\renewcommand{\O}{\Omega}
\newcommand{\po}{\partial\O}
\renewcommand{\o}{\omega}
\renewcommand{\a}{\alpha}
\renewcommand{\b}{\beta}
\renewcommand{\div}{\,\mathrm{div}\,}
\newcommand{\G}{\Gamma}
\newcommand{\g}{\gamma}
\renewcommand{\d}{\delta}
\newcommand{\D}{\Delta}
\newcommand{\s}{\sigma}
\newtheorem{thm}{Theorem}[section]
\newtheorem{prop}[thm]{Proposition}
\newtheorem{cor}[thm]{Corollary}
\newtheorem{lemma}[thm]{Lemma}
\newtheorem{defn}[thm]{Definition}
\newtheorem{preremark}[thm]{Remark}
\numberwithin{equation}{section}
\newcommand\res{\hbox{ {\vrule height .22cm}{\leaders\hrule\hskip.2cm} } }
\newcommand{\R}{\mathbb R}
\newcommand{\dist} {\mathrm{dist}}
\def\Xint#1{\mathchoice
                 {\XXint\displaystyle\textstyle{#1}}%
                 {\XXint\textstyle\scriptstyle{#1}}%
                 {\XXint\scriptstyle\scriptscriptstyle{#1}}%
                 {\XXint\scriptscriptstyle\scriptscriptstyle{#1}}%
                 \!\int}
\def\XXint#1#2#3{{\setbox0=\hbox{$#1{#2#3}{\int}$}
       \vcenter{\hbox{$#2#3$}}\kern-.5\wd0}}
\def\dashint{\Xint-}
\newcommand{\meanbar}[1]{%
\setbox0 = \hbox{$#1 \int$}
\hbox to 0pt{%
\thinspace
\hskip 0.1\wd0
\raise 0.5\ht0
\hbox{%
\lower 0.5\dp0
\hbox{\rule{0.8\wd0}{2\linethickness}}
}%
\hss
}%
}
    \newcounter{myfootertablecounter}
\begin{document}
\title{Harmonic Analysis on chord arc domains.}

\author{Emmanouil Milakis \and Jill Pipher \and Tatiana Toro}
\date{}
\maketitle

\begin{abstract}
In the present paper we study the solvability of the Dirichlet problem for second order divergence form elliptic operators with bounded measurable coefficients which are small perturbations of given operators in rough domains beyond the Lipschitz category. In our approach, the development of the theory of tent spaces on these domains is essential. 

\end{abstract}
AMS Subject Classifications: 35B20, 31B35, 46E30, 35J25.
\\
\textbf{Keywords}: Chord arc domains, Elliptic measures, Perturbations of the Laplacian, Tent spaces.

\tableofcontents

\section{Introduction}
\renewcommand{\thesection}{\arabic{section}}

In this paper, we establish fine properties of the elliptic measure associated to the solvability of the Dirichlet problem for certain small perturbations of elliptic operators in chord arc domains.
The elliptic measure is that which arises naturally as the representing measure associated
to the solution of the Dirichlet problem for a second order elliptic operator with continuous boundary data.
The ``fine properties" of such measures are sharply described by the conditions defining
the Muckenhoupt weight classes, in which these measures are compared to other natural
measures, such as surface measure,  which live on the boundary of the domain. 

We will consider second order elliptic operators in divergence form, $L = \textrm{div}A\nabla$ ,
which are perturbations, in a sense to be made precise, of some given elliptic operators. The 
perturbation theory developed here for chord arc domains is the extension of that same theory for
Lipschitz domains: see \cite{d1}, \cite{esc1} \cite{feff}, \cite{fkp} for some prior literature.

A chord arc domain in $\R^n$ is a non-tangentially accessible (NTA)
domain whose boundary is rectifiable and whose surface measure is Ahlfors regular (i.e. the surface measure 
on boundary balls of radius $r$ grows like $r^{n-1}$) . We refer the
reader to the available literature, and specifically to \cite{JK} for the 
precise definition of NTA domains. NTA domains possess all
of the following properties: (i) a quantified standard relationship
between elliptic measure on the boundary of a domain and the Green's function for that
domain, (ii) the doubling property of elliptic measure, and (iii)  comparison principles
for non-negative solutions to elliptic divergence form equations. These properties are
consequences of the geometric definition of NTA domains and are stated
precisely in the next section. 

We briefly recall the Muckenhoupt weight classes. If $\mu$ and $\nu$ are mutually
absolutely continuous positive measures defined on
the boundary of a domain, $\partial\O$, then there exists a weight function $g$
such that $d\mu = g d\nu$. The measure $d\mu$ belongs to the weight class
$B_q(d\nu)$ if there exists a constant $C>0$ such that for all balls $B \subset \partial\O$, 
$ (\nu(B)^{-1}\int_B g^q d\nu)^{1/q} \leq C\nu(B)^{-1}\int_B g d\nu$. 
The union of the $B_q$ classes is the $A_{\infty}$ class. By real variable methods,
it is known that if elliptic measure and surface measure on a domain are
related via a weight in the $A_{\infty}$ class then the Dirichlet problem with data
in $L^p(d\sigma)$ is solvable {\it for some} $p < \infty$.
There is a well known relationship between the Muckenhoupt $B_q$ weight classes, the
existence of estimates for maximal functions and nontangential maximal functions, and
the solvability of Dirichlet problems for second order elliptic divergence form
operators. We assume that the reader is familiar with these results in harmonic
analysis/elliptic theory. 

One specific and nontrivial result in this theory is Dahlberg's result of 1977: the
harmonic measure $\omega$ on a Lipschitz domain is mutually absolutely continuous
with respect to surface measure, $\sigma$, and the weight $k$ relating the two measures
$(d\omega = k d\sigma$) belongs to the $B_2(d\sigma)$ class.
There is a further relationship between Muckenhoupt weights and the function
space $BMO$ of functions of bounded mean oscillation which then implies 
that $\log k \in BMO(d\sigma)$. On $C^1$ domains, Jerison and Kenig
proved that $\log k \in VMO(d\sigma)$. $VMO$ is the Sarason space of vanishing
mean oscillation, a proper subspace of $BMO$, and arises as the predual
of the Hardy space $H^1$. In \cite{kt1}, Kenig and Toro showed that $\log k$ belongs
to $VMO(d\sigma)$ when the domain is merely chord arc (with a vanishing condition).

The theory of perturbation of elliptic operators on Lipschitz domains begins
with a result of Dahlberg, \cite{d}, which measures the difference between
coefficients of the matrices of two divergence form elliptic operators in
a Carleson norm. Here is the set up for the general perturbation theory:
If $L_0 = \textrm{div}A_0\nabla$ is elliptic in 
a domain $\O$, then an elliptic operator 
$L_1 = \textrm{div}A_1\nabla$ is a perturbation of $L_0$ when 
the difference $\epsilon(X) = |A_1(X) - A_0(X)|$ is equal to zero
when $X \in \partial\O$. How closely should these operators, $L_0$ and $L_1$,
agree in the interior of the domain so that good properties of the elliptic
measure associated to $L_0$ be preserved?  The correct answer to this
question is stated in terms of Carleson measures. The Carleson condition on $\epsilon(X)$ is a delicate
measure of the rate at which $\epsilon(X)$ tends to zero as $X$ approaches
the boundary of $\O$. In terms of such Carleson conditions, sharp results on perturbations
were obtained in \cite{fkp}. And in $\cite{esc1}$, Escauriaza showed
that a (vanishing) Carleson condition on a perturbation of the Laplacian
in $C^1$ domains preserved the Jerison-Kenig result, namely that
$\log k \in VMO$.  We will provide precise statements of some 
of these results in the next
section. 

Our aim is to extend the perturbation results of \cite{fkp} to
the setting of chord arc domains (CADs).  Much of the technology of function
spaces on the boundary which is available when the domain has
Lipschitz boundary is not available in this setting. Therefore, a good portion of
this paper is devoted to developing the theory of these function spaces for chord
arc domains, especially the theory of tent spaces due to Coifman, Meyer and Stein.
These function spaces and their duals figure prominently in the theory of Hardy spaces
and $BMO$ spaces - the connection between them is established via Carleson measures. The
development of the theory of tent spaces on chord arc domains is a purely geometric
and independent aspect of this paper. 

In \cite{mt}, it was shown that a (vanishing) Carleson measure
condition on perturbations of the Laplacian on CADs with vanishing constant preserves
$A_{\infty}$. In the last section of the paper we show that 
this result holds for perturbations from arbitrary
elliptic divergence form operators on general CADs.

\section{Preliminaries}
\renewcommand{\thesection}{\arabic{section}}

In this section we recall some definitions and give the necessary background on properties of solutions to elliptic equations in divergence form. We will also introduce some notations which will be used throughout the paper.

\begin{defn}
Let $\Omega \subset \R^{n}$. We say that
$\Omega$ is a chord arc domain (CAD) if $\Omega$ is an NTA
set of locally finite perimeter whose boundary is Ahlfors regular, i.e. the surface measure to the boundary satisfies the following condition: there exists $C\ge 1$ so that for $r\in (0,{\rm{diam}}\,\Omega)$ and $Q\in\partial \Omega$
\begin{equation}\label{1.7A}
C^{-1}r^{n-1}\le
\sigma(B(Q,r))\le
 Cr^{n-1}.
\end{equation}
Here $B(Q,r)$ denotes the $n$-dimensional ball of radius
$r$ and center $Q$ and $\sigma=\mathcal{H}^{n-1}\res \partial\Omega$ and $\mathcal{H}^{n-1}$ denotes the $n-1$-dimensional Hausdorff measure. The best constant $C$ above is referred to as the Ahlfors regularity constant.
\end{defn}

\begin{defn}
Let $\Omega \subset \R^{n}$, $\delta> 0$ and $R>0$. 
If D denotes Hausdorff measure and $\mathcal L(Q)$ denotes an $ n-1$-plane containing
a point $Q \in \Omega$, set 
\begin{equation}
\theta(r) = \sup_{Q \in \partial\Omega}\inf_{\mathcal L(Q)}
r^{-1} D[\partial\Omega \cap B(Q,r), \mathcal L \cap B(Q,r)]
\end{equation}\
We say that
$\Omega$ is a $(\delta,R)$-chord arc domain (CAD) if $\Omega$ is a
set of locally finite perimeter such that
\begin{equation}\label{1.6}
\sup_{0<r\le
 R}\theta(r)\le
 \delta
\end{equation}
and
\begin{equation}\label{1.7}
\sigma( B(Q,r))\le
 (1+\delta)\omega_{n-1}r^{n-1}\ \ \forall
Q\in\partial\Omega\ \ {\rm{and}} \ \forall r\in (0,R].
\end{equation}
Here $\omega_{n-1}$ is the volume of the $(n-1)$-dimensional unit
ball in $\R^{n-1}$.
\end{defn}

\begin{defn}
Let $\Omega \subset \R^{n}$, we say that $\Omega$ is a chord arc
domain with vanishing constant if it is a $(\delta, R)$-CAD for
some $\delta>0$ and $R>0$,
\begin{equation}\label{1.8}
\limsup_{r\rightarrow 0}\theta(r)=0
\end{equation}
and
\begin{equation}\label{1.9}
\lim_{r\rightarrow 0}\sup_{Q\in\partial \Omega}\frac{\sigma(B(Q,r))}{\omega_{n}r^{n-1}} =1.
\end{equation}
\end{defn}

For the purpose of this paper we assume that $\Omega\subset \R^{n}$ is a bounded
domain.  We consider elliptic
operators $L$ of the form
\begin{equation}\label{div-tt}
Lu=\textrm{div}(A(X)\nabla u)
\end{equation}
defined in the domain $\Omega$ with symmetric coefficient matrix $A(X)=(a_{ij}(X))$ and such that there
are $\lambda, \Lambda>0$ satisfying
\begin{equation}\label{ellipticity}
\lambda |\xi|^2\le
 \sum_{i,j=1}^{n}a_{ij}(X)\xi_i\xi_j\le
 \Lambda |\xi|^2
\end{equation}
for all $X \in \Omega$ and $\xi \in \R^{n}$.

We say that a function $u$ in $\Omega$ is a solution to $Lu=0$ in $\Omega$ provided that $u\in W_{\rm{loc}}^{1,2}(\Omega)$ and for all $\phi\in C^{\infty}_c(\Omega)$
$$\int_{\Omega}\langle A(x)\nabla u,\nabla\phi\rangle dx =0.$$

A domain $\Omega$ is called regular for the operator $L$, if for
every $g\in C(\partial \Omega)$, the generalized solution of the
classical Dirichlet problem  with boundary data $g$ is a function $u\in C(\overline{\Omega})$. Let $\Omega$ be a regular domain for $L$ as above and $g\in C(\partial \Omega)$. The Riesz Representation Theorem ensures that there exists a family of regular Borel probability measures
$\{\omega^X_L\}_{X\in\Omega}$ such that
$$u(X)=\int_{\partial\Omega}g(Q)d\omega^X_L(Q).$$
For $X\in \Omega$, $\omega^X_L$ is called the $L-$elliptic measure of $\Omega$ with pole $X$. When no
confusion arises, we will omit the reference to $L$ and simply
called it as the elliptic measure.

To state our results  we introduce the notion of perturbation of an operator. Consider two elliptic operators $L_i={\rm{div}}(A_i\nabla\ )$ for $i=0,1$ defined on a chord arc domain $\Omega\subset \R^{n}$. We say that $L_1$ is a perturbation of $L_0$ if the deviation function
\begin{equation}\label{eqn:tt-a}
a(X)=\sup\{|A_1(Y)-A_0(Y)|: Y\in B(X,\delta(X)/2)\}
\end{equation}
where
$\delta(X)$ is the distance of $X$ to $\partial \Omega$, satisfies the following Carleson measure property: there exists a constant $C>0$ such that
\begin{equation}\label{normfkp}
\sup_{0<r<\rm{diam}\Omega}\sup_{Q\in\partial \Omega}
\bigg\{\frac{1}{\sigma(B(Q,r))}\int_{B(Q,r)\cap \Omega}\frac{a^2(X)}{\delta(X)}dX\bigg\}^{1/2}\le
 C.
\end{equation}

Note that in this case $L_1=L_0$ on $\partial \Omega$. For $i=0,1$ we denote by $G_i(X,Y)$  the Green's function of $L_i$ in $\Omega$ with pole at $X$ and by $\omega^X_i$ the corresponding elliptic measure.

We now recall some of the results concerning the regularity of the elliptic measure of perturbation operators
in Lipschitz domains. The results in the literature are more general than those quoted below.

\begin{thm}\label{dahlberg1986}\cite{d}
Let $\Omega=B(0,1)$. If $L_0=\Delta$, $a(X)$ is as in (\ref{eqn:tt-a}),
\begin{equation}\label{carlesonnorm}
h(Q,r)=\bigg\{\frac{1}{\sigma(B(Q,r))}\int_{B(Q,r)\cap \Omega}\frac{a^2(X)}{\delta(X)}dX\bigg\}^{1/2}
\end{equation}
and
$$
\lim_{r \rightarrow 0}\sup_{|Q|=1}h(Q,r)=0,
$$
then the elliptic kernel of $L_1$, $k=d\omega_{L_1}/d\sigma\in B_q(d\sigma)$ for all $q>1$.
\end{thm}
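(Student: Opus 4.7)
The plan is to convert the reverse H\"older condition $k \in B_q(d\sigma)$ for every $q > 1$ into an $L^p$ nontangential maximal function estimate for $L_1$-solutions, for every $p > 1$ (via the classical duality between the $B_q$ class and $L^{q'}$-solvability of the Dirichlet problem), and then obtain the latter by comparing $L_1$ with the Laplacian. Let $u_1$ solve $L_1 u_1 = 0$ in $\Omega = B(0,1)$ with continuous boundary data $f$ and let $u_0$ be the harmonic extension of $f$. Since harmonic measure on the unit ball is a bounded, smooth multiple of surface measure (the Poisson kernel), $\|N(u_0)\|_{L^p(d\sigma)} \lesssim \|f\|_{L^p(d\sigma)}$ is classical for every $p$; the task therefore reduces to a small-constant bound $\|N(u_1 - u_0)\|_{L^p(d\sigma)} \le \eta \|f\|_{L^p(d\sigma)}$ that can be absorbed.

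The key identity is the Green's function representation
\[
u_1(X) - u_0(X) = \int_{\Omega} \langle (A_0 - A_1)(Y)\nabla u_0(Y),\, \nabla_Y G_1(X,Y)\rangle \, dY,
\]
obtained by integration by parts against $G_1$. Taking nontangential maximal functions and applying Cauchy--Schwarz, one extracts the weight $a^2(Y)/\delta(Y)$, whose integral over a Carleson box $B(Q,r)\cap\Omega$ is exactly $h(Q,r)^2 \sigma(B(Q,r))$, and is left with an area integral of $|\nabla u_0|^2|\nabla G_1|^2 \delta$. The latter is handled by tent-space/square-function duality: $|\nabla u_0|^2 \delta\, dY$ is the density whose tent integral is $S(u_0)^2$, controlled in $L^p$ by $N(u_0)$ and hence by $\|f\|_{L^p}$, while Caccioppoli-type estimates together with the doubling of $\omega_1$ do the analogous work for $G_1$. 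Combined with the Carleson embedding theorem, this produces
\[
\|N(u_1 - u_0)\|_{L^p(d\sigma)} \le C(p) \sup_{Q,r} h(Q,r)\, \|f\|_{L^p(d\sigma)}.
\]

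The final step extracts all $q > 1$ from the vanishing hypothesis. Given $p$, pick $r_0$ so small that $\sup_Q h(Q,r) < \eta/C(p)$ for all $r \le r_0$, then perform a Whitney/stopping-time decomposition of $\partial\Omega$ into surface balls of radius $\le r_0$ and apply the local Carleson estimate on each, using standard interior regularity (Moser/De Giorgi) and comparison on a slightly enlarged subdomain to handle the contribution from scales $> r_0$. The resulting $L^p(d\sigma)$ estimate for every $p > 1$ translates by duality into $k \in B_q(d\sigma)$ for every $q > 1$.

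The main obstacle will be the quantitative Carleson step: pairing $(A_0 - A_1)\nabla u_0$ against $\nabla G_1$ through tent-space duality so that the constant comes out proportional to $\sup_{Q,r} h(Q,r)$ requires careful gradient estimates on $G_1$ (via Caccioppoli) together with the doubling and comparison principles for $\omega_1$. Once that is in place, the decisive use of the \emph{vanishing} hypothesis---yielding an \emph{arbitrarily} small constant $\eta$, rather than just \emph{some} small $\eta$ as in the non-vanishing perturbation theorem---is exactly what permits the target exponent $q$ to be taken arbitrary; arranging this is a matter of localizing in scale via the Whitney/stopping-time argument rather than invoking a one-shot perturbation estimate.
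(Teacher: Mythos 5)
This theorem is not proved in the paper; it is quoted as a known result of Dahlberg~\cite{d} in the preliminaries. The paper explicitly remarks that Dahlberg's original proof proceeds by ``introducing a differential inequality for a family of elliptic measures,'' and that the direct perturbation proof it later adapts to chord arc domains is due to Fefferman, Kenig and Pipher~\cite{fkp}. So there is no internal proof to compare against; I will instead assess your proposal on its own terms, against the machinery the paper develops.

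Your plan starts from the representation
$u_1(X)-u_0(X)=\int_\Omega \nabla_Y G_1(X,Y)\cdot(A_0-A_1)(Y)\nabla u_0(Y)\,dY$,
using the Green's function $G_1$ of the \emph{perturbed} operator together with $\nabla u_0$. That identity is correct, but it puts the unknown object on the wrong side of the pairing, and this creates a circularity that I do not think can be repaired by tent-space duality alone. In the far-field part of the nontangential estimate (the part away from the pole $X$), after Caccioppoli applied to $G_1(X,\cdot)$ and the NTA comparison principle, the kernel factor is forced to carry the normalization
$G_1(Y)\sim \omega_1(\Delta(Q_Y,\delta(Y)))/\delta(Y)^{n-2}$. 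The stopping-time/Carleson estimate on the dyadic pieces therefore hands you a bound in terms of
$\sup_\Delta\big(\tfrac{1}{\omega_1(\Delta)}\int_{T(\Delta)}a^2(Y)\tfrac{G_1(Y)}{\delta(Y)^2}\,dY\big)^{1/2}$
and a maximal function with respect to $\omega_1$, not $\sigma$. Your hypothesis controls $\sup_{Q,r}h(Q,r)$, i.e.\ a Carleson norm of $a^2/\delta$ against $\sigma$, and converting this into an $\omega_1$-weighted Carleson norm (or converting $M_{\omega_1}$ into an $L^p(\sigma)$-bounded operator) requires $\omega_1\in A_\infty(\sigma)$ --- precisely what you are trying to prove. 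Doubling of $\omega_1$ (which you invoke) is a generic NTA fact and is not sufficient; the missing input is the quantitative comparison between $\omega_1$ and $\sigma$. The FKP route avoids this by using $G_0$ and $\nabla u_1$: the Green's function that appears is that of the known operator $L_0=\Delta$, so all the comparison estimates close against $\omega_0\approx\sigma$ on the ball, and the only unknown quantity that enters is the square function of $u_1$, which is absorbed at the end by a bootstrap. That same choice of representation is exactly what the paper uses in its own perturbation argument (Lemmas~\ref{lem2.9} and~\ref{lem2.10}), for the same reason.

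Two smaller comments. First, as written your absorption step produces $\|N(u_1)\|_{L^p(\sigma)}\lesssim(1+\eta)\|f\|_{L^p(\sigma)}$ with no genuine use of smallness of $\eta$, because your right-hand side is already $\|f\|_{L^p}$ rather than $\|N(u_1)\|_{L^p}$; this is a symptom of the same issue --- in a correct direct argument the smallness must enter to close a bootstrap, as in~\cite{fkp} and in the paper's proof of Theorem~\ref{mainthm1}. Second, your final localization idea (shrink scales so the Carleson norm is below $\eta/C(p)$, do a Whitney decomposition, and absorb the large-scale contribution by interior regularity) is the right way to turn a vanishing Carleson condition into $B_q$ for \emph{all} $q>1$; that part of the plan is sound and is essentially how the literature upgrades the small-constant perturbation theorem to Dahlberg's conclusion. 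But it is downstream of the core perturbation estimate, which needs to be restructured with $G_0$ and $\nabla u_1$ first.
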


In \cite{feff}, R.Fefferman investigated an alternative to the smallness condition on $h(Q,r)$ above, and considered a pointwise requirement on the quantity $A(a)(Q)$.

\begin{thm}\cite{feff}\label{fefferman1989}
Let $\Omega= B(0,1)$ and $L_0=\Delta$. Let $\Gamma(Q)$ denote a non-tangential cone with vertex $Q\in \partial \Omega$ and
$$A(a)(Q)=\bigg(\int_{\Gamma(Q)}\frac{a^2(X)}{\delta^n(X)}dX\bigg)^{1/2},$$
where $a(X)$ is as in (\ref{eqn:tt-a}).
If $\|A(a)\|_{L^{\infty}}\le
 C$ then $\omega\in A_{\infty}(d\sigma)$.
\end{thm}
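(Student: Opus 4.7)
The plan is to establish $\omega \in A_\infty(d\sigma)$ by comparing the $L_1$-elliptic measure $\omega$ directly to the harmonic measure $\omega_0$ on $B(0,1)$. Since $\omega_0$ has smooth positive density with respect to the surface measure $\sigma$ on the unit sphere, $\omega_0 \in B_q(d\sigma)$ for every $q > 1$; by the transitivity of $A_\infty$ under mutual absolute continuity it suffices to prove $\omega \in A_\infty(d\omega_0)$, with constants depending only on ellipticity, dimension, and $\|A(a)\|_{L^\infty}$.

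The central device is a Green-type identity for the difference of solutions. If $u_i$ solves $L_i u_i = 0$ in $\Omega$ with common continuous boundary data $f$, then $v := u_1 - u_0$ vanishes on $\partial\Omega$ and satisfies $L_0 v = \mathrm{div}\bigl((A_0 - A_1)\nabla u_1\bigr)$; testing against $G_0$ and integrating by parts produces
\begin{equation*}
u_1(X) - u_0(X) = \int_{\Omega} (A_0 - A_1)(Y)\, \nabla_Y u_1(Y) \cdot \nabla_Y G_0(X,Y)\, dY.
\end{equation*}
Taking $f = \chi_E$ for a Borel set $E \subset \Delta := B(Q_0,r)\cap \partial B(0,1)$ and $X = X_\Delta$ a corkscrew point for $\Delta$, the left-hand side equals $\omega^{X_\Delta}(E) - \omega_0^{X_\Delta}(E)$. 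I would then apply Cauchy--Schwarz together with the pointwise bound $|A_0 - A_1|(Y) \le a(Y)$ and the interior gradient estimate $|\nabla_Y G_0(X_\Delta,Y)|\,\delta(Y) \lesssim G_0(X_\Delta,Y)$ to obtain
\begin{equation*}
\bigl|\omega^{X_\Delta}(E) - \omega_0^{X_\Delta}(E)\bigr|^2 \;\lesssim\; \mathrm{I}\cdot \mathrm{II},
\end{equation*}
where $\mathrm{I} := \int_\Omega a^2(Y)\, G_0(X_\Delta, Y)\, \delta(Y)^{-2}\,dY$ and $\mathrm{II} := \int_\Omega |\nabla u_1(Y)|^2\, G_0(X_\Delta, Y)\,dY$.

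The factor $\mathrm{II}$ is a Green-weighted energy integral which I would control by a Rellich-type integration by parts: testing $L_1 u_1 = 0$ against $u_1 G_0$ and using the symmetry and ellipticity of $A_1$, one reduces $\mathrm{II}$ to a boundary expression that, combined with $N(u_1) \le 1$ and $u_1 = 0$ on $\partial\Omega\setminus E$, is bounded by $\omega_0^{X_\Delta}(\Delta)$ up to ellipticity constants. The main obstacle, and the key place where the pointwise hypothesis $\|A(a)\|_{L^\infty} \le C$ must be decisively used, is the estimate on $\mathrm{I}$. The idea is to transport the bulk integral to the boundary via the CFMS-type comparison $G_0(X_\Delta, Y)\,\delta(Y)^{-2} \approx \omega_0^{X_\Delta}(\Delta(Y, \delta(Y)))\,\delta(Y)^{-n}$ (valid on $B(0,1)$) and Fubini, producing
\begin{equation*}
\mathrm{I} \;\lesssim\; \int_{\partial B(0,1)} \Bigl(\int_{\Gamma(Q)} a^2(Y)\, \delta(Y)^{-n}\,dY\Bigr)\, d\omega_0^{X_\Delta}(Q) \;=\; \int_{\partial B(0,1)} A(a)^2(Q)\, d\omega_0^{X_\Delta}(Q) \;\le\; \|A(a)\|_{L^\infty}^2.
\end{equation*}

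Combining these estimates yields $\bigl|\omega^{X_\Delta}(E) - \omega_0^{X_\Delta}(E)\bigr| \lesssim \|A(a)\|_{L^\infty}\, \omega_0^{X_\Delta}(\Delta)^{1/2}$. After a standard localization (running the argument on the Carleson region over $\Delta$ rather than all of $\Omega$) and a good-$\lambda$ iteration that chops $\|A(a)\|_{L^\infty}$ into smaller pieces, this sharpens to the quantitative comparison $|\omega^{X_\Delta}(E)/\omega^{X_\Delta}(\Delta) - \omega_0^{X_\Delta}(E)/\omega_0^{X_\Delta}(\Delta)| \lesssim \|A(a)\|_{L^\infty}$. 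The standard Coifman--Fefferman characterization of $A_\infty$ (for suitable $\eta = \eta(\varepsilon)$, $\omega_0(E)/\omega_0(\Delta) < \eta$ implies $\omega(E)/\omega(\Delta) < \varepsilon$) then delivers $\omega \in A_\infty(d\omega_0) \subset A_\infty(d\sigma)$.
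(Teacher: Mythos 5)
The statement you are proving is quoted from Fefferman's 1989 paper and is not re-proved here; the closest thing this paper does prove is the chord arc generalization, Theorem \ref{thm8.2}, whose argument reduces the pointwise hypothesis on $A(a)$ to the Fefferman--Kenig--Pipher Carleson condition of Theorem \ref{mainthm1} and then applies that perturbation theorem. Your approach is genuinely different: you run the comparison directly on $\omega-\omega_0$ in the style of Fefferman's original proof, rather than routing through FKP. Your central observation---transport $\mathrm{I}=\int a^2 G_0\,\delta^{-2}$ to the boundary using $G_0(X_\Delta,Y)\delta(Y)^{-2}\sim\omega_0^{X_\Delta}(\Delta(Q_Y,\delta(Y)))\delta(Y)^{-n}$ plus Fubini, yielding $\mathrm{I}\lesssim\|A(a)\|_{L^\infty}^2$---is exactly the calculation that appears in (\ref{eqn8.3}), so that part matches the paper's mechanism.

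The gap is in the bootstrap. Your Cauchy--Schwarz/Rellich argument can at best produce $\bigl|\omega^{X_\Delta}(E)/\omega^{X_\Delta}(\Delta)-\omega_0^{X_\Delta}(E)/\omega_0^{X_\Delta}(\Delta)\bigr|\lesssim\|A(a)\|_{L^\infty}$, and this only gives the Coifman--Fefferman $A_\infty$ criterion (for every $\varepsilon$ there is $\eta(\varepsilon)$) when $\|A(a)\|_{L^\infty}$ is \emph{small}. For general bounded $\|A(a)\|_{L^\infty}\le C_0$ one must interpolate along the path $L_t=\mathrm{div}\bigl[(1-t)A_0+tA_1\bigr]\nabla$, partitioning $[0,1]$ finely enough that each consecutive step has small deviation and iterating the small-constant result; this is precisely what the proof of Theorem \ref{thm8.2} does (and what Fefferman's original proof does via a differential inequality for the family $\omega_t$). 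Your phrase ``a good-$\lambda$ iteration that chops $\|A(a)\|_{L^\infty}$ into smaller pieces'' conflates two different tools: a good-$\lambda$ inequality is a device for upgrading weak estimates to reverse-H\"older estimates, not a device for shrinking the operator deviation. Without the continuity/homotopy step made explicit, the argument as written proves the theorem only when $\|A(a)\|_{L^\infty}$ is already small. The estimate $\mathrm{II}\lesssim\omega_0^{X_\Delta}(\Delta)$ is plausible on the ball via a Green-weighted Caccioppoli argument, but you should note that the mismatch between $L_1$ (satisfied by $u_1$) and $L_0$ (whose Green function you weight by) produces error terms that themselves involve $\varepsilon=A_1-A_0$ and need to be absorbed.
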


The main results in \cite{d} and in \cite{feff} are proved using Dahlberg's idea of introducing a differential inequality for a family of elliptic measures. In \cite{fkp}, Fefferman, Kenig and Pipher presented a new direct proof of these results and we will show here that this proof extends beyond the class of Lipchitz domains. This requires a careful reworking of many of the technical steps in the \cite{fkp} proof, and the development of the required new analytic tools for CADs.

\begin{thm}\label{fkp1991}\cite{fkp}
Let $\Omega$ be a Lipschitz domain. Let $L_1$ be such that (\ref{normfkp})  holds then
 $\omega_{1}\in A_\infty(d\sigma)$ whenever $\omega_{0}\in A_\infty(d\sigma)$.
\end{thm}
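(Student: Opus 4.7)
The plan is to follow the strategy of \cite{fkp}. The crucial starting point is the Green's function identity relating solutions of the two operators: if $u_0$ and $u_1$ are continuous solutions in $\O$ of $L_0 u_0 = 0$ and $L_1 u_1 = 0$, respectively, with the same continuous boundary data $f$, then
$$u_1(X) - u_0(X) = \int_{\O} \nabla_Y G_1(X,Y) \cdot (A_0(Y) - A_1(Y))\, \nabla u_0(Y)\,dY,$$
and by the definition of the deviation function the integrand is pointwise bounded by $a(Y)\,|\nabla_Y G_1(X,Y)|\,|\nabla u_0(Y)|$ on each Whitney cube. The goal is to translate this identity into a quantitative comparison of the elliptic measures $\omega_0$ and $\omega_1$ on the boundary.

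I would first recall that on a Lipschitz domain, $\omega_0 \in A_\infty(d\sigma)$ is equivalent to solvability of the $L^p$ Dirichlet problem for $L_0$ for some $p<\infty$, which in turn is equivalent to the $L^p(d\sigma)$-boundedness of the non-tangential maximal function $N(u_0)$ by $\|f\|_{L^p(d\sigma)}$, together with the square-function equivalence $\|N(u_0)\|_{L^p(d\sigma)} \sim \|S(u_0)\|_{L^p(d\sigma)}$. The task is then to deduce an analogous non-tangential estimate for $u_1$ from the one for $u_0$ and the Carleson condition (\ref{normfkp}).

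The heart of the argument is a perturbation estimate of the schematic form
$$\|N(u_1 - u_0)\|_{L^2(d\omega_0)} \;\lesssim\; \|a\|_{\text{Carl}}\, \|S(u_0)\|_{L^2(d\omega_0)},$$
where $\|a\|_{\text{Carl}}$ denotes the Carleson norm appearing in (\ref{normfkp}). To prove it, one inserts the differential identity, applies Cauchy--Schwarz in the direction dual to the Carleson condition, uses a Harnack-type reinterpretation of $|\nabla_Y G_1(X,Y)|\,dY$ as a density comparable to an averaged $\omega_1$-mass (via the standard NTA Green's function / elliptic measure identification), and replaces non-tangential integrals of $|\nabla u_0|^2$ by $S(u_0)^2$. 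This closes cleanly when $\|a\|_{\text{Carl}}$ is sufficiently small, yielding directly that $\omega_1 \in A_\infty(\omega_0)$; combined with $\omega_0 \in A_\infty(d\sigma)$ and the transitivity of $A_\infty$, this gives $\omega_1 \in A_\infty(d\sigma)$. For arbitrary Carleson norm one runs a stopping-time / sawtooth decomposition, isolating dyadic boundary regions on which the localized Carleson norm falls below a fixed threshold $\eps_0$ and then propagating $A_\infty$ from the sawtooth pieces to all of $\O$ using the NTA comparison principle for non-negative solutions.

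The main obstacle is the perturbation estimate itself, because $\omega_1$ is the unknown and the bounds involving $G_1$ must be closed self-consistently rather than by direct reference to an already-known elliptic measure — this is the substantive improvement of \cite{fkp} over the differential-inequality approach of \cite{d} and \cite{feff}. A related delicate point is the stopping-time argument, which requires producing, inside every boundary ball where $\|a\|_{\text{Carl}}$ is large, a sawtooth subdomain carrying a definite fraction of surface measure on which the restricted Carleson norm is below $\eps_0$. Both steps rely crucially on the Lipschitz structure through the availability of $S\sim N$ equivalence, tent-space duality with $BMO$, and interior Caccioppoli/reverse-H\"older estimates; extending these ingredients to CADs is precisely the analytic program undertaken in the remainder of the paper.
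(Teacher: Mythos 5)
Theorem~\ref{fkp1991} is quoted from \cite{fkp} and is not reproved here; the nearest in-paper proof is that of its CAD analogue, Theorem~\ref{thm8.1}, which follows the same outline. Your high-level architecture --- prove a small-Carleson-norm $B_2(\omega_0)$ estimate first and then remove the smallness assumption by a stopping-time / sawtooth localization combined with the NTA comparison principle --- is the right one and matches what the paper executes in Section~8 (reducing Theorem~\ref{thm8.1} to Theorem~\ref{thm8.2} and then to Theorem~\ref{mainthm1}).

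There is, however, a concrete error in the central perturbation step that would make your plan circular. You wrote the Green's function identity with the roles of the two operators interchanged:
\[
u_1(X)-u_0(X)=\int_\Omega\nabla_Y G_1(X,Y)\cdot(A_0-A_1)(Y)\,\nabla u_0(Y)\,dY,
\]
and then propose to estimate $|\nabla_Y G_1|$ by an averaged $\omega_1$-mass. But $\omega_1$ is the unknown; no such estimate is available a priori. The identity that \cite{fkp} and this paper actually use (see the display just before Lemma~\ref{lem2.9}) keeps the \emph{known} $G_0$ on the Green's function side:
\[
F(X)=u_1(X)-u_0(X)=\int_\Omega\nabla_Y G_0(X,Y)\,\varepsilon(Y)\,\nabla u_1(Y)\,dY .
\]
The self-consistency you attribute to \cite{fkp} is therefore not between $G_1$ and $\omega_1$ at all; it is a buckling argument in the solution. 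Lemma~\ref{lem2.9} bounds $\widetilde N F$ and $\widetilde N(\delta|\nabla F|)$ in $L^2(\omega_0)$ by $\varepsilon_0\, S(u_1)$ --- note $S(u_1)$, not the $S(u_0)$ in your schematic bound --- and this is absorbed using Lemma~\ref{lem2.10}, the triangle inequality $S(u_1)\le S(F)+S(u_0)$, and $\|S(u_0)\|_{L^2(\omega_0)}\lesssim\|f\|_{L^2(\omega_0)}$ from Lemma~\ref{1510}, closing when $\varepsilon_0$ is small. With your orientation of the identity the estimate would still contain $\nabla_Y G_1$, there would be no buckling variable, and no way to proceed without already knowing something about $\omega_1$. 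Correct the identity (and replace $S(u_0)$ by $S(u_1)$ in the schematic estimate), and the rest of your outline --- the sawtooth localization, the comparison principle, and the iteration along $L_t=(1-t)A_0+tA_1$ --- does agree with the paper's proof of Theorem~\ref{thm8.1}.
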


\begin{thm}\label{fkp1991b}\cite{fkp}
Let $\Omega$ be a Lipschitz domain. Let $\omega_0$, $\omega_1$ denote the $L_0-$elliptic measure and the $L_1-$elliptic measure respectively in $\Omega$ with pole $0\in\Omega$.
There exists an $\varepsilon_0>0$, depending on the ellipticity constants and the dimension, such that if
$$\sup_{\Delta \subseteq \partial \Omega}\bigg\{\frac{1}{\omega_0(\Delta)}\int_{T(\Delta)}a^2(X)\frac{G_0(X)}{\delta^2(X)}dX\bigg\}^{1/2}\le
 \varepsilon_0,$$
then $\omega_1\in B_2(\omega_0)$.
Here $T(\Delta)=\overline B(Q,r)\cap \Omega$ is the Carleson region associated to the surface ball $\Delta(Q,r)=B(Q,r)\cap \partial \Omega$,  and $G_0(X)=G_0(0,X)$ denotes the Green's function for $L_0$ in $\Omega$ with pole at $0\in \Omega$.
\end{thm}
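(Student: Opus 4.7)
The plan is to follow the direct integration-by-parts approach of Fefferman-Kenig-Pipher, which replaces Dahlberg's differential-inequality argument with a single Carleson-measure estimate. Fix a surface ball $\Delta_0=\Delta(Q_0,r_0)$ and a Borel set $E\subset\Delta_0$; it suffices to produce the reverse Hölder inequality
$$\frac{\omega_1(E)}{\omega_1(\Delta_0)} \le C\Bigl(\frac{\omega_0(E)}{\omega_0(\Delta_0)}\Bigr)^{1/2},$$
which is equivalent to $\omega_1\in B_2(\omega_0)$. Let $u_i$ solve $L_iu_i=0$ in $\O$ with boundary data $\chi_E$, so that $u_i(0)=\omega_i(E)$.

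The first step is the key identity. Integration by parts, using that $G_0(0,\cdot)$ is the $L_0$-Green's function with pole at $0$, that $L_1u_1=0$, and that $G_0(0,\cdot)$ vanishes on $\po$, produces
$$u_1(0)-u_0(0) = \iint_\O \langle (A_0-A_1)(Y)\,\nabla u_1(Y),\,\nabla_Y G_0(0,Y)\rangle\, dY.$$
Cauchy-Schwarz with the natural split separating $a$ from $|\nabla u_1|\,|\nabla G_0|$ then yields
$$|u_1(0)-u_0(0)|^2 \le \Bigl(\iint_\O a^2\,\tfrac{G_0}{\delta^2}\,dY\Bigr)\Bigl(\iint_\O |\nabla u_1|^2\,\tfrac{|\nabla G_0|^2\delta^2}{G_0}\,dY\Bigr).$$
Interior Caccioppoli for $G_0$ on Whitney balls $B(Y,\delta(Y)/2)$ reduces the second factor, after dyadic averaging, to $\iint_\O |\nabla u_1|^2\,G_0\,dY$, and the standard energy estimate (testing $L_1u_1=0$ against $u_1G_0$ and using ellipticity) bounds the latter by $C\int_{\po}u_1^2\,d\omega_0 = C\,\omega_0(E)$.

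The final step is localization. Since $u_1$ vanishes on $\po\setminus\Delta_0$, Hölder decay of nonnegative NTA solutions forces $|\nabla u_1|^2G_0$ to be effectively supported in a Carleson region $T(C\Delta_0)$. Combined with tent-space / Carleson-measure duality, this lets one replace $\iint_\O$ in the first factor by $\iint_{T(C\Delta_0)}$, which the hypothesis bounds by $\varepsilon_0^2\,\omega_0(C\Delta_0)\le C\varepsilon_0^2\,\omega_0(\Delta_0)$ using doubling of $\omega_0$ on the NTA domain. Putting everything together gives
$$|\omega_1(E)-\omega_0(E)|\le C\varepsilon_0\,\omega_0(\Delta_0)^{1/2}\,\omega_0(E)^{1/2}.$$
Taking $E=\Delta_0$ and choosing $\varepsilon_0$ small enough forces $\omega_1(\Delta_0)\sim\omega_0(\Delta_0)$; substituting back into the displayed estimate produces the $B_2(\omega_0)$ inequality.

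The main obstacle is justifying the pairing in the third step: one must control the off-diagonal contribution of $|\nabla u_1|^2G_0$ outside the tent over $\Delta_0$ and pair the Carleson measure $a^2G_0/\delta^2\,dX$ against $|\nabla u_1|^2$-quantities via a square-function bound whose underlying surface measure is $d\omega_0$ rather than $d\sigma$. On Lipschitz domains this rests on Dahlberg-type comparison estimates and the classical Coifman-Meyer-Stein tent-space theory; it is precisely this analytic machinery that the subsequent sections of the present paper must generalize in order to carry the argument over to chord arc domains.
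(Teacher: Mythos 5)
Your proposal identifies the correct starting point — the identity $u_1(0)-u_0(0)=\int_\Omega \langle(A_0-A_1)\nabla u_1,\nabla_Y G_0\rangle\,dY$ followed by Cauchy--Schwarz against the Carleson hypothesis — but the step you call a ``standard energy estimate'' contains the genuine gap. Testing $L_1u_1=0$ against $u_1G_0$ does \emph{not} yield $\int_\Omega|\nabla u_1|^2G_0\lesssim\int_{\po}u_1^2\,d\omega_0$, because $G_0$ is the Green's function for $L_0$, not $L_1$. After integrating by parts and replacing $A_1\nabla G_0$ by $A_0\nabla G_0+\varepsilon\nabla G_0$, the term $-\tfrac12\int A_0\nabla(u_1^2)\cdot\nabla G_0$ does produce $\tfrac12 u_1(0)^2+\tfrac12\int_{\po}u_1^2\,d\omega_0$, but there remains a cross-term of the form $\int\varepsilon\,u_1\,\nabla u_1\cdot\nabla G_0\,dY$ involving the deviation $\varepsilon=A_1-A_0$. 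Controlling this cross-term is exactly the difficulty you began with, reappearing one level down: applying Cauchy--Schwarz to it brings back either $\int|\nabla u_1|^2G_0$ (needing absorption, which fails since $a$ is only small in Carleson sense, not pointwise) or products of $N$- and $S$-type functionals of $u_1$. There is no self-contained ``energy estimate'' here; the inequality must be closed by a feedback argument.

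This is precisely what the paper (following \cite{fkp}) sets up in Section 7. Rather than estimating at the pole $0$ with boundary data $\chi_E$, the paper proves the equivalent $L^2(\omega_0)$-Dirichlet a priori bound $\|Nu_1\|_{L^2(\omega_0)}\lesssim\|f\|_{L^2(\omega_0)}$ for general data $f$, by writing $u_1=u_0+F$ with $F(X)=\int\nabla_YG_0(X,Y)\varepsilon(Y)\nabla u_1(Y)\,dY$, and then establishing a coupled system: Lemma \ref{lem2.9} controls $\widetilde{N}F$ and $\widetilde{N}(\delta|\nabla F|)$ in $L^2(\omega_0)$ by $\varepsilon_0\|S_M u_1\|_{L^2(\omega_0)}$, and Lemma \ref{lem2.10} controls $\|SF\|_{L^2(\omega_0)}$ by those same $\widetilde{N}$-quantities plus $\|f\|_{L^2(\omega_0)}$. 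It is the interplay of these two lemmas (together with $S(u_1)\le S(F)+S(u_0)$, Lemma \ref{1510}, and the smallness of $\varepsilon_0$) that closes the loop; the dyadic stopping-time decomposition and the non-tangential machinery on chord arc domains built up in Sections 3--6 are the tools needed to run Lemma \ref{lem2.9}. Your closing paragraph singles out off-diagonal localization and the $\omega_0$-weighted tent-space pairing as the ``main obstacle,'' and those are indeed nontrivial, but they are consequences of the architecture: the more basic missing ingredient is that a single Cauchy--Schwarz plus a standard energy bound cannot close, and an iterative $N$--$S$ absorption scheme is required in its place.
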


In the recent paper \cite{mt}, Theorem \ref{fkp1991} was generalized to chord arc domains with small constant in the case $L_0=\Delta$. More precisely,

\begin{thm}\label{mainpertu}\cite{mt}
Let $\Omega$ be a chord arc domain. Let $L_0=\Delta$ and $L_1$ be such that (\ref{normfkp}) holds. There exists $\delta(n)>0$ such that if $\Omega \subset \R^{n}$ is a $(\delta,R)-$CAD with $0<\delta\le
\delta(n)$
then $\omega_{1}\in A_\infty(d\sigma)$.
\end{thm}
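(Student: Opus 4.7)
The plan is to adapt the direct proof of Theorem \ref{fkp1991} from \cite{fkp} to the chord arc setting. The small-constant hypothesis is used at exactly one place: by Kenig--Toro, on a $(\delta,R)$-CAD with $\delta$ sufficiently small, harmonic measure $\omega_0$ for $L_0 = \Delta$ satisfies $\omega_0 \in A_\infty(d\sigma)$. Since $A_\infty$ is transitive among doubling measures, it therefore suffices to prove $\omega_1 \in A_\infty(\omega_0)$.

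First I would establish a CAD analog of Theorem \ref{fkp1991b}: there is a dimensional $\varepsilon_0>0$ such that whenever the FKP-type Carleson norm
\begin{equation*}
\sup_{\Delta\subset\partial\Omega}\Bigl(\frac{1}{\omega_0(\Delta)}\int_{T(\Delta)} a^2(X)\,\frac{G_0(X)}{\delta(X)^2}\,dX\Bigr)^{1/2}
\end{equation*}
is at most $\varepsilon_0$, one has $\omega_1 \in B_2(\omega_0)$. The FKP argument runs through (i) a good-$\lambda$ inequality comparing the non-tangential maximal function of $u_1-u_0$ with a square-function-type quantity built out of $a$ and $\nabla u_0$, and (ii) a Carleson duality estimate in which $a^2(X)G_0(X)/\delta(X)^2\,dX$ is tested against tent-space atoms. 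This is where the tent space and Hardy/BMO theory for CADs developed earlier in the paper enters: every Lipschitz cone, smooth cutoff and graph-based integration by parts from \cite{fkp} has to be replaced by its NTA corkscrew/Harnack-chain substitute, and the square function estimates have to be produced via CAD tent space atoms rather than via flattening to a half-space.

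Next, to reconcile this Carleson norm with the hypothesis (\ref{normfkp}), I would use the Dahlberg-type comparison $G_0(X)/\delta(X) \approx \omega_0(\Delta_X)/\sigma(\Delta_X)$, where $\Delta_X$ is a surface ball of radius $\delta(X)$ near $X$. This estimate holds in any NTA domain with Ahlfors regular boundary once $\omega_0\in A_\infty(d\sigma)$, and combined with the $A_\infty$ weight property it shows the two Carleson norms are comparable up to the $A_\infty$ constant of $\omega_0$. Finally, to remove the smallness of the Carleson norm and deduce $A_\infty$ rather than $B_2$, I would apply the standard sawtooth / dyadic subdivision argument: over a sufficiently fine Whitney--David--Semmes decomposition of a Carleson region the local Carleson norm becomes as small as desired, and a pigeonhole plus comparison-principle argument then propagates $A_\infty$ from those pieces to the full region.

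The main obstacle I expect is the CAD version of Theorem \ref{fkp1991b} itself. In the Lipschitz case nearly every step uses the graph structure---uniform cones with vertex on $\partial\Omega$, smooth approximations of the domain, and direct integration by parts against flat cutoffs. None of these survive in a CAD, and each must be replaced using the full NTA toolkit (corkscrews, Harnack chains, comparison principle, dyadic cubes on $\partial\Omega$) together with the CAD tent spaces, while verifying that the non-tangential maximal function bounds, square function bounds and Carleson duality carry through with constants depending only on the NTA and Ahlfors regularity data.
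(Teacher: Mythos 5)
Your high-level plan matches the architecture the paper ultimately uses to prove this (and its stronger successor, Theorem \ref{thm8.1}): first establish the CAD version of the FKP small-perturbation theorem (Theorem \ref{mainthm1}), recognize that the role of the small-constant hypothesis is only to guarantee $\omega_0 \in A_\infty(\sigma)$ (in fact this holds for any CAD by David--Jerison and Semmes, so the vanishing hypothesis is extraneous), and then pass from a bounded Carleson norm to the $A_\infty$ conclusion via a sawtooth construction. But the mechanism you give for the final upgrade is wrong in a way that matters. You assert that ``over a sufficiently fine Whitney--David--Semmes decomposition of a Carleson region the local Carleson norm becomes as small as desired.'' It does not: the quantity $\sup_{\Delta}\sigma(\Delta)^{-1}\int_{T(\Delta)}a^2/\delta$ is scale invariant, and refinement cannot shrink a supremum over all surface balls. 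There is no decomposition that turns a bounded Carleson norm into a small one.

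The actual route the paper takes to remove smallness is different. Fix a surface ball $\Delta$. By Chebyshev, the Carleson bound produces a subset $S\subset\Delta$ with $\sigma(S)\geq\frac12\sigma(\Delta)$ on which the \emph{truncated square function} $A_{\gamma,r}(a)$ is pointwise bounded (not small). One then builds the sawtooth $\Omega_S$ over $S$, replaces $A_1$ by a modified matrix $\widetilde A_1$ equal to $A_0$ away from the sawtooth so that the deviation of $\widetilde L_1$ has a globally $L^\infty$-bounded square function, and applies a Fefferman-type theorem (Theorem \ref{thm8.2}): a pointwise $L^\infty$ bound on $A(a)$, together with $\omega_0\in A_\infty(\sigma)$, forces $\omega_1\in A_\infty(\sigma)$. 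That Fefferman-type theorem is itself proved by pigeonholing the path $L_t=(1-t)L_0+tL_1$ into finitely many increments so small that each consecutive pair satisfies the hypothesis of the $B_2$ perturbation theorem, and then chaining the $B_2$ memberships. Finally, the DJK comparison between elliptic measure on $\Omega_S$ and on $\Omega$ transfers the $A_\infty$ information back to the full domain. Your plan omits both the pointwise-square-function pigeonhole and the $t$-iteration that actually achieve what you attributed to ``making the Carleson norm small,'' and without them the last step does not close. The remaining items (the CAD replacement of the FKP estimates, the $G_0/\delta\sim\omega_0(\Delta_X)/\delta(X)^{n-1}$ comparison, the tent-space machinery) are identified correctly.
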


The purpose of the present paper is to extend the result above to perturbation operators on ``rough domains".
In particular we will show the following result.

\begin{thm}\label{mainthm1}
Let $\Omega$ be a chord arc domain. There exists an $\varepsilon_0
>0$, depending also on the ellipticity constants and the dimension, such that if
\begin{equation}\label{condThm2.11}
\sup_{\Delta \subseteq \partial \Omega}\bigg\{\frac{1}{\omega_0(\Delta)}\int_{T(\Delta)}a^2(X)\frac{G_0(X)}{\delta^2(X)}dX\bigg\}^{1/2}\le
 \varepsilon_0
,
\end{equation}
then $\omega_1\in B_2(\omega_0)$.
\end{thm}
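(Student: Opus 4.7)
The plan is to adapt the direct argument for Theorem \ref{fkp1991b} given in \cite{fkp} to the CAD setting, using the CAD tent-space theory developed in the earlier sections of this paper. First I would fix a pole $X_0 \in \O$ with $\d(X_0) \approx \mathrm{diam}(\O)$ and reduce $\omega_1 \in B_2(\omega_0)$ to a square-function/non-tangential maximal function bound: for every surface ball $\D_0 \subset \po$ and every continuous boundary datum $f$ supported in $\D_0$, letting $L_i u_i = 0$ in $\O$ with $u_i|_{\po} = f$, one has
\begin{equation*}
\Bigl(\int_{\po} S^2(u_1 - u_0)\, d\omega_0\Bigr)^{1/2} \le C\eps_0 \Bigl(\int_{\po} N^2(u_0)\, d\omega_0\Bigr)^{1/2},
\end{equation*}
where $S$ and $N$ are the square function and non-tangential maximal function built on the NTA cones of $\O$. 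Once this is established, the standard $A_\infty$/good-$\l$ machinery, applied with respect to the doubling reference measure $\omega_0$, upgrades it to the reverse H\"older inequality $\omega_1 \in B_2(\omega_0)$.

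Second, I would derive that square-function estimate from the identity $L_0(u_1 - u_0) = \dv((A_0 - A_1)\grad u_1)$ and the Green's representation
\begin{equation*}
u_1(X) - u_0(X) = \int_{\O} \bigl(A_0(Y) - A_1(Y)\bigr) \grad u_1(Y) \cdot \grad_Y G_0(X, Y)\, dY.
\end{equation*}
A Cauchy--Schwarz splitting of the integrand produces the Carleson factor $a^2(Y) G_0(X_0, Y)/\d(Y)^2$, whose $\omega_0$-Carleson norm is at most $\eps_0^2$ by hypothesis \eqref{condThm2.11}, together with a solid-integral factor involving $\d(Y)|\grad u_1(Y)|$ that is absorbed into a square function of $u_1$ via Caccioppoli on NTA Whitney balls. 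The tent-space $T^2$--$T^2$ duality on $\po$ constructed earlier then transfers the $\omega_0$-Carleson control to an $L^2(\omega_0)$ bound on $S(u_1 - u_0)$, while the square-function/non-tangential maximal function comparison on NTA domains yields $\|S u_1\|_{L^2(\omega_0)} \lesssim \|N u_1\|_{L^2(\omega_0)} \lesssim \|N u_0\|_{L^2(\omega_0)}$, the last bound following by self-improvement once $\eps_0$ is sufficiently small.

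The main obstacle is running each of these steps intrinsically with respect to $\omega_0$ rather than with respect to surface measure $\s$, since on a general CAD we have no a priori $A_\infty$-relation between $\omega_0$ and $\s$ (that comparability would be a consequence, not a hypothesis). Thus one cannot pass through classical Lipschitz-domain theory and must instead use only: the doubling of $\omega_0$ on NTA surface balls, the NTA Harnack chain linking boundary balls to interior Whitney regions, the comparison principle for $L_0$-solutions in $\O$, and the $\omega_0$-weighted tent space/BMO/Carleson duality set up earlier in this paper. Verifying the sharp form of the tent-space pairing with weight $\omega_0$ on the rough boundary $\po$, and the associated stopping-time decompositions respecting the NTA Whitney structure, is where the real work lies; once these ingredients are in place, the FKP iteration runs essentially as in \cite{fkp} and produces an $\eps_0 = \eps_0(n, \l, \L) > 0$ for which the conclusion holds.
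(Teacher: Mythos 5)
Your high-level strategy---the Green's representation $F=u_1-u_0 = \int_\O \nabla_Y G_0(\cdot,Y)\,\varepsilon(Y)\nabla u_1(Y)\,dY$, a Cauchy--Schwarz split producing the Carleson factor $a^2\,G_0/\d^2$, square-function/non-tangential maximal function comparison on NTA domains (Lemma~\ref{1510}), and a final self-improvement when $\eps_0$ is small---matches the paper's. But the mechanism you propose for converting the Carleson control into the key $L^2(\omega_0)$ bound on the non-tangential maximal function of $F$ has a genuine gap. You invoke ``the $\omega_0$-weighted tent space/BMO/Carleson duality set up earlier in this paper,'' but no such $\omega_0$-weighted tent-space theory is developed here: the spaces $T^p$, the functionals $A$ and $C$, and the $T^1/T^\infty$ and $T^p/T^q$ duality of Sections~3--6 are all built over surface measure $\sigma$, not $\omega_0$. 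Since the whole point of the theorem is that no $A_\infty(\s)$-relation between $\omega_0$ and $\s$ is assumed, the $\sigma$-tent-space machinery cannot be applied to $\omega_0$-weighted integrals without re-deriving it, and the paper does not do that.

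What the paper actually does for the analogue of FKP's Lemma~2.9 (which is where the Carleson hypothesis \eqref{condThm2.11} is spent) is a direct pointwise stopping-time argument adapted to $\omega_0$: it constructs Christ dyadic cubes $Q_\a^k\subset\po$ with respect to the doubling measure $\omega_0$, their interior Whitney-type projections $I_\a^k$, splits $F$ into a local piece $F_1$ and a far piece $F_2$, and estimates $F_2$ by summing over $I_\a^k$ using a level-set decomposition of a truncated square function $T_\varepsilon u_1$ together with weak-type bounds for $M_{\omega_0}$. The end result is a pointwise bound $\widetilde{N}F(Q_0)\lesssim \eps_0\, M_{\omega_0}(S_M u_1)(Q_0)$, which then closes the loop via the second lemma and integration. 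That dyadic/stopping-time machinery is what replaces the tent-space duality in your sketch, and it is where most of the work lies; your proposal elides it. You also omit the preliminary reduction, via the modified matrix $A'$ and the comparison of Green's functions, to the case $a(X)\equiv 0$ when $\d(X)\gtrsim \d(0)$, which is needed so that the far-field contribution and the pole of $G_0$ can be controlled.
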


The various constants that will appear in the sequel may vary from formula to formula, although for simplicity we use the same letter(s). If we do not give
any explicit dependence for a constant, we mean that it depends
only on the usual parameters such as ellipticity constants, NTA constants, the Ahlfors regularity constant, the dimension  and the NTA character of the domain. Moreover throughout the paper we shall use the notation $a\lesssim b$ to mean that there is a constant $c>0$ such that $a\le
 cb$. Similarly $a\simeq b$ means that $a\lesssim b$ and $b\lesssim a$.

Next we recall the main theorems we will use about the boundary behavior of
$L-$elliptic functions in non-tangentially accessible (NTA)
domains for uniformly elliptic divergence form operators $L$ with bounded measurable coefficients. We refer the reader to \cite{k1} for the definitions and more details regarding elliptic operators of divergence form
defined in NTA domains.

\begin{lemma}\label{lem2.3}
Let $\Omega$ be an NTA domain, $Q\in\partial\Omega$, $0<2r<R$, and
$X\in\Omega\backslash B(Q,2r)$. Then
$$
C^{-1}<\frac{\omega^{X}(B(Q,r))}{r^{n-2}G(A(Q,r),X)}<C,
$$
where $G(A(Q,r),X)$ is the $L-$Green function of $\Omega$ with pole $X$, and $\omega^X$ is the corresponding elliptic measure.
\end{lemma}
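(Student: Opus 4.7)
The plan is to establish the two-sided comparison by the classical maximum-principle argument of Caffarelli-Fabes-Mortola-Salsa and Jerison-Kenig. Fix $X \in \Omega \setminus B(Q, 2r)$, set $A := A(Q, r)$, and consider the two $L$-harmonic functions
$$
U(Y) = \omega^Y(B(Q,r)), \qquad W(Y) = G(Y, A),
$$
in the region $D := \Omega \setminus \overline{B(Q, 2r)}$. Because $A \in B(Q, r) \subset B(Q, 2r)$, both $U$ and $W$ are $L$-harmonic in $D$, and because the coefficient matrix is symmetric, $W(X) = G(A, X)$. Both $U$ and $W$ vanish on $\partial\Omega \cap \partial D = \partial\Omega \setminus \Delta(Q, 2r)$: $U$ because $\Delta(Q, r) \subset \Delta(Q, 2r)$ forces the boundary data of $U$ to vanish there, and $W$ because Green's functions vanish on $\partial\Omega$.

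The standard tools I would assemble, all valid for NTA domains and bounded measurable elliptic coefficients, are: (i) the De Giorgi--Nash--Moser pointwise upper bound $G(Y,Z) \leq C|Y-Z|^{2-n}$; (ii) Bourgain's estimate $\omega^Y(\Delta(Q, \rho)) \geq c_0 > 0$ for $Y \in \Omega \cap B(Q, \rho/2)$, which rests on the interior and exterior corkscrew conditions; (iii) the interior Harnack inequality along NTA Harnack chains; and (iv) boundary H\"older continuity at $\partial\Omega$ for nonnegative $L$-solutions vanishing on a surface piece, a consequence of the exterior corkscrew property.

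The heart of the proof is to establish the two-sided comparison $c\, r^{n-2} W(Y) \leq U(Y) \leq C\, r^{n-2} W(Y)$ on the sphere $\Omega \cap \partial B(Q, 2r)$. The maximum principle applied to the $L$-harmonic functions $U - C r^{n-2} W$ and $c r^{n-2} W - U$ in $D$ then propagates both inequalities throughout $D$ and in particular to $X$, which is exactly the assertion of the lemma. At the non-tangential corkscrew $A(Q, 2r)$ the comparison is easy: by (i) and (iii) one has $W(A(Q, 2r)) \simeq r^{2-n}$, and by (ii) at scale $r$ together with (iii) one has $U(A(Q, 2r)) \simeq 1$, so the ratio $U/W$ has the correct size $\simeq r^{n-2}$ there.

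The main technical obstacle is extending this comparison on $\Omega \cap \partial B(Q, 2r)$ from the corkscrew $A(Q,2r)$ to an arbitrary point $Y$ on that sphere, particularly when $Y$ approaches $\partial\Omega \setminus \Delta(Q, 2r)$: here interior Harnack alone is insufficient because the number of Harnack hops to a fixed interior point degenerates. This is handled by applying (iv) to both $U$ and $W$, which are nonnegative $L$-solutions vanishing on the same portion $\partial\Omega \setminus \Delta(Q, 2r)$; combined with the matched normalizations at $A(Q, 2r)$ established above, the H\"older estimate forces $U$ and $W$ to decay to zero at comparable rates as $Y \to \partial\Omega$, yielding the pointwise comparison on $\partial D$ and hence, via the maximum principle, on all of $D$.
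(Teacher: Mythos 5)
The paper does not prove this lemma: it is stated among the standard NTA background facts with a pointer to \cite{k1} (Kenig's CBMS notes; the original is Caffarelli--Fabes--Mortola--Salsa and Jerison--Kenig). So what is at issue is whether your reconstruction of the known argument is sound. It is not, and the gap is in the very step you flag as the ``heart'' of the proof.

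Your plan is to prove the \emph{two-sided} comparison $c\,r^{n-2}W(Y)\le U(Y)\le C\,r^{n-2}W(Y)$ on all of $\Omega\cap\partial B(Q,2r)$ and then propagate both inequalities through $D$ by the maximum principle. The comparison at the corkscrew $A(Q,2r)$ is fine, and your tools (i)--(iii) are the right ones for that. But the claim that boundary H\"older continuity (tool (iv)), together with matched normalizations at $A(Q,2r)$, ``forces $U$ and $W$ to decay to zero at comparable rates'' is exactly the boundary Harnack principle --- i.e.\ Lemma~\ref{lem2.5} of this paper --- and it does not follow from H\"older decay. Boundary H\"older gives only a one-sided bound $u(Y)\le C(\delta(Y)/r)^{\alpha}\|u\|_{\infty}$; the matching lower bound for a nonnegative solution vanishing on a boundary piece has, in general, a \emph{different} exponent (it comes from Harnack chains and is worse), so two such solutions normalized at a corkscrew need not be comparable as $\delta(Y)\to 0$. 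Making them comparable is the content of the boundary Harnack principle, which in the NTA theory is \emph{deduced from} the present lemma, not available before it. As stated, your argument is therefore circular.

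The standard proof avoids asking for any two-sided comparison on a separating sphere. Each inequality is obtained separately from a \emph{one-sided} maximum-principle comparison in a region chosen so that both sides are controlled on its boundary without circularity. For instance, to get $\omega^{X}(\Delta(Q,r))\gtrsim r^{n-2}G(X,A(Q,r))$, one compares $r^{n-2}G(\cdot,A(Q,r))$ with $\omega^{\cdot}(\Delta(Q,2r))$ in $\Omega\setminus\overline{B(A(Q,r),cr)}$: on the small sphere $\partial B(A(Q,r),cr)$ the pointwise Green's function bound gives $r^{n-2}G\le C$ while Bourgain's estimate gives $\omega^{\cdot}(\Delta(Q,2r))\ge c_0$, and on $\partial\Omega$ the Green's function vanishes while $\omega^{\cdot}(\Delta(Q,2r))\ge 0$; the maximum principle then gives the inequality at $X$, and doubling (Lemma~\ref{lem2.4}) replaces $\Delta(Q,2r)$ by $\Delta(Q,r)$. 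The reverse inequality is obtained by a complementary comparison using the Carleson estimate (value of a nonnegative solution vanishing on a surface ball is controlled by its value at the corkscrew), which is itself a consequence of (iv) and Harnack and does not presuppose boundary Harnack. You should restructure the proof along these lines rather than trying to establish a global two-sided bound on $\Omega\cap\partial B(Q,2r)$.
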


\begin{lemma}\label{lem2.4}
Let $\Omega$ be an NTA domain with constants $M>1$ and $R>0$,
$Q\in\partial\Omega$, $0<2r<R$, and $X\in\Omega\backslash
B(Q,2Mr)$. Then for $s\in [0,r]$
$$
\omega^{X}(B(Q,2s))\le C \omega^{X}(B(Q,s)),
$$
where $C$ only depends on the NTA constants of $\Omega$.
\end{lemma}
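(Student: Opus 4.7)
The plan is to reduce doubling of $\omega^X$ to doubling of the Green's function restricted to corkscrew points, and then to apply the Harnack chain condition that is built into the NTA definition.

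First, I would fix corkscrew points $A_s := A(Q,s)$ and $A_{2s} := A(Q,2s)$ associated to the surface balls $B(Q,s)\cap\partial\Omega$ and $B(Q,2s)\cap\partial\Omega$; by the NTA interior corkscrew condition, these lie in $\Omega$ with $\delta(A_s)\gtrsim s$, $\delta(A_{2s})\gtrsim s$, and $|A_s-Q|\simeq s$, $|A_{2s}-Q|\simeq s$. Since $X\notin B(Q,2Mr)$ and $s\le r$, applying Lemma \ref{lem2.3} with radii $s$ and $2s$ (both legitimate because $2s\le 2r<R$ and $X$ lies outside $B(Q,2s)$ and $B(Q,4s)$, as $2Mr\ge 4s$ when $M\ge 2$; the borderline $1<M<2$ case is absorbed into the constants after one extra Harnack step) gives
\begin{equation*}
\omega^X(B(Q,s))\simeq s^{n-2}G(A_s,X),\qquad \omega^X(B(Q,2s))\simeq (2s)^{n-2}G(A_{2s},X).
\end{equation*}

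Next, I would compare $G(A_s,X)$ and $G(A_{2s},X)$. Both corkscrew points have $|A_s-A_{2s}|\lesssim s$, they lie at distance $\gtrsim s$ from $\partial\Omega$, and they are far from the pole $X$: indeed $|A_s-X|\ge |Q-X|-|Q-A_s|\ge 2Mr-Cs\gtrsim r$, and similarly for $A_{2s}$. The NTA Harnack chain condition then produces a chain of balls of uniformly bounded cardinality (depending only on the NTA constants) joining $A_s$ to $A_{2s}$, each contained in $\Omega$ with a fixed proportion of its radius separating it from $\partial\Omega$, and each lying well inside $\Omega\setminus\{X\}$. Since $Y\mapsto G(Y,X)$ is a nonnegative solution of $LU=0$ on $\Omega\setminus\{X\}$, iterated interior Harnack along this chain gives
\begin{equation*}
G(A_s,X)\simeq G(A_{2s},X)
\end{equation*}
with constants depending only on the NTA character.

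Combining the two displays yields $\omega^X(B(Q,2s))\simeq 2^{n-2}\omega^X(B(Q,s))$, which is the desired doubling inequality. The only real subtlety is bookkeeping the size of $M$ so that both applications of Lemma \ref{lem2.3} are legal and the Harnack chain has all of its balls simultaneously away from $\partial\Omega$ and away from $X$; this is the step that consumes the hypothesis $X\in\Omega\setminus B(Q,2Mr)$ rather than the weaker $X\notin B(Q,2r)$ one might naively try.
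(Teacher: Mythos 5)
The paper does not prove Lemma~\ref{lem2.4}: it is recalled without proof, together with Lemmas~\ref{lem2.3}, \ref{lem2.5}, and \ref{1510}, as known background from the Jerison--Kenig theory of NTA domains, with the reader pointed to \cite{JK} and \cite{k1}. Your proof is the standard argument from that theory and is correct. You convert each surface-ball measure into a Green's-function value at the corresponding corkscrew point via Lemma~\ref{lem2.3}, and then compare $G(A_s,X)$ with $G(A_{2s},X)$ by iterated interior Harnack along an NTA Harnack chain of bounded cardinality; the two quantities $s^{n-2}$ and $(2s)^{n-2}$ differ only by the fixed factor $2^{n-2}$, so doubling follows. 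The bookkeeping points you flag are genuine but routine. The chain does indeed stay within $O(s)$ of $Q$: each ball of the chain has diameter comparable to its distance from $\partial\Omega$, and since the endpoints have $\delta\gtrsim s$ and are within $O(s)$ of each other while the chain has boundedly many links, the boundary distance (and hence the ball diameter) can grow only by a bounded factor along the chain, so the total travelled distance is $O(s)$; since $|X-Q|\ge 2Mr\ge 2Ms$, the chain avoids $X$ once one accounts for the fixed dilation, and the residual case where $M$ is barely larger than $1$ costs only a bounded number of extra Harnack steps, exactly as you say. Finally, applying Lemma~\ref{lem2.3} at radius $2s$ formally requires $4s<R$; this is handled by the usual device of first proving the statement with $R$ replaced by $R/2$ and absorbing one additional doubling step.
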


\begin{lemma}\label{lem2.5}
Let $\Omega$ be an NTA domain, and $0<Mr<R$. Suppose that $u,v$ vanish continuously on
$\partial\Omega\cap B(Q,Mr)$ for some $Q\in\partial\Omega$, $u,v\ge
 0$ and
$Lu=Lv=0$ in $\Omega$. Then there exists a
constant $C>1$ (only depending on the NTA constants) such that for
all $X\in B(Q,r)\cap\Omega$,
$$
C^{-1}\frac{u(A(Q,r))}{v(A(Q,r))}\le
 \frac{u(X)}{v(X)}\le
C\frac{u(A(Q,r))}{v(A(Q,r))}.
$$
\end{lemma}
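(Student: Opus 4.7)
The plan is to reduce the two-sided comparison of $u/v$ to a single-function statement: for every nonnegative $L$-solution $w$ vanishing on $\Delta(Q,Mr)$ and every $X\in B(Q,r)\cap\Omega$, we have the equivalence
$$
w(X)\;\simeq\;w(A(Q,r))\cdot\omega^X(\Delta(Q,r)) \tag{$\ast$}
$$
with comparison constants depending only on the NTA constants of $\Omega$ and the ellipticity constants of $L$. Once $(\ast)$ is established for both $u$ and $v$, the elliptic-measure factor $\omega^X(\Delta(Q,r))$ appears in both the numerator and denominator of $u(X)/v(X)$ and cancels, immediately yielding the stated inequality.

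The upper bound in $(\ast)$ rests on a \emph{Carleson-type estimate}: if $w\ge 0$, $Lw=0$, and $w$ vanishes continuously on $\Delta(Q,2r)$, then $\sup_{B(Q,r)\cap\Omega} w\le C\, w(A(Q,r))$. I would prove this by decomposing $B(Q,r)\cap\Omega$ into interior Whitney cubes and boundary-adjacent pieces, and linking any interior point to $A(Q,r)$ via a Harnack chain whose length is controlled by the NTA constants. Boundary-adjacent points are handled by iteratively applying the maximum principle in NTA subdomains of the form $\Omega\cap B(Q',\rho)$ with $Q'\in \partial\Omega$ and $\rho$ small, using that $w$ vanishes on $\partial\Omega$ and the fact (from Lemma~\ref{lem2.3} and standard NTA theory) that the $L$-harmonic measure of the ``interior'' face $\partial B(Q',\rho)\cap\Omega$ inside such a subdomain is bounded away from 1 uniformly. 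Iterating this geometric decay a controlled number of times, together with Harnack chains back to $A(Q,r)$, yields the sup estimate. Then, applied in the subdomain $\Omega_0 = \Omega\cap B(Q,2r)$, the maximum principle gives $w(X)\le C\,w(A(Q,r))\,\omega^X_{\Omega_0}(\partial B(Q,2r)\cap\Omega)$, and comparing this subdomain measure with $\omega^X$ via Lemma~\ref{lem2.3} and Lemma~\ref{lem2.4} produces the upper half of $(\ast)$.

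For the lower bound in $(\ast)$, Harnack chains inside $\Omega$ give $w(Y)\ge c\, w(A(Q,r))$ on a fixed non-tangential region $\mathcal N$ which contains a corkscrew point at every scale down to $\sim r$; in particular, the boundary data of the function $Y\mapsto \omega^Y_{\Omega_0}(\Delta(Q,r))$ is dominated by $c^{-1} w(Y)/w(A(Q,r))$ on $\partial\Omega_0\cap\Omega$ (while on $\Delta(Q,r)$ both functions sit above 0). Invoking the maximum principle in $\Omega_0$ then gives $w(X)\ge c\, w(A(Q,r))\,\omega^X(\Delta(Q,r))$ after again using Lemmas~\ref{lem2.3} and~\ref{lem2.4} to pass from the subdomain measure to $\omega^X$ itself. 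The main obstacle is the Carleson estimate: the usual Lipschitz proof exploits a simple graph-like boundary description, but in the NTA setting one must carefully combine the corkscrew condition, the Harnack chain condition, and the maximum principle, iterated over dyadic scales, to handle points that lie arbitrarily close to $\partial\Omega$. Once $(\ast)$ is in place, the boundary Harnack inequality is immediate.
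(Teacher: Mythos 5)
This lemma is one of the background facts the paper imports verbatim from the NTA literature (Jerison--Kenig; Kenig's CBMS notes) and does not prove, so your proposal has to be judged against the standard argument there. The central claim $(\ast)$ of your reduction is false, and precisely in the half you need the maximum principle for. Let $X$ tend to a point $P$ in the relative interior of $\Delta(Q,r)$: then $w(X)\to 0$ because $w$ vanishes continuously there, while $\omega^X(\Delta(Q,r))\to 1$ because $\omega^X\to\delta_P$ weakly and $\Delta(Q,r)$ is relatively open in $\partial\Omega$. Hence the lower bound $w(X)\ge c\,w(A(Q,r))\,\omega^X(\Delta(Q,r))$ cannot hold on all of $B(Q,r)\cap\Omega$. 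The same defect invalidates your maximum-principle step: on the portion $\Delta(Q,r)$ of $\partial\Omega_0$ the function $Y\mapsto\omega^Y_{\Omega_0}(\Delta(Q,r))$ has boundary value $1$ while your majorant $c^{-1}w/w(A(Q,r))$ has boundary value $0$; the parenthetical ``both functions sit above $0$'' is not a hypothesis under which the maximum principle gives domination. The correct normalizing factor must vanish at $\partial\Omega$ at the same rate as $w$ --- e.g.\ $r^{n-2}G(A(Q,r),X)$, or $(r/\delta(X))^{n-2}\omega^{A(Q,r)}(\Delta(Q_X,\delta(X)))$ (fixed pole, shrinking surface ball), or the harmonic measure $\omega^X_{\Omega_0}(S)$ of the spherical cap $S=\partial B(Q,2r)\cap\Omega$ in the truncated domain --- not $\omega^X(\Delta(Q,r))$.

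Your upper-bound half is fine in outline: the Carleson estimate via iterated geometric decay near the boundary plus Harnack chains, followed by the maximum principle in $\Omega_0$, gives $w(X)\le C\,w(A(Q,r))\,\omega^X_{\Omega_0}(S)$. But once the common factor is corrected to $\omega^X_{\Omega_0}(S)$, the entire difficulty of the comparison theorem sits in the matching lower bound $v(X)\ge c\,v(A(Q,r))\,\omega^X_{\Omega_0}(S)$, and that ingredient is absent from your proposal. Harnack chains only give $v\ge c\,v(A(Q,r))$ on the part of $S$ lying at distance $\gtrsim r$ from $\partial\Omega$; to conclude you need the nontrivial estimate $\omega^X_{\Omega_0}(S)\le C\,\omega^X_{\Omega_0}(S\cap\{\delta\ge\varepsilon r\})$ for $X\in B(Q,r)\cap\Omega$ (equivalently, a Green's function or kernel estimate near the edge $\partial B(Q,2r)\cap\partial\Omega$), which is the heart of Jerison--Kenig's proof and does not follow from Lemma~\ref{lem2.3}, Lemma~\ref{lem2.4}, and Harnack alone. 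As written, the argument reduces the lemma to a false statement and omits the hard half of the correct one.
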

\begin{lemma}\label{1510}
Let $\mu\in A_\infty(d\omega)$, $0\in\Omega$, and set 
 $$S_\alpha(u)(Q)=\bigg(\int_{\Gamma_\alpha(Q)}|\nabla u(X)|^2 {\delta(X)}^{2-n}dX\bigg)^{1/2}\ \ {\rm{and}}\ \ N_\alpha(u)(Q)=\sup\{|u(X)|: X\in \Gamma_\alpha(Q)\} $$
where for $Q\in\partial\Omega$
\begin{equation}\label{cone-a}
\Gamma_\alpha(Q)=\{X \in \Omega: |X-Q|< (1+\alpha)\delta(X)\}
\end{equation} 
Then if $Lu=0$ and $0<p<\infty$,
$$\bigg(\int_{\partial\Omega}(S_\alpha(u))^pd\mu\bigg)^{1/p}\le
 C_{\alpha,p}\bigg(\int_{\partial\Omega}(N_\alpha(u))^pd\mu\bigg)^{1/p}.$$
If in addition $u(0)=0$ then
$$\bigg(\int_{\partial\Omega}(N_\alpha(u))^pd\mu\bigg)^{1/p}\le
 C_{\alpha,p}\bigg(\int_{\partial\Omega}(S_\alpha(u))^pd\mu\bigg)^{1/p}.$$
 \end{lemma}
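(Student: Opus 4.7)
The plan is to reduce the two $L^p(d\mu)$ inequalities to good-$\lambda$ comparisons between $N_\alpha u$ and $S_\alpha u$ with respect to the elliptic measure $\omega = \omega^0_L$, and then to transfer these to $\mu$ using the hypothesis $\mu \in A_\infty(d\omega)$. The scheme follows Dahlberg's good-$\lambda$ argument in its NTA-adapted form: the only geometric inputs are the three preceding lemmas (CFMS-type comparison, doubling, and boundary comparison principle), all of which are available here.

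For the direction $S_\alpha u \lesssim N_\alpha u$, I would establish that there exists $\eta(\gamma)\to 0$ as $\gamma\to 0$ such that, for every $\lambda>0$,
\[
\omega\set{Q\in\partial\Omega\colon S_\alpha u(Q) > 2\lambda,\ N_\alpha u(Q)\le \gamma\lambda} \le \eta(\gamma)\,\omega\set{S_\alpha u > \lambda}.
\]
Fixing $\lambda$, decompose the level set $\set{S_\alpha u \le \lambda}$ by Whitney-type surface balls $\Delta$ on $\partial\Omega$, and over each such $\Delta$ build the sawtooth region $\mathcal R_\Delta$ above $F = \set{N_\alpha u \le \gamma\lambda}\cap \Delta$. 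On $\mathcal R_\Delta$, apply Green's theorem to $u^2$ against the $L$-Green's function $G(0,\cdot)$; since $|u|\le \gamma\lambda$ on the lateral boundary of $\mathcal R_\Delta$ by definition of $F$, this controls $\int_{\mathcal R_\Delta}|\nabla u|^2 G(0,X)\,dX$ by $C\gamma^2\lambda^2\,\omega(\Delta)$, where the conversion of the Green-function boundary trace to $\omega$ uses Lemma \ref{lem2.3}. Rewriting the cone integral defining $S_\alpha u$ via Fubini and the same CFMS-type estimate then gives $\int_F (S_\alpha u)^2 d\omega \lesssim \gamma^2\lambda^2\,\omega(\Delta)$, and Chebyshev produces the claim with $\eta(\gamma)=C\gamma^2$.

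For the reverse direction, under $u(0)=0$ the symmetric good-$\lambda$ inequality
\[
\omega\set{N_\alpha u > 2\lambda,\ S_\alpha u \le \gamma\lambda}\le \eta(\gamma)\,\omega\set{N_\alpha u > \lambda}
\]
is obtained by a dual sawtooth argument: boundary H\"older continuity of $L$-harmonic functions in NTA domains together with the comparison principle (Lemma \ref{lem2.5}) bounds $N_\alpha u$ at a boundary point by a tent-averaged square function, where the hypothesis $u(0)=0$ anchors the representation formula and prevents the appearance of an additive constant that would otherwise obstruct the bound.

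With these good-$\lambda$ inequalities for $\omega$ in hand, the hypothesis $\mu\in A_\infty(d\omega)$ provides constants $C,\theta>0$ such that $\mu(E)/\mu(\Delta)\le C(\omega(E)/\omega(\Delta))^\theta$ for all $E\subset\Delta$; this upgrades each good-$\lambda$ statement to the same form for $\mu$ with $C\eta(\gamma)^\theta$ replacing $\eta(\gamma)$. The standard distribution-function argument, writing $\int (S_\alpha u)^p d\mu = p\int_0^\infty \lambda^{p-1}\mu\set{S_\alpha u>\lambda}\,d\lambda$, splitting each level set via the good-$\lambda$ inequality, and choosing $\gamma$ so small that $2^p C\eta(\gamma)^\theta\le 1/2$, then closes the $L^p(d\mu)$ comparisons. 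The main obstacle is the rigorous execution of the Green's-theorem step on sawtooth regions whose boundaries are only Ahlfors regular; this is handled by exhausting the sawtooth by smoother NTA subdomains and using the doubling of $\omega$ (Lemma \ref{lem2.4}) to uniformly control the resulting error terms.
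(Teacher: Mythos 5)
The paper does not itself prove this lemma; it is stated in the preliminaries as a known boundary-behavior result for $L$-solutions on NTA domains, with a reference to \cite{k1} (the Lipschitz-domain original being \cite{DJK}). Your scheme --- local good-$\lambda$ inequalities between $S_\alpha u$ and $N_\alpha u$ with respect to $\omega$, transferred to $\mu$ via the $A_\infty(d\omega)$ hypothesis, followed by the distribution-function integration --- is indeed the classical Dahlberg--Jerison--Kenig route, and the $A_\infty$ transfer step (applied ball-by-ball over the Whitney decomposition) is stated correctly; it is precisely why the hypothesis on $\mu$ is $A_\infty(d\omega)$.

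Two concrete issues remain. In the $S_\alpha u\lesssim N_\alpha u$ direction the surface Whitney decomposition has to be taken on the open set $\{S_{\alpha}u>\lambda\}$ (or, more precisely, a widened-aperture version $\{S_{\alpha'}u>\lambda\}$, exactly as in Lemma~\ref{fixaper}), not on the closed set $\{S_\alpha u\le\lambda\}$ as written; it is the good points adjacent to each Whitney ball that control the part of $\Gamma_\alpha(Q)$ above the Whitney scale, and without this cone-splitting the sawtooth estimate bounds only the local part of $S_\alpha u(Q)$ and Chebyshev does not close. Also, the integration by parts against $G_0$ produces not only boundary terms in $u^2$ on the bottom of $\partial\mathcal{R}_\Delta$ (where $|u|\le\gamma\lambda$, which in fact holds on all of $\mathcal{R}_\Delta$, not just the lateral boundary) but also a top-cap term involving $\nabla u$, which requires Cacciopoli; that bookkeeping should be exhibited. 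The more serious gap is the direction $N_\alpha u\lesssim S_\alpha u$: as written it is not an argument. Boundary H\"older continuity pertains to solutions that vanish on a boundary arc, which $u$ need not do, and Lemma~\ref{lem2.5} compares ratios of two nonnegative solutions vanishing on a common arc; neither bounds $N_\alpha u$ by a ``tent-averaged square function.'' The actual mechanism again splits $\Gamma_\alpha(Q)$ at the Whitney scale, controls the oscillation of $u$ along Harnack chains inside the local cone by dyadic pieces of the square function (Cacciopoli plus Poincar\'e), and controls the value at the top of the local cone by the neighboring good point where the widened-aperture maximal function is $\le\lambda$; the hypothesis $u(0)=0$ enters when $\{N_\alpha u>\lambda\}$ exhausts $\partial\Omega$, via interior estimates near the pole, so that the telescoping starts from a known value. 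This oscillation estimate is the heart of the reverse inequality and needs to be supplied rather than replaced by an appeal to the comparison principle.
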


\section{Nontangential behavior in CADs.}\label{nontan}

In this section we study the space of functions defined on chord arc domains whose non-tangential maximal function is well behaved
Our goal is to extend the theory of tent spaces developed by Coifman, Meyer and Stein \cite{cms} in the upper half space  to chord arc domains. It will play a crucial role in the proof of Theorem \ref{mainthm1}.

We first study the notion of global $\gamma$-density with respect to a set $\mathcal F\subset \partial\Omega$ for a doubling measure $\mu$ supported on $\partial\Omega$. In this paper $\mu$ will either be the elliptic measure of a divergence form operator defined on $\Omega$ or the surface measure to $\partial\Omega$. Please note that in contrast with the classical case we do not restrict the definition to the case 
$\mathcal F$ closed.

\begin{defn}\label{g-density} Let $\mathcal F\subset \partial\Omega$, and let $\gamma\in (0,1)$. A point $Q\in \partial\Omega$ has global 
$\gamma$-density with respect to $\mathcal F$ for a doubling measure $\mu$ if for $\rho\in (0, \rm{diam}\, \Omega)$
\begin{equation}\label{eqn-g-density}
\frac{\mu(B(Q,\rho)\cup \mathcal F)}{\mu(B(Q,\rho)}\ge\gamma.
\end{equation}
Let $\mathcal F^\ast_\gamma$ the set of points of global $\gamma$-density of $\mathcal F$.
\end{defn}

\begin{lemma}\label{3.10tt}
let $\Omega$ be a CAD with surface measure $\sigma$ and let
$\Delta(P,s) =  B(P,s) \cap \partial\Omega$ be the
surface ball centered at $P$.
Given $\alpha>0$ there exists $\gamma\in (0,1)$ close to 1 and $\lambda_0>0$ such that if $\mathcal F\subset\partial\Omega$ and $P\in \mathcal F_\gamma^\ast$ then for $P\in \Delta(P, (2+\alpha)r)$
\begin{equation}\label{3.200tt}
\sigma(\Delta(Q,\beta r)\cup\mathcal F)\ge \lambda_0 (\beta r)^{n-1},
\end{equation}
where $\beta=\min\{1,\alpha\}$.
\end{lemma}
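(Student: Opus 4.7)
The plan is to carry out a direct ``subtraction'' argument: since $P$ is a point of $\gamma$-density for $\mathcal{F}$ with $\gamma$ close to $1$, large surface balls about $P$ are almost entirely filled with $\mathcal{F}$; hence any smaller surface ball contained in such a large ball must still capture a fixed fraction of mass in $\mathcal{F}$, provided its own surface-measure is not too much smaller than the error term. The Ahlfors regularity hypothesis converts these mass comparisons into comparisons of radii to the power $n-1$.

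First I would fix $Q \in \Delta(P,(2+\alpha)r)$ and set $R = (2+\alpha+\beta)r$, so that by the triangle inequality $\Delta(Q,\beta r)\subset \Delta(P,R)$. The $\gamma$-density assumption at $P$ (applied at radius $R$) then gives
\[
\sigma\bigl(\Delta(P,R)\setminus \mathcal{F}\bigr)\;\le\;(1-\gamma)\,\sigma(\Delta(P,R))\;\le\;(1-\gamma)\,C\,R^{n-1},
\]
where $C$ is the Ahlfors regularity constant from (1.7A). On the other hand, the lower Ahlfors bound yields
\[
\sigma\bigl(\Delta(Q,\beta r)\bigr)\;\ge\;C^{-1}(\beta r)^{n-1}.
\]
(I would note that if $R>\mathrm{diam}\,\Omega$ the result is essentially trivial from the density condition alone, so we may as well assume $R<\mathrm{diam}\,\Omega$ so that the definition of $\mathcal{F}^{\ast}_{\gamma}$ applies.)

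Next, since $\Delta(Q,\beta r)\setminus\mathcal{F}\subset \Delta(P,R)\setminus\mathcal{F}$, we get the key inequality
\[
\sigma\bigl(\Delta(Q,\beta r)\cap\mathcal{F}\bigr)\;\ge\;\sigma\bigl(\Delta(Q,\beta r)\bigr)-\sigma\bigl(\Delta(P,R)\setminus\mathcal{F}\bigr)\;\ge\;\Bigl[C^{-1}-(1-\gamma)\,C\Bigl(\tfrac{R}{\beta r}\Bigr)^{n-1}\Bigr](\beta r)^{n-1}.
\]
Since $R/(\beta r) = (2+\alpha+\beta)/\beta$ depends only on $\alpha$, the bracketed quantity is made strictly positive — say equal to $\lambda_0>0$ — by choosing $\gamma$ sufficiently close to $1$, namely $1-\gamma < (2C^2)^{-1}\bigl(\beta/(2+\alpha+\beta)\bigr)^{n-1}$. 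This gives the desired bound uniformly in $Q\in\Delta(P,(2+\alpha)r)$.

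The ``main obstacle'' is essentially nothing deep: the only thing to watch is to quantify the dependence of $\gamma$ on $\alpha$ correctly, and to handle the edge case $R\ge\mathrm{diam}\,\Omega$ (which can be dismissed by either shrinking the range of admissible $r$ or by observing that in that regime the density condition already forces $\sigma(\mathcal{F})$ to be comparable to $\sigma(\partial\Omega)$, so (3.200tt) follows from Ahlfors regularity directly on $\Delta(Q,\beta r)$). Note also that the apparent appearances of $\cup$ in Definition 3.1 and (3.200tt) must be read as $\cap$ for the statement to make sense; the argument above is for that corrected form.
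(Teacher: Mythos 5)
Your proposal is correct and takes essentially the same approach as the paper: the paper argues by contradiction (assume $\sigma(\Delta(Q,\beta r)\cap\mathcal F)<\lambda_0(\beta r)^{n-1}$, bound $\sigma(\Delta(P,(3+2\alpha)r)\cap\mathcal F)$ from above, and contradict the density condition), while you phrase the same computation directly as $\sigma(\Delta(Q,\beta r)\cap\mathcal F)\ge\sigma(\Delta(Q,\beta r))-\sigma(\Delta(P,R)\setminus\mathcal F)$ and then invoke Ahlfors regularity and the density condition to make the right-hand side positive. You also correctly identify that the $\cup$ in Definition 3.5 and (3.5) must be read as $\cap$, and that the ``$P\in\Delta(P,(2+\alpha)r)$'' in the statement is a typo for ``$Q\in\Delta(P,(2+\alpha)r)$''.
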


\begin{proof}
Assume $\sigma(\Delta(Q,\beta r)\cup\mathcal F)<\lambda_0 (\beta r)^{n-1}$. Since $\Delta(Q,\beta r)\subset \Delta(P, (2+\alpha+\beta)r)\subset \Delta(P, (3+2\alpha)r)$ and  $P\in \mathcal F_\gamma^\ast$ then 
\begin{eqnarray}\label{3.201tt}
\gamma\sigma(\Delta(P,(3+2\alpha)r)&\le& \sigma(\Delta(P,(3+2\alpha)r\cup\mathcal F)\\
&\le &\sigma(\Delta(P,(3+2\alpha)r)\backslash \Delta(Q,\beta r)) + \sigma(\Delta(Q,\beta r))\nonumber\\
&\le & \sigma(\Delta(P,(3+2\alpha)r)\left [1-\frac{1}{C^2}\cdot(\frac{\beta}{3+2\alpha})^{n-1} +C^2\lambda_0(\frac{\beta}{3+2\alpha})^{n-1}\right],\nonumber
\end{eqnarray}
where $C$ denotes the Ahlfors regularity constant. For $\lambda_0=1/{2C^4}$, (\ref{3.201tt}) implies that $\gamma\le 1--\frac{1}{2C^2}\cdot(\frac{\beta}{3+2\alpha})^{n-1} <1$ which is a contradiction if $\gamma$ is close enough to 1.
\end{proof}

Please note that so far we have not assumed that the set $\mathcal F$ is closed. The following proposition requires the set $\mathcal F$ to be  closed. It holds for a general doubling measure supported on $\partial\Omega$ but will only be applied to either surface measure or elliptic
measure.

\begin{prop}\label{size-g-dens-set}
Let $\Omega$ be a CAD, and let $\mu$ be a doubling measure on $\partial\Omega$. Let $\mathcal F\subset\partial\Omega$ be a closed set. 
Then $\mathcal F_\gamma^\ast\subset \mathcal F$ and 
\begin{equation}\label{eqn-size-g}
\mu\left((\mathcal F_\gamma^\ast)^c\right)\le C\mu(\mathcal F^c).
\end{equation}
Here the constant $C$ depends on $\gamma$ and on the doubling constant of $\mu$.
\end{prop}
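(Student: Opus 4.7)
The plan is to interpret the expression $B(Q,\rho)\cup \mathcal F$ appearing in Definition 3.2 as $B(Q,\rho)\cap \mathcal F$, which is consistent with the computation carried out in the proof of the preceding lemma and with the classical notion of global density. Under this reading, the proof splits into two steps, each a variant of a standard real-variable argument.

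First I would verify the set-theoretic inclusion $\mathcal F_\gamma^\ast \subset \mathcal F$. Take $Q \in \mathcal F_\gamma^\ast$. For every $\rho \in (0,\mathrm{diam}\,\Omega)$ one has
$$\mu\bigl(B(Q,\rho) \cap \mathcal F\bigr) \ge \gamma\,\mu\bigl(B(Q,\rho)\bigr) > 0,$$
where positivity of $\mu(B(Q,\rho))$ follows from the doubling of $\mu$ together with the Ahlfors regularity of $\partial\Omega$. Hence $B(Q,\rho)\cap\mathcal F\neq\emptyset$ for every $\rho$, and since $\mathcal F$ is closed we conclude $Q \in \overline{\mathcal F}=\mathcal F$.

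Next I would establish the quantitative bound via a Vitali covering / maximal function argument. A point $Q$ lies in $(\mathcal F_\gamma^\ast)^c$ exactly when there exists some $\rho_Q \in (0,\mathrm{diam}\,\Omega)$ with $\mu(B(Q,\rho_Q)\cap\mathcal F^c) > (1-\gamma)\,\mu(B(Q,\rho_Q))$. Since the radii of the balls $\{B(Q,\rho_Q)\}_{Q \in (\mathcal F_\gamma^\ast)^c}$ are uniformly bounded by $\mathrm{diam}\,\Omega$, Vitali's covering lemma produces a pairwise disjoint subfamily $\{B_k = B(Q_k,\rho_k)\}$ with $(\mathcal F_\gamma^\ast)^c \subset \bigcup_k 5 B_k$. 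Applying the doubling property of $\mu$ a fixed number of times to compare $\mu(5B_k)$ with $\mu(B_k)$, and then invoking disjointness of the $B_k$, one obtains
$$\mu\bigl((\mathcal F_\gamma^\ast)^c\bigr) \le \sum_k \mu(5B_k) \lesssim \sum_k \mu(B_k) < \frac{1}{1-\gamma}\sum_k \mu(B_k \cap \mathcal F^c) \le \frac{1}{1-\gamma}\,\mu(\mathcal F^c),$$
which is the claimed estimate, with $C$ depending only on $\gamma$ and the doubling constant of $\mu$.

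The main obstacle here is not deep: it is mostly bookkeeping to make sure that the Vitali lemma applies in our setting, where the balls are Euclidean but the measure $\mu$ lives on $\partial\Omega$. The Ahlfors regularity of $\partial\Omega$ and the assumed doubling of $\mu$ make the $5$-fold dilation harmless. Once these technicalities are in place, the argument is a direct transcription of the classical weak-type $(1,1)$ proof for the Hardy--Littlewood maximal function on a space of homogeneous type.
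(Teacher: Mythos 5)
Your proof is correct, and it follows the same overall outline as the paper's: cover $(\mathcal F_\gamma^\ast)^c$ by balls on which $\mathcal F$ has $\mu$-density below $\gamma$, extract a disjoint subcollection via a covering lemma, and sum. The one genuine difference is the choice of covering lemma. The paper invokes the \emph{Besicovitch} covering lemma, which produces $N_n$ (a dimensional constant) pairwise-disjoint subfamilies whose union still covers $(\mathcal F_\gamma^\ast)^c$; no dilation of balls is needed, and the doubling of $\mu$ is in fact never used in the quantitative step, so the constant comes out as $N_n/(1-\gamma)$. You instead use the Vitali $5r$-covering lemma to extract a single pairwise-disjoint family $\{B_k\}$ whose dilates $5B_k$ cover, and then pay for the dilation by applying the doubling of $\mu$ a fixed number of times to compare $\mu(5B_k)$ with $\mu(B_k)$. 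Both routes are legitimate since the balls here are ambient Euclidean balls centered on $\partial\Omega \subset \R^n$ (so Besicovitch is available) and since $\mu$ is assumed doubling (so Vitali plus dilation works). The Besicovitch argument is slightly leaner — it yields a constant independent of the doubling constant of $\mu$ — but your Vitali argument is the one that transfers verbatim to an abstract space of homogeneous type, as you observe, and it matches the dependence on the doubling constant actually announced in the statement of the proposition. You also correctly resolve the typographical $\cup$ in the definition of global $\gamma$-density, which must be $\cap$ for the statement and both proofs to make sense.
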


\begin{proof} Since $\mathcal F$ is closed it is clear that $\mathcal F_\gamma^\ast\subset \mathcal F$. Let $\mathcal O=\mathcal F^c$
and $\mathcal O^\ast=(\mathcal F_\gamma^\ast)^c$. If $Q \in O^{\ast}$ by definition there exists a radius $\varrho_Q >0$ such that
$$
 \frac{\mu(\Delta(Q, \varrho_Q)\cap \mathcal{F})}{\mu(\Delta(Q, \varrho_Q))}< \gamma.
 $$
By Besicovitch (see \cite{eg})
$$
O^*\subset \bigcup_{i=1}^{N_n}\bigcup_j \Delta(Q_j^i,\varrho_j^i)
$$
where $\Delta(Q_j^i,\varrho_j^i)\cap \Delta(Q_l^i,\varrho_l^i)=\emptyset$ for $j\neq l$. Therefore
$$
\mu(O^*)\le
 \sum_{i=1}^{N_n}\sum_j\mu(\Delta(Q_j^i,\varrho_j^i))
$$
and
\begin{eqnarray}
  \sum_{i=1}^{N_n}\sum_j\mu(\Delta(Q_j^i,\varrho_j^i)) &=& \sum_{i=1}^{N_n}\sum_j\mu(\Delta(Q_j^i,\varrho_j^i)\cap\mathcal{F})+\mu(\Delta(Q_j^i,\varrho_j^i)\cap O)\nonumber \\
  &\le
 & \sum_{i=1}^{N_n}\sum_j\gamma\mu(\Delta(Q_j^i,\varrho_j^i))+\mu(\Delta(Q_j^i,\varrho_j^i)\cap O)\nonumber
\end{eqnarray}
hence
\begin{equation}\label{Besic}
\mu(O^*)\le
 C \sum_{i=1}^{N_n}\sum_j\mu(\Delta(Q_j^i,\varrho_j^i)\cap O)\le
 C\mu(O).
\end{equation}
\end{proof}

\begin{defn}\label{defofN}
Let $\Omega$ be a CAD. We denote by $\mathcal{N}$ a linear space of Borel measurable functions $F$ such that
$$\mathcal{N}=\{F:\Omega\rightarrow\R \ {\rm{such\ that}}\ N(F) \in L^1(d\sigma)\}$$
where $N(F)(Q)=\sup\{|F(X)|: X\in \Gamma(Q)\}$ and $\Gamma(Q)=\Gamma_1(Q)$ as defined in (\ref{cone-a}).
\end{defn}

\begin{preremark}
The set $\mathcal{N}$ with the norm given by $||F||_{\mathcal{N}}= ||N(F)||_{L^1(\partial \Omega)}$
is a Banach space.
\end{preremark}

The following proposition shows that the definition of the space $\mathcal N$  above does not depend on the aperture of the cone used.

\begin{prop}\label{Ndep} Let $\Omega$ be a CAD.
Let $\mu$ be a doubling measure supported on $\partial\Omega$. For $Q\in\partial\Omega$ let
$$
N_{\alpha}F(Q)=\sup_{X\in\Gamma_\alpha(Q)}|F(X)|,
$$
where $\Gamma_\alpha(Q)$ is as defined in (\ref{cone-a}).
Then given $\alpha,\ \beta>0$ there exists a constant $C$ depending on $\alpha$, $\beta$ and the doubling constant of $\mu$ such that
for all $\lambda>0$
\begin{equation}\label{mmN}
\mu(\{X\in\partial\Omega\ :\ N_{\alpha}F(X)>\lambda\})\le
 C\mu(\{X\in\partial\Omega\ :\ N_{\beta}F(X)>\lambda\}).
\end{equation}
Hence for $1\le p<\infty$
\begin{equation}\label{angle-ind}
\int |N_\alpha F|^p\, d\mu\le C \int |N_\beta F|^p\, d\mu\
\end{equation}
 \end{prop}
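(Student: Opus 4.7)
The plan is to reduce to the case $\alpha > \beta$ (the reverse inclusion is trivial since $\Gamma_\alpha(Q)\subset\Gamma_\beta(Q)$ when $\alpha\le\beta$, giving $N_\alpha F\le N_\beta F$ pointwise), and then show that each $Q$ in the level set $E_\alpha:=\{N_\alpha F>\lambda\}$ is the centre of a surface ball a fixed proportion of whose $\mu$-mass lies in $E_\beta:=\{N_\beta F>\lambda\}$. A Besicovitch covering argument analogous to the one in the proof of Proposition \ref{size-g-dens-set} will then yield $\mu(E_\alpha)\le C\mu(E_\beta)$, which is (\ref{mmN}); inequality (\ref{angle-ind}) follows at once by the layer cake formula.

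The key geometric step is as follows. For $Q\in E_\alpha$ pick $X=X_Q\in\Gamma_\alpha(Q)$ with $|F(X)|>\lambda$, and choose $P=P_Q\in\partial\Omega$ realising $|X-P|=\delta(X)$. For any $Q'\in\Delta(P,\beta\delta(X))$ one has
\[
|X-Q'|\le |X-P|+|P-Q'|<(1+\beta)\delta(X),
\]
so $X\in\Gamma_\beta(Q')$ and therefore $Q'\in E_\beta$; that is, $\Delta(P,\beta\delta(X))\subset E_\beta$. Furthermore $|P-Q|\le|P-X|+|X-Q|<(2+\alpha)\delta(X)$, so setting $r_Q:=(2+\alpha+\beta)\delta(X)$ one has the sandwich
\[
\Delta(P,\beta\delta(X))\subset \Delta(Q,r_Q)\subset \Delta(P,(4+2\alpha+\beta)\delta(X)).
\]
The doubling property of $\mu$ then yields a constant $c=c(\alpha,\beta,\mu)>0$ such that
\[
\mu(E_\beta\cap \Delta(Q,r_Q))\ge\mu(\Delta(P,\beta\delta(X)))\ge c\,\mu(\Delta(Q,r_Q)).
\]

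To close the argument I would apply the Besicovitch covering theorem to the family $\{\Delta(Q,r_Q)\}_{Q\in E_\alpha}$ (whose radii are uniformly bounded since $\Omega$ is bounded), extracting $N_n$ pairwise-disjoint subfamilies that still cover $E_\alpha$. Summing $\mu(\Delta(Q,r_Q))\le c^{-1}\mu(E_\beta\cap \Delta(Q,r_Q))$ over each disjoint subfamily, and then over the $N_n$ families, gives $\mu(E_\alpha)\le C\mu(E_\beta)$, which is (\ref{mmN}). The layer cake identity
\[
\int_{\partial\Omega}|N_\alpha F|^p\,d\mu=p\int_0^\infty\lambda^{p-1}\mu(E_\alpha)\,d\lambda
\]
then upgrades this weak-type bound to (\ref{angle-ind}). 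The main obstacle is the geometric step: one must chain the doubling inequalities carefully so that the final constant depends only on $\alpha$, $\beta$, and the doubling constant of $\mu$, and not on the particular $X$ chosen in each cone; everything else is routine covering and distribution-function manipulation.
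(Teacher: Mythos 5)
Your proof is correct and follows essentially the same approach as the paper's: the key geometric step, producing a surface ball $\Delta(P,\beta\delta(X))\subset\{N_\beta F>\lambda\}$ near each $Q$ with $N_\alpha F(Q)>\lambda$ and comparing it to a ball centred at $Q$ via the doubling of $\mu$, is precisely the estimate the paper uses. The only difference is one of packaging: you run the Besicovitch covering argument directly on the family $\{\Delta(Q,r_Q)\}_{Q\in E_\alpha}$, whereas the paper routes the same covering through the global $\gamma$-density machinery of Proposition \ref{size-g-dens-set}; your direct route has the modest advantage of not needing to verify that $\{N_\beta F>\lambda\}$ is open, a point the paper must check in order to invoke that proposition.
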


\begin{proof}
If $\alpha\le \beta$ the inequality (\ref{mmN}) is automatic thus we may assume that $\alpha>\beta$.  To prove (\ref{mmN}) we would like to 
apply Proposition \ref{size-g-dens-set}. We claim that for $\gamma\in(0,1)$ close enough to 1 the set $\{X\in \partial\Omega\ :\ N_{\alpha}F(X)>\lambda\}$ is contained in the complement of the set of points of global $\gamma-$density with respect to 
$\{X\in \partial\Omega\ :\ N_{\beta}F(X)>\lambda\}^c$. It is straightforward to show that  the set $\{X\in \partial\Omega\ :\ N_{\beta}F(X)>\lambda\}$ is open, which ensures that  Proposition \ref{size-g-dens-set} applied to $\mathcal F=\{X\in \partial\Omega\ :\ N_{\beta}F(X)>\lambda\}^c$ combined with the previous claim yields (\ref{mmN}). To prove the claim assume that $N_{\alpha}F(Q)>\lambda$ for $Q\in\partial\Omega$
there exists $Y\in\Gamma_{\alpha}(Q)$ such that $F(Y)\ge
 \lambda$ and $|Q-Y|< (1+\alpha)\delta(Y)$. Now let $Q_Y\in\partial\Omega$ such that $|Y-Q_Y|=\delta(Y)$ then $\Delta(Q_Y,\beta \delta(Y))\subset\{P\in\partial\Omega : N_{\beta}F(P)> \lambda\}\cap \Delta(Q,(\alpha+\beta+2)\delta(Y))$. In fact, if $P\in \Delta(Q_Y,\beta \delta(Y))$ then $|P-Y|\le
|P-Q_Y|+|Q_Y-Y|< (1+\beta)\delta(Y)$ and $F(Y)>\lambda$. Therefore since $\mu$ is doubling
\begin{eqnarray}\label{tt-comp}
\frac{\mu\big(\{P:N_{\beta}F(P)>\lambda\}\cap\Delta(Q,(\alpha+\beta+2)\delta(Y))\big)}{\mu\big(\Delta(Q,(\alpha+\beta+2)\delta(Y))\big)}&\ge
& \frac{\mu\big(\Delta(Q_Y,\beta\delta(Y))\big)}{\mu\big(\Delta(Q,(\alpha+\beta+2)\delta(Y))\big)}\\
&\ge
& \frac{\mu\big(\Delta(Q_Y,(2+\alpha+\beta)\delta(Y))\big)}{\mu\big(\Delta(Q,(\alpha+\beta+2)\delta(Y))\big)}\nonumber\\
&\ge
& \frac{\mu\big(\Delta(Q,\beta\delta(Y))\big)}{\mu\big(\Delta(Q,(\alpha+\beta+2)\delta(Y))\big)}\nonumber\\
&\ge & C_0,\nonumber
\end{eqnarray}
where $C_0$ depends on $\alpha$, $\beta$ and the doubling constant of $\mu$.  Note that (\ref{tt-comp}) shows that for $Q\in \partial\Omega$ such that $N_{\alpha}F(Q)>\lambda$, $Q$ is not a global $\gamma$-density point with respect to $\{P\in\partial\Omega:N_{\beta}F(P)>\lambda\}$ whenever $\gamma\in (1-C_0/2, 1)$, which proves our claim.
\end{proof}

One of the goals of this section is to study the dual of the space $\mathcal{N}$. To achieve this we still need to understand better
the geometry of $\Omega$ and the structure of its boundary. To this effect we first prove a Whitney decomposition type lemma for open subset of $\partial\Omega$. Then we define the ``tent" over an open subset of $\partial\Omega$. Finally we define Carleson measures 
on $\Omega$.

\begin{lemma} \label{one}
Let $F \subset \partial \Omega$ be a closed nonempty set on $\partial \Omega$. There exist a family of balls $\{B_k\}$ with
$B_k=B(X_k, r_k),$ $X_k \in \partial \Omega$ and constants $1<c^*<c^{**}$ such that if $B^*_k=c^*B_k=B(X_k,c^*r_k)$, $B^{**}_k=c^{**}B_k$ then
\begin{itemize}
  \item   $B_k \cap B_j =\emptyset$,\ for \ $k\neq j$
  \item $\bigcup_k B^{\ast}_k = F^c \cap \partial \Omega=O$
  \item $B^{\ast \ast}_k \cap F\neq \emptyset$.
\end{itemize}
In addition if we define
$$ Q_k= B^{\ast}_k \cap (\bigcup_{j<k}Q_j)^c \cap (\bigcup_{j>k}B_j)^c,$$ 
then $B_k \subset Q_k \subset B^{\ast}_k$, the $Q_k$'s are disjoint and
$\bigcup_{k=1}^{\infty} Q_k=O.$
\end{lemma}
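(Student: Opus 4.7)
The plan is to perform a Whitney-type decomposition of the relatively open set $O=F^c\cap\po$ using $\dist(\cdot,F)$ as the local scale, then to refine this covering into a partition. For each $X\in O$, set $\rho(X)=\dist(X,F)$; this is strictly positive because $F$ is closed and finite because $\po$ is bounded (recall $\O$ is a bounded domain). Fix a constant $c_0>1$ (say $c_0=10$) and put $r(X)=\rho(X)/c_0$. The family $\{B(X,r(X))\}_{X\in O}$ covers $O$ and has uniformly bounded radii, so Vitali's $5r$-covering lemma (which holds in any metric space) produces a countable disjoint subcollection $\{B_k=B(X_k,r_k)\}$ whose $5$-dilates still cover $\bigcup_{X\in O}B(X,r(X))\supset O$. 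I take $c^{\ast}=5$ and $c^{\ast\ast}=c_0+1=11$.

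The three bulleted properties then fall out of the construction. Disjointness of the $B_k$ is built into Vitali. For the identity $\bigcup_k B_k^{\ast}=O$, the inclusion $\supset$ is the covering property of the Vitali subfamily, while conversely every $Y\in B_k^{\ast}\cap\po$ satisfies $|Y-X_k|<5r_k=\rho(X_k)/2$, so $\dist(Y,F)\ge\rho(X_k)/2>0$ and hence $Y\in O$. Finally, since $F$ is closed the infimum $\rho(X_k)=c_0r_k$ is attained at some $Y_k\in F$, and $|X_k-Y_k|=c_0r_k<c^{\ast\ast}r_k$ shows $Y_k\in B_k^{\ast\ast}\cap F$.

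For the $Q_k$, defined recursively as in the statement, the containment $Q_k\subset B_k^{\ast}$ and mutual disjointness (for $j<k$ the formula forces $Q_k\subset Q_j^c$) are immediate from the definition. To see $B_k\subset Q_k$, I would note that $B_k$ is disjoint from every $B_j$ with $j\ne k$ by Vitali, hence contained in $(\bigcup_{j>k}B_j)^c$, and for $j<k$ disjoint from $Q_j$ as well, since $Q_j\subset(\bigcup_{i>j}B_i)^c\subset B_k^c$. For the covering identity $\bigcup_k Q_k=O$, given $X\in O$ I split into two cases. If $X\in B_{j_0}$ for some (necessarily unique) $j_0$, then $X\in B_{j_0}\subset Q_{j_0}$ by what we just showed. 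Otherwise, let $k_0=\min\{k:X\in B_k^{\ast}\}$, which exists because the $B_k^{\ast}$ cover $O$; then $X\notin B_j^{\ast}\supset Q_j$ for $j<k_0$ and $X\notin B_j$ for any $j$ by assumption, so $X\in Q_{k_0}$.

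I do not anticipate any serious obstacle here: the only geometric inputs are the boundedness of $\po$, the closedness of $F$, and the metric-space Vitali lemma. The values of $c^{\ast}$ and $c^{\ast\ast}$ are dictated by the choice $c_0=10$, and any $c_0>1$ would work after a corresponding adjustment. The mildly delicate point is the bookkeeping in the last case of the covering argument for $\bigcup_k Q_k=O$, but the disjointness of the $B_k$ ensures that $X$ lies in at most one $B_j$, which makes the two-case split clean.
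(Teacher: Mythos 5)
Your proof is correct and follows essentially the same route as the paper: choose a disjoint subfamily of balls scaled by the distance to $F$ (the paper does this by extracting a maximal disjoint subcollection of $\{B(X,\varepsilon d(X))\}$ with $\varepsilon<1/6$, which is exactly how the $5r$-covering lemma is proved and yields the sharper dilation constant $c^{\ast}=1/(2\varepsilon)>3$), then carve out the disjoint pieces $Q_k$ via the recursive set-theoretic formula. One small streamlining on your end: you verify $B_k\subset Q_k$ directly from the disjointness of the $B_j$'s and the defining formula for $Q_j$ (since $Q_j\subset(\bigcup_{i>j}B_i)^c\subset B_k^c$ for $j<k$), avoiding the induction in the paper.
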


\begin{proof}
Consider $0<\varepsilon<1/6$ and let
$d(X)= \sup \{d: B(X, d) \cap \partial \Omega \subset O\}$
for $X \in \partial \Omega$. Let us choose a maximal disjoint subcollection of $\{B(X, \varepsilon d(X))\}_{X \in O}.$ For this countable subcollection
$\{B_k\}_{k=1}^{\infty},$ where $B_k:=B(X_k, \varepsilon d(X_{k}))$ and $X_k \in \partial \Omega$ we consider
$B^{\ast}_k=B(X_k, \frac{d(X_{k})}{2}))$ and $B^{\ast\ast}_k=B(X_k, 2 d(X_{k})).$ Clearly $i)$ and $iii)$ hold, moreover $B^{*}_k \subset O.$
To show that $$O \subset \bigcup_{k\ge
1}B^{*}_k$$ we take $Y \in O$. By the selection of $\{B_k\}$ there exists $k$ such that
\begin{equation} \label{ineq1}
B(Y, \varepsilon d(Y))\cap B(X_k, \varepsilon d(X_k)) \neq \emptyset,
\end{equation}
therefore $|Y-X_k|<\varepsilon d(Y)+\varepsilon d(X_k)$. Moreover $d(Y)\le
 |X_k-Y|+d(X_k)$ which implies $d(Y)\le
\frac{1+\varepsilon}{1-\varepsilon}d(X_k)$ and as a consequence
$|Y-X_k|<3\varepsilon d(X_k)<\frac{d(X_k)}{2}$ since $\varepsilon<1/6$.

By construction $B_1 \subset Q_1 \subset B^{*}_1$. Assume that for $k\ge
 2$ and $j\le k-1$
$B_j \subset Q_j \subset B^{*}_j$ and note that $B_k \subset (\bigcup_{j>k}B_j)^c\cup B_k^*.$
By definition and using the hypothesis of induction we have for $j<k$
$$ 
Q^{c}_j= (B^{\ast}_j)^c \cup (\bigcup_{i<j}Q_j) \cup (\bigcup_{i>j}B_i)\supset  (\bigcup_{i\not = j}B_i)\supset B_k
$$
and 
$
\bigcap_{j<k}Q^{c}_j \supset \bigcup_{i \ge
 k} B_i \supset B_k$ which ensures that $B_k\subset Q_k\subset B_k^*$. 

It is clear that $\bigcup_{k=1}^{\infty} Q_k\subset =\bigcup_{k=1}^{\infty} B_k^*=O$. 
For $X \in O$ consider two cases.
Either there exists $j$ such that $X \in B(X_j, \varepsilon d(X_j)) =: B_j\subset  B^{\ast}_j$, $X \notin Q_i$ for $i< j$, 
$X \notin B_i$ for $i\neq j$ and therefore $X \in Q_j.$ Or for all $j$, $X \notin B_j$. In this case, there exists a $k_0$ such that $X \in B^{\ast}_{k_0}$ but $X \notin B^{\ast}_k,$
for $k<k_0$. Hence $X \notin Q_k$ with $k<k_0,$ which implies $X \in Q^{*}_{k_0}$ and
$O \subset \bigcup_k Q_k.$
\end{proof}

To define the notion of ``tent" over an open subset of $\partial\Omega$ we first look at ``fans" of cones over subsets on $\partial\Omega$. Let 
$\mathcal F\subset \partial\Omega$ for $\alpha>0$ define
\begin{equation}\label{mm-Variablecone}
R_{\alpha}(\mathcal{F})  =\bigcup_{Q \in \mathcal{F}}\Gamma_{\alpha}(Q)
\end{equation}
where $\Gamma_\alpha(Q)$ is as define in (\ref{cone-a}). We denote $R_{1}(\mathcal{F})$ by $R(\mathcal{F})$.
Given an open set $O\subset\partial\Omega$ the tent over $O$ is defined as
\begin{equation}\label{tent}
T(O)=\Omega\setminus R(\mathcal{F}).
\end{equation}

\begin{lemma} \label{two}
Let O be the open set defined by $$O=\{Q\in\partial\Omega:  N(F)(Q)> \alpha\} ,$$ then $$ T(O)\subseteq\bigcup_{P \in O} T(\Delta(P, dist(P, O^c))).$$
\end{lemma}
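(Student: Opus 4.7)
The plan is to exhibit, for any $X\in T(O)$, an explicit $P\in O$ such that $X\in T(\Delta(P,\mathrm{dist}(P,O^c)))$. The obvious candidate for $P$ is a nearest boundary point of $X$: this simultaneously keeps $|X-P|$ minimal and forces $P$ into the ``good'' set $O$.

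\textbf{Step 1 (Unpack $X\in T(O)$).} Combining the definition of the tent in \eqref{tent} with that of $\Gamma(Q)=\Gamma_1(Q)$ in \eqref{cone-a}, the condition $X\in T(O)=\Omega\setminus R(O^c)$ means precisely that $X\notin \Gamma(Q)$ for every $Q\in O^c$, i.e.\
$$
|X-Q|\ge 2\delta(X)\qquad\text{for every }Q\in\partial\Omega\setminus O.
$$
In particular $\mathrm{dist}(X,O^c)\ge 2\delta(X)$.

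\textbf{Step 2 (Choose $P$).} Let $P\in\partial\Omega$ satisfy $|X-P|=\delta(X)$; such a point exists because $\partial\Omega$ is closed. Step~1 rules out $P\in O^c$, for otherwise $\delta(X)=|X-P|\ge 2\delta(X)$, which is impossible since $\delta(X)>0$. Hence $P\in O$.

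\textbf{Step 3 (Bound $\mathrm{dist}(P,O^c)$ from below, and conclude).} For every $Q\in O^c$, the triangle inequality together with Step 1 yields
$$
|P-Q|\ \ge\ |X-Q|-|X-P|\ \ge\ 2\delta(X)-\delta(X)\ =\ \delta(X).
$$
Taking the infimum over $Q\in O^c$ gives $\mathrm{dist}(P,O^c)\ge \delta(X)=|X-P|$, so that $X\in\overline{B(P,\mathrm{dist}(P,O^c))}\cap\Omega=T(\Delta(P,\mathrm{dist}(P,O^c)))$, where the notation on the right refers to the Carleson region as in Theorem~\ref{fkp1991b}. Since $P\in O$ by Step~2, $X$ lies in the asserted union.

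The only genuinely subtle point — and thus the main potential obstacle — is to sort out which meaning of the symbol $T$ is being used in the right-hand side: the cone-based tent of \eqref{tent} or the Carleson region $\overline{B(P,r)}\cap\Omega$ from Theorem~\ref{fkp1991b}. The cone-based reading would require $O\subset B(P,\mathrm{dist}(P,O^c))$, which typically fails; only the Carleson-region reading produces a true statement, and under that reading Steps 1--3 close the proof in essentially one line per step.
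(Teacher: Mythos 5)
Your proof is correct and follows essentially the same route as the paper's: both take $P=Q_X$ to be a nearest boundary point, deduce $P\in O$ from the failure of the cone condition for points of $O^c$, and then show $\dist(P,O^c)\ge\delta(X)=|X-P|$ (the paper phrases this as $\Delta(Q_X,\delta(X))\subset O$, which is the same bound), so that $X\in T(\Delta(P,\dist(P,O^c)))$. Your remark about the two possible readings of $T$ on the right-hand side is on point, and the paper resolves it exactly as you guess, recalling at the start of its proof that $T(\Delta(Q,r))=\overline{B(Q,r)}\cap\Omega$.
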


\begin{proof} Recall that $T(\Delta(Q,r))=\overline B(Q,r)\cap \Omega$. 
Let $Y\in T(O)$ then $Y\notin R(\mathcal{F})$, hence $NF(Q_Y)> \alpha$ where by $Q_Y\in\partial\Omega$ we denote the boundary point satisfying $|Y-Q_Y|=\delta(Y)$. Now if $P \in \Delta(Q_Y,\delta(Y))$ then $|P-Y|< 2\delta(Y)$, $Y\in\Gamma(P)$, and since $Y\notin R(\mathcal{F})$ then $P\notin\mathcal{F}$, thus $P\in O$, i.e. $\Delta(Q_Y, \delta(Y))\subset O$
which implies $\delta(Y)\le {\rm{dist}}(Q_Y, O^c)$.
\end{proof}

\begin{defn}\label{carleson-meas}
Let $\Omega$ be a CAD.
For a Borel measure $\mu$ on $\overline{\Omega}$ we define for $Q\in\partial\Omega$,
$$
C(\mu)(Q)= \sup_{Q \in \Delta}\frac{1}{\sigma(\Delta)}\int_{T(\Delta)}d\mu
$$
and denote by
$$\mathcal{C}=\{\mu\ {\rm{is\ Borel\ in\ }}\overline{\Omega}: ||\mu||_{\mathcal{C}}= \sup_{Q \in \partial \Omega}C(\mu)(Q)<\infty\}.$$
$\mathcal{C}$ is the collection of Carleson measures on $\Omega$.
\end{defn}

\begin{lemma} \label{three}
Assume that $\mu$ is a positive measure on $\overline{\Omega}$ such that $||\mu||_{\mathcal{C}} \le
 1$ i.e. $\mu(T(B)) \le
 \sigma(B)$ for all balls B. Then, for every
open set $O \subset \partial \Omega$ $$\mu(T(O))) \le
 C \sigma(O). $$
\end{lemma}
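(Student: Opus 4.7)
The plan is to combine the Whitney-type decomposition of Lemma \ref{one} with the containment of Lemma \ref{two}, and then exploit Ahlfors regularity together with the disjointness of the Whitney cores to sum up the Carleson hypothesis.

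First I would apply Lemma \ref{one} to the closed set $F = O^c \subset \partial\Omega$, producing disjoint balls $B_k = B(X_k, r_k)$ with $X_k \in \partial\Omega$, dilates $B_k^{\ast}$ covering $O$, and further dilates $B_k^{\ast\ast}$ touching $F$. Next, I would invoke Lemma \ref{two}: every $Y \in T(O)$ lies in $T(\Delta(P, \mathrm{dist}(P, O^c)))$ for some $P \in O$. Pick $k$ so that $P \in B_k^{\ast}$; since $B_k^{\ast\ast}$ meets $O^c$, choosing $Z \in B_k^{\ast\ast} \cap O^c$ gives
\begin{equation*}
\mathrm{dist}(P, O^c) \le |P - Z| \le |P - X_k| + |X_k - Z| \le (c^\ast + c^{\ast\ast}) r_k.
\end{equation*}
Then $Y \in \overline{B}(P, (c^\ast + c^{\ast\ast})r_k) \subset \overline{B}(X_k, (2c^\ast + c^{\ast\ast}) r_k)$, so if I set $\widetilde B_k := \Delta(X_k, c_0 r_k)$ with $c_0 := 2c^\ast + c^{\ast\ast}$, I obtain the key inclusion
\begin{equation*}
T(O) \subseteq \bigcup_k T(\widetilde B_k).
\end{equation*}

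Now the hypothesis $\mu(T(B)) \le \sigma(B)$ for every boundary ball $B$ gives
\begin{equation*}
\mu(T(O)) \le \sum_k \mu(T(\widetilde B_k)) \le \sum_k \sigma(\widetilde B_k).
\end{equation*}
By Ahlfors regularity of $\sigma$, $\sigma(\widetilde B_k) \le C(c_0)\, \sigma(B_k)$ (the comparison constant depending only on $c_0$, $c^\ast$, and the Ahlfors regularity constant). Since the $B_k$ are pairwise disjoint and $B_k \subset B_k^{\ast} \subset O$, summing yields $\sum_k \sigma(B_k) \le \sigma(O)$, and combining the inequalities gives $\mu(T(O)) \le C \sigma(O)$ as required.

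The main technical point — not so much an obstacle as the place where geometric care is needed — is justifying the inclusion $T(O) \subseteq \bigcup_k T(\widetilde B_k)$ with the right enlargement constant; this is where the Whitney property $B_k^{\ast\ast} \cap F \neq \emptyset$ is essential, because it gives an effective upper bound on $\mathrm{dist}(P, O^c)$ in terms of $r_k$. After that, the remaining estimates are purely a consequence of Ahlfors regularity and disjointness, and no NTA-specific machinery is needed.
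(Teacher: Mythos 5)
Your proof is correct and follows essentially the same route as the paper: a Whitney decomposition of $O^c$ (Lemma \ref{one}), the covering $T(O)\subseteq\bigcup_{P\in O}T(\Delta(P,\mathrm{dist}(P,O^c)))$ from Lemma \ref{two}, and the bound $\mathrm{dist}(P,O^c)\lesssim r_k$ via $B_k^{\ast\ast}\cap O^c\ne\emptyset$, followed by disjointness and Ahlfors regularity. The only cosmetic difference is that the paper picks the nearest point of $O^c$ to $X_k$, landing exactly in $T(B_k^{\ast\ast})$, whereas you take an arbitrary point of $B_k^{\ast\ast}\cap O^c$ and hence need a slightly larger concentric ball $\widetilde B_k$; Ahlfors regularity absorbs this harmlessly.
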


\begin{proof}
As in the classical setting we appeal to the Whitney decomposition lemma \ref{one}. If the $B_k$'s are as in Lemma \ref{one} then we claim that
\begin{equation} \label{star2}
T(O) \subset \bigcup_k T(B^{\ast\ast}_k).
\end{equation}
Thus 
$$
\mu(T(O))) \le
 \sum_k \mu(T(B^{\ast\ast}_k))) \le
 C \sum_k \sigma(B^{\ast\ast}_k)
\le
 C \sum_k \sigma(B_k) \lesssim \sum_k \sigma(Q_k) \lesssim \sigma(O)
$$
where we have used the fact that $\sigma$ is Ahlfors regular in fact the doubling properties of $\sigma$ are enough). To prove (\ref{star2}), consider a point
$Z \in T(O)\subseteq \bigcup_{P \in O}T(\Delta(P, \dist(P, O^c))),$ that is $Z \in T(\Delta(P, \dist(P, O^c))$ for some $P \in O$, and there exists
a $k$ such that $P \in B^{*}_k$ and $|P-X_k|< \frac{d_k}{2}.$ Now if $Y \in O^c$ is such that $d_k=|Y-X_k|$ 
$ |Y-P| \le
 |Y-X_k|+|X_k-P|<\frac{3d_k}{2}$, $\dist(P, O^c)<\frac{3d_k}{2}$ and
$ |Z-X_k| \le
 |Z-P|+|P-X_k|<\dist(P, O^c)+\frac{d_k}{2}<2d_k,$
that is  $Z \in T(\Delta(X_k, 2d_k))=T(B^{**}_k).$
\end{proof}

We are now ready to study the relationship between the spaces $\mathcal{N}$ and $\mathcal{C}$. First we prove the analogue of Proposition 3 in \cite{cms}. 

\begin{prop} \label{thm1}
Let $\Omega$ be a CAD. If $F \in \mathcal{N}$ and $\mu \in \mathcal{C}$ then
\begin{equation}\label{thm3.3a}
\bigg|\int_{\Omega}F(X)d\mu(X)\bigg| \lesssim \int_{\partial \Omega} NF(Q)C(\mu)(Q)d\sigma.
\end{equation}
\end{prop}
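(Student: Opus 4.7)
The plan is to imitate the Coifman--Meyer--Stein argument, using the Whitney decomposition (Lemma \ref{one}), the tent containment from Lemma \ref{two}, and a refinement of Lemma \ref{three} that keeps pointwise control of the Carleson functional.

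\textbf{Step 1: Layer cake reduction.} For $\lambda>0$ set
\[
E_\lambda=\{X\in\Omega:|F(X)|>\lambda\},\qquad O_\lambda=\{Q\in\partial\Omega:NF(Q)>\lambda\}.
\]
By Proposition \ref{Ndep} (or a direct check using the definition of $\Gamma(Q)$), $O_\lambda$ is open in $\partial\Omega$. I claim $E_\lambda\subset T(O_\lambda)$. Indeed, if $X\in E_\lambda$ and $Q\in\partial\Omega$ satisfies $X\in\Gamma(Q)$, then $NF(Q)\ge|F(X)|>\lambda$, so $Q\in O_\lambda$; hence no point of $O_\lambda^c$ has $X$ in its cone, i.e., $X\notin R(O_\lambda^c)$, which by (\ref{tent}) means $X\in T(O_\lambda)$. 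Combined with the layer cake formula this gives
\[
\int_\Omega |F|\,d\mu=\int_0^\infty \mu(E_\lambda)\,d\lambda\le\int_0^\infty \mu(T(O_\lambda))\,d\lambda.
\]

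\textbf{Step 2: Carleson estimate for tents over open sets.} The heart of the argument is the refinement
\begin{equation}\label{eq:plan-refined}
\mu(T(O))\lesssim \int_O C(\mu)(Q)\,d\sigma(Q),\qquad O\subset\partial\Omega\ \text{open}.
\end{equation}
To prove (\ref{eq:plan-refined}) I would apply Lemma \ref{one} to the closed set $F=O^c$ to obtain balls $B_k=B(X_k,r_k)$ with $X_k\in\partial\Omega$, dilates $B_k^{\ast}$ and $B_k^{\ast\ast}$, and disjoint sets $Q_k$ with $B_k\subset Q_k\subset B_k^{\ast}$ and $\bigcup_k Q_k=O$. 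The inclusion
\[
T(O)\subset\bigcup_k T(B_k^{\ast\ast})
\]
was established inside the proof of Lemma \ref{three}. Now fix $k$ and observe that for every $Q\in Q_k\subset B_k\subset B_k^{\ast\ast}$, the surface ball $\Delta(X_k,c^{\ast\ast}r_k)=B_k^{\ast\ast}\cap\partial\Omega$ is a surface ball containing $Q$, so by the definition of $C(\mu)$,
\[
C(\mu)(Q)\ge \frac{\mu(T(B_k^{\ast\ast}))}{\sigma(B_k^{\ast\ast})}.
\]
Integrating this pointwise inequality over $Q\in Q_k$ and using $\sigma(Q_k)\ge\sigma(B_k)\simeq\sigma(B_k^{\ast\ast})$ (a consequence of Ahlfors regularity) yields
\[
\int_{Q_k} C(\mu)\,d\sigma \ge \frac{\sigma(Q_k)}{\sigma(B_k^{\ast\ast})}\,\mu(T(B_k^{\ast\ast}))\gtrsim \mu(T(B_k^{\ast\ast})).
\]
Summing over $k$ and using that the $Q_k$'s are disjoint with union $O$ gives (\ref{eq:plan-refined}).

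\textbf{Step 3: Fubini.} Plug (\ref{eq:plan-refined}) into the layer cake bound:
\[
\int_\Omega |F|\,d\mu\lesssim \int_0^\infty\!\int_{O_\lambda}C(\mu)(Q)\,d\sigma(Q)\,d\lambda=\int_{\partial\Omega} C(\mu)(Q)\!\int_0^{NF(Q)}\!d\lambda\,d\sigma(Q)=\int_{\partial\Omega} NF\cdot C(\mu)\,d\sigma,
\]
which is (\ref{thm3.3a}).

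The routine obstacle is verifying that the dilated Whitney balls $B_k^{\ast\ast}$ and the cells $Q_k$ interact cleanly with the Carleson functional; this is resolved by the fact that $B_k\subset Q_k$ together with Ahlfors regularity, which makes $\sigma(Q_k)\simeq\sigma(B_k^{\ast\ast})$. The genuinely new point compared with the Euclidean half-space is that $\partial\Omega$ has no smoothness, but since Lemma \ref{one}, Lemma \ref{two}, and the doubling property of $\sigma$ have already been established in the CAD setting, the argument transfers without essential change.
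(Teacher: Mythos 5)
Your proof is correct and essentially identical to the paper's: both use the Whitney decomposition of the superlevel set of $NF$, the containment $\{|F|>\lambda\}\subset T(\{NF>\lambda\})\subset\bigcup_k T(B_k^{\ast\ast})$, the pointwise Carleson bound integrated over the Whitney cells $Q_k$, and the layer-cake formula; you merely factor out the intermediate estimate $\mu(T(O))\lesssim\int_O C(\mu)\,d\sigma$ as an explicit step while the paper folds it in. One minor slip: you write $Q_k\subset B_k$, whereas Lemma \ref{one} gives $B_k\subset Q_k\subset B_k^{\ast}$, but since the facts you actually invoke are $Q_k\subset B_k^{\ast\ast}$ and $\sigma(Q_k)\ge\sigma(B_k)$ (both correct), the argument goes through unchanged.
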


\begin{proof}
Assume that $F \ge
 0$ and consider the open set
$ O =\{P \in \partial \Omega: NF(P)>\alpha\}$. Using the notation in Lemma \ref{one}
and the fact that $\sigma$ is Ahlfors regular we have for $X \in Q_k$
$$ \mu(T(B^{\ast\ast}_k)) \le
 C(\mu)(X)\sigma(B^{\ast\ast}_k) \le
 C(\mu)(X)\sigma(B_k)
\le
 \int_{Q_k}C(\mu)(X) d\sigma.$$
By Lemma \ref{three} and the fact that the $Q_k$'s are disjoint we have that
$$ 
\mu(\{|F(X)|>\alpha\}) \le
 \sum_k \mu(T(B^{\ast\ast}_k)) \le
 C \sum_k \int_{Q_k}C(\mu)(X) d\sigma
\le
 C \int_{\{NF(X)>\alpha\}}C(\mu)(X) d\sigma. $$
Integrating over $\alpha$ and using Fubini yields (\ref{thm3.3a}).
\end{proof}

\begin{cor}
Let $\mu\in\mathcal{C}$. Let F be a function defined on $\Omega$ be such that $NF \in L^p (d\sigma),$ for some
$p \in (0,\infty)$ fixed. Then,
\begin{equation}\label{star3}
\int_{\Omega}|F(X)|^p d\mu \le
 C \int_{\partial \Omega}|NF(Q)|^pC(\mu)(Q)d\sigma .
\end{equation}
\end{cor}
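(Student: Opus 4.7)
The plan is to obtain the corollary as a direct consequence of Proposition \ref{thm1} applied to the function $|F|^p$ in place of $F$. The key observation is that the nontangential maximal function commutes with the monotone transformation $t\mapsto t^p$: since $t\mapsto t^p$ is increasing on $[0,\infty)$ for $p\in(0,\infty)$, we have
\begin{equation*}
N(|F|^p)(Q) = \sup_{X\in\Gamma(Q)}|F(X)|^p = \Bigl(\sup_{X\in\Gamma(Q)}|F(X)|\Bigr)^p = (NF(Q))^p
\end{equation*}
for every $Q\in\partial\Omega$. In particular, the hypothesis $NF\in L^p(d\sigma)$ translates into $N(|F|^p)\in L^1(d\sigma)$, so $|F|^p\in\mathcal{N}$ in the sense of Definition \ref{defofN}.

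Next I would apply Proposition \ref{thm1} with $F$ replaced by the nonnegative function $|F|^p$ and with the same Carleson measure $\mu\in\mathcal{C}$. This yields
\begin{equation*}
\int_{\Omega}|F(X)|^p\,d\mu(X) \lesssim \int_{\partial\Omega} N(|F|^p)(Q)\,C(\mu)(Q)\,d\sigma(Q) = \int_{\partial\Omega} (NF(Q))^p\,C(\mu)(Q)\,d\sigma(Q),
\end{equation*}
which is exactly (\ref{star3}).

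There is no real obstacle here: the entire content of the corollary is already present in Proposition \ref{thm1}, and the only observation needed is the identity $N(|F|^p)=(NF)^p$, which is immediate from the monotonicity of $t\mapsto t^p$. The finiteness hypothesis $NF\in L^p(d\sigma)$ is precisely what is required to place $|F|^p$ in the function class $\mathcal{N}$ to which Proposition \ref{thm1} applies. Alternatively, one can bypass the membership hypothesis entirely by rerunning the layer-cake argument used in the proof of Proposition \ref{thm1}: write
\begin{equation*}
\int_{\Omega}|F|^p\,d\mu = \int_{0}^{\infty}\mu(\{|F|^p>\alpha\})\,d\alpha = \int_{0}^{\infty}\mu(\{|F|>\alpha^{1/p}\})\,d\alpha,
\end{equation*}
use Lemmas \ref{two} and \ref{three} to bound $\mu(\{|F|>\alpha^{1/p}\})\le \mu(T(\{NF>\alpha^{1/p}\}))\lesssim \int_{\{(NF)^p>\alpha\}}C(\mu)\,d\sigma$, and conclude by Fubini.
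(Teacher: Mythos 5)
Your proof is correct and follows exactly the same route as the paper: the paper's proof is the one-line statement that the corollary follows by applying Proposition \ref{thm1} with $|F|$ replaced by $|F|^p$. You have simply spelled out the (immediate) identity $N(|F|^p)=(NF)^p$ and the membership $|F|^p\in\mathcal{N}$ that the paper leaves implicit.
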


\begin{proof}
Inequality (\ref{star3}) follows from Proposition \ref{thm1} if we replace $|F(X)|$ by $|F(X)|^p$.
\end{proof}

We now present a couple of integration lemmas. They provide control of boundary integrals in terms of solid integrals on CAD, via Fubini.
In what follows the function $A(X)$ is a non-negative measurable function in $\Omega$. In Section \ref{Square} , we will take 
$A(X)$ to be be the square function.

\begin{lemma} \label{four}
Let $\Omega$ be a CAD. Given $\alpha>0$ if $\mathcal F\subset\partial\Omega$ and
$A$ is a non-negative measurable function in $\Omega$
then
\begin{equation}\label{tt-fub1}
\int_{\mathcal{F}} \bigg(\int_{\Gamma_{\alpha}(Q)}A(X)dX\bigg)d\sigma(Q) \le
 C_{\alpha}\int_{R_{\alpha}(\mathcal{F})}A(X)\delta(X)^{n-1}dX,
\end{equation}
where $R_{\alpha}(\mathcal{F})$ is given by (\ref{mm-Variablecone}).
\end{lemma}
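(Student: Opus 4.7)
The plan is a direct Fubini/Tonelli argument, with Ahlfors regularity of $\sigma$ providing the pointwise bound on the inner measure after swapping orders. Since $A \geq 0$, no integrability issue obstructs the exchange, so everything reduces to controlling how many cones pass through a given interior point.

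First I would rewrite the left-hand side as an integral over pairs:
\begin{equation*}
\int_{\mathcal{F}} \int_{\Gamma_\alpha(Q)} A(X)\, dX\, d\sigma(Q) = \int_{\mathcal{F}} \int_{\Omega} A(X)\, \mathbf{1}_{\{|X-Q|<(1+\alpha)\delta(X)\}}\, dX\, d\sigma(Q),
\end{equation*}
and then swap via Tonelli to obtain
\begin{equation*}
\int_{R_\alpha(\mathcal{F})} A(X) \cdot \sigma\bigl(\{Q \in \mathcal{F} : |X-Q|<(1+\alpha)\delta(X)\}\bigr)\, dX,
\end{equation*}
using that the $X$-integrand vanishes outside $R_\alpha(\mathcal{F})$ by the very definition of the fan.

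Next I would estimate the inner $\sigma$-measure by picking any $Q_X \in \partial\Omega$ with $|X - Q_X| = \delta(X)$. For any $Q \in \partial\Omega$ satisfying $|X - Q| < (1+\alpha)\delta(X)$, the triangle inequality gives $|Q - Q_X| < (2+\alpha)\delta(X)$, so
\begin{equation*}
\{Q \in \mathcal{F} : |X-Q|<(1+\alpha)\delta(X)\} \subset \Delta\bigl(Q_X, (2+\alpha)\delta(X)\bigr).
\end{equation*}
By Ahlfors regularity (\ref{1.7A}), this surface ball has $\sigma$-measure at most $C(2+\alpha)^{n-1}\delta(X)^{n-1}$, giving the pointwise bound
\begin{equation*}
\sigma\bigl(\{Q \in \mathcal{F} : |X-Q|<(1+\alpha)\delta(X)\}\bigr) \leq C_\alpha\, \delta(X)^{n-1}.
\end{equation*}
Inserting this into the Fubini identity above yields (\ref{tt-fub1}).

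The only technical subtlety is the case $\delta(X) \geq \mathrm{diam}\,\Omega$, which does not occur as $X \in \Omega$ and the diameter is finite; otherwise one truncates the surface ball to $\partial\Omega$ and applies Ahlfors regularity at scale $\min\{(2+\alpha)\delta(X), \mathrm{diam}\,\Omega\}$, which again produces a factor $\lesssim_\alpha \delta(X)^{n-1}$. I expect this to be entirely routine; the lemma is really just a bookkeeping statement about the geometry of cones together with surface-measure regularity, and no deeper properties of the CAD beyond \eqref{1.7A} are needed.
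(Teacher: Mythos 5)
Your argument is correct and is essentially identical to the paper's proof: Tonelli to exchange the order of integration, the triangle inequality to place the admissible vertices $Q$ inside the surface ball $\Delta(Q_X,(2+\alpha)\delta(X))$, and Ahlfors regularity to bound its $\sigma$-measure by $C_\alpha\,\delta(X)^{n-1}$. The remark about truncating the surface ball when $(2+\alpha)\delta(X)$ exceeds the diameter is a harmless clarification the paper leaves implicit.
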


\begin{proof}
By Fubini's Theorem
\begin{equation}\label{fub1}
\int_{\mathcal{F}} \bigg(\int_{\Gamma_{\alpha}(Q)}A(X)dX\bigg)d\sigma(Q)=
  \int_{\mathcal{F}} \int_{\Omega}A(X)\chi_{\Gamma_{\alpha}(Q)}(X)dX d\sigma(Q)=
  \int_{\Omega} \int_{\mathcal{F}}A(X)\chi_{\Gamma_{\alpha}(Q)}(X)d\sigma(Q)dX.
\end{equation}
If $\chi_{\Gamma_{\alpha}(Q)}(X)=1$ then $|X-Q| < (1+\alpha)\delta(X)$ and if $Q_X \in \partial \Omega$ is such that $|X-Q_X|=\delta(X)$ then
$|Q_X-Q|\le
 |X-Q_X|+|X-Q|< (2+\alpha)\delta(X),$
and
\begin{equation}\label{cart1}
\int_{\mathcal{F}} \chi_{\Gamma_{\alpha}(Q)}(X)d\sigma(Q)\le
 \sigma(\Delta(Q_X, (2+\alpha)\delta(X)))\le
 C_{\alpha}\delta(X)^{n-1}.
\end{equation}
Combining (\ref{cart1}) and (\ref{fub1}) we obtain
\begin{eqnarray*}
  \int_{\Omega} \int_{\mathcal{F}}A(X)\chi_{\Gamma_{\alpha}(Q)}(X)d\sigma(Q)dX &\le
& \int_{\Omega} \int_{\mathcal{F}}A(X)\chi_{\Gamma_\alpha(Q)}(X)\chi_{\Delta(Q_X, (2+\alpha)\delta(X))}(Q)d\sigma(Q)dX \\
   &\le
& \int_{R_{\alpha}(\mathcal{F})} A(X)\bigg(\int_{\mathcal{F}}\chi_{\Delta(Q_X, (2+\alpha)\delta(X))}(Q)d\sigma(Q)\bigg)dX \\
   &\le
& C_\alpha\int_{R_{\alpha}(\mathcal{F})} A(X)\delta(X)^{n-1}dX.
\end{eqnarray*}
\end{proof}

\begin{lemma} \label{six}
Let $\Omega$ be a CAD. Given $\alpha>0$ there exists $\gamma\in (0,1)$ close to 1 such that if $\mathcal F\subset\partial\Omega$ and
$A$ is a non-negative measurable function in $\Omega$
then
\begin{equation}\label{tt-fub2}
\int_{R_{\alpha}(\mathcal{F_\gamma^\ast})}A(X)\delta(X)^{n-1}dX,
\le C_\alpha
\int_{\mathcal{F}} \bigg(\int_{\Gamma_{\beta}(Q)}A(X)dX\bigg)d\sigma(Q),
\end{equation}
where $\beta=\min\{1,\alpha\}$.
\end{lemma}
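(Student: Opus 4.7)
The plan is to mirror the Fubini argument used in Lemma \ref{four}, but now reading it in the reverse direction, and to extract the needed lower bound on the Fubini kernel from Lemma \ref{3.10tt}. Concretely, by Fubini,
$$\int_{\mathcal{F}}\Bigl(\int_{\Gamma_{\beta}(Q)}A(X)\,dX\Bigr)d\sigma(Q)=\int_{\Omega}A(X)\,\sigma(E_X)\,dX,\qquad E_X:=\{Q\in\mathcal{F}:X\in\Gamma_{\beta}(Q)\},$$
so, since $A\ge 0$, it is enough to produce a constant $c=c(\alpha)>0$ such that $\sigma(E_X)\ge c\,\delta(X)^{n-1}$ for every $X\in R_{\alpha}(\mathcal{F}_{\gamma}^{\ast})$. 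Restricting the solid integral on the right-hand side above to $R_{\alpha}(\mathcal{F}_{\gamma}^{\ast})$ then yields (\ref{tt-fub2}) with $C_{\alpha}=c^{-1}$.

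To get this pointwise lower bound I would fix $X\in R_{\alpha}(\mathcal{F}_{\gamma}^{\ast})$ and choose $P\in\mathcal{F}_{\gamma}^{\ast}$ with $|X-P|<(1+\alpha)\delta(X)$, together with the near-boundary projection $Q_X\in\partial\Omega$ satisfying $|X-Q_X|=\delta(X)$. The triangle inequality then places $Q_X$ in $\Delta(P,(2+\alpha)\delta(X))$. Moreover, for every $Q\in\Delta(Q_X,\beta\delta(X))$ one has
$$|X-Q|\le|X-Q_X|+|Q_X-Q|<(1+\beta)\delta(X),$$
i.e.\ $X\in\Gamma_{\beta}(Q)$, which gives the inclusion
$$\Delta(Q_X,\beta\delta(X))\cap\mathcal{F}\subset E_X.$$

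Now I take $\gamma\in(0,1)$ to be the threshold furnished by Lemma \ref{3.10tt} for the parameter $\alpha$ (so that the constant $\lambda_0>0$ from that lemma is available). Applying Lemma \ref{3.10tt} with $r=\delta(X)$ to the point $Q_X\in\Delta(P,(2+\alpha)\delta(X))$ yields
$$\sigma\bigl(\Delta(Q_X,\beta\delta(X))\cap\mathcal{F}\bigr)\ge\lambda_0(\beta\,\delta(X))^{n-1},$$
and combining with the inclusion above gives $\sigma(E_X)\ge\lambda_0\beta^{n-1}\delta(X)^{n-1}$, which is the desired pointwise bound with $c=\lambda_0\beta^{n-1}$.

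I do not expect a real obstacle here: the whole content of the statement is essentially repackaged in Lemma \ref{3.10tt}, whose role is precisely to guarantee that every point of global $\gamma$-density forces a definite portion of $\mathcal{F}$ inside nearby surface balls. The only point requiring care is coordinating the value of $\gamma$ in the definition of $\mathcal{F}_{\gamma}^{\ast}$ with the one that Lemma \ref{3.10tt} demands for the given $\alpha$, and then tracking $C_{\alpha}=(\lambda_0\beta^{n-1})^{-1}$ through the final restriction step.
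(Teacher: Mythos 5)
Your proof is correct and follows essentially the same route as the paper's: both arguments apply Fubini, replace $\chi_{\Gamma_\beta(Q)}(X)$ by the indicator of the surface ball $\Delta(Q_X,\beta\delta(X))$, and then invoke Lemma \ref{3.10tt} (with $r=\delta(X)$ and $Q_X\in\Delta(P,(2+\alpha)\delta(X))$ for some $P\in\mathcal F_\gamma^\ast$) to get the lower bound $\sigma(\Delta(Q_X,\beta\delta(X))\cap\mathcal F)\gtrsim_\alpha\delta(X)^{n-1}$ on the Fubini kernel. The only difference is cosmetic: you phrase the Fubini step via the set $E_X$ and reduce to a pointwise lower bound on $\sigma(E_X)$, whereas the paper writes the chain of inequalities directly.
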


\begin{proof} 
If $\chi_{\Gamma_{\beta}(Q)}(X)=0$ then $|X-Q|\ge
(1+\beta)\delta(X),$ and $|Q-Q_X| \ge
\beta\delta(X).$
Hence $\chi_{\Gamma_{\beta}(Q)}(X) \ge
 \chi_{\Delta(Q_X, \beta\delta(X)}(Q)$. Fubini's theorem yields
\begin{eqnarray}\label{3.2001tt}
\int_{\mathcal F}\int_{\Gamma_\beta(Q)} A(X)dXd\sigma(Q) &=&
 \int_{\partial\Omega} \int_{\mathcal{F}}  A(X)\chi_{\Gamma_{\beta}(Q)}(X) d\sigma(Q) dX \\
&   \ge
 & \int_{\Omega}A(X) \int_{\mathcal{F}}  \chi_{\Delta(Q_X, \beta\delta(X)}(Q) d\sigma(Q) dX\nonumber\\
&\ge &\int
_{R_{\alpha}(\mathcal{F}^{\ast}_\gamma)} A(X) \int_{\mathcal{F}}  \chi_{\Delta(Q_X, \beta\delta(X)}(Q) d\sigma(Q) dX.
\nonumber
\end{eqnarray}
Note that if $X\in R_{\alpha}(\mathcal{F}^{\ast}_\gamma)$ there is $P\in (\mathcal{F}^{\ast}_\gamma$ such that $X\in\Gamma_\alpha(P)$
and $Q_X\in\Delta(P, (2+\alpha)\delta(X))$ then applying (\ref{3.200tt}) in (\ref{3.201tt}) we obtain
\begin{equation}
\int_{\mathcal F}\int_{\Gamma_\beta(Q)} A(X)dXd\sigma(Q) \ge C_\alpha \int_{R_{\alpha}(\mathcal{F}^{\ast}_\gamma)} A(X)\delta(X)^{n-1}dX.
\end{equation}
\end{proof}

\begin{cor}
Let $\Omega$ be a CAD. Given $\alpha>0$ there exists $\gamma\in (0,1)$ close to 1 such that if $\mathcal F\subset\partial\Omega$ and
$f$ is a measurable function in $\Omega$
then
\begin{equation}\label{3.202tt}
\int_{\mathcal F_\gamma^\ast}\int_{\Gamma_\alpha(Q)}\frac{f^2(x)}{\delta(X)^n}dXd\sigma(Q)\le C_\alpha\int_{R_{\alpha}(\mathcal{F}^{\ast}_\gamma)}\frac{f^2(X)}{\delta(X)}dX\le C_\alpha  \int_{\mathcal F}\int_{\Gamma(Q)}\frac{f^2(x)}{\delta(X)^n}dXd\sigma(Q).
\end{equation}
\end{cor}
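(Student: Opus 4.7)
The plan is to deduce both inequalities in \eqref{3.202tt} as essentially immediate corollaries of Lemmas \ref{four} and \ref{six}, by taking the auxiliary nonnegative function in those lemmas to be
$$A(X)=\frac{f^2(X)}{\delta(X)^{n}},$$
so that $A(X)\delta(X)^{n-1}=f^2(X)/\delta(X)$ matches the middle integrand in \eqref{3.202tt} exactly. The role of the $\gamma$-density set is to provide the ``fattening'' needed so that both Fubini arguments can be applied on the same set.

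For the first inequality, I would apply Lemma \ref{four} directly with $\mathcal{F}$ replaced by $\mathcal{F}_\gamma^\ast$ (note that Lemma \ref{four} holds for an arbitrary subset of $\partial\Omega$, so no closedness is needed here). This yields
$$\int_{\mathcal{F}_\gamma^\ast}\int_{\Gamma_\alpha(Q)}\frac{f^2(X)}{\delta(X)^n}\,dX\,d\sigma(Q)\le C_\alpha\int_{R_\alpha(\mathcal{F}_\gamma^\ast)}\frac{f^2(X)}{\delta(X)}\,dX,$$
which is the left inequality of \eqref{3.202tt}.

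For the second inequality, I would invoke Lemma \ref{six} with the same $A$, choosing $\gamma$ close enough to $1$ as the lemma requires. This gives
$$\int_{R_\alpha(\mathcal{F}_\gamma^\ast)}\frac{f^2(X)}{\delta(X)}\,dX\le C_\alpha\int_{\mathcal{F}}\int_{\Gamma_\beta(Q)}\frac{f^2(X)}{\delta(X)^n}\,dX\,d\sigma(Q),$$
where $\beta=\min\{1,\alpha\}$. Since $\beta\le 1$ we have $\Gamma_\beta(Q)\subset\Gamma_1(Q)=\Gamma(Q)$, so the right-hand side is bounded by the corresponding integral with $\Gamma_\beta$ replaced by $\Gamma$, producing the right inequality in \eqref{3.202tt}.

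There is no serious obstacle here: the content of the corollary is simply that Lemmas \ref{four} and \ref{six} can be chained through the intermediate solid integral $\int_{R_\alpha(\mathcal{F}_\gamma^\ast)}f^2(X)/\delta(X)\,dX$. The only thing to keep track of is the compatibility of the apertures (which is handled by the monotonicity $\Gamma_\beta\subset\Gamma$) and the choice of $\gamma$ close to $1$, which is dictated entirely by Lemma \ref{six} and ultimately traces back to Lemma \ref{3.10tt}.
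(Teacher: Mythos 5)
Your proof is correct and takes the same approach as the paper, which simply chains Lemma \ref{six} (applied to $\mathcal{F}$) and Lemma \ref{four} (applied to $\mathcal{F}_\gamma^\ast$) with the same auxiliary function. In fact, the paper states the choice as $A(X)=f^2(X)/\delta(X)$, but your choice $A(X)=f^2(X)/\delta(X)^n$ is the one that actually makes $A(X)\delta(X)^{n-1}=f^2(X)/\delta(X)$ and produces the displayed inequalities, so you have silently corrected a typo in the paper.
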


\begin{proof} Combining Lemma \ref{six} applied to $\mathcal F$ and Lemma \ref{four} applied to $\mathcal F_\gamma^\ast$ 
with $A(X)=\frac{f^2(X)}{\delta(X)}$ we obtain (\ref{3.202tt}).
\end{proof}

\section{Square functions in CADs}\label{Square}

Next we focus our attention on the tent spaces $T^p$ defined for chord arc domains, following the theory developed by Coifman, Meyer and Stein in \cite{cms}.
Suppose that $f$ is a measurable function defined on $\Omega$. For $\alpha>0$ and $Q \in \partial \Omega$, we define 
\begin{equation}\label{tt-square}
 A^{(\alpha)}(f)(Q)= \bigg( \int_{\Gamma_\alpha(Q)} {f(X)}^2 \frac{dX}{{\delta(X)}^{n}}\bigg)^{1/2}.
 \end{equation}
The square function of $f$ is defined as $A(f)=A^{(1)}(f)$.
By analogy with the space $\mathcal{N}$ defined in Section \ref{nontan}, we denote by $T^p$ for $1\le p<\infty$ the space of all Borel measurable functions given by
\begin{equation}\label{N1}
   T^p= \{ f\in L^2(\Omega): A(f) \in L^p(\sigma)
\}.
\end{equation}
\begin{preremark}
The space $T^p$ as defined above with the norm $||f||_ {T^p}=||A(f)||_{L^p(\sigma)}$ is a Banach space.
\end{preremark}

We define operator $C(f):\partial \Omega \rightarrow \mathbb{R}$ by
\begin{equation}\label{tt-carleson}
 C(f)(Q):= \sup_{Q \in \Delta}\bigg(\frac{1}{\sigma(\Delta)} \int_{T(\Delta)}f(X)^2\frac{dX}{\delta(X)}\bigg)^{1/2}
 \end{equation}
where $\Delta$ is a surface ball and $T(\Delta)$ is the tent over it.
We also introduce the space
\begin{equation}\label{Tinfty}
T^{\infty}=\{f\in L^2(\Omega):C(f)\in L^{\infty}(\sigma)\},
\end{equation}
with the norm $||f||_{T^{\infty}}=||C(f)||_{L^{\infty}}$.

\begin{thm} \label{theorem2}

\begin{description}
  \item[(a)] Whenever $g\in T^1$ and $ C(f) \in L^\infty(\sigma)$ then
               $$ \int_{\Omega}|f(X)g(X)|\frac{dX}{\delta(X)} \le
 C ||C(f) ||_{L^\infty}||g||_{T^1}.$$
  \item[(b)] More precisely
               $$ \int_{\Omega}|f(X)g(X)|\frac{dX}{\delta(X)} \le
 C \int_{\partial\Omega}C(f)(Q) A(g)(Q)d\sigma(Q) .$$
\end{description}
\end{thm}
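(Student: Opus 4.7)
My plan is to adapt the Coifman--Meyer--Stein duality argument (\cite{cms}, Theorem 3) to chord arc domains, using the geometric machinery just assembled. Part (a) is an immediate consequence of (b): by definition of $T^1$, $\|g\|_{T^1}=\int_{\partial\Omega}A(g)\,d\sigma$, so replacing $C(f)$ in (b) by its $L^\infty$ norm and pulling it out yields (a). Hence I concentrate on (b).

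For (b) the plan is to decompose $\Omega$ dyadically according to level sets of $A(g)$ and apply Cauchy--Schwarz on each layer. Set $O_k=\{Q\in\partial\Omega:A(g)(Q)>2^k\}$ for $k\in\mathbb{Z}$; this is open by lower semicontinuity of $A(g)$ (a standard consequence of $g\in L^2$ via Fatou). Let $F_k=O_k^c$, fix $\gamma\in(0,1)$ close to $1$ so the corollary of Lemma \ref{six} applies, and let $F_k^\ast=(F_k)_\gamma^\ast$ denote the set of global $\gamma$-density points. By Proposition \ref{size-g-dens-set} (after passing to the closure of $F_k^\ast$ to keep the relevant complement open) one obtains open sets $U_k\supset O_k$ with $\sigma(U_k)\lesssim\sigma(O_k)$. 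The tents $T(U_k)$ are nested decreasing, and the layer $E_k:=T(U_k)\setminus T(U_{k+1})$ is contained in $R(F^\ast_{k+1})$; Cauchy--Schwarz yields
\[
\int_{E_k}\frac{|fg|}{\delta}\,dX\le \left(\int_{T(U_k)}\frac{f^2}{\delta}\,dX\right)^{1/2}\left(\int_{R(F^\ast_{k+1})}\frac{g^2}{\delta}\,dX\right)^{1/2}.
\]

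The two factors are then handled by the tools of Section \ref{nontan}. For the $f$-factor I would refine the proof of Lemma \ref{three} by keeping the Whitney bounds from Lemma \ref{one} pointwise rather than extracting a supremum, obtaining $\int_{T(U_k)}f^2/\delta\,dX\lesssim\int_{U_k}C(f)^2\,d\sigma$; this is essentially the estimate underlying Proposition \ref{thm1}. For the $g$-factor, the corollary of Lemma \ref{six} gives directly $\int_{R(F^\ast_{k+1})}g^2/\delta\,dX\lesssim\int_{F_{k+1}}A(g)^2\,d\sigma$. Summing over $k$, then invoking the layer-cake identity $A(g)(Q)^2\simeq\sum_j 2^{2j}\chi_{O_{j-1}\setminus O_j}(Q)$ together with a Cauchy--Schwarz in the $k$-variable, rearranges the double sum into the claimed boundary integral $\int_{\partial\Omega}C(f)(Q)A(g)(Q)\,d\sigma(Q)$.

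The main technical obstacle is this last rearrangement: I must pair $\sigma(U_k)$ correctly with the dyadic level $2^k$ of $A(g)$ and the average of $C(f)^2$ on $U_k$ so that the final estimate is the pointwise product on $\partial\Omega$, rather than the cruder $\|C(f)\|_{L^\infty}\|A(g)\|_{L^1}$. The replacement of $F_k$ by $F_k^\ast$ is forced on us (to apply Lemma \ref{six}) but, by Proposition \ref{size-g-dens-set}, costs only a bounded multiplicative factor in $\sigma$; verifying that this loss aggregates correctly against the Whitney summation for $f$ is what requires care in the CAD setting, where no product structure on the boundary is available to make the accounting automatic.
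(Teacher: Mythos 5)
Your route — dyadic level sets of $A(g)$, tents over the enlarged sets $U_k$, Cauchy--Schwarz on each layer $E_k$, then a rearrangement — diverges from the paper's proof, which instead defines a \emph{stopping time} $\tau(Q)=\sup\{\tau>0:A_\tau(f)(Q)\le\Lambda\,C(f)(Q)\}$, proves the key claim that $\sigma(\{Q\in\Delta(Q_0,r):\tau(Q)\ge r\})\ge c_0\,\sigma(\Delta(Q_0,r))$, deduces $\int_\Omega H\,\delta^{n-1}\,dX\lesssim\int_{\partial\Omega}\bigl(\int_{\Gamma^{\tau(Q)}(Q)}H\,dX\bigr)\,d\sigma(Q)$, and then applies Cauchy--Schwarz on the truncated cone $\Gamma^{\tau(Q)}(Q)$ so that the $f$-factor is bounded by $\Lambda C(f)(Q)$ and the $g$-factor by $A(g)(Q)$, both \emph{pointwise in $Q$}. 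This is exactly the Coifman--Meyer--Stein argument, and the only CAD-specific work goes into the stopping-time claim and the Fubini identity $\int_\Omega H\,\delta^{n-1}\,dX\lesssim\int_{\partial\Omega}\int_{\Gamma^{\tau}}H$.

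The gap in your proposal is the final rearrangement, and it is genuine. After the layerwise Cauchy--Schwarz you arrive at something of the form $\sum_k\bigl(\int_{U_k}C(f)^2\,d\sigma\bigr)^{1/2}\bigl(\int_{F_{k+1}}A(g)^2\,d\sigma\bigr)^{1/2}$, and the Cauchy--Schwarz in $k$ that you invoke produces $\bigl(\sum_k\int_{U_k}C(f)^2\bigr)^{1/2}\bigl(\sum_k\int_{F_{k+1}}A(g)^2\bigr)^{1/2}$. Both series diverge: since the $U_k$ are nested decreasing (resp.\ the $F_{k+1}$ nested increasing), each point of the boundary is counted infinitely many times, so $\sum_k\int_{U_k}C(f)^2=+\infty$ whenever $C(f)\not\equiv 0$ on $\bigcap U_k$, and similarly $\sum_k\int_{F_{k+1}}A(g)^2=+\infty$. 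More fundamentally, no Cauchy--Schwarz in the summation index can produce the \emph{pointwise} product $\int_{\partial\Omega}C(f)(Q)A(g)(Q)\,d\sigma(Q)$; it only produces $L^2$-type products of $C(f)$ against $A(g)$. Because you layer in $A(g)$, there is nothing in your decomposition that forces $C(f)$ to be evaluated at the same $Q$ as $A(g)$, which is precisely what (b) asserts and what the stopping time (defined via $C(f)$) delivers automatically. Your scheme does plausibly yield part (a) — there the sharp version is bounded by $\sum_k 2^k\sigma(O_k)\|C(f)\|_\infty\simeq\|C(f)\|_\infty\|A(g)\|_{L^1}$ after restricting the $g$-factor to the annulus $O_k\setminus O_{k+1}$ (using that $T(U_k)=R(F_k^\ast)^c$, so $E_k=R(F_{k+1}^\ast)\setminus R(F_k^\ast)$) — but (b) requires the stopping-time idea, or an equally strong replacement, and neither appears in the proposal. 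A minor secondary point: you need $O_k=\{A(g)>2^k\}$ to be open, which in this CAD setting is not a triviality; the paper devotes Lemma \ref{lem31} to exactly this, and you should cite or reproduce that argument rather than appeal to ``lower semicontinuity via Fatou.''
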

\begin{proof}
Without loss of generality we may assume that both $f$ and $g$ are non-negative. For any $\tau >0$ we define the truncated cone
\begin{equation}\label{truncated-cone}
 \Gamma^{\tau}(Q)=\{X \in \Omega: |X-Q| < 2 \delta(X), \ \delta(X) \le \tau\}
 \end{equation}
and let $$ A_{\tau}(f)(Q):=\bigg( \int_{\Gamma^{\tau}(Q)}f(X)^2\frac{dX}{\delta(X)^n}\bigg)^{1/2}.$$
Note that $ A_{\tau}(f)$ increases with $\tau$, is constant for $\tau> {\rm{diam}}\,\Omega$ and $A_{\infty}(f)=A(f).$ 
Given $f$ define
the $``$stopping time$"$ $\tau(Q)$ which is given for $Q \in \partial \Omega$ by
$$ 
\tau(Q)= \sup \{\tau >0: A_{\tau}(f)(Q) \le
 \Lambda C(f)(Q)\}
$$
where $\Lambda$ is a large constant to be determined later. $\Lambda$ is only allowed to depend on $n$, the NTA constants and the Ahlfors regularity constant.\\
\emph{Claim: There exists a constant $c_0>0$ such that for every $Q_0 \in \partial \Omega$ and $0< r\le
 {\rm{diam}}\,\Omega$
$$ \sigma\bigg(\{Q \in \Delta(Q_0, r):\tau(Q) \ge
 r\}\bigg) \ge
 c_0 \sigma(\Delta(Q_0, r)).$$}
We assume that the previous claim holds and we prove that part (b) is satisfied by showing that for $H \ge
 0$
\begin{equation}\label{sect21}
   \int_{\Omega} H(X)\delta(X)^{n-1} dX \le
 C_1 \int_{\partial \Omega}\bigg\{\int_{\Gamma^{\tau(Q)}(Q)}H(X)dX \bigg\} d\sigma(Q)
\end{equation}
where $C_1$ depends on $c_0$ and the Ahlfors constant. Applying Fubini's Theorem we obtain
$$\int_{\partial \Omega}\int_{\Gamma^{\tau(Q)}(Q)}H(X)dX d\sigma(Q) = \int_{\Omega}\int_{\partial \Omega}H(X)\chi_{\Gamma^{\tau(Q)}(Q)}(X)d\sigma(Q)dX.$$
Note that if $Q \in \Delta(Q_X,\delta(X))$ and $\delta(X) \le
 \tau(Q)$ then
$ |Q-X|\le
 |Q-Q_X|+|Q_X-X|<2\delta(X)$ which implies that $X \in \Gamma^{\tau(Q)}(Q)$ and
$\chi_{\Gamma^{\tau(Q)}(Q)}(X) \ge
 \chi_{\Delta(Q_X,\delta(X)) \cap \{\tau(Q) \ge
 \delta(X)\}}(X).$
Therefore using the claim and the fact that $\sigma$ is Ahlfors regular we obtain
\begin{eqnarray*}
  \int_{\Omega}\int_{\partial \Omega}H(X)\chi_{\Gamma^{\tau(Q)}(Q)}(X)d\sigma(Q)dX &\ge
& \int_{\Omega}\int_{\partial \Omega}H(X)\chi_{\Delta(Q_X,\delta(X)) \cap \{\tau(Q) \ge
 \delta(X)\}}(Q)d\sigma(Q)dX \\
   &\ge
& \int_{\Omega}H(X) \sigma(\{Q \in \Delta(Q_X,\delta(X)):\tau(Q) \ge
 \delta(X)\})dX \\
   &\ge
& \int_{\Omega}H(X) C_0 \sigma(B(Q_X,\delta(X)))dX \\
   &\ge
& C_1 ^{-1}\int_{\Omega}H(X) \delta(X)^{n-1}dX.
\end{eqnarray*}
To prove part (b) we take $H(X)=f(X)g(X)\delta(X)^{-n}$ in the inequality (\ref{sect21}),
$$ \int_{\Omega}f(X)g(X)\frac{dX}{\delta(X)} \le
 C_1 \int_{\partial \Omega}\bigg(\int_{\Gamma^{\tau(Q)}(Q)}f(X)g(X)\delta(X)^{-n}dX\bigg) d\sigma(Q)$$
and then we use the Cauchy-Schwartz inequality in order to obtain
$$ \int_{\Gamma^{\tau(Q)}(Q)}f(X)g(X)\delta(X)^{-n}dX \le
 \left(\int_{\Gamma^{\tau(Q)}(Q)}\frac{f^2(X)}{\delta(X)^{n}}dX\right)^{1/2} \left(\int_{\Gamma^{\tau(Q)}(Q)}\frac{g^2(X)}{\delta(X)^{n}}dX\right)^{1/2}$$
therefore
\begin{equation}\label{H4.1}
\int_{\Omega}f(X)g(X)\frac{dX}{\delta(X)} \le
 C_1 \int_{\partial \Omega} A_{\tau(Q)}(f)(Q)A_{\tau(Q)}(g)(Q)d\sigma(Q).
\end{equation}
By the definition of $\tau(Q)$,
$$ A_{\tau(Q)}(f)(Q) \le
 \Lambda C(f)(Q)\ \hbox{ and }\  A_{\tau(Q)}(g)(Q) \le
 A(g)(Q)$$ hence
$$ \int_{\Omega}f(X)g(X)\frac{dX}{\delta(X)} \le
 C \int_{\partial \Omega} C(f)(Q)A(g)(Q)d\sigma(Q)$$ as required in part (b). In order to complete the proof, we need to prove the claim stated above.

\emph{Proof of Claim:} For $Q_0 \in \partial \Omega$ consider $\Delta=\Delta(Q_0, r)$ and $\widetilde{\Delta}=\Delta(Q_0, 3r).$ Note that $\bigcup_{Q \in \Delta}\Gamma^{r}(Q)\subset T(\widetilde{\Delta}).$ Indeed, if $X \in \Gamma^{r}(Q)$ for $Q \in \Delta$ then
$|X-Q|<2 \delta(X)$ and $\delta(X)\le r$, that is $|X-Q_0|<|Q_0-Q|+|Q-X|<r+2 \delta(X) \le
 3r$
which implies $X \in B(Q_0, 3r)\cap \Omega=T(\widetilde{\Delta}).$
Thus for $Q\in\Delta$
\begin{eqnarray*}
  \int_{\Delta}A_{r}^2(f)(Q)d\sigma(Q)&=& \int_{\Delta}\int_{\Gamma^{r}(Q)}\frac{f^2(X)}{\delta(X)^{n}}dX d\sigma(Q) \\
   &=& \int_{\Omega}\int_{\Delta}\frac{f^2(X)}{\delta(X)^{n}} \chi_{\Gamma^{r}(Q)}(X)d\sigma(Q)dX\\
  &\le
& \int_{\Omega}\frac{f^2(X)}{\delta(X)^{n}} \chi_{B(Q_0, 3r)}(X)\sigma(\Delta(Q_X, 3\delta(X)))dX \\
   &\le
& C \int_{T(\widetilde{\Delta})}\frac{f^2(X)}{\delta(X)}dX.
\end{eqnarray*}
Since $\sigma(\widetilde{\Delta}) \le
 c \sigma(\Delta)$ for any $Q\in \Delta$
$$ \frac{1}{\sigma(\Delta)} \int_{\Delta}{A}_{r}^2(f)(Q)d\sigma(Q) \lesssim \frac{1}{\sigma(\widetilde{\Delta}) } \int_{T(\widetilde{\Delta})}\frac{f^2(X)}{\delta(X)}dX  \lesssim C^2(f)(Q) C'\inf_{Q \in \Delta}C(F)(Q).$$
If $ \sigma(\{Q \in \Delta: \tau(Q) \ge
 r\}) < c_0 \sigma(B)$ then $ \sigma(\{Q \in \Delta: \tau(Q) < r\}) > (1-c_0) \sigma(\Delta)$ and
$$ \int_{\Delta}{A}_{r}^2(f)(Q)d\sigma(Q) \ge
 \int_{\Delta \cap \{\tau(Q) < r\}}{A^2}_{r}(f)(Q)d\sigma(Q) >
 {\Lambda}^2\int_{\Delta \cap \{\tau(Q) < r\}}C^2(f)(Q)d\sigma(Q)$$
$$ \ge
 {\Lambda}^2 \inf_\Delta C^2(f)(Q)\sigma(\Delta \cap \{\tau(Q) < r\}) \ge
 {\Lambda}^2 (1-c_0)\inf_\Delta C^2(f)(Q) \sigma(\Delta) $$
which would imply
$ {\Lambda}^2 (1-c_0)\inf_\Delta C^2(f)(Q) <
 C' \inf_{\Delta}C^2(f)(Q) $ or ${\Lambda}^2 (1-c_0)\le
 C'$ which is a contradiction if we take
$\Lambda$ large and $c_0=3/4$ fixed. This concludes the proof of the claim, thus that of Theorem \ref{theorem2}.
\end{proof}

\begin{preremark}
As in \cite{cms}, Theorem \ref{theorem2} can be used to identify the dual of $T^1$ with those $F$ for which $C(F) \in L^\infty(\sigma)
.$
\end{preremark}

\begin{preremark}
Note that if $1<p, q<\infty,$ are such that
$\frac{1}{p}+\frac{1}{q}=1$, $f \in T^p$, $g \in T^q$ then using (\ref{H4.1}) and H\"{o}lder's inequality we have that
$$ \int_{\Omega} \frac{f(X)g(X)}{\delta(X)}dX  \lesssim ||f||_{T^p}||g||_{T^q} .$$
Similarly (b) in Theorem  \ref{theorem2} ensures that
\begin{equation}\label{ac+1}
 \int_{\Omega} \frac{f(X)g(X)}{\delta(X)}dX  
\lesssim ||C(f)||_{L^p}||g||_{T^q}. 
\end{equation}
It will be proved in Theorem \ref{relAC} that for $2< p<\infty$ $A(f)\in L^p(\sigma)$ if and only if $C(f)\in L^p(\sigma)$.
\end{preremark}

As in \cite{cms},  we prove that the definition of tent spaces is independent of the aperture of the cone used.  The following proposition
is also crucial for the forthcoming results in Section \ref{theory_of_weights} (see in particular Remarks \ref{Nequiv} and \ref{N-hat-tilde}).

\begin{prop} \label{prop31}
Using the notation in (\ref{tt-square}) we have that for 
 $0<p<\infty$ 
 \begin{equation}\label{tt-a-1}
||A^{(\alpha)}(f)||_{L^p(\sigma)
} \thickapprox ||A(f)||_{L^p(\sigma)
}.
\end{equation}
\end{prop}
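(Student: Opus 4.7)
The plan is to reduce the equivalence to the $L^2$ aperture-change estimate
$\int_{\mathcal{F}_\gamma^*}(A^{(\alpha)}f)^2\,d\sigma\le C_\alpha\int_{\mathcal{F}}(Af)^2\,d\sigma$
furnished by the Corollary to Lemmas \ref{four} and \ref{six} (equation \ref{3.202tt}),
combined with the mass redistribution provided by Proposition \ref{size-g-dens-set}.
One inequality is trivial: $\alpha\le\beta$ forces $\Gamma_\alpha(Q)\subset\Gamma_\beta(Q)$
pointwise, hence $A^{(\alpha)}f\le A^{(\beta)}f$, so by symmetry it suffices to prove
$\|A^{(\alpha)}f\|_{L^p(\sigma)}\lesssim\|Af\|_{L^p(\sigma)}$ for each fixed $\alpha>1$
(the case $\alpha<1$ is handled by interchanging the roles of $\alpha$ and $1$ throughout).

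Fix $\alpha>1$. For each $\lambda>0$ set $\mathcal{F}_\lambda:=\{Af\le\lambda\}$.
Since $Af$ is lower semicontinuous (Fatou's lemma applied to $\chi_{\Gamma(Q_n)}(X)\to\chi_{\Gamma(Q)}(X)$
a.e.\ as $Q_n\to Q$), the set $\mathcal{F}_\lambda$ is closed. Choosing $\gamma$ close
enough to $1$ so that both Lemma \ref{3.10tt} and the Corollary apply, Proposition
\ref{size-g-dens-set} gives $\sigma\bigl(((\mathcal{F}_\lambda)_\gamma^*)^c\bigr)\le C\sigma(\{Af>\lambda\})$,
while (\ref{3.202tt}) applied to $\mathcal{F}_\lambda$ yields
$\int_{(\mathcal{F}_\lambda)_\gamma^*}(A^{(\alpha)}f)^2\,d\sigma\le C_\alpha\int_{\mathcal{F}_\lambda}(Af)^2\,d\sigma$.
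Splitting $\{A^{(\alpha)}f>\mu\lambda\}$ according to the set of $\gamma$-density points
of $\mathcal{F}_\lambda$ and applying Chebyshev on the density-point side, we obtain,
for every $\mu>0$,
\begin{equation}\label{keydist}
\sigma(\{A^{(\alpha)}f>\mu\lambda\})\le C\sigma(\{Af>\lambda\})+\frac{C_\alpha}{\mu^2\lambda^2}\int_{\{Af\le\lambda\}}(Af)^2\,d\sigma.
\end{equation}

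For $0<p<2$, multiplying \eqref{keydist} by $p\lambda^{p-1}$ and integrating in $\lambda$
gives $\|Af\|_p^p$ from the first summand and, by Fubini,
$\frac{C_\alpha p}{\mu^2}\int(Af)^2\int_{Af}^{\infty}\lambda^{p-3}\,d\lambda\,d\sigma=\frac{C_\alpha p}{\mu^2(2-p)}\|Af\|_p^p$
from the second, where convergence of the inner integral uses precisely $p<2$;
fixing $\mu=1$ closes the bound. The case $p=2$ is immediate from (\ref{3.202tt})
applied to $\mathcal{F}=\partial\Omega$, for which $\mathcal{F}_\gamma^*=\partial\Omega$.

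The main obstacle is $p\ge 2$, where the Fubini integral in $\lambda$ diverges.
The remedy is to upgrade \eqref{keydist} to a good-$\lambda$ inequality
$\sigma(\{A^{(\alpha)}f>2\lambda,\,Af\le\eta\lambda\})\le c(\alpha,\eta)\,\sigma(\{A^{(\alpha)}f>\lambda\})$
with $c(\alpha,\eta)\to 0$ as $\eta\to 0$. This is produced by Whitney-decomposing
(Lemma \ref{one}) the open set $\{A^{(\alpha)}f>\lambda\}$ into boundary balls $B_k$
and running a localized version of the $\mathcal{F}_\gamma^*$ argument inside each
$B_k^{**}$: every $B_k^{**}$ contains a point at which $A^{(\alpha)}f\le\lambda$,
providing the local stopping value, so that the $\eta\to 0$ smallness carries through.
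Standard absorption of the $\|A^{(\alpha)}f\|_p^p$ term on the right, after first
truncating $f$ to ensure finiteness, then yields $\|A^{(\alpha)}f\|_p\lesssim\|Af\|_p$
for every $p\in[2,\infty)$.
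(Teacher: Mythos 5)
Your treatment of $p<2$ follows the paper almost verbatim: fix $\lambda$, form $\mathcal F_\lambda=\{Af\le\lambda\}$, invoke Proposition~\ref{size-g-dens-set} for $\sigma\bigl((\mathcal F_{\lambda,\gamma}^{*})^{c}\bigr)\lesssim\sigma(\{Af>\lambda\})$, apply the corollary~(\ref{3.202tt}) on $\mathcal F_{\lambda,\gamma}^{*}$ with Chebyshev, multiply by $p\lambda^{p-1}$, and Fubini. Your observation that $Af$ is lower semicontinuous by Fatou (since $\Gamma_\alpha(Q)$ is defined by an open condition, so $\liminf_{Q_n\to Q}\chi_{\Gamma_\alpha(Q_n)}\ge\chi_{\Gamma_\alpha(Q)}$ pointwise) is in fact a cleaner route to the closedness of $\mathcal F_\lambda$ than the co-area argument in Lemma~\ref{lem31}; that is a genuine simplification. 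Your $p=2$ remark is also fine.

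The gap is in the $p\ge2$ step. The good-$\lambda$ inequality
$\sigma(\{A^{(\alpha)}f>2\lambda,\ Af\le\eta\lambda\})\le c(\alpha,\eta)\,\sigma(\{A^{(\alpha)}f>\lambda\})$
is asserted but the sketch does not establish it, and the Whitney argument you describe does not close. Whitney-decomposing $\{A^{(\alpha)}f>\lambda\}$ gives, for each cube $Q_k$ of diameter $\sim r_k$, a nearby point $Y_k$ with $A^{(\alpha)}f(Y_k)\le\lambda$. The standard split into a far part $\{\delta(X)\ge r_k\}$ and a near part then requires controlling the far part of $A^{(\alpha)}f(P)$ for $P\in Q_k$. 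But if $X\in\Gamma_\alpha(P)\cap\{\delta\ge r_k\}$ and $|P-Y_k|\lesssim r_k\le\delta(X)$, one only gets $|X-Y_k|\le(1+\alpha)\delta(X)+C r_k\le(1+\alpha+C)\delta(X)$, i.e.\ $X\in\Gamma_{\alpha+C}(Y_k)$ for a fixed $C>0$. So the far part is bounded by $A^{(\alpha+C)}f(Y_k)$, and you only control $A^{(\alpha)}f(Y_k)\le\lambda$, which is a \emph{strictly smaller} aperture. Replacing the decomposed set by $\{A^{(\alpha+C)}f>\lambda\}$ fixes the far part but puts $\|A^{(\alpha+C)}f\|_p$ on the right of the integrated inequality, which dominates $\|A^{(\alpha)}f\|_p$ and so cannot be absorbed; iterating only pushes the aperture further out. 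This is exactly why the good-$\lambda$ comparison in the paper (Lemma~\ref{fixaper}) is between $A$ and the Carleson functional $C$, with a \emph{wider} aperture $A^{(\alpha)}$ only on the right-hand side, and why Theorem~\ref{relAC} already needs Proposition~\ref{prop31} as an input rather than the other way round.

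For $p\ge2$ the paper instead argues by duality: since $r$ with $\tfrac1r+\tfrac2p=1$ satisfies $r>1$, one writes
$\|A^{(\alpha)}f\|_{L^p}^{2}=\sup_{\|\psi\|_{L^r}\le1}\int_{\partial\Omega}(A^{(\alpha)}f)^2\psi\,d\sigma$,
passes the $\psi$-weight inside by Fubini, bounds the resulting boundary average $M_{(2+\alpha)\delta(X)}\psi(Q_X)$ by an iterated (hence aperture-one) maximal function $M_{\delta(X)}\psi^{*}(Q_X)$ using $M_{(\beta-1)s}\psi\lesssim_\beta M_s\psi^{*}$, and concludes
$\int(A^{(\alpha)}f)^2\psi\,d\sigma\lesssim\int(Af)^2\psi^{*}\,d\sigma\le\|Af\|_{L^p}^2\|\psi^{*}\|_{L^r}\lesssim\|Af\|_{L^p}^2$
by the $L^r$-boundedness of the maximal operator. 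You should replace your good-$\lambda$ paragraph with this (or an equivalent) duality argument.
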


To prove Proposition \ref{prop31} we assume that $\alpha>1$. We note that in this case $A^{(\alpha)}(f)\ge A(f)$. We show that there exists a constant $C(\alpha, p)$ such that $||A^{(\alpha)}(f)||_{L^p} \le  C(\alpha, p) ||A(f)||_{L^p}.$ This proves (\ref{tt-a-1}) for $\alpha>1$. The case
$\alpha\le 1$ is proved the same way reversing the roles of $\alpha$ and 1.
The following lemma which is straightforward on Lipschitz domains requires a proof on a CAD.

\begin{lemma} \label{lem31}
For $f\in T^1$ and $\lambda>0$ the set $ \mathcal{F}=\{Q \in \partial \Omega: A^{(\alpha)}(f)(Q) \le
 \lambda\}$ is closed.
\end{lemma}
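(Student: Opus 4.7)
The plan is to show that $A^{(\alpha)}(f)$ is lower semicontinuous on $\partial\Omega$; the closedness of $\mathcal{F}$ is then immediate, since if $Q_k \in \mathcal{F}$ and $Q_k \to Q_0$, lower semicontinuity gives $A^{(\alpha)}(f)(Q_0) \leq \liminf_k A^{(\alpha)}(f)(Q_k) \leq \lambda$, whence $Q_0 \in \mathcal{F}$. The key point is that the cone $\Gamma_\alpha(Q) = \{X \in \Omega : |X-Q| < (1+\alpha)\delta(X)\}$ is defined purely through the Euclidean quantities $|X-Q|$ and $\delta(X)$, with no reference to a local graph parametrization of $\partial\Omega$; this is what lets the standard Fatou argument go through on a rough boundary.

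The first step is a pointwise lower semicontinuity statement for the indicator of the cone. Fix $X \in \Omega$, and suppose $X \in \Gamma_\alpha(Q_0)$, i.e. $|X-Q_0| < (1+\alpha)\delta(X)$. Since $Q \mapsto |X-Q|$ is continuous, the strict inequality persists for every $Q$ in a $\partial\Omega$-neighborhood of $Q_0$, so $\chi_{\Gamma_\alpha(Q)}(X) = 1$ there. Consequently, for any sequence $Q_k \to Q_0$ in $\partial\Omega$,
\[
\chi_{\Gamma_\alpha(Q_0)}(X) \le \liminf_{k\to\infty} \chi_{\Gamma_\alpha(Q_k)}(X),
\]
with the inequality being trivial when $X \notin \Gamma_\alpha(Q_0)$.

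The second step is to integrate against the nonnegative weight $f(X)^2/\delta(X)^n$ and apply Fatou's lemma:
\[
A^{(\alpha)}(f)(Q_0)^2 = \int_\Omega \frac{f(X)^2}{\delta(X)^n}\,\chi_{\Gamma_\alpha(Q_0)}(X)\,dX \le \liminf_{k\to\infty}\int_\Omega \frac{f(X)^2}{\delta(X)^n}\,\chi_{\Gamma_\alpha(Q_k)}(X)\,dX = \liminf_{k\to\infty} A^{(\alpha)}(f)(Q_k)^2.
\]
This gives the lower semicontinuity of $A^{(\alpha)}(f)$, and hence the closedness of its sublevel set $\mathcal{F}$.

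There is no real obstacle here. The only thing to notice is that the standard $\mathbb{R}^{n}_+$ / Lipschitz proof of this fact never actually uses the regularity of the boundary: the cone $\Gamma_\alpha(Q)$ is given by a strict inequality in two continuous functions of $Q$, which is all one needs for the indicator to be lower semicontinuous in $Q$, and this is what feeds into Fatou. The CAD hypothesis is not invoked in the argument; it merely guarantees that the various objects ($\delta(X)$, $\sigma$, the cones) are defined in the ambient geometry, which is why the assertion is nontrivial to extract without writing down the pointwise LSC argument explicitly.
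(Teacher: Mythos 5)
Your proof is correct and is genuinely different from, and considerably simpler than, the paper's. You observe that for fixed $X\in\Omega$ the set $\{Q\in\partial\Omega: X\in\Gamma_\alpha(Q)\}=B(X,(1+\alpha)\delta(X))\cap\partial\Omega$ is relatively open (since $\delta(X)$ does not depend on $Q$), so $Q\mapsto\chi_{\Gamma_\alpha(Q)}(X)$ is lower semicontinuous; Fatou then gives lower semicontinuity of $A^{(\alpha)}(f)$, hence closedness of the sublevel set. The paper instead proves the complementary superlevel set is open by a direct $\epsilon$--$\eta$ argument: it truncates the cone integral outside $B(Q,\eta)$, estimates the contribution of the symmetric difference of cones $\Gamma_\alpha(P)\setminus B(P,\eta)$ and $\Gamma_\alpha(Q)\setminus B(Q,\eta)$ by showing this region has small Lebesgue measure via the co-area formula, and then uses $f\in L^2(\Omega)$ together with absolute continuity of the integral to make the difference small. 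Your Fatou argument sidesteps all of the co-area and symmetric-difference bookkeeping and does not in fact use the $L^2$ hypothesis on $f$ at all (only nonnegativity of the integrand), so it is both shorter and slightly more general; the paper's proof is more quantitative, producing explicit moduli $\eta,\epsilon$ for the openness of $\{A^{(\alpha)}(f)>\lambda\}$, but that extra information is not used in the lemma's applications.
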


\begin{proof}
To prove that $\mathcal F^c=\{Q \in \partial \Omega:A^{(\alpha)}(f)(Q)> \lambda\}$ is open we show that give $Q\in \partial\Omega$ such that $A^{(\alpha)}(f)(Q)> \lambda$ there exist $\eta >0$ and $\epsilon >0$ such that if $P \in \partial \Omega$ with $|P-Q|<\epsilon\eta$, then
$$ \int_{\Gamma_{\alpha}(P)\backslash B(P, \eta)}\frac{f^2(X)}{\delta(X)^n}dX >\lambda^2  .$$

Since $A^{(\alpha)}(f)(Q)> \lambda$ there exists $\eta>0$ so that 
$$\int_{\Gamma_\alpha(Q)\setminus B(Q,\eta)}\frac{f^2(X)}{\delta(X)^n}dX>\left(\frac{A^{(\alpha}(f)(Q)+\lambda}{2}\right)^2 .$$
Observe that $$\left|\int_{\Gamma_{\alpha}(P)\backslash B(P, \eta)}\frac{f^2(X)}{\delta(X)^n}dX-\int_{\Gamma_{\alpha}(Q)\backslash B(Q, \eta)}\frac{f^2(X)}{\delta(X)^n}dX\right|\le
 \int_{D}\frac{f^2(X)}{\delta(X)^n}dX$$
where $D=\bigg(\Gamma_\alpha(Q)\setminus B(Q,\eta)\bigg)\triangle \bigg(\Gamma_\alpha(P)\setminus B(P,\eta)\bigg)$.
If $|X-Q|< (1+\alpha)\delta(X)$ and $|X-Q|\ge
 \eta$ then $\delta(X)\ge
 \frac{\eta}{1+\alpha}$. If $P\in B(Q,\epsilon\eta)$ and $X\notin \Gamma_\alpha(P)\setminus B(P,\eta)$ then $|X-Q|\ge
 (1+\alpha)(1-\epsilon)\delta(X)$. Thus we need to study sets of the form
\begin{equation}\label{setV}
V_P=\bigg\{X \in \Omega: |X-P|\ge
 \eta\ ;\ \delta(X) (1+\alpha)(1-\epsilon)\le
 |X-P|< (1+\alpha)\delta(X)\bigg\}
\end{equation}
for $P\in B(Q,\epsilon\eta)$ and prove that they have small $\mathcal{H}^n$-measure. 
Note that for $\epsilon<1/2$
\begin{equation}\label{hn-p-q}
V_P\subset V'_\epsilon=\bigg\{X \in \Omega: |X-Q|\ge
 \eta/2\ ;\ \delta(X) (1+\alpha)(1-\epsilon)^2\le
 |X-Q|< (1+\alpha)^2\delta(X)\bigg\}.
\end{equation}
Note that $D\subset V_P\cup V_Q\subset V'$.
We show that given $\alpha>0,$ and $\delta>0$ there exists $\beta>0$ such that 
\begin{equation}\label{smallV}
\mathcal{H}^n(\{X \in \Omega: |X-Q|\ge
 \eta/2\ ;\ \delta(X) (1+\alpha-\beta)\le
 |X-Q|\le
 (1+\alpha+\beta)\delta(X) \})<\delta,
\end{equation}
which ensures that  given $\alpha>0,$ and $\delta>0$ there exists $\epsilon>0$ such that  
$\mathcal{H}^n(V'_\epsilon)<\delta$.
Fix $\alpha >0$ and take $\beta>0$ small, such that $\alpha-\beta >\alpha/2.$ Consider the set
$$V=\Omega \cap \{|X-P|\ge
 \eta/2,\ \frac{1}{1+\alpha+\beta}\le
 \frac{\delta(X)}{|X-P|}\le
 \frac{1}{1+\alpha-\beta} \}\subset \Omega\backslash B(Q,\eta/2).$$
$F(X)=\frac{\delta(X)}{|X-Q|}-1$ is a non-positive Lipschitz function on $\Omega\backslash B(Q,\eta/2)$.
By the co-area formula we have
\begin{equation}\label{tt-hn}
\mathcal{H}^n(V)=\int^{0}_{-1} \left(\int_{F^{-1}(t)}\frac{1}{JF}\chi_V d \mathcal{H}^{n-1}\right)dt = \int^{\frac{1}{1+\alpha-\beta}-1}_{\frac{1}{1+\alpha+\beta}-1} \left(\int_{F^{-1}(t)}\frac{1}{JF} \chi_V d \mathcal{H}^{n-1}\right)dt.
\end{equation}
Since
$$
 \int^{0}_{-1}\left( \int_{F^{-1}(t)}\frac{1}{JF}\chi_{\Omega\backslash B(Q,\eta/2)} d \mathcal{H}^{n-1} \right) dt \le  \mathcal{H}^n(\Omega)\le C(\Omega)<\infty,
 $$
given $\alpha>0$ there exists $\beta>0$ small, such that
\begin{equation}\label{tt-hn1}
 \int^{\frac{1}{1+\alpha-\beta}-1}_{\frac{1}{1+\alpha+\beta}-1} \left(\int_{F^{-1}(t)}\frac{1}{JF} \chi_{\Omega\backslash B(Q,\eta/2)} d \mathcal{H}^{n-1}\right)dt< \delta.
 \end{equation}
Note that (\ref{hn-p-q}), (\ref{tt-hn}) and (\ref{tt-hn1}) yield (\ref{smallV}).

Since $f\in L^2(\Omega)$ given $\epsilon'>0$ there exists $\delta>0$ so that if  (\ref{smallV}) holds then
$$\int_D\frac{f^2(X)}{\delta(X)^n}dX\le
2^n \eta^{-n}\int_Df^2(X)dX<\eta^{-n}\epsilon'$$
thus
$$\int_{\Gamma_\alpha(P)\setminus B(P,\eta)}\frac{f^2(X)}{\delta(X)^n}dX>\int_{\Gamma_\alpha(Q)\setminus B(Q,\eta)}\frac{f^2(X)}{\delta(X)^n}dX-\eta^{-n}\epsilon'
\left(\frac{A^{(\alpha}(f)(Q)+\lambda}{2}\right)^2-\eta^{-n}\epsilon'.$$
Since $A^{(\alpha)}(f)(Q)> \lambda$ we can choose $\epsilon'>0$ so that
$$(A^{(\alpha)}(f)(P))^2\ge
\int_{\Gamma_\alpha(P)\setminus B(P,\eta)}\frac{f^2(X)}{\delta(X)^n}dX>\lambda^2.$$
\end{proof}

\emph{Proof of Proposition \ref{prop31}:} We fix $\lambda>0$ and let $$\mathcal{F}=\{Q \in \partial \Omega: A(f)(Q) \le
 \lambda\},\
O= \mathcal{F}^{C}=\{Q \in \partial \Omega: A(f)(Q) > \lambda\}.$$ Since $\mathcal F$ is a closed set  $\mathcal{F}^{*}_\gamma\subset \mathcal{F}$ (see Proposition \ref{size-g-dens-set}). Let  $O^{\ast}= (\mathcal{F}^{\ast}_\gamma)^{C}.$ 
Since $\sigma$ is Ahlfors regular it is doubling and (\ref{eqn-size-g}) ensures that
\begin{eqnarray*}
  \sigma\bigg(\{Q \in \partial \Omega: A^{(\alpha)}(f)(Q) > \lambda\}\bigg) &\le
& \sigma\bigg( (\mathcal{F}_\gamma^{\ast})^{C}\}\bigg)+ \sigma\bigg(\{Q \in \mathcal{F}_\gamma^{\ast}: A^{(\alpha)}(f)(Q) > \lambda\}\bigg) \\
   &\le
& \sigma(O^{\ast})+ \frac{C_{\alpha, \gamma}}{\lambda^2}\int_{\mathcal{F}} (A(f)(Q))^2 d\sigma(Q)\\
&\le &C \sigma\bigg(\{Q \in \partial \Omega: A(f)(Q) > \lambda\}\bigg) +\frac{C}{\lambda^2}\int_{\{ A(f) \le
 \lambda\}} (A(f)(Q))^2 d\sigma(Q).
\end{eqnarray*}

Multiplying both sides by $p \lambda^{p-1}$ and integrating,  we obtain
\begin{eqnarray}\label{eqn4.10t}
 p \int_{0}^{\infty}\sigma(\{Q \in \partial \Omega: A^{(\alpha)}(f) > \lambda\})\lambda^{p-1} d\lambda& \le&
C p \int_{0}^{\infty}\sigma(\{Q \in \partial \Omega: A(f) > \lambda\})\lambda^{p-1} d\lambda\\
&+&
+C p \int_{0}^{\infty}\lambda^{p-3}\int_{\{ A(f) \le
 \lambda\}} (A(f)(Q))^2 d\sigma d\lambda.\nonumber
 \end{eqnarray}
If $p<1$, Fubini's Theorem applied to the second term yields
\begin{eqnarray}
  \int_{0}^{\infty}\lambda^{p-3}\int_{\{ A(f) \le
 \lambda\}} (A(f)(Q))^2 d\sigma d\lambda &=& \int_{0}^{\infty}\int_{\partial \Omega} \chi_{\{ Af \le
 \lambda\}}(A(f)(Q))^2 \lambda^{p-3} d\sigma d\lambda \\
   &=& \int_{\partial \Omega}(Af(Q))^2 \int_{Af(Q)}^{\infty}  \lambda^{p-3} d\lambda d\sigma \nonumber\\
   &=& C_p\int_{\partial \Omega}(Af(Q))^2 (Af(Q))^{p-2}  d\sigma(Q).\nonumber
\end{eqnarray}
Thus for $p<2$ $ ||A^{(\alpha)}f||^p_{L^p} \le
 C ||Af||^p_{L^p}.$

For the case $p \ge
 2$, if $\frac{1}{r}+\frac{2}{p}=1$ observe that
\begin{equation}\label{tt-dual}
||A^{(\alpha)}f||^2_{p} = \sup_{\psi}\left\{\int_{\partial \Omega}(A^{(\alpha)}f)^2 \psi d\sigma:\ \psi \in L^{r}(\sigma),\ ||\psi||_{L^r} \le 1\right\}
\end{equation}
Note that
$$ \int_{\partial \Omega}(A^{(\alpha)}f)^2 \psi d\sigma \le
 \left(\int_{\partial \Omega}(A^{(\alpha)}f)^p\right)^{2/p}\left(\int_{\partial \Omega}\psi^r \right)^{1/r}.$$
Also if $X \in \Gamma_{\alpha}(Q)$ then $|X-Q| < (1+\alpha)\delta(X)$ and $|Q-Q_X| < (2+\alpha)\delta(X)$ therefore
\begin{eqnarray}\label{star24}
  \int_{\partial \Omega}(A^{(\alpha)}(f)(Q))^2 \psi(Q) d\sigma(Q) &=& \int_{\partial \Omega} \int_{\Gamma_{\alpha}(Q)}f^2(X)\frac{dX}{\delta(X)^n} \psi(Q) d\sigma(Q)\\
   &=& \int_{\partial \Omega} \int_{\Omega}\frac{f^2(X)}{\delta(X)^n} \chi_{\Gamma_{\alpha}(Q)}(X) \chi_{\Delta(Q_X, (2+\alpha)\delta(X))}(Q)\psi(Q) dX d\sigma \nonumber\\
   &=& \int_{\Omega} \left(\int_{\partial \Omega} \chi_{\Delta(Q_X, (2+\alpha)\delta(X))}(Q)\psi(Q) \chi_{\Gamma_{\alpha}(Q)}(X)  d\sigma(Q) \right) \frac{f^2(X)}{\delta(X)^n}dX \nonumber \\
   &\le
& \int_{\Omega} \frac{f^2(X)}{\delta(X)^n} \int_{\Delta(Q_X, (2+\alpha)\delta(X))} \psi(Q) d\sigma(Q) dX \nonumber\\
   &\le
& C_{\alpha}\int_{\Omega} M_{(2+\alpha)\delta(X)} \psi(Q_X) \frac{f^2(X)}{\delta(X)} dX \nonumber
\end{eqnarray}
where $$ M_s \psi(P)=\frac{1}{s^{n-1}}\int_{\Delta(P, s)} \psi(Q)d\sigma(Q).$$
Let $$ \psi^{\ast}(P)= \sup_{s>0}\frac{1}{s^{n-1}}\int_{\Delta(P, s)} \psi(Q)d\sigma(Q)$$
then $$ M_s(M_{\beta s} \psi)(P)=\frac{1}{s^{n-1}}\int_{\Delta(P, s)} \frac{1}{(\beta s)^{n-1}}\int_{\Delta(Q,\beta s)} \psi(X)d\sigma(X)d\sigma(Q).$$
If $\beta >1$ and $Q \in \Delta(P, s)$ observe that $\Delta(P, (\beta-1)s) \subset \Delta(Q, \beta s)$ and
\begin{eqnarray*}
 M_s(M_{\beta s} \psi)(P)& \ge
 & \frac{1}{s^{n-1}}\int_{\Delta(P, s)} \frac{1}{(\beta s)^{n-1}}\int_{\Delta(P,(\beta-1)s)} \psi(X)d\sigma(X)d\sigma(Q) \\
   &\ge
 & C \frac{1}{(\beta s)^{n-1}}\int_{\Delta(P,(\beta-1)s)} \psi(X)d\sigma(X) \\
   &\ge
 & C_{\beta}M_{(\beta-1)s} \psi.
\end{eqnarray*}
That is for $\beta >1$
\begin{equation}\label{tt4.6}
M_{(\beta-1)s} \psi \le
 C_{\beta}M_s(M_{\beta s} \psi) \le
 C_{\beta}M_s \psi^{*}
\end{equation}
since $M_{\beta s} \psi \le
 C \psi^{\ast} $. Plugging (\ref{tt4.6}) into (\ref{star24}) with $s=\delta(X)$, $\beta-1=2+\alpha$ we obtain
\begin{eqnarray*}
  \int_{\partial \Omega}(A^{(\alpha)}(f)(Q))^2 \psi(Q) d\sigma(Q) & \lesssim & C_{\alpha} \int_{\Omega}M_{(2+\alpha)\delta(X)} \psi(Q_X)\frac{f^2(X)}{\delta(X)} dX\nonumber\\
   & \le
 & C_{\alpha} \int_{\Omega}M_{\delta(X)} \psi^{\ast}(Q_X)\frac{f^2(X)}{\delta(X)} dX\nonumber\\
   &\le
&  C_{\alpha} \int_{\Omega}\frac{f^2(X)}{\delta(X)^n} \int_{\Delta(Q_X, \delta(X))}\psi^{\ast}(Q)d\sigma(Q) dX\nonumber\\
   &\le
&  C_{\alpha} \int_{\Omega}\int_{\partial \Omega}\frac{f^2(X)}{\delta(X)^n}\chi_{\Delta(Q_X,\delta(X))}(Q) \chi_{\Gamma(Q)}(X) \psi^{\ast}(Q)d\sigma(Q)dX \nonumber\\
   & \le
 & C_{\alpha}\int_{\partial \Omega} \left(\int_{\Gamma(Q)}\frac{f^2(X)}{\delta(X)^n}dX \right) \psi^{\ast}(Q)d\sigma(Q)\nonumber  \\
   &\le
 &  C_{\alpha}\int_{\partial \Omega} (Af)^2(Q)\psi^{\ast}(Q)d\sigma(Q)\nonumber \\
   &\le
&  C_{\alpha}||Af||^2_{L^p}||\psi^{\ast}||_{L^r} \le
 C_{\alpha}||Af||^2_{L^p}||\psi||_{L^r}\le
 C_{\alpha}||Af||^2_{L^p}\nonumber
\end{eqnarray*}
where we have used the fact that if $Q\in \Delta(Q_X,\delta(X))$ then $|X-Q|\le
 |Q-Q_X|+|X-Q_X|\le
 2\delta(X)$ and the fact that the maximal function of $\psi$ is bounded in $L^r(\sigma)$.
Taking the supremum over all $\psi$ yields that  $ ||A^{(\alpha)}f||_{L^p} \le
 C_{\alpha}||Af||_{L^p}$ for $2\le p < \infty.$
\hfill $\Box$

\begin{defn}
A $T^1$ atom is a function $a(X)$ which is supported in $T(\Delta)$, $\Delta=B(Q_0,r)\cap\partial \Omega$ for $Q_0\in\partial \Omega$ and
\begin{equation}\label{tt4.9'}
\int_{T(\Delta)}a^2(X)\frac{dX}{\delta(X)}\le
 \frac{1}{\sigma(\Delta)}.
\end{equation}
\end{defn}
Observe that if $a$ is supported in $B(Q_0,r)\cap\Omega$ then $A(a)$ given by
$$A(a)=\bigg(\int_{\Gamma(Q)}\frac{a^2(X)}{\delta(X)^n}dX\bigg)^{1/2}$$
is supported in $\Delta(Q_0,3r)$. Indeed, if $X\in\Gamma(Q)$ and $|Q-Q_0|\ge
 3r$ then $|X-Q_0|\ge
 r$ which gives $a(X)=0$ thus $A(a)(Q)=0$. Using (\ref{tt4.9'}) we estimate,
\begin{eqnarray}\nonumber
                 \int_{\partial \Omega}A^2(a)(Q)d\sigma(Q) &=& \int_{\partial\Omega}\int_{\Gamma(Q)}\frac{a^2(X)}{\delta(X)^n}dXd\sigma(Q)=\int_{\partial\Omega}\int_{\Omega}\chi_{\Gamma(Q)}(X)\frac{a^2(X)}{\delta(X)^n}dXd\sigma(Q) \\
                  &\le
 & \int_{\Omega}\int_{\partial\Omega}\chi_{\Delta(Q_X,3\delta(X))}(Q)\chi_{\Gamma(Q)}(X)\frac{a^2(X)}{\delta(X)^n}d\sigma(Q)dX \nonumber \\
                  &\le
& C\int_{\Omega}\frac{a^2(X)}{\delta(X)}dX=C\int_{T(\Delta)}\frac{a^2(X)}{\delta(X)}dX\lesssim \frac{1}{\sigma(\Delta)}   \nonumber        \end{eqnarray}
which yields
$$\int_{\partial\Omega}A(a)(Q)d\sigma(Q)\le
 \bigg(\int_{\partial\Omega}A^2(a)(Q)d\sigma(Q)\bigg)^{1/2}\bigg(\sigma(\Delta)\bigg)^{1/2}\le
 C_n$$
where $C_n$ depends on the Ahlfors regularity constant. Thus, if $a$ is a $T^1$ atom, then $a\in T^1$ and $||a||_{T^1}=||A(a)||_{L^1}\le
 C_n$.

We are now ready to prove the duality relation between $T^1$ and $T^\infty$ (see (\ref{N1}) and (\ref{Tinfty})).

\begin{thm}\label{thm32}
If $G\in(T^1)^*$ then there exists a $g\in T^\infty$ such that for every $f\in T^1$
$$|G(f)|\simeq\bigg|\int_{\Omega}f(x)g(x)\frac{dX}{\delta(X)}\bigg|.$$
\end{thm}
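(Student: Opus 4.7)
The plan is to follow the Hilbert-space duality argument of Coifman--Meyer--Stein \cite{cms}, adapted to CADs using the geometric tools of Section~\ref{nontan}. One direction is immediate from Theorem~\ref{theorem2}(a): every $g\in T^\infty$ defines a bounded linear functional on $T^1$ via $f\mapsto\int_\Omega fg\,dX/\delta(X)$ with operator norm $\lesssim\|g\|_{T^\infty}$.

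For the converse, fix $G\in(T^1)^*$ and a surface ball $\Delta$. If $f\in L^2(\Omega)$ is supported in the Carleson region $T(\Delta)$, set $M=\bigl(\int_{T(\Delta)}f^2\,dX/\delta(X)\bigr)^{1/2}$ and $a=f/(M\sigma(\Delta)^{1/2})$. Then $a$ satisfies the $T^1$-atom condition, and the computation following the atom definition shows $\|a\|_{T^1}\le C_n$. Consequently
$$|G(f)|\le C_n\|G\|_{(T^1)^*}\,\sigma(\Delta)^{1/2}\,\Bigl(\int_{T(\Delta)}f^2\,\tfrac{dX}{\delta(X)}\Bigr)^{1/2},$$
so $G$ restricts to a bounded linear functional on the Hilbert space $L^2(T(\Delta),dX/\delta(X))$. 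By Riesz representation there is a unique $g_\Delta$ supported in $T(\Delta)$ with
$$G(f)=\int_{T(\Delta)}f\,g_\Delta\,\frac{dX}{\delta(X)}\qquad\text{and}\qquad\int_{T(\Delta)}g_\Delta^{\,2}\,\frac{dX}{\delta(X)}\le C_n\|G\|^2\sigma(\Delta).$$

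Uniqueness forces $g_{\Delta_1}=g_{\Delta_2}$ on $T(\Delta_1)$ whenever $\Delta_1\subset\Delta_2$, so exhausting $\Omega$ by an increasing family of Carleson regions produces a single measurable $g$ on $\Omega$ that represents $G$ on every such $T(\Delta)$ and satisfies $C(g)(Q)\le C_n\|G\|$ for every $Q\in\partial\Omega$; in particular $g\in T^\infty$ with $\|g\|_{T^\infty}\lesssim\|G\|$. To upgrade the integral representation from $f$ supported in a single Carleson region to arbitrary $f\in T^1$, the plan is to invoke a $T^1$-atomic decomposition $f=\sum_k\lambda_k a_k$ with $\sum|\lambda_k|\lesssim\|f\|_{T^1}$, and then pass to the limit using continuity of $G$ together with the easy direction applied to $g$.

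The main obstacle is establishing the $T^1$-atomic decomposition on a CAD. Following \cite{cms}, one performs a stopping-time/Calder\'on--Zygmund decomposition on the level sets $\{A(f)>2^k\}\subset\partial\Omega$, using the Whitney decomposition of open boundary sets (Lemma~\ref{one}) together with the Carleson packing estimate (Lemma~\ref{three}) to control the $T^1$-norm of the resulting pieces; the Ahlfors regularity and NTA properties of CADs are precisely what allow the classical stopping-time arguments to carry over to this setting.
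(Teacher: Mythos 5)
Your overall strategy --- localize $G$, apply the Riesz representation theorem, patch the local representing functions into a global $g$, and read off the Carleson condition --- is the same as the paper's. You differ in two concrete ways.

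First, you localize to Carleson regions $T(\Delta)$ and represent $G$ on the weighted Hilbert space $L^2(T(\Delta), dX/\delta(X))$, so the bound $C(g)(Q)\lesssim\|G\|$ falls out directly from the Riesz norm estimate. The paper instead localizes to interior compact sets $K\subset\subset\Omega$, represents $G$ on unweighted $L^2(K)$ to get $g\in L^2_{\mathrm{loc}}(\Omega)$, and then verifies $C(g)\in L^\infty$ separately by testing $G$ against the auxiliary atoms
$a_m=g\chi_{T(\Delta)\cap\{\delta>r/m\}}\bigl(\sigma(\Delta)\int_{T(\Delta)\cap\{\delta>r/m\}}g^2/\delta\bigr)^{-1/2}$
and letting $m\to\infty$. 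Your variant is perhaps a bit more streamlined here; both are valid and yield the same conclusion.

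Second --- and this is the substantive divergence --- to upgrade the representation formula from Carleson-supported $f$ to all of $T^1$, you propose to prove a $T^1$-atomic decomposition on CADs, and you correctly flag this as the main obstacle. That is a genuinely hard tool which the paper never establishes and does not need. The paper instead uses the much simpler fact that compactly supported functions are dense in $T^1$: set $E_m(f)(Q)=\int_{\Gamma(Q)\cap K_m^c}f^2(X)\delta(X)^{-n}dX$ for an increasing compact exhaustion $\{K_m\}$; then $E_m^{1/2}\le A(f)\in L^1(\sigma)$ and $E_m\to 0$ $\sigma$-a.e., so by dominated convergence $\|f-f\chi_{K_m}\|_{T^1}=\int E_m^{1/2}\,d\sigma\to 0$. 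Since compactly supported $f$ fall within the class you have already handled (they lie in some $T(\Delta)$ with $\int f^2/\delta<\infty$), this density argument closes the proof immediately. I would replace your appeal to atomic decomposition with this observation; as written, your proposal leaves its hardest and only unestablished ingredient unproven, when a one-paragraph dominated convergence argument suffices.

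One smaller caveat: when you write "if $f\in L^2(\Omega)$ is supported in $T(\Delta)$, set $M=(\int_{T(\Delta)}f^2\,dX/\delta)^{1/2}$," you should restrict to $f$ for which this quantity is finite, i.e.\ to $f\in L^2(T(\Delta),dX/\delta(X))$; mere membership in $L^2(dX)$ does not guarantee $M<\infty$ because the weight $1/\delta$ blows up at the boundary. With that clarification your Riesz step is sound.
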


\begin{proof} We first notice that Theorem \ref{theorem2} shows that every $g\in T^\infty$ induces an element in $(T^1)^*$.
Let $G\in (T^1)^*$ and note that if $K$ is a compact set in $\Omega$ and $f$ is supported in $K$ with $f\in L^{2}(K)$ then $f\in T^1$. 
First we consider $K=\overline{B(X_0,r)}$ with ${\rm{dist}(K,\partial\Omega)}\ge
 \epsilon_0$. If $X\in \Gamma(Q)\cap \overline{B(X_0,r)}$ then
\begin{eqnarray*}
  |Q-Q_{X_0}| &\le
& |Q-X|+|X-X_0|+|X_0-Q_{X_0}| < 2\delta(X)+r+\delta(X_0)\\
   &\le
& 2|X-Q_{X_0}|+r+\delta(X_0)\le
 2|X-X_0|+2|X_0-Q_{X_0}|+r+\delta(X_0)\\
 &\le&
 3r+3\delta(X_0)<4r+4\delta(X_0)
\end{eqnarray*}

Thus
\begin{eqnarray}
\int_{\partial\Omega}A(f)(Q)d\sigma(Q) &=&\int_{\partial\Omega}\bigg(\int_{\Gamma(Q)\cap \overline{B(X_0,r)}}\frac{f^2(X)}{\delta(X)^n}dX\bigg)^{1/2}d\sigma(Q)\nonumber\\
&\le&
 \int_{\Delta(Q_{X_0,4r+4\delta(X_0)})}\bigg(\int_{\Gamma(Q)\cap B(X_0,r)}\frac{f^2(X)}{\delta(X)^n}dX\bigg)^{1/2}d\sigma(Q)\nonumber\\
&\lesssim& \epsilon_0^{-n/2}(r+\delta(X_0))^{n-1}\bigg(\int_{\overline{B(X_0,r)}}f^2(X)\bigg)^{1/2}.\nonumber
\end{eqnarray}

If $K\subseteq \bigcup_{i=1}^m\overline{B(X_i,r_i)}$ with $\overline{B(X_i,r_i)}\subset\subset \Omega$, ${\rm{dist}}(B(X_i,r_i),\partial\Omega)\ge
 r_i>\frac{1}{2}{\rm{dist}}(K,\partial\Omega)=\varepsilon_K$ and $r_i\le
{\rm{diam}}\,K$ then $\delta(X_i)\le
{\rm{diam}}\,K+{\rm{dist}}(K,\partial\Omega)$ and
$$\int_{\partial\Omega}A(f)(Q)d\sigma(Q)\le
 \int_{\partial\Omega}\sum_{i=1}^m\bigg(\int_{\Gamma(Q)\cap B(X_i,r_i)}\frac{f^2(X)}{\delta(X)^n}dX\bigg)^{1/2}d\sigma(Q)$$
$$\le
 \sum_{i=1}^mC_K\varepsilon_K^{-n/2}\bigg(\int_{\overline{B(X_i,r_i)}}f^2(X)dX\bigg)^{1/2}(r_i+\delta(X_i))^{n-1}\le
 C_K||f||_{L^2(K)},$$
therefore for $f$ compactly supported
$$|G(f)|\le
 C||f||_{T^1}\le
 C_K||f||_{L^2(K)}.$$
Thus $G$ induces a bounded linear functional on $L^2(K)$ which can be represented by a $g_K\in L^2(K)$. Taking an increasing family of such $K$ which exhaust $\Omega$, gives us a function $g\in L^2_{{\rm{loc}}}(\Omega)$ and 
\begin{equation}\label{4.101}
G(f)=\int_\Omega f(X)g(X)\frac{dX}{\delta(X)}
\end{equation}
whenever $f\in T^1$ with compact support in $\Omega$.

Let $a\in T^1$ be an atom supported on $T(\Delta)$ then
$$|G(a)|\le
 ||G||||a||_{T^1}\le
 C_n||G||.$$
For the atom
$$a_m=g\chi_{T(\Delta)\cap\{\delta(X)>r/m\}} \bigg(\sigma(\Delta))\int_{T(\Delta)\cap\{\delta(X)>r/m\}}\frac{g^2(X)}{\delta(X)}dX\bigg)^{-1/2}$$
where $\Delta=B(Q,r)\cap \partial \Omega$, we have
$$|G(a_m)|=\bigg(\frac{1}{\sigma(\Delta)}\int_{T(\Delta)\cap\{\delta(X)>r/m\}}\frac{g^2(X)}{\delta(X)}dX\bigg)^{1/2}\le
 C_n||G||$$
and if $m\rightarrow \infty$
$$\bigg(\frac{1}{\sigma(\Delta)}\int_{T(\Delta)}\frac{g^2(X)}{\delta(X)}dX\bigg)^{1/2}\le
 C_n||G||$$
which shows that $C(g)\in L^\infty$. This representation of $G$ (as in (\ref{4.101}) can be extended to all of $T^1$ since the subspace of the functions with compact support is dense in $T^1$.
\end{proof}

\section{Duality of $T^p$ spaces}
The main purpose of the present section is study the dual spaces to $T^p$ spaces for $1< p< \infty$. The main result is contained in the following theorem.

\begin{thm} \label{thm3} Let $1< p< \infty$. The dual of $T^p$ is the space $T^q$ with $\frac{1}{p}+\frac{1}{q}=1$. More precisely if
$G \in (T^p)^{\ast}$ then there exists  $g \in T^q$ such that for every $f \in T^p$
$$ G(f)= \int_{\Omega}f(X)g(X)\frac{dX}{\delta(X)}\ \ \hbox{ and }\ \  ||G||\simeq ||g||_{T^q}
.$$
\end{thm}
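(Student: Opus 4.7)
The plan is to prove the theorem by showing both inclusions: every $g\in T^q$ induces a bounded functional on $T^p$ via the pairing $\langle f,g\rangle=\int_\Omega fg\,\delta(X)^{-1}dX$, and conversely every $G\in (T^p)^\ast$ admits such a representer $g\in T^q$. The first inclusion is essentially immediate from the technology already developed: combining (\ref{H4.1}) from the proof of Theorem \ref{theorem2} (which gives the pointwise bound $\int_\Omega fg\,\delta^{-1}dX\lesssim \int_{\partial\Omega}A_{\tau(Q)}(f)A_{\tau(Q)}(g)\,d\sigma$) with $A_{\tau}\le A$ and H\"older's inequality on $\partial\Omega$ yields $|\langle f,g\rangle|\lesssim \|f\|_{T^p}\|g\|_{T^q}$. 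So the map $g\mapsto \langle \cdot,g\rangle$ embeds $T^q$ into $(T^p)^\ast$ with control on norms.

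For the converse, I would first produce $g\in L^2_{\mathrm{loc}}(\Omega)$ representing $G$ by exactly the same exhaustion argument as in the proof of Theorem \ref{thm32}. For a compact $K\subset\subset\Omega$ with $\mathrm{dist}(K,\partial\Omega)\ge\varepsilon_0$ and $f\in L^2(\Omega)$ supported in $K$, the same cone computation shows $\|f\|_{T^p}\le C_K\|f\|_{L^2(K)}$ (now with an $L^2$-to-$L^p$ step on a bounded surface ball that is harmless because the integrand is pointwise controlled). Hence $G$ restricted to $L^2(K)$ is a bounded linear functional, and Riesz yields a $g_K\in L^2(K)$; exhausting $\Omega$ produces $g\in L^2_{\mathrm{loc}}(\Omega)$ with $G(f)=\int_\Omega fg\,\delta^{-1}dX$ for every compactly supported $f\in T^p$.

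The main step, and the main obstacle, is verifying $A(g)\in L^q(\sigma)$ with $\|A(g)\|_{L^q}\lesssim\|G\|$. I would introduce truncations $g_k=g\chi_{\{1/k<\delta(X)<k,\ |X|<k\}}$, so that $A(g_k)\in L^\infty(\sigma)$ and $A(g_k)\nearrow A(g)$ by monotone convergence; it suffices to bound $\|A(g_k)\|_{L^q}$ uniformly. For $q>2$, I would use the $L^{q/2}$-duality
\[
\|A(g_k)\|_{L^q}^{2}=\sup_{\phi\ge0,\ \|\phi\|_{L^{r}}\le1}\int_{\partial\Omega}A(g_k)^2\,\phi\,d\sigma,\qquad \tfrac1r+\tfrac2q=1,
\]
and test $G$ against $f(X)=g_k(X)\Phi(X)$, where $\Phi(X)=\delta(X)^{1-n}\int_{\Delta(Q_X,\delta(X))}\phi\,d\sigma$ is the natural Carleson-type lift of $\phi$. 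A Fubini argument identical to (\ref{star24}) shows that $G(f)=\int g_k^2\Phi\,\delta^{-1}dX\simeq \int_{\partial\Omega}A(g_k)^2\phi\,d\sigma$, and applying $|G(f)|\le\|G\|\,\|f\|_{T^p}$ reduces everything to the bound $\|f\|_{T^p}\lesssim \|\phi\|_{L^r}\|A(g_k)\|_{L^q}^{q-1}$. This last estimate is obtained by running the same maximal-function/aperture-change computation used in the $p\ge2$ half of Proposition \ref{prop31}, now with the cone integrals of $\Phi^2$ controlled by $(\psi^\ast)^2$ and the Hardy–Littlewood maximal theorem on the Ahlfors regular surface. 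Rearranging gives $\|A(g_k)\|_{L^q}\lesssim\|G\|$ uniformly in $k$.

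The case $1<q<2$ (equivalently $p>2$) cannot use the $L^{q/2}$ duality directly; I would instead dualize at the level of $A(g_k)$ itself, write $\|A(g_k)\|_{L^q}=\sup\{\int A(g_k)\psi\,d\sigma:\|\psi\|_{L^p}\le1,\ \psi\ge0\}$, and use an atomic-style test function $f$ built from $g_k/A(g_k)$ times a Carleson average of $\psi$, estimating $\|f\|_{T^p}$ via Proposition \ref{prop31} and Lemmas \ref{four}–\ref{six} in reverse. The principal obstacle throughout is that the cone integrations and their reverse counterparts on a CAD are only available up to the $\gamma$-density machinery of Proposition \ref{size-g-dens-set} and the corollary to Lemma \ref{six}; every Fubini step involving the cones $\Gamma_\alpha(Q)$ must be funneled through these integration lemmas rather than the clean flat-space identities used in \cite{cms}. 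Once $g\in T^q$ is established, the representation $G(f)=\int fg\,\delta^{-1}dX$ extends to all $f\in T^p$ by density of compactly supported functions (which in turn follows from dominated convergence applied to $A(f\chi_{\{\delta>1/k\}\cap B(0,k)})$).
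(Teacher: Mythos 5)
Your plan for $1<p<2$ follows the paper closely: produce $g\in L^2_{\mathrm{loc}}(\Omega)$ via exhaustion and Riesz, then bound $\|A(g_K)\|_{L^q}$ by testing $G$ against a function of the form $g_K$ multiplied by a maximal-function lift of a nonnegative $\psi\in L^r(\sigma)$ with $\tfrac1r+\tfrac2q=1$, and close the loop with the Hardy--Littlewood maximal theorem. One small but real slip: the estimate you need is $\|h_\psi\|_{T^p}\lesssim \|\psi\|_{L^r}\|A(g_K)\|_{L^q}$ (exponent $1$), not $\|A(g_K)\|_{L^q}^{q-1}$. Indeed $A(h_\psi)(Q)\lesssim M\psi(Q)\,A(g_K)(Q)$ and H\"older with $\tfrac{p}{r}+\tfrac{p}{q}=1$ gives the first power; with your stated $q-1$ power the ``rearranging'' step only closes when $q<3$ and the argument collapses for large $q$.

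The genuine gap is your treatment of $p>2$. You propose to dualize at the level of $A(g_k)$ itself and build a test function from $g_k/A(g_k)$ times a Carleson lift of $\psi$. This is not fleshed out, and it runs into real trouble: $A(g_k)$ is a function on $\partial\Omega$, so the putative test function $f(X)=g_k(X)\Psi(X)$ has $\Psi$ incorporating a division by $A(g_k)(Q_X)$ (or an average thereof), and there is no control on the oscillation of $A(g_k)$ inside cones, nor on the zero set of $A(g_k)$. The ``run Proposition~\ref{prop31} in reverse'' step would require a reverse-H\"older–type substitute that you do not have. The paper avoids all of this: having established $(T^p)^\ast=T^q$ for $1<p<2$, it proves that $T^p$ is reflexive for $1<p<2$ via the Eberlein--\v{S}muljan theorem (using the uniform bound $\|f_n\|_{L^2(K)}\le C_K\|f_n\|_{T^p}$ to extract weakly convergent subsequences on compact sets, and the representation $G(f)=\int fg\,\delta^{-1}dX$ with $g\in T^q$ to upgrade to weak convergence in $T^p$). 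Reflexivity then yields $(T^p)^\ast=T^q$ for $p>2$ for free. You should replace your $p>2$ sketch with this reflexivity argument, or at minimum supply the missing estimate justifying the division by $A(g_k)$.
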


\begin{proof}
As in \cite{cms} we first study the case $p=2$. Note that from Lemmas \ref{four} and \ref{six} there exists a constant $C_n$ such that
\begin{equation}\label{5.tt.1}
 C_n^{-1}\int_{\Omega}f^2(X)\frac{dX}{\delta(X)} \le
 {||f||^2}_{T^2} \le
 C_n\int_{\Omega}f^2(X)\frac{dX}{\delta(X)} 
 \end{equation}
 Given $g\in T^2$ the proof of Theorem \ref{theorem2} (see (\ref{H4.1})) yields that the operator $G$ defined by
$$ G(f)= \int_{\Omega}f(X)g(X)\frac{dX}{\delta(X)} $$
satisfies
\begin{equation}\label{5.tt.2}
|G(f)|
\le
 C \int_{\partial \Omega}A(f)(Q)A(g)(Q)d\sigma(Q) \le C\|f\|_{T^2}\|g\|_{T^2}.
 \end{equation}
Thus $G\in ({T^2})^{\ast}$.

Consider $T^2$ with the norm induced by the inner product
$$ \langle f,g\rangle= \int_{\Omega}\frac{f(X)g(X)}{\delta(X)}dX$$
then $(T^2,\langle,\rangle)$ is a Hilbert space. Given $G\in ({T^2})^{\ast}$ by the Riesz Representation Theorem there exists $g \in (T^2, \langle\ ,\ \rangle)$ such that $$ G(f)= \int_{\Omega}f(X)g(X)\frac{dX}{\delta(X)}. $$ By (\ref{5.tt.1}) and (\ref{5.tt.2}) we have that
$$ C_n^{-1}||g||_{T^2} \le
 ||G|| \le
  C_n ||g||_{T^2}.$$

Consider the case $1<p<2$. Let $G\in ({T^p})^{\ast}$. For $f \in L^2(\Omega)$ with compact support $K\subset\subset\Omega$,let
$ S =\{Q \in \partial \Omega: \Gamma(Q)\cap K \neq \emptyset \}$ and $\varepsilon_K={\rm{dist}}(K,\partial\Omega)$
then
$|G(f)|\le  ||G||||f||_{T^p}$
and
\begin{equation}\label{H-E}
||f||^{p}_{T^p}\le
 \int_S\bigg(\int_{\Gamma(Q)\cap K}\frac{f^2(X)}{\delta(X)^n}dX\bigg)^{p/2}d\sigma\le
 \bigg(\int_K\frac{f^2(X)}{\delta(X)^n}dX\bigg)^{p/2}\sigma(S)\le
  C_K \bigg(\int_{K}\frac{f^2(X)}{\delta^2(X)}dX\bigg)^{1/2}
\end{equation}
Thus $|G(f)|\le
 C_K||f/\delta||_{T^{2}}$. By the Riesz Representation Theorem there exists $g$ which is locally in $L^2(\Omega)$ such that
$$
G(f)= \int_{\Omega}\frac{f(X)g(X)}{\delta(X)}dX
$$
 whenever $f \in L^2(\Omega)$ and has compact support in $\Omega$. Note that for every $K\subset\subset\Omega$, $f\in L^2(\Omega)$ by Theorem \ref{theorem2} (b)
\begin{eqnarray}\label{tt5.1}
  \int_{K}f^2(X)dX &\le
 & C_K \int_{\Omega}\frac{(f\chi_K)(X)(f\chi_K)(X)}{\delta(X)}dX \\
                   &\le
 & C_K \int_{\partial\Omega}A(f\chi_K)(Q)C(f\chi_K)(Q)d\sigma(Q)  \nonumber\\
                   &\le
 & C_K \bigg(\int_{\partial\Omega} A^p(f\chi_K)(Q) d\sigma(Q)\bigg)^{1/p} \bigg(\int_{\partial\Omega} C^q(f\chi_K)(Q) d\sigma(Q)\bigg)^{1/q} 
\end{eqnarray}
where $1/p+1/q=1$. By the definition of $C(f\chi_K)$ and if $\delta_K(Q)$ denotes the distance of Q to the set $K$, 
$$ 
C(f\chi_K)(Q)=\sup_{Q \in \Delta}\left(\frac{1}{\sigma(\Delta)} \int_{T(\Delta)}\frac{(f\chi_K)^2(X)}{\delta(X)}dX\right)^{1/2} \lesssim \left(\int_{T(\Delta)}{f^2(X)}{} dX \right)^{1/2}\varepsilon_K^{-\frac{}{2}}$$
and
\begin{equation}\label{tt5.2}
\int_{\partial \Omega}(C(f\chi_K)(Q))^qd\sigma(Q) \le
 C_K \left(\int_K f^2(X)dX\right)^{q/2}.
\end{equation}
Combining (\ref{tt5.1}) and (\ref{tt5.2}) we have for $p>1$
$$ \left(\int_K f^2(X)dX\right)^{1/2} \le  
 C(K,p) \|f\chi_K\|_{T^p}.$$

Observe that the set $\{f \in T^p:\ f{\rm{\ is\  compactly\ supported\ in}} \ \Omega\}$ is dense in $T^p$. Indeed, let us choose an increasing family of compact sets $\{K_n\}$ which exhaust $\Omega$. For $f \in T^p$ consider $f_m=f \chi_{K_m} \in T^p$. Then
$$ ||f_m-f ||_{T^p}=||A(f_m-f) ||_{L^p(\sigma)}$$ 
and
$$
||A(f_m-f) ||_{L^p(\sigma)
}=\left(\int_{\partial \Omega}\bigg[\int_{\Gamma(Q)}\frac{(f_m-f)^2(X)}{\delta(X)^n}dX \bigg]^{p/2}d\sigma\right)^{1/2}
=\left(\int_{\partial\Omega}( E_m(f)(Q))^{p/2}d\sigma(Q)\right)^{1/2}
$$
where $$E_m(f)(Q)=\int_{\Gamma(Q)\cap K_m^c}\frac{f^2(X)}{\delta(X)^n}dX. $$
Note that $0 \le
 E_m(f)(Q) \le
 E_{m-1}(f)(Q)\le
 \ldots \le
 E(f)(Q)=\int_{\Gamma(Q)}\frac{f^2(X)}{\delta(X)^n}dX$. 
 Since $f\in T^p$, $E_m(f) \rightarrow 0$ a.e. $Q \in \partial \Omega$ and by the Dominated
Convergence Theorem 
$$ 
\lim_{m\rightarrow \infty}\int_{\partial \Omega}E_m(f)(Q) d\sigma(Q) =0\ \ \hbox{ and }\ \ 
\lim_{m\rightarrow \infty}||A(f_m-f) ||_{L^p}=0.
$$

We claim that for $g$ as above there exists $C'>0$ such that 
\begin{equation}\label{star48}
  ||A(g_K) ||_{L^q} \le
 C'||G||
\end{equation}
where  $g_K=g\chi_K$, and $K$ is any compact subset of $\Omega$.
The key point is that $C'$ is a constant independent of the choice of the set $K$. Note that this ensures that 
\begin{equation}\label{5.tt.7}
\|g\|_{T^q}\le C' \|G\|.
\end{equation}

Let $r$ denote the exponent dual to $q/2$, $\frac{1}{r}+\frac{2}{q}=1$. Then, as in the proof of Proposition \ref{prop31} (see (\ref{tt-dual})),
\begin{equation}\label{5.tt.6}
||A(g_K)||_{L^q}^2=\sup_{\psi}\left\{\int_{\partial\Omega}(A(g_K)(Q))^2\psi(Q)d\sigma(Q): \psi\ge 0, \ \psi\in L^r(\partial\Omega),\ \
||\psi||_{L^r}\le 1\right\}
\end{equation}

 As in the proof of Proposition \ref{prop31} to obtain (see (\ref{star24}))
\begin{equation}\label{5.tt.4}
\sup_{\psi}\int_{\partial\Omega}(A(g_K)(Q))^2\psi(Q)d\sigma(Q)\le
 C\sup_{\psi}\int_{\Omega}\frac{g_K^2(X)}{\delta(X)}M_{3\delta(X)}\psi(Q_X)dX = C\sup_{\psi} G(h_\psi),
 \end{equation}
where $h_\psi(X)=g_K(X)M_{3\delta(X)}\psi(Q_X)$. Note that
$$M_{3\delta(X)}\psi(Q_X)=\frac{1}{(3\delta(X))^{n-1}}\int_{\Delta(Q_X,3\delta(X))}\psi(Y)d\sigma(Y)\le
 CM_{6\delta(X)}\psi(Q)\le
 CM\psi(Q)$$
\begin{equation}\label{5.tt.3}
A(h_\psi)(Q)=\bigg(\int_{\Gamma(Q)}\frac{g_K^2(X)M_{3\delta(X)}^2\psi(Q_X)}{\delta(X)^n}dX\bigg)^{1/2}
\lesssim M\psi(Q)A(g_K)(Q)
\end{equation}
where $M\psi$ denotes the maximal function of $\psi$.

Integrating (\ref{5.tt.3}), noting that $\frac{p}{r}+\frac{p}{q}=1$ and applying H\"{o}lder's inequality,   we conclude that
\begin{eqnarray}\label{5.tt.8}
\|h_\psi\|_{T^p}&=&\left(
  \int_{\partial\Omega}(A(h_\psi)(Q))^pd\sigma(Q) \right)^{1/p}\\
   &\lesssim& \bigg(\int_{\partial\Omega}(M\psi(Q))^rd\sigma(Q)\bigg)^{1/r}\bigg(\int_{\partial\Omega}(A(g_K)(Q))^qd\sigma(Q)\bigg)^{1/q}
   \nonumber\\
 & \lesssim& ||M\psi||_{L^r}||A(g_K)||_{L^q} \lesssim ||\psi||_{L^r}||A(g_K)||_{L^q} \nonumber
\end{eqnarray}

Since $h_\psi$ is compactly supported $h_\psi\in T^p$
\begin{equation}\label{5.tt.5}
|G(h_\psi)|\le
 ||G||||h_\psi||_{T^p}\lesssim ||G||\cdot||\psi||_{L^r}||g_K||_{T^q}
\end{equation}

Combining (\ref{5.tt.4}), (\ref{5.tt.5}) and (\ref{5.tt.8})
$$||A(g_K)||^2_{L^q}=\|g_K\|_{T^q}^2\le
 C ||G||\cdot ||g_K||_{T^q}.$$
where $C$ is independent of $K$. This proves (\ref{star48}) and (\ref{5.tt.7}). The density of compactly supported functions $f$ in $T^p$, 
ensures that $G(f)=\int_{\Omega}\frac{f(X)g(X)}{\delta(x)}dX$. Applying H\"older's inequality to (\ref{5.tt.2}) we conclude that $\|G\|\le \|g\|_{T^q}$. This combined with (\ref{5.tt.7}) guarantees that $\|G\|\simeq\|g\|_{T^q}$, which completes the proof in the case $1<p<2$.

To prove Theorem \ref{thm3} for any $p\in (2,\infty)$, it is enough to show that  for $1<p<2$, $T^p$ is reflexive.
By the Eberlein-Smulyan Theorem (see \cite{y}) it is enough to show that whenever $f_n \in T^p,$ $||f_n||_{T^p} \le
 1$ there exists a subsequence which converges weakly in $T^p$. If $\{f_n\} \in T^p$ with $||f_n||_{T^p} \le
 1,$ we have $$ \left(\int_{K}f^2_n(X)dX\right)^{1/2} \le
 C_K ||f_n||_{T^p} \le
 C_K$$
therefore taking a compact exhaustion of $\Omega$ we show that there exists a subsequence $\{f_{n_j}\} $ such that $f_{n_j} \rightharpoonup f$ in $L^2(K),$ for all $K \subset\subset \Omega$. Let $G\in (T^p)^\ast$, by the proof above there exists $g\in T^q$,
where $\frac{1}{p}+\frac{1}{q}=1$ such that $G(f)=\int_{\Omega}\frac{f(X)g(X)}{\delta(x)}dX$. 
Given $\varepsilon >0$ there exists a compact set K such that $ ||A(g-g_K)||_{T^q}=||g-g_K||_{T^q}<\varepsilon$ where $g_K=g\chi_K$.
Note that
\begin{eqnarray*}
  G(f_{n_j})-G(f_{n_i}) &=& \int_{\Omega}\frac{(f_{n_j}-f_{n_i})(X)}{\delta(X)}g(X) dX \\
  &=& \int_{\Omega}\frac{(f_{n_j}-f_{n_i})(X)}{\delta(X)} g_K(X) dX +\int_{\Omega}\frac{(f_{n_j}-f_{n_i})(X)}{\delta(X)}(g-g_K)(X) dX.
\end{eqnarray*}
Since  $f_{n_i} \rightharpoonup f$ in $L^2(K)$ and $ g_K(X)/\delta(X) \in L^2(K)$,
$$ \int_{\Omega}\frac{(f_{n_j}-f_{n_i})(X)}{\delta(X)} g_K(X) dX \longrightarrow 0
\ \ \hbox{ as }\ \ i,j \rightarrow \infty.
$$
Also by (\ref{H4.1})
\begin{eqnarray*}
  \bigg|\int_{\Omega}\frac{(f_{n_j}-f_{n_i})(X)}{\delta(X)}(g-g_K)(X) dX\bigg| 
     &\lesssim & \int_{\partial \Omega} A(f_{n_j}-f_{n_i})(Q)A(g-g_K)(Q)d\sigma(Q) \\
   &\le
 &  ||f_{n_j}-f_{n_i}||_{T^p}||g-g_K||_{T^q}< 2\varepsilon.
\end{eqnarray*}
Thus $\{G(f_{n_i})\}$ converges, which ensures that $\{f_{n_i}\}$ converges weakly in $T^p$.
\end{proof}

\section{Relation between integrals on cones $A$ and Carleson's function $C$}
\renewcommand{\thesection}{\arabic{section}}
In this section we study, as in \cite{cms}, the relation between the functionals $A$ and $C$. We show that if $2<p< \infty$ then
$||A(f)||_{L^p} \simeq ||C(f)||_{L^p}. $ 

\begin{thm} \label{relAC}
\begin{description}
  \item[a)] If $0<p< \infty$ then
$$||A(f)||_{L^p} \le
 C_p||C(f)||_{L^p}.$$
  \item[b)] If $2<p\le
 \infty$ then
$$||C(f)||_{L^p} \le
 C_p||A(f)||_{L^p}.$$
\end{description}

\end{thm}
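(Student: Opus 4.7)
The plan is to handle (b) through a pointwise maximal-function bound and (a) through a Coifman-Meyer-Stein style good-$\lambda$ inequality, using Proposition \ref{prop31} (aperture independence) and Lemma \ref{lem31} (openness of super-level sets) as the key CAD-adapted tools.

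For (b), I would first prove the pointwise estimate $C(f)(Q_0)^2 \le c\,M(A(f)^2)(Q_0)$ for every $Q_0\in\partial\Omega$, where $M$ denotes the Hardy--Littlewood maximal operator on $\partial\Omega$. Fix a surface ball $\Delta=\Delta(Q_1,r)\ni Q_0$ and set $\tilde\Delta=\Delta(Q_1,3r)$. For any $X\in T(\Delta)$ one has $\delta(X)\le |X-Q_1|\le r$, so $|Q_X-Q_1|\le 2r$ and $\Delta(Q_X,\delta(X))\subset\tilde\Delta$; moreover $\Delta(Q_X,\delta(X))\subset\{Q:X\in\Gamma(Q)\}$. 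Fubini and Ahlfors regularity then give
$$\int_{\tilde\Delta}A(f)^2\,d\sigma=\int_\Omega \frac{f^2(X)}{\delta(X)^n}\,\sigma(\tilde\Delta\cap\{Q:X\in\Gamma(Q)\})\,dX\gtrsim \int_{T(\Delta)}\frac{f^2(X)}{\delta(X)}\,dX.$$
Since $\sigma(\tilde\Delta)\lesssim\sigma(\Delta)$, dividing by $\sigma(\Delta)$ and taking the sup over $\Delta\ni Q_0$ yields the claimed pointwise bound. For $p\in(2,\infty)$ we have $p/2>1$, so $M$ is bounded on $L^{p/2}(\sigma)$ and $\|C(f)\|_{L^p}^2=\|C(f)^2\|_{L^{p/2}}\lesssim\|A(f)^2\|_{L^{p/2}}=\|A(f)\|_{L^p}^2$; the case $p=\infty$ is immediate.

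For (a), I would establish the good-$\lambda$ inequality
$$\sigma\bigl(\{Q:A(f)(Q)>2\lambda,\ C(f)(Q)\le\gamma\lambda\}\bigr)\le c_0\,\gamma^2\,\sigma\bigl(\{Q:A^{(\alpha_1)}(f)(Q)>\lambda\}\bigr)$$
for some fixed $\alpha_1>1$ and $c_0$ absolute; integrating against $p\lambda^{p-1}$, invoking $\|A^{(\alpha_1)}(f)\|_{L^p}\lesssim\|A(f)\|_{L^p}$ from Proposition \ref{prop31}, and absorbing the resulting $\gamma^2\|A(f)\|_{L^p}^p$ term for $\gamma$ small enough gives (a) for every $0<p<\infty$. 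The set $O_\lambda=\{A^{(\alpha_1)}(f)>\lambda\}$ is open by Lemma \ref{lem31}. Apply Lemma \ref{one} with $\mathcal F=O_\lambda^c$ to decompose $O_\lambda=\bigcup_k Q_k$ with $B_k\subset Q_k\subset B_k^*$ and reference points $Q_0^{(k)}\in B_k^{**}\cap O_\lambda^c$ satisfying $A^{(\alpha_1)}(f)(Q_0^{(k)})\le\lambda$. For $Q\in Q_k$ with $A(f)(Q)>2\lambda$, split $\Gamma(Q)=(\Gamma(Q)\cap B(Q,Mr_k))\cup(\Gamma(Q)\setminus B(Q,Mr_k))$; for $M$ chosen large depending only on $\alpha_1$ and the Whitney constants, the far piece sits inside $\Gamma_{\alpha_1}(Q_0^{(k)})$ (because there $\delta(X)\gtrsim r_k/M$ while $|Q-Q_0^{(k)}|\lesssim r_k$), contributing at most $(A^{(\alpha_1)}(f)(Q_0^{(k)}))^2\le\lambda^2$. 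Hence the near piece contributes $\gtrsim\lambda^2$. Integrating in $Q$ over $Q_k\cap\{A>2\lambda,\,C\le\gamma\lambda\}$, exchanging the order of integration as in Lemma \ref{four}, and then using any $Q$ in that set together with a ball $\tilde\Delta_k\supset B_k\cap\partial\Omega$ absorbing the near Carleson region to bound $\int_{T(\tilde\Delta_k)}f^2/\delta\,dX\le\gamma^2\lambda^2\sigma(\tilde\Delta_k)\lesssim\gamma^2\lambda^2\sigma(Q_k)$, one obtains $\sigma(Q_k\cap\{\cdots\})\lesssim\gamma^2\sigma(Q_k)$; summing in $k$ finishes.

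The main obstacle is the aperture mismatch in (a): on a CAD the cone $\Gamma(Q)=\{X:|X-Q|<2\delta(X)\}$ is defined through $\delta$, so the far part of $\Gamma(Q)$ at a point $Q\in Q_k$ can only be placed inside a genuinely wider cone $\Gamma_{\alpha_1}(Q_0^{(k)})$ at the reference point, not in $\Gamma(Q_0^{(k)})$ itself, which is why one is forced to use $A^{(\alpha_1)}$ on the right of the good-$\lambda$ inequality and $A$ on the left and close the bootstrap with Proposition \ref{prop31}. Everything else, namely the openness of $\{A^{(\alpha_1)}(f)>\lambda\}$, the Whitney decomposition, and the Ahlfors-regular Fubini step identifying the union of near cones with a single Carleson tent, follows the pattern already set in Lemmas \ref{lem31}, \ref{one} and the $T^p$ theory of Section 4.
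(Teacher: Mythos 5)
Your proposal is correct and follows essentially the same route as the paper: part (b) is the pointwise bound $C(f)^2\lesssim M(A(f)^2)$ obtained by Fubini and Ahlfors regularity (the paper uses $t\Delta$ with $t>3$ instead of $3\Delta$, but it is the same computation), followed by $L^{p/2}$-boundedness of the maximal function; part (a) is the good-$\lambda$ inequality of Lemma \ref{fixaper} closed via Proposition \ref{prop31}. The only cosmetic difference is that you split the cone $\Gamma(Q)$ at distance $Mr_k$ from $Q$, whereas the paper splits $f=f_1+f_2$ at the level set $\delta(X)=r_k$; since $|X-Q|\simeq\delta(X)$ inside $\Gamma(Q)$, these are interchangeable, and both rely on Lemma \ref{lem31} (openness of the super-level sets of $A^{(\alpha)}$), the Whitney decomposition of Lemma \ref{one}, and the Chebyshev step fed by the Carleson bound at a reference point with $C(f)\le\gamma\lambda$.
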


In the proof of Theorem \ref{relAC} uses the following ``good-$\lambda$" inequality.

\begin{lemma} \label{fixaper}
There exist a fixed aperture $\alpha>1$ and a constant $C>0$ so that for $0< \gamma \le
 1$ and $0<\lambda< \infty$
 \begin{equation}\label{6.1}
    \sigma(\{Q \in \partial \Omega: A(f)(Q)>2\lambda ;\ C(f)(Q) \le
 \gamma\lambda\}) \le
 C{\gamma}^2 \sigma(\{Q \in \partial \Omega: A^{(\alpha)}(f)(Q)>\lambda\}).
 \end{equation}
\end{lemma}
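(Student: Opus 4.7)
The plan is to adapt the classical Coifman--Meyer--Stein good-$\lambda$ argument to the CAD setting, using the Whitney decomposition on $\partial\Omega$ from Lemma \ref{one} in place of the analogous Euclidean construction. Set
$$O = \{Q \in \partial\Omega: A^{(\alpha)}(f)(Q) > \lambda\}, \qquad E = \{Q : A(f)(Q) > 2\lambda,\ C(f)(Q) \le \gamma\lambda\}.$$
Since $\alpha > 1$ gives $A(f) \le A^{(\alpha)}(f)$ pointwise, $E \subset O$; and $O$ is open by the same argument that proves Lemma \ref{lem31} (with the aperture $\alpha$ in place of $1$). Assume first $O \neq \partial\Omega$ so that Lemma \ref{one} applies, producing balls $B_k = B(X_k, r_k)$, dilated balls $B_k^* \subset B_k^{**}$, and a disjoint partition $\{Q_k\}$ of $O$ with $B_k \subset Q_k \subset B_k^*$ and $B_k^{**} \cap O^c \neq \emptyset$.

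The heart of the argument is a cone-splitting estimate on each Whitney piece. Fix $k$ with $E \cap Q_k \neq \emptyset$, choose $P_k \in B_k^{**}\cap O^c$ (so $A^{(\alpha)}(f)(P_k) \le \lambda$), and pick $Q^* \in E \cap Q_k$ (so $C(f)(Q^*) \le \gamma\lambda$). For $Q \in E\cap Q_k$, decompose
$$\Gamma(Q) = \Gamma^{high}(Q) \cup \Gamma^{low}(Q),\qquad \Gamma^{high}(Q) = \Gamma(Q) \cap \{\delta(X) > K r_k\},$$
with a large constant $K$ to be chosen. For $X \in \Gamma^{high}(Q)$, since $Q \in B_k^*$ and $P_k \in B_k^{**}$, the triangle inequality gives
$$|X - P_k| \le |X-Q| + |Q-X_k| + |X_k - P_k| \le 2\delta(X) + C r_k \le \bigl(2 + C/K\bigr)\delta(X),$$
so choosing $K$ large enough (independent of $k,Q$) forces $X \in \Gamma_\alpha(P_k)$ for a fixed aperture $\alpha > 1$. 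Consequently
$$\int_{\Gamma^{high}(Q)} \frac{f^2(X)}{\delta(X)^n}\,dX \le \bigl(A^{(\alpha)}(f)(P_k)\bigr)^2 \le \lambda^2.$$

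For the low part, integrate over $Q \in E \cap Q_k$ and apply Fubini as in Lemma \ref{four}, noting that any $X$ contributing satisfies $|X - X_k| \le |X-Q| + |Q-X_k| \lesssim r_k$ and $\delta(X) \le K r_k$. Thus
$$\int_{E \cap Q_k} \int_{\Gamma^{low}(Q)}\frac{f^2(X)}{\delta(X)^n}\,dX\,d\sigma(Q) \lesssim \int_{T(\widetilde\Delta_k)} \frac{f^2(X)}{\delta(X)}\,dX,$$
where $\widetilde\Delta_k = B(X_k, C'r_k)\cap\partial\Omega$ is a suitable enlarged surface ball containing $Q^*$. By definition of $C(f)$ at $Q^*$ and Ahlfors regularity,
$$\int_{T(\widetilde\Delta_k)} \frac{f^2(X)}{\delta(X)}\,dX \le \sigma(\widetilde\Delta_k)\,\bigl(C(f)(Q^*)\bigr)^2 \le C\,\sigma(B_k)\,\gamma^2\lambda^2.$$
Since $A(f)(Q)^2 > 4\lambda^2$ on $E$, combining the two pieces yields
$$4\lambda^2\,\sigma(E \cap Q_k) \le \lambda^2\,\sigma(E\cap Q_k) + C\gamma^2\lambda^2\,\sigma(B_k),$$
hence $\sigma(E\cap Q_k) \le C\gamma^2\sigma(Q_k)$ by Ahlfors regularity. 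Summing the disjoint $Q_k$'s over $k$ gives $\sigma(E)\le C\gamma^2\sigma(O)$, which is (\ref{6.1}).

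The main technical obstacle I expect is calibrating $\alpha$, $K$, and the enlargement factor $C'$ consistently: $\alpha$ must be large enough that the geometric inclusion $\Gamma^{high}(Q) \subset \Gamma_\alpha(P_k)$ holds with a uniform constant, yet $\alpha$ must be a fixed absolute quantity depending only on the NTA and Ahlfors data (not on $k$, $\lambda$, or $\gamma$). The edge case $O = \partial\Omega$ must be handled separately: there is no Whitney complement to anchor the high-cone estimate, but in that case one directly bounds $\sigma(E)$ using the Carleson estimate on a single fixed surface ball covering $\partial\Omega$, which is allowed since $\Omega$ is bounded and $\sigma(\partial\Omega) < \infty$.
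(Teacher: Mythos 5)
Your proposal is correct and follows essentially the same route as the paper's proof: Whitney decomposition of $O=\{A^{(\alpha)}(f)>\lambda\}$ via Lemma \ref{one}, splitting the cone at height $\sim r_k$, controlling the high part by $A^{(\alpha)}(f)$ at a nearby point of $O^c$ (forcing $\alpha\ge 6$ or so), and controlling the low part by the Carleson function at a point of $E\cap Q_k$ via the Fubini Lemma \ref{four}. The only cosmetic difference is the final step: the paper uses the pointwise splitting $A(f)\le A(f_1)+A(f_2)$ with $f_1=f\chi_{\{\delta\ge r_k\}}$, gets $A(f_2)\ge\lambda$ on the bad set, and applies Chebyshev to the $L^2$-average of $A(f_2)^2$ over $B_k^*$, whereas you integrate the low-cone contribution only over $E\cap Q_k$ and subtract the high bound; both yield $\sigma(E\cap Q_k)\lesssim\gamma^2\sigma(Q_k)$. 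Your explicit treatment of the case $O=\partial\Omega$ (where Lemma \ref{one} does not apply) is a genuine point the paper leaves implicit; your remedy via the global Carleson bound and Lemma \ref{four} is correct.
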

\begin{proof}
Let $\bigcup Q_k$ be a Whitney decomposition of $\{A^{(\alpha)}(f)(Q)>\lambda\}$ as in Lemma \ref{one}.
For each $k$, there exists $Y_k \in \{A^{(\alpha)}(f)(Q)\le
\lambda\}$ such that $\dist(Y_k, Q_k) \le
 c_0 {\rm{diam}}\,
Q_k.$
Since $\alpha>1$ then $A ^{(\alpha)}(f)\ge
 A(f)$ and the set $\{A(f)(Q)>2\lambda\}$ is contained in the set $\{A^{(\alpha)}(f)(Q)>\lambda\}.$\\
To prove (\ref{6.1}) it enough to show that
$$ \sigma(\{X\in Q_k: A(f)(X)>2\lambda ;\ C(f)(X) \le
 \gamma\lambda\}) \le
 c {\gamma}^2 \sigma(Q_k).$$
The construction in Lemma \ref{one} for $\{A^{(\alpha)}(f)(Q)\le
\lambda\}$ yields a family of balls $\{B_k\}$ such that $B_k=B(X_k, \frac{1}{24}d(X_k)),$
$B^{*}_k=B(X_k, \frac{1}{2}d(X_k))$, $B^{**}_k=B(X_k, 2d(X_k))$ and $B_k \subset Q_k \subset B^{*}_k$ for $X_k \in \{A^{(\alpha)}(f)>\lambda\} $ and
$d(X_K)=\dist(X_K, \{A^{(\alpha)}(f)\le
 \lambda\}).$
Nota that $\frac{1}{12}d(X_k)\le
 {\rm{diam}}\,Q_k \le
 d(X_k)= : 2r_k$ and that there exists $Y_k\in \{A^{(\alpha)}(f)(Q)\le
\lambda\}$ such that $\dist(Y_k, Q_k)\le
 4 r_k$. If $P \in Q_k$ then $|P-Y_k|\le
 5 d(X_k).$ Define $f=f_1+f_2$ where
$$\left\{
  \begin{array}{ll}
    f_1(X)=f(X)\chi_{\{\delta(X) \ge
 r_k\}} &  \\
    f_2(X)=f(X)\chi_{\{\delta(X) < r_k\}}. &
  \end{array}
\right.$$
Note that $A(f) \le
 A(f_1)+A(f_2).$ For $P \in Q_k$, $|P-Y_k|\le 5r_k$, where $Y_k\in \{A^{(\alpha)}(f)(Q)\le
\lambda\}$ 
 and
$$ A(f_1)(P)^2=  \int_{\Gamma(P)\cap\{\delta(X) \ge
 r_k\}} \frac{f^2(X)}{\delta(X)^n}dX \le \int_{\Gamma_6(Y_k)\cap\{\delta(X) \ge
 r_k\}} \frac{f^2(X)}{\delta(X)^n}dX
 $$
 Thus for $\alpha \ge
 6$ and $P\in Q_k$ we obtain $$A(f_1)(P)^2 \le
 A^{(\alpha)}(f)(Y_k)^2\le \lambda^2.$$
Thus for $P \in Q_k$  if  $A(f)(P) \ge 2 \lambda$ 
 $A(f_1)(P) \le
 \lambda, $  and $ 2 \lambda \le
 A(f)(P) \le
 A(f_1)(P) + A(f_2)(P) $ which ensures that $A(f_2)(P) \ge
 \lambda$, i.e.  $\{P \in Q_k: A(f)(P)> 2\lambda \} \subset \{P \in Q_k: A(f_2)(P)\ge
 \lambda \}.$
By the definition 
$$ A(f_2)(P)^2= \int_{\Gamma(P)\cap\{\delta(X) < r_k\}} \frac{f^2(X)}{\delta(X)^n}dX.$$
Lemma \ref{four} combined the the Ahlfors regularity of $\sigma$ yields 
\begin{eqnarray}\label{a2-c}
\frac{1}{\sigma(B^{\ast}_{k})}\int_{B^{\ast}_{k}} (A(f_2)(P))^2 d\sigma(P)&\le&
  \frac{1}{\sigma(B^{\ast}_{k})}\int_{B^{\ast\ast}_{k}} \frac{f^2(X)}{\delta(X)}dX
   \le
   \frac{C}{\sigma(B^{\ast\ast}_{k})}\int_{B^{\ast\ast}_{k}} \frac{f^2(X)}{\delta(X)}dX\\
   &\le&
 C \inf_{P \in B^{\ast}_{k}}(C(f)(P))^2.\nonumber
 \end{eqnarray}
On the other hand if the set
$ \{X \in Q_k: A(f)(X)>2\lambda ;\ C(f)(X) \le
 \gamma\lambda\}$ is nonempty, there exists $P_0 \in Q_k \subset B^{\ast}_{k}$ such that
$A(f)(P_0)>2\lambda$ and $ C(f)(P_0) \le
 \gamma\lambda,$
thus (\ref{a2-c}) yields
$$\frac{1}{\sigma(B^{\ast}_{k})}\int_{B^{\ast}_{k}} (A(f_2)(P))^2 d\sigma(P)\le
 C \gamma^2\lambda^2.$$
In this case, using the Ahlfors regularity of $\sigma$
\begin{equation*}
  \sigma(\{P \in Q_k: A(f_2)(P)> \lambda \}) \le
  \sigma(\{P \in B^{\ast}_k: A(f_2)(P)> \lambda \})
  C \gamma^2\sigma(B_k) \le
  C \gamma^2\sigma(Q_k).
\end{equation*}
Hence 
$$ \sigma(\{P \in Q_k: Af(P)> 2 \lambda ;\ C(f)(P) \le
 \gamma\lambda\}) \le
 C \gamma^2\sigma(Q_k),$$
and since $\{Q_k\}$ is a disjoint cover of $\{A^{(\alpha)}(f)> \lambda\}$
\begin{eqnarray*}
  \sigma(\{Q \in \partial \Omega: A(f)>2\lambda ;\ C(f) \le
 \gamma\lambda\}) &\le
& \sum_k \sigma(\{X \in Q_k: A(f)>2\lambda ;\ C(f) \le
 \gamma\lambda\})\\
   &\le
&  \sum_k  C \gamma^2\sigma(Q_k)\le
  C \gamma^2 \sigma(\{Q \in \partial \Omega:A^{(\alpha)}(f)> \lambda\}).
\end{eqnarray*}
\end{proof}

\emph{Proof of Theorem \ref{relAC}.}  Note that Theorem \ref{thm3} combined with (\ref{ac+1}) yield part $a)$ for $1<p<\infty$ 
Note that Lemma \ref{fixaper}  ensures that for $\alpha$ big enough
\begin{eqnarray*}
  \sigma(\{A(f)> 2 \lambda \})&\le
 & \sigma(\{A(f)> 2 \lambda ;\ C(f)\le
 \gamma\lambda\}) + \sigma(\{C(f)> \gamma\lambda\}) \\
                                         &\le
 & C \gamma^2 \sigma(\{A^{(\alpha)}(f)> \lambda\})+\sigma(\{C(f)> \gamma\lambda\}).
\end{eqnarray*}
Multiplying both sides by $p \lambda^{p-1}$, integrating with respect to $\lambda$ and using Proposition \ref{prop31} we obtain
$$2^{-p}||A(f)||^{p}_{L^p} \le
 C \gamma^2 ||A^{(\alpha)}(f)||^{p}_{L^p}+C \gamma^{-p} ||C(f)||^{p}_{L^p} 
 C(\alpha, p) \gamma^2 ||A(f)||^{p}_{L^p} + C \gamma^{-p} ||C(f)||^{p}_{L^p} 
 .$$
 Choosing $\gamma>0$ small enough so that
$C \gamma^2 C(\alpha, p) 2^p <\frac{1}{2}$ we obtain
$$ ||A(f)||_{L^p} \le
C||C(f)||_{L^p} $$ provided that $||A(f)||_{L^p} < \infty.$ If $||A(f)||_{L^p} = \infty$
the result is obtained by applying the previous argument
to $f\chi_K$ where $K$ is selected from an increasing family of compact subsets which exhausts $\Omega.$

To prove part $b)$ of the Theorem, let $\Delta=\Delta(Q_0,r)$ and $t\Delta =\Delta(Q_0, tr)$ for $t>3$. Note that
$X\in T(\Delta)$ then $\Delta(Q_X,\delta(X))\subset t\Delta$, as in Lemma \ref{six} $\chi_{\Gamma(Q)}(X)\ge
 \chi_{\Delta(Q_X,\delta(Q))}(Q)$ thus
 \begin{eqnarray}\label{mm38}
  \int_{t\Delta}\left(\int_{\Gamma(Q)} \frac{f^2(X)}{\delta(X)^n}dX\right) d\sigma(Q) &=& \int_{\partial\Omega}\int_{\Omega} \frac{f^2(X)}{\delta(X)^n}\chi_{\Gamma(Q)}(X)\chi_{t\Delta}(Q)dX d\sigma(Q) \\
   &\ge
& \int_{\partial\Omega}\int_{\Omega} \frac{f^2(X)}{\delta(X)^n}\chi_{\Delta(Q_X,\delta(X))}(Q)\chi_{T(\Delta)}(X)dX d\sigma(Q)
\nonumber \\
   &\ge
& \int_{\Omega} \frac{f^2(X)}{\delta(X)^n}\sigma({\Delta(Q_X,\delta(X))})\chi_{T(\Delta)}(X)dX d\sigma(Q)\nonumber \\
   &\ge
& \int_{T(\Delta)} \frac{f^2(X)}{\delta(X)}dX.\nonumber
\end{eqnarray}
(\ref{mm38}) and the Ahlfors regularity of $\sigma$ ensure that
 $$ \frac{1}{\sigma(\Delta)}\int_{T(\Delta)} \frac{f^2(X)}{\delta(X)}dX \le
 \frac{C}{\sigma(\Delta)}\int_{t\Delta} (A(f)(Q))^2d\sigma(Q)  \le
 \frac{C}{\sigma(t\Delta)}\int_{t\Delta} (A(f)(Q))^2d\sigma(Q). $$
Therefore $(C(f)(Q))^2 \le
 C M(A(f)(Q))^2 $ which for $p>1$ ensures that
\begin{eqnarray*}
   \left(\int_{\partial \Omega} (C(f)(Q))^{2p}d\sigma(Q) \right)^{1/p} &\le
& C \left(\int_{\partial \Omega} (M(A(f)^2(Q)))^{p}(Q)d\sigma(Q) \right)^{1/p} \\
   &\le
& C \left(\int_{\partial \Omega} (A(f)(Q))^{2p}d\sigma(Q) \right)^{1/2p}.
\end{eqnarray*}
\hfill $\Box$

\section{Solvability of the Dirichlet problem in $L^p$ for perturbation operators on CADs}\label{theory_of_weights}
\renewcommand{\thesection}{\arabic{section}}

In this section we study the following question: given a second order divergence form elliptic symmetric operator $L_1$ which is a perturbation of an operator $L_0$ for which the Dirichlet problem can be solved in $L^p$ what can be said about the solvability of 
the Dirichlet problem in $L^q$ for $L_1$? As it was pointed out in the introduction this problem is well understood on Lipschitz domains. The goal of this section is to develop a similar theory for CADs. Given that we lack some of the tools available in the Lipschitz
case rather than following Dahlberg's steps we turn our attention to \cite{fkp}. Proposition \ref{p-fkp} below justifies this approach.

Assume that $L_0$ and $L_1$ are  second order divergence form elliptic symmetric operators as in Section 2. Assume also that $0\in\Omega$ and denote by $G_0(Y)$ the Green's function of $L_0$ in $\Omega$ with pole $0$, and by $\omega_0$ the corresponding elliptic measure. Let $a$ be the deviation function defined in (\ref{eqn:tt-a}).

\begin{prop}\label{p-fkp}
Let $\Omega$ be a CAD and that assume $\omega_0\in B_p(\sigma)$ for some $p>1$. 
Given $\epsilon>0$ there exists $\delta>0$ such that if
\begin{equation}\label{small-carl}
\sup_{\Delta \subseteq \partial \Omega}
\bigg\{\frac{1}{\sigma(\Delta)}\int_{T(\Delta)}\frac{a^2(X)}{\delta(X)}dX\bigg\}^{1/2}\le
 \delta,
\end{equation}
then
\begin{equation}\label{small-fkp}
\sup_{\Delta \subseteq \partial \Omega}\bigg\{\frac{1}{\omega_0(\Delta)}\int_{T(\Delta)}a^2(X)\frac{G_0(X)}{\delta^2(X)}dX\bigg\}^{1/2}\le
\epsilon.
 \end{equation}
\end{prop}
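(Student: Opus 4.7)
The plan is to reduce \eqref{small-fkp} to a $\sigma$--Carleson estimate via the fundamental Green function identification, and then close the loop using the Carleson embedding (Proposition~\ref{thm1}) together with the $L^p$--boundedness of the Hardy--Littlewood maximal function on $(\partial\Omega,\sigma)$. The starting observation is that Lemma~\ref{lem2.3}, Harnack's principle (to move from $A(Q_X,\delta(X))$ to $X$), and Ahlfors regularity of $\sigma$ combine to give, for $X$ at a definite scale-invariant distance from the pole $0$,
$$
\frac{G_0(X)}{\delta(X)^2} \;\simeq\; \frac{\omega_0(\Delta_X)}{\delta(X)^n} \;\simeq\; \frac{\bar k(X)}{\delta(X)}, \qquad \Delta_X:=\Delta(Q_X,\delta(X)),\qquad \bar k(X):=\frac{\omega_0(\Delta_X)}{\sigma(\Delta_X)}.
$$
Writing $\delta_0$ for the Carleson constant in \eqref{small-carl}, the task reduces to proving
$$
\int_{T(\Delta)}\bar k(X)\,\frac{a^2(X)}{\delta(X)}\,dX \;\lesssim\; \delta_0^{\,2}\,\omega_0(\Delta).
$$

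The next move is to treat $\mu:=\frac{a^2(X)}{\delta(X)}\,dX$ as a $\sigma$--Carleson measure with $\|\mu\|_{\mathcal C}\le\delta_0^{\,2}$ and apply Proposition~\ref{thm1} with $F:=\bar k\,\chi_{T(\Delta)}$:
$$
\int_{T(\Delta)}\bar k(X)\,\frac{a^2(X)}{\delta(X)}\,dX \;=\;\int_{\Omega}F\,d\mu \;\lesssim\; \delta_0^{\,2}\int_{\partial\Omega} N(F)(Q)\,d\sigma(Q).
$$
Because $F$ is supported in $T(\Delta)$, the support of $N(F)$ lies in a fixed dilate $3\Delta$ (if $X\in T(\Delta)\cap\Gamma(Q)$ then $|Q-Q_0|\le|Q-X|+|X-Q_0|<3r_\Delta$). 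A simple doubling computation gives the pointwise bound $N(F)\lesssim M_\sigma k$, where $M_\sigma$ is the Hardy--Littlewood maximal function on $(\partial\Omega,\sigma)$ and $k=d\omega_0/d\sigma$: for any $Q$ with $X\in\Gamma(Q)$, the triangle inequality places $\Delta_X\subset\Delta(Q,4\delta(X))$, so by doubling of both $\omega_0$ and $\sigma$ we obtain $\bar k(X)\lesssim\omega_0(\Delta(Q,4\delta(X)))/\sigma(\Delta(Q,4\delta(X)))\le M_\sigma k(Q)$.

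It then suffices to show $\int_{3\Delta}M_\sigma k\,d\sigma\lesssim\omega_0(\Delta)$, which I would prove by splitting $k=k_1+k_2$ with $k_1:=k\,\chi_{5\Delta}$. For the local piece $k_1$, the $B_p$ reverse H\"older inequality on $5\Delta$, together with doubling, yields $\|k_1\|_{L^p(\partial\Omega)}\lesssim\bar k_\Delta\,\sigma(\Delta)^{1/p}$; the $L^p$--boundedness of $M_\sigma$ propagates this to $M_\sigma k_1$, and H\"older delivers $\int_{3\Delta}M_\sigma k_1\,d\sigma\lesssim\sigma(\Delta)^{1/p'}\cdot\bar k_\Delta\,\sigma(\Delta)^{1/p}=\omega_0(\Delta)$. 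For the far piece $k_2$, any ball contributing to $M_\sigma k_2(Q)$ with $Q\in 3\Delta$ must have radius $\gtrsim r_\Delta$, so doubling of $\omega_0$ and Ahlfors regularity of $\sigma$ force $M_\sigma k_2(Q)\lesssim\bar k_\Delta$ uniformly on $3\Delta$, giving $\int_{3\Delta}M_\sigma k_2\,d\sigma\lesssim\omega_0(\Delta)$. Chaining the estimates,
$$
\int_{T(\Delta)}a^2(X)\,\frac{G_0(X)}{\delta(X)^2}\,dX \;\lesssim\;\delta_0^{\,2}\,\omega_0(\Delta),
$$
and taking $\delta:=\delta_0$ small enough that the universal constant times $\delta_0^{\,2}$ is below $\epsilon^2$ yields \eqref{small-fkp}.

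The main technical obstacle is the behavior of $G_0$ near the pole $0$, where Lemma~\ref{lem2.3} is unavailable and the identification $G_0/\delta^2\simeq\bar k/\delta$ of the first paragraph breaks down. I plan to handle this by quarantining a ball $B(0,c\,\delta(0))$: on its complement the argument above runs verbatim, while on the quarantined region the standard bound $G_0(X)\lesssim|X|^{2-n}$ together with $\delta(X)\ge\delta(0)/2$ makes the contribution to $\int a^2G_0/\delta^2\,dX$ controlled by a constant; and crucially, if $T(\Delta)$ meets this quarantine then $\omega_0(\Delta)$ is bounded below by a positive constant depending only on the NTA character and $\delta(0)$, which absorbs the pole-region contribution into the desired $\delta_0^{\,2}$ factor.
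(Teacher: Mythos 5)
Your route is a genuine alternative to the paper's. You identify $G_0(X)/\delta(X)^2\simeq\bar k(X)/\delta(X)$ away from the pole, view $a^2\,dX/\delta$ as a $\sigma$-Carleson measure, and apply the Carleson embedding (Proposition~\ref{thm1}) directly with $F=\bar k\chi_{T(\Delta)}$, closing via the Hardy--Littlewood maximal function and the $B_p$ reverse-H\"older inequality. The paper instead uses Lemma~\ref{six} to move the volume integral to a $d\omega_0$-weighted cone integral of the truncated square function $A^{(5)}_{r_0}(a)$, applies $B_p$ plus H\"older to pass from $d\omega_0$ to $d\sigma$, and then estimates the resulting $L^{2q}(\sigma)$ norm of that square function by \emph{duality}, via Proposition~\ref{thm1} applied to an auxiliary averaged function $F(X)=\chi_{\{\delta(X)<r_0\}}\fint_{\Delta(Q_X,7\delta(X))}g\,d\sigma$. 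Your approach is more direct in that it avoids the duality detour, but two steps in your write-up fail as stated.

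First, the far-piece claim $M_\sigma k_2(Q)\lesssim\bar k_\Delta$ uniformly on $3\Delta$ is false. For $Q\in3\Delta$ and a large ball $\Delta'\ni Q$ of radius $s\gg r_\Delta$, one has $\fint_{\Delta'}k\simeq\omega_0(\Delta(Q,s))/\sigma(\Delta(Q,s))$, and nothing in the hypotheses bounds this by $\omega_0(\Delta)/\sigma(\Delta)$: the $B_p$ condition allows $\omega_0$ to place most of its mass far from $\Delta$, making the big-ball average much larger than $\bar k_\Delta$. The step is, fortunately, unnecessary: since $F=\bar k\chi_{T(\Delta)}$ vanishes outside $T(\Delta)$, every $X\in\Gamma(Q)$ contributing to $N(F)(Q)$ satisfies $\delta(X)\le|X-Q_0|\le r_\Delta$, so your own pointwise bound refines to $N(F)(Q)\lesssim\sup_{0<t\le 4r_\Delta}\fint_{\Delta(Q,t)}k\,d\sigma$, a \emph{truncated} maximal function. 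For $Q\in3\Delta$ this only sees $k\chi_{7\Delta}$, and your local reverse-H\"older/maximal-function argument for $k_1$ already gives $\int_{3\Delta}N(F)\,d\sigma\lesssim\omega_0(\Delta)$ without any far piece.

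Second, the pole quarantine, as written, bounds the contribution of $T(\Delta)\cap B(0,c\delta(0))$ to $\int a^2 G_0/\delta^2\,dX$ only by an absolute constant; dividing by $\omega_0(\Delta)\gtrsim c_1>0$ then yields a bounded ratio, not one that can be made $\le\epsilon^2$ by shrinking the Carleson norm $\delta$. What rescues the argument is that \eqref{small-carl} in fact forces the \emph{pointwise} bound $\sup_\Omega a\lesssim\delta$: for any $X$, pick $Y_0\in B(X,\delta(X)/2)$ at which $|A_1-A_0|$ nearly attains $a(X)$; then $a\gtrsim a(X)$ on the non-tangential ball $B(Y_0,\delta(Y_0)/8)\subset T(\Delta(Q_{Y_0},2\delta(Y_0)))$, and applying \eqref{small-carl} on that tent and using $\delta(\cdot)\simeq\delta(Y_0)$ there gives $a(X)^2\lesssim\delta^2$. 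With this, the quarantined region contributes at the correct scale $\delta^2$. You should also quarantine the set $\{\delta(X)>\delta(0)/2\}$ rather than merely $B(0,c\delta(0))$, since Lemma~\ref{lem2.3} with pole $0$ requires $|Q_X|\ge 2\delta(X)$, i.e.\ $\delta(X)\le\delta(0)/2$, and points with $\delta(X)>\delta(0)/2$ need not lie near the pole. With these corrections your route is sound.
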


\begin{proof}
Let $\Delta_0=\Delta(Q_0,r_0)$ and $t\Delta_0=\Delta(Q_0,tr_0)$. Using Lemma \ref{six}, Lemma \ref{lem2.3}, Fubini and the notation for truncated cones introduced in (\ref{truncated-cone}) we have

\begin{eqnarray}\label{tt-7-1}
  \int_{T(\Delta_0)}a^2(X)\frac{G_0(X)}{\delta(X)^2}dX &\le
  & \int_{\partial\Omega}\int_{\Gamma(Q)}\frac{a^2(X)G_0(X)}{\delta(X)^{n+1}}\chi_{T(\Delta_0)}(X)dXd\sigma(Q) \\
   &\lesssim& \int_{3\Delta_0}\int_{\Gamma^{r_0}(Q)}\frac{a^2(X)}{\delta(X)^{n}}\frac{\omega_0(\Delta(Q_X,\delta(X)))}{\delta(X)^{n-1}}dXd\sigma(Q)\nonumber\\
   &\lesssim&\int_{7\Delta_0}\int_{\Gamma_5^{r_0}(P)}\frac{a^2(X)}{\delta(X)^n}dXd\omega_0(P)\nonumber\\
   &\lesssim& \int_{7\Delta_0} (A^{(5)}_{r_0}(P))^2 d\omega_0(P).\nonumber
   \end{eqnarray}
   
Since $\omega_0\in B_p(d\sigma)$ for some $p>1$, if $\frac{1}{p}+\frac{1}{q}=1$ and $k=\frac{d\omega_0}{d\sigma}$ then
\begin{eqnarray}\label{**G}
  \int_{t\Delta_0}(A_{r_0}^{(\alpha)}(a)(P))^2k(P)d\sigma(P) &\le
& \bigg(\int_{t\Delta_0}(A_{r_0}^{(\alpha)}(a)(P))^{2q}d\sigma(P)\bigg)^{1/q}\bigg(\int_{t\Delta_0}k^pd\sigma\bigg)^{1/p} \\
   &\le
& \bigg({\fint}_{t\Delta_0}(A_{r_0}^{(\alpha)}(a)(P))^{2q}d\sigma(P)\bigg)^{1/q}\omega_0(t\Delta_0)\nonumber
\end{eqnarray}
because
$$\bigg(\int_{t\Delta_0}k^pd\sigma\bigg)^{1/p}\le
 C \sigma(t\Delta_0)^{1/p}{\fint}_{t\Delta}kd\sigma \le C \omega_0(t\Delta)\sigma(t\Delta_0)^{-1/q}.$$

Combining (\ref{tt-7-1}), (\ref{**G}) and Lemma \ref{lem2.4} we obtain
\begin{equation}\label{tt-7-2}
\frac{1}{\omega_0(\Delta_0)} \int_{T(\Delta_0)}a^2(X)\frac{G_0(X)}{\delta(X)^2}dX \le C
\bigg({\fint}_{7\Delta_0}(A_{r_0}^{(5)}(a)(P))^{2q}d\sigma(P)\bigg)^{1/q}.
\end{equation}
We estimate $\bigg({\fint}_{7\Delta_0}(A_{r_0}^{(5)}(a)(P))^{2q}d\sigma(P)\bigg)^{1/q}$ by duality. Let $g\in L^p(\sigma)$ with $\frac{1}{p}+\frac{1}{q}=1$ and $g$ supported on $7\Delta_0$. Without lot of generality we may assume that $g\ge 0$.
\begin{eqnarray}\label{tt.7.200}
\int_{\partial\Omega}(A_{r_0}^{(5)}(a)(P))^2 g(P)\, d\sigma(P) &=& \int_{\po}\int_{\G_5(Q)}\frac{a^2(X)}{\delta(X)^n}\chi_{\{\delta(X)<r_0\}}
(X)
\chi_{7\Delta_0}(Q)g(Q)\, dX\, d\s(Q)\\
&\le&
 \int_{\po}\int_{\O}\frac{a^2(X)}{\delta(X)^n}\chi_{\{\delta(X)<r_0\}}
(X)\chi_{T(13\D_0)}(X)
\chi_{\D(Q_X,7\delta(X))}(Q)g(Q)\, dX\, d\s(Q)\nonumber\\
&\le &\int_{T(13\D_0)}\chi_{\{\delta(X)<r_0\}}
(X)\frac{a^2(X)}{\delta(X)^n}\left(\int_{\D(Q_X,7\d(X))}g(Q)\, d\s(Q)\right)\, dX\nonumber\\
&\le &C \int_{T(13\D_0)}\chi_{\{\delta(X)<r_0\}}
(X)\frac{a^2(X)}{\delta(X)}\left(\fint_{\D(Q_X,7\d(X))}g(Q)\, d\s(Q)\right)\, dX.\nonumber
\end{eqnarray}
Letting $F(X)=\chi_{\{\delta(X)<r_0\}}(X)\fint_{\D(Q_X,7\d(X))}g(Q)\, d\s(Q)$ and applying Proposition \ref{thm1} to the last term in 
(\ref{tt.7.200}) we obtain
\begin{equation}\label{tt.7.201}
\int_{\partial\Omega}(A_{r_0}^{(5)}(a)(P))^2 g(P)\, d\sigma(P)\le C\int_{\po}NF(Q)C(\frac{a^2}{\delta}\chi_{T(13\D_0)})(Q)\, d\s(Q),
\end{equation}
where
\begin{equation}\label{tt.7.202}
NF(Q)=\sup_{X\in\G(Q)}|F(X)|=\sup_{X\in\G^{r_0}(Q)}|F(X)|\le C M_{9r_0}g(Q).
\end{equation}
Here $ M_{9r_0}g$ denotes the truncated maximal function of $g$, i.e. $M_{9r_0}g(Q)=\sup_{0<r\le 9r_0}\fint_{\D(Q,r)}|g|\, d\s$. Note that if $|Q-Q_0|\ge 10r_0$ and $X\in \G^{r_0}(Q)$ then $|Q-Q_X|> 7r_0$ and $NF(Q)=0$. Moreover (\ref{small-carl}) yields 
$C(\frac{a^2}{\delta}\chi_{T(13\D_0)})(Q)\le C(\frac{a^2}{\delta})(Q)\le \d$. This combined with (\ref{tt.7.201}), (\ref{tt.7.202}), H\"older's inequality, the fact that $\sigma$ is Ahlfors regular and the maximal function theorem ensures that
\begin{eqnarray}\label{tt.7.203}
\int_{\partial\Omega}(A_{r_0}^{(5)}(a)(P))^2 g(P)\, d\sigma(P)&\le& C\d\int_{10\D_0}NF(Q)\, d\s(Q)\\
&\le& C\d\int_{10\D_0}M_{9r_0}g(Q)\, d\s(Q)\nonumber\\
&\le&  C\d\left(\int_{10\D_0}(M_{9r_0}g(Q))^p\, d\s(Q)\right)^{1/p}\s(10\D_0)^{1/q}\nonumber\\
&\le & C\delta \left(\int_{\po}g(Q)^p\, d\s(Q)\right)^{1/p}\s(7\D_0)^{1/q},\nonumber
\end{eqnarray}
which implies
\begin{equation}\label{tt.7.204}
\bigg({\fint}_{7\Delta_0}(A_{r_0}^{(5)}(a)(P))^{2q}d\sigma(P)\bigg)^{1/q}\le C\d.
\end{equation}
Note that (\ref{tt.7.204}) combined with (\ref{tt-7-2}) yields (\ref{small-fkp}) provided $C\d<\epsilon$.
\end{proof}

In this section we need to consider variants of the non-tangential maximal function of $u$. Define for $\alpha\in(0,1)$ and $\eta>0$
\begin{equation}\label{N-tilde}
\widetilde{N}^\eta_\alpha F(Q)=\sup_{X \in \Gamma_\eta(Q)} \left( \fint_{B(X,\alpha\delta(X)/8)}F^2(Z)dZ\right)^{1/2}.
\end{equation}
For simplicity $\widetilde{N}^1_\alpha F =\widetilde{N}_\alpha F$, $\widetilde{N}^\eta_1F=\widetilde{N}^\eta F$ and  $\widetilde{N}^1_1F=\widetilde{N} F$. Recall that $N_\eta F(Q)=\sup_{X\in\G_\eta(Q)}|F(X)|$.

\begin{preremark}\label{N-hat-tilde}
Let $\mu$ be a doubling measure on $\partial\Omega$ then
for $p\ge  1$, $\alpha,\beta\in(0,1)$ and $\eta>0$
$$||\widetilde{N}_\alpha F||_{L^p(\mu)}\sim ||\widetilde{N}_\beta F||_{L^p(\mu)}\sim||\widetilde{N}
^\eta_\alpha F||_{L^p(\mu)}.$$
\end{preremark}

\begin{proof} 
Note that Proposition \ref{Ndep} ensures that for $1\le p<\infty$, $\|\widetilde{N}_\alpha^\eta F\|_{L^p(\mu)}\sim \|\widetilde{N}_\alpha F\|_{L^p(\mu)}$. Moreover for  $\alpha>\beta$, $\widetilde{N}_{\beta}F(Q)\le (\alpha/\beta)^n\widetilde{N}_{\alpha}F(Q)$ thus it is enough to show $\|\widetilde{N}_\alpha F\|_{L^p(\mu)} \le C \|\widetilde{N}_\beta F\|_{L^p(\mu)}$.
We claim that for $\gamma=(2+\frac{\alpha}{8})(1-\frac{\alpha}{8})^{-1}-1$
\begin{equation}\label{mm-eqv0}
\widetilde{N}_\alpha F(Q)\le C_{n,\alpha,\beta} \widetilde{N}^{\gamma}_{\beta}F(Q),
\end{equation}
which yields the desired inequality.
Note that
\begin{eqnarray*}
  \fint_{B(X,\frac{\alpha}{8}\delta(X))} F^2(Z)dZ &=&\frac{C_{\alpha}}{\delta(X)^n}\int_{B(X,\frac{\alpha}{8}\delta(X))} F^2(Z)dZ \\
   &=& \frac{C_{\alpha}}{\delta(X)^n}\int_{B(X,\frac{\alpha}{8}\delta(X))\setminus B(X,\frac{\beta}{8}\delta(X))} F^2(Z)dZ+\frac{C_{\alpha}}{\delta(X)^n}\int_{B(X,\frac{\beta}{8}\delta(X))} F^2(Z)dZ.
\end{eqnarray*}
Covering the region $B(X,\frac{\alpha}{8}\delta(X))\backslash B(X,\frac{\beta}{8}\delta(X))$ by balls $B_i=B(Y_i,r)$ with radius $r=(1-\frac{\alpha}{8})\delta(x)\frac{\beta}{8}$ and $Y_i\in B(X,\frac{\alpha}{8}\delta(X))\backslash B(X,\frac{\beta}{8}\delta(X))$,
and noting that  the number of such balls only depends on $\alpha,\beta, n$ we have
\begin{equation}\label{mm-eqv1}
\fint_{B(X,\frac{\alpha}{8}\delta(X))} F^2(Z)dZ\le
 C_{\alpha,\beta,n}' \bigg(\frac{1}{\delta(X)^n}\sum_i \int_{B_i} F^2(Z)dZ+\fint_{B(X,\frac{\beta}{8}\delta(X))} F^2(Z)dZ\bigg).
\end{equation}
If $X\in \G(Q)$ and $Y\in B(X,\frac{\alpha}{8}\d(X))$ then $ (1-\frac{\alpha}{8})\d(X)\le \d(Y)\le  (1+\frac{\alpha}{8})\d(X)$ and $Y\in \G_\g(Q)$.
Hence
\begin{equation}\label{tt.7.17}
\fint_{B_i}F^2(Z)\, dZ\le C\sup_{Y\in\G_\g(Q)}\fint_{B(Y,r)}F^2(Z)\, dZ\le C\sup_{Y\in\G_\g(Q)}\fint_{B(Y,\frac{\beta}{8}\d(Y))}F^2(Z)\, dZ=\widetilde{N}_\beta^\gamma F(Q),
\end{equation}
which combined with (\ref{mm-eqv1}) yields (\ref{mm-eqv0}).
\end{proof}

\begin{preremark}\label{Nequiv}
Assume that $L_iu=0$ for $i=0$ or $i=1$. Then $||\widetilde{N}u||_{L^p(\s)}\sim ||Nu||_{L^p(\s)}$ for $1\le p<\infty$.
\end{preremark}
\begin{proof}
Since $\widetilde{N}u(Q)\le C N_{10/7}u(Q)$, Proposition \ref{Ndep} ensures that $\|\widetilde{N}u\|_{L^p(\s)}\le C\|Nu\|_{L^p(\s)}$.
Since $u$ is a solution for $L_i$ then $u^2$ is a subsolution for $L_i$ and Lemma 1.1.8 of \cite{k1} guarantees that
$$u^2(X)\le
 \sup_{B(X,\frac{\delta(X)}{16})}u^2(Y)\le
 C {\fint}_{B(X,\frac{\delta(X)}{8})}u^2(Z)dZ.$$
 Hence $Nu(Q)\le C\widetilde{N}u(Q)$ and $\|Nu\|_{L^p(\s)}\le C \|\widetilde{N}u\|_{L^p(\s)}$ follows.
 \end{proof}


We still need a few preliminaries before we can get to the proof of Theorem \ref{mainthm1}.
Recall that by assumption $0\in \Omega$. Let $R_0=\frac{1}{2^{30}}\min\{\delta(0),1\}$. The following calculation shows that we may assume that $a(X)=0$ for all $X\in \Omega$ such that $\delta(X)>4R_0$. Cover the boundary $\partial\Omega$ by balls $\{B(Q_i,R_0/2)\}_{i=1}^M$ such that $Q_i\in\partial\Omega$ and $|Q_i-Q_j|\ge
 \frac{R_0}{2}$ for $i\neq j$. Note that $M$ depends only on  $n$, $R_0$ and ${\rm{diam}}\, \Omega$. Let $\{\varphi_i\}_{i=1}^M$ be a partition of unity associated with this covering satisfying $0\le
\varphi_i\le
 1$, ${\rm{spt}}\varphi_i\subset B(Q_i,2R_0)$, $\varphi_i\equiv 1$ on $B(Q_i,R_0)$ and $|\nabla\varphi_i|\le
 4/R_0$. Define
$$\psi_i(X)=\left\{
  \begin{array}{ll}
    \bigg(\sum_{j=1}^M\varphi_j(X)\bigg)^{-1}\varphi_i(X) & {\rm{if}} \  \sum_{j=1}^M\varphi_j(X)\neq 0\\
    0 & {\rm{otherwise}}.
  \end{array}
\right.$$
Note that for $X\in (\partial \Omega, \frac{1}{2}R_0):=\{Y\in \R^n :\exists Q_Y\in\partial\Omega\ {\rm{with}}\ |Q_Y-Y|=\delta(Y)\le
 R_0/2\}$ there exists $Q_X\in\partial\Omega$ with $|Q_X-X|\le
 R_0/2$ and $i\in\{1,...,M\}$ such that $|Q_X-Q_i|<R_0/2$, thus $X\in B(Q_i,R_0)$ and $\varphi_i(X)=1$ therefore $\sum_{j=1}^{M}\psi_j(X)=1$. If $X\in \R^n \setminus (\partial \Omega, {2}R_0)$ then $\varphi_i(X)=0$ and $\sum_{j=1}^{M}\psi_j(X)=0$. Consider the matrix
 \begin{equation}\label{tt-a'}
 A'(X)=\bigg(\sum_{j=1}^{M}\psi_j(X)\bigg)A_1(X)+\bigg(1-\sum_{j=1}^{M}\psi_j(X)\bigg)A_0(X)
 \end{equation}
 and the corresponding operator $L'={\rm{div}}A'\nabla$. Note that $A'$ is symmetric and $L'$ is an elliptic second order divergence form operator with bounded coefficients in $\Omega$. 
Denote by $a'$ the deviation function 
$$a'(X)=\sup_{B(X,\delta(X)/2)}|A'(Y)-A_0(Y)|$$
\begin{lemma}\label{claim1-p4}
Let $A'$ be as in (\ref{tt-a'}) then
$a'(X)=0$
for $X\in \Omega$, with $\delta(X)>4R_0$.
\end{lemma}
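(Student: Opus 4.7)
The plan is to unwind the definitions and observe that the cutoff functions $\psi_j$ all vanish well away from the boundary. Specifically, from $A'(Y) - A_0(Y) = \bigl(\sum_{j=1}^M \psi_j(Y)\bigr)(A_1(Y) - A_0(Y))$, it suffices to show that $\sum_{j=1}^M \psi_j(Y) = 0$ for every $Y \in B(X, \delta(X)/2)$ whenever $\delta(X) > 4R_0$.

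First I would recall the observation, already made in the paragraph preceding the lemma, that $\sum_{j=1}^M \psi_j(Y) = 0$ whenever $Y \in \mathbb{R}^n \setminus (\partial\Omega, 2R_0)$, i.e., whenever $\delta(Y) > 2R_0$. This is because each $\varphi_j$ is supported in $B(Q_j, 2R_0)$ with $Q_j \in \partial\Omega$, so if $\delta(Y) > 2R_0$ then $Y \notin B(Q_j, 2R_0)$ for every $j$, hence every $\varphi_j(Y) = 0$ and consequently every $\psi_j(Y) = 0$.

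Next, for $X$ with $\delta(X) > 4R_0$ and any $Y \in B(X, \delta(X)/2)$, the triangle inequality gives
\begin{equation*}
\delta(Y) \geq \delta(X) - |Y - X| > \delta(X) - \tfrac{1}{2}\delta(X) = \tfrac{1}{2}\delta(X) > 2R_0.
\end{equation*}
Hence $\sum_{j=1}^M \psi_j(Y) = 0$, and therefore $A'(Y) - A_0(Y) = 0$ for every such $Y$. Taking the supremum over $Y \in B(X, \delta(X)/2)$ yields $a'(X) = 0$.

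There is no real obstacle here; the proof is a direct chase through the construction of the partition of unity and the definition of $A'$. The only thing to verify is the quantitative relationship between $\delta(X)$, $\delta(Y)$, and the support condition on the $\varphi_j$, and the factor $1/2$ in the ball defining the deviation function is exactly what makes the argument work when the threshold is taken to be $4R_0$.
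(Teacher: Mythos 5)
Your proof is correct and takes essentially the same route as the paper's: both argue that any $Y\in B(X,\delta(X)/2)$ satisfies $\delta(Y)\ge\delta(X)/2>2R_0$, forcing $\sum_j\psi_j(Y)=0$ and hence $A'(Y)=A_0(Y)$. You supply a few extra intermediate steps (the factorization of $A'-A_0$ and the support argument for the $\varphi_j$), but the content is identical.
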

\begin{proof}
For $X\in \Omega$ with $\delta(X)>4R_0$, if $Y\in B(X,\delta(X)/2)$ then $\d(Y)\ge \frac{\delta(X)}{2}>2R_0$,
$A'(Y)=A_0$ and $a'(X)=0$.
\end{proof}

\begin{lemma}\label{claim2-p4}
If $\omega'$ denotes the elliptic measure associated to $L'$ with pole at $0$, then $\omega_1\in B_p(\omega_0)$ if and only if $\omega'\in B_p(\omega_0)$.
\end{lemma}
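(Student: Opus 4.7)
The plan is to establish the stronger statement that $\omega_1$ and $\omega'$ are uniformly equivalent as Borel measures on $\partial\Omega$: there is $C=C(\Omega,L_0,L_1,R_0)>0$ with $C^{-1}\omega_1(E)\le \omega'(E)\le C\omega_1(E)$ for every Borel $E\subset\partial\Omega$. This is more than enough to conclude the lemma, because such uniform equivalence gives $k_1:=d\omega_1/d\omega_0\simeq k':=d\omega'/d\omega_0$ pointwise $\omega_0$-a.e., and the $B_p(\omega_0)$ condition---which compares an $L^p$-average of the density over a surface ball to its $L^1$-average---is preserved under pointwise comparability of weights.

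The key structural fact, already used in Lemma \ref{claim1-p4}, is that $\sum_j\psi_j(X)=1$ whenever $\delta(X)\le R_0/2$. Thus $A'\equiv A_1$ on the boundary layer $D:=\{X\in\Omega:\delta(X)\le R_0/2\}$, i.e.\ $L'=L_1$ on $D$. Moreover $0\notin D$ since $\delta(0)\ge 2^{30}R_0$. Consequently both Green's functions $G_1(\cdot,0)$ and $G'(\cdot,0)$ are positive in $\Omega$, vanish continuously on $\partial\Omega$, and jointly solve the common equation $L_1 u = L' u = 0$ throughout $D$.

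By Lemma \ref{lem2.3}, the measure equivalence reduces to proving the uniform comparison $G_1(A(Q,r),0)\simeq G'(A(Q,r),0)$ for every $Q\in\partial\Omega$ and $r\in(0,\mathrm{diam}\,\Omega)$. For $r<r_0:=R_0/(2M)$ (with $M$ the NTA constant), the comparison ball $B(Q,Mr)\cap\Omega$ lies entirely inside $D$ and misses $0$, so Lemma \ref{lem2.5} applied with $L=L_1$ to $u=G_1(\cdot,0)$ and $v=G'(\cdot,0)$ yields
\[
\frac{G_1(A(Q,r),0)}{G'(A(Q,r),0)}\simeq \frac{G_1(A(Q,r_0),0)}{G'(A(Q,r_0),0)}.
\]
At the fixed scale $r_0$, the interior Harnack inequality---used separately for $G_1(\cdot,0)$ as an $L_1$-solution and for $G'(\cdot,0)$ as an $L'$-solution, along a chain of balls joining $A(Q,r_0)$ to a single reference interior point $X_0$ with $\delta(X_0)=r_0$ and whose length depends only on $\Omega$ and $R_0$---shows that this ratio is comparable, uniformly in $Q$, to the single positive finite number $G_1(X_0,0)/G'(X_0,0)$. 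The remaining large-scale case $r\ge r_0$ is handled by covering $\Delta(Q,r)$ with boundary balls of radius $r_0$ and invoking the doubling property of both elliptic measures (Lemma \ref{lem2.4}).

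The main obstacle is the uniformity of the comparison constants in $Q\in\partial\Omega$, which rests on two standard NTA inputs: Lemma \ref{lem2.5} applied to the pair $(G_1(\cdot,0),G'(\cdot,0))$ produces constants depending only on the NTA character of $\Omega$, and the corkscrew points $\{A(Q,r_0):Q\in\partial\Omega\}$ can be joined to $X_0$ by Harnack chains of length bounded solely in terms of $\Omega$ and $R_0$. A minor subtlety is that Lemma \ref{lem2.5} is phrased for $L$-solutions defined on all of $\Omega$, whereas $G'(\cdot,0)$ satisfies the correct equation $L_1 v = 0$ only in $D$; this is absorbed by the fact that the proof of the boundary Harnack principle is purely local in $B(Q,Mr)\cap\Omega\subset D$.
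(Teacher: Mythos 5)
Your proposal is correct and follows essentially the same route as the paper: observe that $A'\equiv A_1$ on the boundary layer (where $\sum_j\psi_j=1$), so $G_1(\cdot,0)$ and $G'(\cdot,0)$ are both $L_1$-solutions near $\partial\Omega$ vanishing on the boundary; apply the boundary Harnack comparison principle and the Green-function/elliptic-measure relation (Lemmas \ref{lem2.5} and \ref{lem2.3}) to conclude $\omega_1(\Delta(Q,r))\simeq\omega'(\Delta(Q,r))$ for $r$ below a fixed scale; and send $r\to 0$ to get pointwise comparability of the Radon--Nikodym derivatives $d\omega_1/d\omega_0\simeq d\omega'/d\omega_0$, which preserves $B_p(\omega_0)$. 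The only point where you do more than the paper is in making explicit the uniformity in $Q$ of the Green-function ratio at the reference scale via a Harnack chain to a fixed interior point $X_0$; the paper simply writes this as ``$\sim 1$'' with implicit constants, and your version is a reasonable unpacking. Your final remark---that Lemma \ref{lem2.5} applies because boundary Harnack is a local statement and $L'=L_1$ holds throughout $B(Q,Mr)\cap\Omega\subset D$---is exactly the right way to justify the reduction, and matches the paper's implicit use of (\ref{tt7.A}).
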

\begin{proof}
Let $G'$ be the Green's function for $L'$ in $\Omega$. Note that for $X\in(\partial\Omega,\frac{R_0}{2})$, $A'(X)=A_1(X)$. For $r<R_0/4$ and $Q\in\partial\Omega$ the comparison principle for NTA domains yields that for $i=0,1$
\begin{equation}\label{tt7.B}
\frac{G_i(0,A(Q,r))}{r}\sim\frac{\omega_i(\Delta(Q,r))}{r^{n-1}}, \hbox{  and  }
\frac{G'(0,A(Q,r))}{r}\sim\frac{\omega'(\Delta(Q,r))}{r^{n-1}}.
\end{equation}
Moreover
\begin{equation}\label{tt7.A}
\frac{G_1(0,A(Q,r))}{G'(0,A(Q,r))}\sim 1.
\end{equation}
Combining (\ref{tt7.B}) and (\ref{tt7.A}) we have
\begin{equation}\label{tt7.C}
\frac{G_1(0,A(Q,r))}{G_0(0,A(Q,r))}\sim\frac{\omega_1(\Delta(Q,r))}{\omega_0(\Delta(Q,r))}\ {\hbox{ and }}\ \frac{G_1(0,A(Q,r))}{G'(0,A(Q,r))}\sim\frac{\omega_1(\Delta(Q,r))}{\omega'(\Delta(Q,r))}\sim 1
\end{equation}
which yields for every $Q\in\partial\Omega$ and for every $r<R_0/2$
\begin{equation}\label{tt7.D}
\frac{\omega'(\Delta(Q,r))}{\omega_0(\Delta(Q,r))}\sim \frac{\omega_1(\Delta(Q,r))}{\omega_0(\Delta(Q,r))}
\end{equation}
with constants that only depend on the NTA constants of $\Omega$. Letting $r$ tend to $0$ we obtain that for every $Q\in\partial\Omega$
\begin{equation}\label{tt7.E}
\frac{d\omega'}{d\omega_0}(Q)\sim \frac{d\omega_1}{d\omega_0}(Q).
\end{equation}
\end{proof}

\begin{lemma}\label{claim3-p7}
Assume that
\begin{equation}\label{tt7.101}
\sup_{\Delta\subset\partial\Omega}\bigg\{\frac{1}{\omega_0(\Delta)}\int_{T(\Delta)}a^2(Y)\frac{G_0(Y)}{\delta(Y)^2}dY\bigg\}^{1/2}<\varepsilon_0
\end{equation}
with $a(Y)=0$ for $Y\in\Omega$ and $\delta(Y)>4R_0$ where $R_0=\frac{1}{2^{30}}\min\{\delta(0),1\}$. Then there exists $C>0$ such that for $X\in\Omega$ with $\delta(X)>5R_0$
\begin{equation}\label{tt7.102}
\sup_{\Delta\subset\partial\Omega}\bigg\{\frac{1}{\omega_0^X(\Delta)}\int_{T(\Delta)}a^2(Y)\frac{G_0(X,Y)}{\delta(Y)^2}dY\bigg\}^{1/2}\le C\varepsilon_0
.
\end{equation}
Here $C$ depends on NTA constants of $\Omega$, the NTA character of $\Omega$ its diameter and $R_0$.
\end{lemma}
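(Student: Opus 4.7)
The key geometric observation is that since $a(Y)=0$ whenever $\delta(Y)>4R_0$, the integrand on the left of (\ref{tt7.102}) is supported on $T(\Delta)\cap\{Y\in\Omega:\delta(Y)\le 4R_0\}$, while the pole $X$ (like $0$) lies in the ``deep interior'' $\{Z\in\Omega:\delta(Z)\ge 5R_0\}$. For any such $Y$ and $Z$, picking $Q_Y\in\partial\Omega$ with $|Y-Q_Y|=\delta(Y)$ gives $|Z-Y|\ge |Z-Q_Y|-|Y-Q_Y|\ge\delta(Z)-\delta(Y)\ge R_0$. Thus the poles $0$ and $X$ never come near the singularity of $G_0(\cdot,Y)$ on the relevant support.

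The strategy is to change the pole via Harnack chains and then invoke the hypothesis (\ref{tt7.101}). Concretely, I will establish two uniform comparisons, with constants depending only on the NTA character and ellipticity constants, on $\mathrm{diam}\,\Omega$, and on $R_0$:
\begin{equation*}
G_0(X,Y)\simeq G_0(0,Y)\quad\text{for every }Y\in\Omega\text{ with }\delta(Y)\le 4R_0,
\end{equation*}
\begin{equation*}
\omega_0^X(\Delta)\simeq\omega_0(\Delta)\quad\text{for every surface ball }\Delta\subset\partial\Omega.
\end{equation*}
The second is immediate: $Z\mapsto\omega_0^Z(\Delta)$ is a positive $L_0$-solution on all of $\Omega$, so the NTA Harnack chain condition provides a finite chain of balls in $\Omega$ connecting $0$ to $X$, of length $N$ depending only on the NTA character and on $\mathrm{diam}\,\Omega/R_0$ (since both points have $\delta\ge 5R_0$); iterating Harnack's inequality along the chain yields the comparison. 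For the first, fix $Y$ with $\delta(Y)\le 4R_0$ and let $F(Z)=G_0(Z,Y)$, a positive $L_0$-solution on $\Omega\setminus\{Y\}$; since $|Z-Y|\ge R_0$ whenever $\delta(Z)\ge 5R_0$, one can produce a Harnack chain from $0$ to $X$ by balls of radius $\le R_0/4$ that avoid $B(Y,R_0/2)$, of length again bounded in terms of the NTA character, $\mathrm{diam}\,\Omega$ and $R_0$ but independent of $Y$, and conclude by iterating Harnack for $F$.

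Given these two comparisons, for every surface ball $\Delta$
\begin{equation*}
\frac{1}{\omega_0^X(\Delta)}\int_{T(\Delta)}a^2(Y)\frac{G_0(X,Y)}{\delta(Y)^2}\,dY\;\lesssim\;\frac{1}{\omega_0(\Delta)}\int_{T(\Delta)}a^2(Y)\frac{G_0(0,Y)}{\delta(Y)^2}\,dY\;\le\; \varepsilon_0^2,
\end{equation*}
and taking the supremum in $\Delta$ and then square roots yields (\ref{tt7.102}).

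The main technical obstacle is the uniform Harnack chain construction behind the first comparison: one needs a chain of bounded length that simultaneously lies in $\Omega$ (with balls whose radii control the distance to $\partial\Omega$, so that interior Harnack applies) and avoids the singleton $\{Y\}$ uniformly as $Y$ ranges over $\{\delta\le 4R_0\}$. The first requirement is exactly the NTA Harnack chain condition for the pair $(0,X)$; the second is handled by restricting the radii to $\le R_0/4$ and, when necessary, locally detouring around $Y$ through the annulus $\{R_0/2\le |Z-Y|\le R_0\}$, which by NTA contains corkscrew points and adds only boundedly many links to the chain. This is the only nontrivial step; the rest of the proof is bookkeeping.
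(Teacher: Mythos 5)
Your proposal takes a genuinely different route from the paper. The paper never compares $G_0(X,\cdot)$ to $G_0(0,\cdot)$ pointwise and never needs a Harnack chain connecting $0$ to $X$: for a small surface ball $\Delta=\Delta(Q,r)$ with $r\le 9R_0/2$, it applies the boundary comparison principle (Lemma \ref{lem2.5}) together with the Green's function/elliptic measure relation (Lemma \ref{lem2.3}) to get, uniformly for $Y\in T(\Delta)$,
\[
\frac{G_0(X,Y)}{G_0(Y)}\;\sim\;\frac{\omega_0^X(\Delta)}{\omega_0(\Delta)},
\]
so the $\omega_0^X(\Delta)$ in the normalization cancels directly against the $G_0(X,Y)$ in the integrand; for large $\Delta$ it covers $T(\Delta)\cap(\partial\Omega,4R_0)$ by boundedly many small tents $T(\Delta_i)$ and sums, using doubling of $\omega_0$ and $\omega_0^X$. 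You instead aim for the stronger global pointwise comparisons $G_0(X,Y)\simeq G_0(0,Y)$ and $\omega_0^X(\Delta)\simeq\omega_0(\Delta)$, each via an interior Harnack chain from $0$ to $X$. The second is correct as you state it: $Z\mapsto\omega_0^Z(\Delta)$ is a positive $L_0$-solution on all of $\Omega$, and the NTA Harnack chain for two points at depth $\ge 5R_0$ has length controlled by $\mathrm{diam}\,\Omega/R_0$.

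The first comparison, however, has a genuine gap. You ask for a chain whose balls avoid $B(Y,R_0/2)$, which is strictly stronger than what Harnack requires and can fail: in an NTA domain with a neck of width $R_0$, placing $Y$ at the center of the neck makes $B(Y,R_0/2)$ together with $\partial\Omega$ disconnect the two sides, so no such chain exists. What interior Harnack actually needs is only $2B_i\subset\Omega\setminus\{Y\}$ for each chain ball $B_i$, i.e.\ avoidance of the singleton; this is always possible ($n\ge 2$), but a chain avoiding $\{Y\}$ of length bounded \emph{uniformly in $Y$} is a nontrivial ``detour'' lemma that your sketch does not establish. The remark that "the annulus $\{R_0/2\le|Z-Y|\le R_0\}$ by NTA contains corkscrew points" does not parse: the corkscrew condition concerns boundary points and scales, not interior points $Y$, and when $\delta(Y)\ll R_0$ that annulus need not lie in $\Omega$. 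The clean repair is to handle the singularity of $G_0(\cdot,Y)$ once via the boundary comparison principle at a scale $\sim\min\{\delta(Y),cR_0\}$ near $Q_Y$, reducing to a ratio $\omega_0^X(\Delta_Y)/\omega_0(\Delta_Y)$, and then apply your (unobstructed) Harnack chain argument to that ratio — which is, in effect, the paper's argument recast pointwise.
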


\begin{proof}
If $\Delta=\Delta(Q,r)$ with $r\le
9/2R_0$ and $\delta(X)>5R_0$ then for $Y\in T(\Delta)$ by the comparison principle and (\ref{tt7.B}) we have
\begin{equation}\label{tt7.103}
\frac{G_0(X,Y)}{G_0(Y)}\sim\frac{G_0(X,A(Q,r))}{G_0(A(Q,r))}\sim\frac{\omega^X_0(\Delta(Q,r))}{\omega_0(\Delta(Q,r))}
\end{equation}
hence
\begin{equation}\label{tt7.104}
\frac{1}{\omega_0^X(\Delta)}\int_{T(\Delta)}a^2(Y)\frac{G_0(X,Y)}{\delta(Y)^2}dY\sim\frac{1}{\omega_0(\Delta)}\int_{T(\Delta)}a^2(Y)\frac{G_0(Y)}{\delta(Y)^2}dY.
\end{equation}
If $r\ge 9/2R_0$ then
$$\frac{1}{\omega_0^X(\Delta)}\int_{T(\Delta)}a^2(Y)\frac{G_0(X,Y)}{\delta(Y)^2}dY=\frac{1}{\omega_0^X(\Delta)}\int_{T(\Delta)\cap(\partial\Omega,4R_0)}a^2(Y)\frac{G_0(X,Y)}{\delta(Y)^2}dY.$$
Covering $\partial\Omega$ by balls $\{B(Q,R_0/2)\}^{M}_{i=1}$, if $\Delta_i=B(Q_i,9/2R_0)\cap\partial\Omega$ we have, using (\ref{tt7.104}), that
\begin{equation}\label{tt7105}
\frac{1}{\omega_0^X(\Delta)}\int_{T(\Delta)\cap(\partial\Omega,4R_0)}a^2(Y)\frac{G_0(X,Y)}{\delta(Y)^2}dY\le
 \frac{1}{\omega_0^X(\Delta)}\sum_{i=1}^M\int_{T(\Delta_i)}a^2(Y)\frac{G_0(X,Y)}{\delta(Y)^2}dY
\end{equation}
$$\lesssim \sum_{i=1}^M\bigg(\int_{T(\Delta_i)}a^2(Y)\frac{G_0(Y)}{\delta(Y)^2}dY\bigg)\frac{\omega_0^X(\Delta_i)}{\omega_0(\Delta_i)}\frac{1}{\omega_0^X(\Delta)}\lesssim \varepsilon_0
$$
because $\omega_0$, $\omega_0^X$ are doubling, $\o^X(\D)\sim C\o^X(\D_i)$ and by (\ref{tt7.103}).
\end{proof}

The last preliminary concerns the existence of a family of dyadic cubes in $\po$ whose ``projections'" in $\O$ provide a good covering 
of $\O\cap (\po, 4R_0)$, with $R_0$ as above. Since $\Omega$ is a CAD in $\R^n$, both $\sigma=\mathcal{H}^{n-1}\res \partial\Omega$ and $\omega_0$ are doubling measures and therefore $(\partial\Omega, |\ |, \sigma)$ and $(\partial\Omega, |\ |, \omega_0)$ are spaces of homogeneous type. Here $|\ |$ denotes the Euclidean distance in $\R^n$. M. Christ's construction (see \cite{c}) ensures that there exists a family of dyadic  cubes $\{Q_\alpha^k\subset\partial\Omega:k\in\mathbb{Z}, \alpha\in I_k\}$, $I_k\subset \mathbb{N}$ such that for every $k\in\mathbb{Z}$
\begin{equation}\label{tt7.19}
    \sigma(\partial\Omega\setminus\bigcup_{\alpha}Q_\alpha^k)=0, \ \ \ \ \omega_0(\partial\Omega\setminus\bigcup_{\alpha}Q_\alpha^k)=0.
\end{equation}
Furthermore the following properties are satisfied:
\begin{enumerate}
\item{} If $l\ge
 k$ then either $Q_\beta^l\subset Q_\alpha^k$ or $Q_\beta^l\cap Q_\alpha^k=\emptyset.$

\item{} For each $(k,\alpha)$ and each $l<k$ there is a unique $\beta$ so that $Q_\alpha^k\subset Q_\beta^l$.

\item{} There exists a constant $C_0>0$ such that ${\rm{diam}}\,Q_\alpha^k\le
 C_08^{-k}$.

\item{} Each $Q_\alpha^k$ contains a ball $B(Z_\alpha^k,8^{-k-1})$.
\end{enumerate}

The fact that $B(Z_\alpha^k,8^{-k-1})\subset Q_\alpha^k$ implies that ${\rm{diam}}\, Q_\alpha^k\ge
 8^{-k-1}$. The Ahlfors regularity property of $\sigma$ combined with properties 3 and 4 ensure that there exists $C_1>1$ such that
\begin{equation}\label{tt7.24}
    C_1^{-1}8^{-k(n-1)}\le
 \sigma(Q_\alpha^k)\le
 C_18^{-k(n-1)}.
\end{equation}
In addition the doubling property of $\omega_0$ yields
\begin{equation}\label{tt7.25}
    \omega_0(B(Z_\alpha^k,8^{-k-1}))\sim\omega_0(Q_\alpha^k).
\end{equation}
For $k\in\mathbb{Z}$ and $\alpha\in I_k$ we define
\begin{equation}\label{tt7.26}
    I_\alpha^k=\{Y\in\Omega: \lambda8^{-k-1}<\delta(Y)<\lambda8^{-k+1}, \ \exists P\in Q_\alpha^k\ \ {\rm{so \ that}}\ \ \lambda8^{-k-1}<|P-Y|<\lambda8^{-k+1}\},
\end{equation}
where $\lambda>0$ is chosen so that for each $k$, the $\{I_\alpha^k\}_{\alpha\in I_k}$'s have finite overlaps and
\begin{equation}\label{tt7.26A}
    \Omega\cap(\partial\Omega,4R_0)\subset \bigcup_{\alpha,k\le
 k_0}I_\alpha^k.
\end{equation}
Here $k_0$ is chosen so that $4R_0<\lambda 8^{-k-1}$; i.e $k_0=[\frac{{\rm{log}}\lambda-{\rm{log}}32R_0}{{\rm{log}}8}]+1$. To see that such a $\lambda>0$ can be found, note that if $I_\alpha^k\cap I_\beta^k\neq\emptyset$ there exist $Y\in I_\alpha^k\cap I_\beta^k$, $P_\alpha\in Q_\alpha^k$ and $P_\beta\in Q_\alpha^k$ so that $$\lambda8^{-k-1}<\delta(Y), \ |P_\alpha-Y|, \ |P_\beta-Y|<\lambda 8^{-k+1}.$$
Thus $|P_\alpha-P_\beta|\le
 2\lambda 8^{-k+1}$ and for $P\in Q_\beta^k$,
\begin{eqnarray}\label{tt7.27}
  |P_\alpha-P| &\le
& |P_\alpha-P_\beta|+|P_\beta-P| 
   \le
 2\delta(Y)+{\rm{diam}}\,
Q_\beta^k \nonumber\\
   &\le
& 2\lambda 8^{-k+1}+C_08^{-k} 
   \le
 8^{-k}(16\lambda+C_0).
\end{eqnarray}
Thus (\ref{tt7.27}) yields that given $I_\alpha^k$, if $Q^k_\beta$ is such that $I_\alpha^k\cap I_\beta^k\neq\emptyset$ then $Q_\beta^k\subset B(P_\alpha, 8^{-k}(16\lambda+C_0))$ for some $P_\alpha\in Q_\alpha^k$. Since $\{Q^k_\beta\}_{\beta\in I_k}$ is a disjoint collection,(\ref{tt7.24}) yields that the number $N$ of cubes $Q_\beta^k$ so that $I_\alpha^k\cap I_\beta^k\neq\emptyset$ satisfies $NC^{-1}8^{-k(n-1)}\le  C8^{-k(n-1)}(16\lambda+C_0)^{n-1}$, i.e.
$N\le  C^2(16\lambda+C_0)^{n-1}$. To show that the $I_\alpha^k$'s cover $(\partial\Omega,4R_0)$ let $Y\in(\partial\Omega,4R_0)$, $\delta(Y)\le
 4R_0<\frac{1}{2^{28}}\min\{\delta(0),1\}$ by choosing $\lambda \ge
 \frac{1}{8}\max\{\delta(0),1\}+1+64C_0$ we have that $\delta(Y)<\frac{\lambda}{8}$. Thus there exists $k\ge
 2$ so that $\lambda8^{-k-1}<\delta(Y)<\lambda8^{-k+1}$ and $Q_Y\in\partial\Omega$ so that $|Q_Y-Y|=\delta(Y)$. Let $\rho_0=\frac{1}{2}\min\{\delta(Y)-\lambda8^{-k-1}, \lambda8^{-k+1}-\delta(Y)\}>0$. Since $\sigma(\partial\Omega\setminus\bigcup_{\alpha\in I_k}Q^k_\alpha)=0$ and $\sigma(\Delta(Q_Y,\rho_0))\ge
 C^{-1}\rho_0^{n-1}>0$ there exists $\alpha\in I_k$ so that $\Delta(Q_Y,\rho_0)\cap Q_\alpha^k\neq\emptyset$. Let $P_\alpha\in\Delta(Q_Y,\rho_0)\cap Q^k_\alpha$ then
\begin{equation}\label{tt7.29}
 \delta(Y)-\rho_0\le   |Q_Y-Y|-|P_\alpha-Q_Y|\le
 |P_\alpha-Y|\le
|P_\alpha-Q_Y|+|Q_Y-Y|\le \rho_0+\delta(Y)
\end{equation}
hence by the selection of $\rho_0$,
$$\frac{\delta(Y)+\lambda8^{-k-1}}{2}\le
 |P_\alpha-Y|\le
 \frac{\delta(Y)+\lambda8^{-k+1}}{2}.$$
Thus $Y\in I^k_\alpha$ provided that $\lambda$ is chosen as above.

Next we proceed with the proof of Theorem \ref{mainthm1} following the approach presented in \cite{fkp}. Note that (\ref{condThm2.11})  implies that $\varepsilon(X)=A_1(X)-A_0(X)\equiv 0$ on $\partial\Omega$, i.e. $L_0=L_1$ on $\partial\Omega$. Thus $L_1$ is regarded as a perturbation of $L_0$. Hence as in \cite{fkp} the strategy consists of regarding the solution to $L_1$ with given boundary data as a perturbation of the solution to $L_0$ with the same boundary data. We consider the Dirichlet problem
\begin{equation}\label{ttm7.4}
\left\{
  \begin{array}{ll}
    L_1u_1=0 & {\rm{in}} \ \Omega \\
    u_1 \ \big{.\mid}_{\partial \Omega}=f & \in L^2(\omega_0).
  \end{array}
\right.
\end{equation}
We need to show the following apriori estimate
\begin{equation}\label{ttm7.5}
||N(u_1)||_{L^{2}(\omega_0)}\le
 ||f||_{L^{2}(\omega_0)}
\end{equation}
which is equivalent to the statement that $\omega_1\in B_2(\omega_0)$. Assume that $f\in C(\partial\Omega)$ and $u_1$ is a solution of (\ref{ttm7.4}). Let $u_0$ satisfy
\begin{equation}\label{ttm7.6}
\left\{
  \begin{array}{ll}
    L_0u_0=0 & {\rm{in}} \ \Omega \\
    u_0 =f & {\rm{on}}\  \partial\Omega.
  \end{array}
\right.
\end{equation}
then
$$||N(u_0)||_{L^{2}(\omega_0)}\le
 ||f||_{L^{2}(\omega_0)}$$
since $Nu_0(Q)\le
 C M_{\omega_0}(f)(Q)$ and $u_1$ is related to $u_0$ by the formula
$$u_1(X)=u_0(X)+\int_{\Omega}G_0(X,Y)L_0 u_1(Y)dY=u_0(X)+F(X).$$
Integration by parts shows that
$$F(X)=\int_{\Omega}G_0(X,Y)(L_0-L_1)u_1(Y)dY=\int_{\Omega} \nabla_YG_0(X,Y)\varepsilon(Y)\nabla u_1(Y) dY$$
where $\varepsilon(Y)=A_1(Y)-A_0(Y)$.

As in \cite{fkp}, the proof of Theorem \ref{mainthm1} follows from the two lemmas below (lemmas \ref{lem2.9} and \ref{lem2.10}). We start with the analogue of Lemma 2.9 of \cite{fkp}.
\begin{lemma}\label{lem2.9}
Let $\Omega$ be a CAD and assume that (\ref{condThm2.11}) holds. Then there exists $C>1$ and $M>1$ such that for $Q_0\in\partial\Omega$
\begin{equation}\label{ttlem7.6A}
\widetilde{N}F(Q_0)\le
 C\varepsilon_0
M_{\omega_0}(S_M(u_1))(Q_0)
\end{equation}
and
\begin{equation}\label{ttlem7.6B}
\widetilde{N}_{1/2}(\delta|\nabla F|)(Q_0)\le
 C\varepsilon_0
\bigg[M_{\omega_0}(S_M(u_1))(Q_0)+\widetilde{N}(\delta|\nabla F|)(Q_0)\bigg].
\end{equation}
Therefore
\begin{equation}\label{ttlem7.6C}
\int_{\partial\Omega}\widetilde{N}F(Q)^2+\widetilde{N}(\delta|\nabla F|)(Q)^2d\omega_0(Q)\le
 C\varepsilon_0
^2\int_{\partial\Omega}S^2u_1(Q)d\omega_0(Q).
\end{equation}
\end{lemma}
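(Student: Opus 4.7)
The plan is to exploit the Green's representation
\begin{equation*}
F(Z)=\int_\Omega \nabla_Y G_0(Z,Y)\cdot \varepsilon(Y)\,\nabla u_1(Y)\,dY,\qquad \varepsilon=A_1-A_0,
\end{equation*}
together with the equation $L_0 F=-\dv(\varepsilon\nabla u_1)$ in $\Omega$, $F|_{\partial\Omega}=0$. Both pointwise estimates come from bilinear splits that separate the Carleson smallness of $a^2 G_0/\delta^2$ from the non-tangential behavior of $u_1$, and (\ref{ttlem7.6C}) follows by integration.

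For (\ref{ttlem7.6A}), fix $X\in\Gamma(Q_0)$ and $Z\in B(X,\delta(X)/8)$. Factoring $a\,|\nabla_Y G_0(Z,Y)|\,|\nabla u_1|=\bigl[a\,G_0(Z,Y)^{1/2}/\delta\bigr]\cdot\bigl[|\nabla_Y G_0(Z,Y)|\,\delta\,|\nabla u_1|/G_0(Z,Y)^{1/2}\bigr]$ and applying Cauchy--Schwarz gives $|F(Z)|^2\le I_1(Z)\,I_2(Z)$. The first factor $I_1(Z)=\int_\Omega a^2 G_0(Z,Y)/\delta^2\,dY$ is bounded by $C\varepsilon_0^2$: cover $\partial\Omega$ by finitely many surface balls $\Delta_i$ of radius comparable to $R_0$ and apply Lemma \ref{claim3-p7} to each, using Lemma \ref{lem2.4} and Lemma \ref{lem2.5} to move the pole from $0$ to $Z$ and the fact that $a$ vanishes outside $(\partial\Omega,4R_0)$ to deal with $Z$ close to the boundary; summing, $I_1(Z)\lesssim\varepsilon_0^2\omega_0^Z(\partial\Omega)=\varepsilon_0^2$. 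The second factor is bounded by decomposing $\Omega\cap(\partial\Omega,4R_0)$ into the Whitney regions $I_\alpha^k$ of (\ref{tt7.26}); on each $I_\alpha^k$, Caccioppoli for $G_0$ together with Harnack replaces $|\nabla_Y G_0|^2\delta^2/G_0$ by $G_0$ in an averaged sense, yielding $I_2(Z)\lesssim\int_\Omega G_0(Z,Y)|\nabla u_1|^2\,dY$. Lemma \ref{lem2.3} rewrites $G_0(Z,Y)$ as $\delta(Y)^{2-n}\omega_0^Z(\Delta(Q_Y,\delta(Y)))$; Fubini together with Lemma \ref{four} then identifies this with $\int_{\partial\Omega}(S_M u_1)^2\,d\omega_0^Z$ for some aperture $M$ depending only on the NTA constants, and the comparison principle (Lemma \ref{lem2.5}) shows the mass of $\omega_0^Z$ is concentrated near $Q_0$ at the scale $\delta(X)$, giving $\int (S_M u_1)^2 d\omega_0^Z\lesssim M_{\omega_0}(S_M u_1)^2(Q_0)$. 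Averaging over $Z\in B(X,\delta(X)/8)$ yields (\ref{ttlem7.6A}).

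For (\ref{ttlem7.6B}), the Caccioppoli inequality applied to $\dv(A_0\nabla F+\varepsilon\nabla u_1)=0$, with cutoff supported in $B(X,\delta/8)$ and equal to $1$ on $B(X,\delta/16)$, gives
\begin{equation*}
\delta(X)^2\fint_{B(X,\delta(X)/16)}|\nabla F|^2\,dY\lesssim \fint_{B(X,\delta(X)/8)}F^2\,dY+\delta(X)^2\fint_{B(X,\delta(X)/8)}a^2|\nabla u_1|^2\,dY.
\end{equation*}
The first term is bounded by $(\widetilde N F)^2(Q_0)\le C\varepsilon_0^2 M_{\omega_0}(S_M u_1)^2(Q_0)$ by Step 1. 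Writing $\nabla u_1=\nabla u_0+\nabla F$, the $\nabla u_0$-contribution to the second term is bounded by $C\varepsilon_0^2 M_{\omega_0}(S_M u_1)^2(Q_0)$ after extracting the $\varepsilon_0$ factor from a local Cauchy--Schwarz tied to the Carleson condition on $a^2 G_0/\delta^2$ and using standard interior estimates for $u_0$; the $\nabla F$-contribution is bounded by $C\varepsilon_0^2(\widetilde N(\delta|\nabla F|))^2(Q_0)$ by the same device. The larger aperture $\alpha=1$ on the right-hand side arises because $B(X,\delta(X)/8)\subset \Gamma_{\alpha'}(Q_0)$ for some $\alpha'$ strictly larger than $1/2$. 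Squaring (\ref{ttlem7.6A})--(\ref{ttlem7.6B}), integrating over $\partial\Omega$ with respect to $\omega_0$, and using the $L^2(\omega_0)$-boundedness of the Hardy--Littlewood maximal operator together with Remark \ref{N-hat-tilde} to identify $\|\widetilde N_{1/2}(\delta|\nabla F|)\|_{L^2(\omega_0)}$ with $\|\widetilde N(\delta|\nabla F|)\|_{L^2(\omega_0)}$, the resulting term $C\varepsilon_0\|\widetilde N(\delta|\nabla F|)\|_{L^2(\omega_0)}$ is absorbed into the left-hand side upon choosing $\varepsilon_0$ small, yielding (\ref{ttlem7.6C}). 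The main technical hurdle is the bound on $I_2(Z)$: expressing a weighted interior integral as a non-tangential square-function integral on $\partial\Omega$ requires simultaneously invoking Caccioppoli for $G_0$ on Whitney regions, Harnack chains to relocate the pole, and the comparison principle to pass from $G_0(Z,\cdot)$ to $\omega_0^Z$; the chord-arc (rather than Lipschitz) geometry forces these steps through NTA Harnack chains rather than through the straight Carleson boxes available in the Lipschitz case.
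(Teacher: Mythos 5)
Your reduction of (\ref{ttlem7.6B}) to (\ref{ttlem7.6A}) via Caccioppoli, and the absorption argument giving (\ref{ttlem7.6C}), are in line with the paper. But the proposed proof of (\ref{ttlem7.6A}) has a gap that cannot be repaired without importing the paper's central device, and that gap would propagate into (\ref{ttlem7.6C}).

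The problem is the single, global Cauchy--Schwarz $|F(Z)|\le I_1(Z)^{1/2}I_2(Z)^{1/2}$. Even granting $I_1(Z)\lesssim\varepsilon_0^2$, your treatment of $I_2$ gives
\[
I_2(Z)\lesssim\int_\Omega G_0(Z,Y)|\nabla u_1(Y)|^2\,dY\ \simeq\ \int_{\partial\Omega}\bigl(S_M u_1\bigr)^2\,d\omega_0^Z,
\]
after Lemma~\ref{lem2.3} and Fubini. Since $\omega_0^Z$ is a probability measure essentially spread over a surface ball $\Delta\ni Q_0$ of radius $\simeq\delta(X)$, this last quantity is comparable to $\frac{1}{\omega_0(\Delta)}\int_\Delta (S_M u_1)^2\,d\omega_0$, i.e.\ to $M_{\omega_0}\bigl((S_M u_1)^2\bigr)(Q_0)$ --- and this is \emph{not} bounded by $\bigl(M_{\omega_0}(S_M u_1)(Q_0)\bigr)^2$ (Jensen gives the opposite inequality). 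Consequently your argument only yields $\widetilde{N}F(Q_0)\lesssim\varepsilon_0\bigl(M_{\omega_0}((S_M u_1)^2)(Q_0)\bigr)^{1/2}$. Integrating the square of this against $\omega_0$ would require $L^1$-boundedness of $M_{\omega_0}$, which is false; so the derivation of (\ref{ttlem7.6C}) collapses.

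This is exactly the loss the paper is designed to avoid. The paper splits $F=F_1+F_2$, treating the singular piece $F_1$ locally (via $\widetilde G_0$ on $2B(X)$ and the kernel $K=G_0-\widetilde G_0$), and on the far piece $F_2$ it performs Cauchy--Schwarz only \emph{per Whitney cube} $I_\alpha^k$ and then runs the ``stopping time'' argument: the dyadic cubes are grouped into the classes $J_j$ determined by the level sets $O_j=\{T_\varepsilon u_1>2^j\}$ (see (\ref{tt7.211})--(\ref{tt7.219})). Summing the per-cube Cauchy--Schwarz over each class, and then over $j$, collapses the square root of the $L^2$-average of $T_\varepsilon u_1$ to the $L^1$-average $\frac{1}{\omega_0(\Delta_0)}\int_{\Delta_0}T_\varepsilon u_1\,d\omega_0$, which is controlled by $M_{\omega_0}(S_M u_1)(Q_0)$ (first power). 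Without that decomposition you are stuck with the second moment, and the desired $B_2(\omega_0)$ estimate does not follow. Your proposal needs to replace the global Cauchy--Schwarz by this scale-by-scale version with the stopping-time bookkeeping, not merely the Whitney decomposition plus Caccioppoli plus Harnack.

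A secondary point: the claim $I_1(Z)\lesssim\varepsilon_0^2$ cannot be read off directly from Lemma~\ref{claim3-p7}, which is stated only for $\delta(X)>5R_0$; for $Z$ near $\partial\Omega$ --- the main case --- one must handle the singularity of $G_0(Z,\cdot)$ separately (this is precisely what the $F_1/\widehat F_1$ machinery does in the paper). That said, this is repairable; the loss of the power in $I_2$ is the essential obstruction.
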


Here $M_{\o_0}$ denotes the Hardy-Littlewood maximal function with respect to $\o_0$, and $S_\alpha (u)$ denotes the square function of $u$ given by
\begin{equation}\label{tt.7a}
S_\alpha^2(u)(Q)=\int_{\Gamma_\alpha(Q)}|\nabla u(X)|^2\delta^{2-n}dX.
\end{equation}

\begin{proof}
The proof follows the same guidelines of Lemma 2.9 in \cite{fkp}. We estimate each term separately. First we show that there exists $M>1$ so that for $Q_0\in\partial\Omega$
\begin{equation}\label{tt7.10.6}
\widetilde{N}F(Q_0) \le
 C \varepsilon_0
 M_{\omega_0}(S_M (u_1))(Q_0).
\end{equation}
Let $X \in \Gamma(Q_0)$ and set $B(X)=B(X,\delta(X)/4) $. We split the potential $F$ into two pieces
\begin{equation}\label{tt7.107}
F(Z)=F_1(Z)+F_2(Z)
\end{equation}
where
\begin{equation}\label{tt7.108}
F_1(Z)=\int_{B(X)}\nabla_YG_0(Z,Y)\varepsilon(Y)\nabla u_1(Y) dY
\end{equation}
and
\begin{equation}\label{tt7.109}
F_2(Z)=\int_{\Omega\setminus B(X)}\nabla_YG_0(Z,Y)\varepsilon(Y)\nabla u_1(Y) dY.
\end{equation}
To estimate $\widetilde{N}F(Q_0)$ let $X\in\Gamma(Q_0)$ and note that
$$\int_{B(X,\frac{\delta(X)}{8})}F^2(Z)\frac{dZ}{\delta(X)^n}\lesssim {\fint}_{B(X,\frac{\delta(X)}{8})}F_1^2(Z)dZ+{\fint}_{B(X,\frac{\delta(X)}{8})}F_2^2(Z)dZ.$$
We look at each term on the right hand side separately. For $Y\in B(X)$, $\frac{3\delta(X)}{4}\le
 \delta(Y)\le
 \frac{5\delta(X)}{4}$, and either $\delta(X)<8R_0$ or $\delta(X)\ge
 8R_0$. If $\delta(X)\ge
 8R_0$ then $\delta(Y)\ge
 6R_0$ thus $\varepsilon(Y)=0$. If $\delta(X)<8R_0$ then $\delta(Y)<10R_0$ and $|Y|\ge
 8R_0$. In this case the Harnack principle ensures that $G_0(X)\sim G_0(Y)$. Furthermore, since $\omega_0$ is doubling, for $Y\in \Gamma_{5/4}(Q_0)$ the relationship between the Green's function and the elliptic measure on NTA domains yields
\begin{equation}\label{tt7.110}
\frac{G_0(X)}{\delta(X)}\sim\frac{\omega_0(\Delta(Q_0,\delta(X))}{\delta(X)^{n-1}}\sim \frac{\omega_0(\Delta(Q_0,\delta(Y))}{\delta(Y)^{n-1}}\sim \frac{G_0(Y)}{\delta(Y)}.
\end{equation}
Therefore for $Y_0\in B(X)$ either $\varepsilon(Y_0)=0$ or for $\delta(X)< 8R_0$ in this case (\ref{tt7.110}) and the doubling properties of $\o_0$ imply 
\begin{eqnarray}\label{tt7.9}
  |\varepsilon(Y_0)|  &\lesssim& \left( {\fint}_{B(X,\frac{\delta(X)}{8})} a ^2(Y) dY\right)^{1/2} \nonumber \\
  &\lesssim& \left( {\fint}_{B(X)} a ^2(Y) dY\right)^{1/2} \nonumber \\
   &\lesssim& \left( \int_{B(X)} \frac{a ^2(Y)}{\delta(Y)^2}G_0(Y)\frac{\delta(Y)}{G_0(Y)} \frac{dY}{\delta^{n-1}(X)}\right)^{1/2}\nonumber \\
   &\lesssim& \left( \frac{1}{\delta(X)^{n-1}}\int_{B(X)} \frac{a ^2(Y)}{\delta(Y)^2}G_0(Y)\frac{\delta(X)^{n-1}}{\omega_0(\Delta(Q_0,\delta(X)))} dY\right)^{1/2}\nonumber \\
   &\lesssim& \left( \frac{1}{\omega_0(\Delta(Q_0,\delta(X)))}\int_{B(X,\delta(X)/4)} \frac{a ^2(Y)}{\delta(Y)^2}G_0(Y)dY\right)^{1/2}\nonumber \\
   &\lesssim& \left( \frac{1}{\omega_0(\Delta(Q_0,\delta(X)))}\int_{T(\Delta(Q_0,3\delta(X)))} \frac{a ^2(Y)}{\delta(Y)^2}G_0(Y)dY\right)^{1/2}\nonumber \\
   &\lesssim& \left( \frac{1}{\omega_0(\Delta(Q_0,3\delta(X)))}\int_{T(\Delta(Q_0,3\delta(X)))} \frac{a ^2(Y)}{\delta(Y)^2}G_0(Y)dY\right)^{1/2} \lesssim \varepsilon_0
.
\end{eqnarray}

Let $\widetilde{G}_0(Z,Y)$ be the Green's function for $L_0$ in $2B(X)=B(X,\delta(X)/2)$. Let
\begin{equation}\label{tt7.111}
K(Z,Y)=G_0(Z,Y)-\widetilde{G}_0(Z,Y),\ \ \widetilde{F}_1(Z)=\int_{B(X)}\nabla_Y\widetilde{G}_0(Z,Y)\varepsilon(Y)\nabla u_1(Y) dY
\end{equation}
and
\begin{equation}\label{tt-hat}
\widehat{F}_1(Z)=F_1(Z)-\widetilde{F}_1(Z).
\end{equation}

\begin{equation}\label{tt7.400}
\left\{
  \begin{array}{lll}
    L_0 \widetilde{F}&=& {\rm{div}}[\varepsilon \nabla u_1 \chi_{B(X)}] \ {\rm{in}} \ 2B(X) \\
    \widetilde{F} & = & 0  \ {\rm on} \ \partial (2B(X)).
  \end{array}
\right.
\end{equation}
Using (\ref{tt7.9}), as in \cite{fkp}, we have that
\begin{eqnarray}\label{tt7.10}
  \int_{2B(X)}|\nabla \widetilde{F}_1|^2dZ &\le
& C \int_{2B(X)}A_0\nabla\widetilde{F}_1\nabla\widetilde{F}_1dZ=C\int \nabla\widetilde{F}_1 \varepsilon \nabla u_1\chi_B dZ \nonumber \\
   &\le
& \frac{1}{2}\int_{B(X)}|\nabla \widetilde{F}_1|^2dZ+C\varepsilon_0
^2\int_{B(X)}|\nabla u_1|^2dZ.
\end{eqnarray}
Combining Sobolev inequality and (\ref{tt7.10}) we obtain
\begin{equation}\label{tt7.11}
    \int_{2B(X)}|\widetilde{F}_1|^2dZ \le
 C\delta(X)^2 \int_{2B(X)}|\nabla \widetilde{F}_1|^2dZ\le
 C\varepsilon^2_p\delta(X)^2\int_{B(X)}|\nabla u_1|^2dZ.
\end{equation}
Thus since for $Z\in B(X)$, $\delta(Z)\sim\delta(X)$ (\ref{tt7.11}) yields
\begin{equation}\label{tt7.12}
    \bigg({\fint}_{B(X,\frac{\delta(X)}{8})}|\widetilde{F}_1|^2dZ\bigg)^{1/2}\le
 C\bigg({\fint}_{B(X,\frac{\delta(X)}{2})}|\widetilde{F}_1|^2dZ\bigg)^{1/2}\le
 C \varepsilon_0
 \bigg(\int_{B(X)}|\nabla u_1|^2\delta(Z)^{2-n}dZ\bigg)^{1/2}.
\end{equation}
If $X\in\Gamma(Q_0)$ and $Z\in B(X)$ then $Z\in\Gamma_2(Q_0)$ and from (\ref{tt7.12}) we conclude
\begin{equation}\label{tt7.13}
    \bigg({\fint}_{B(X,\frac{\delta(X)}{8})}|\widetilde{F}_1|^2dZ\bigg)^{1/2}\le
 C\bigg({\fint}_{B(X,\frac{\delta(X)}{2})}|\widetilde{F}_1|^2dZ\bigg)^{1/2}\le
 C \varepsilon_0
 S_2(u_1)(Q_0).
\end{equation}
We now estimate $\widehat{F}_1$ by writing
\begin{equation}\label{tt7.14half}
\widehat{F}_1=F_1-\widetilde{F}_1=\int_{B(X)}\nabla_Y K(Z,Y)\varepsilon\nabla u_1(Y)dY.
\end{equation}
That is
$$ |\widehat{F}_1(Z)| \le
 \varepsilon_0
 \int_{B(X)}|\nabla_Y K(Z,Y)||\nabla u_1(Y)|dY.$$
For fixed $Z\in B(X)$ we have that $L_0 K(Z,Y)=0$ in $2B(X).$  Apply Cauchy-Schwarz and Cacciopoli's inquality (to $K$) we obtain
\begin{equation}\label{tt7.15}
|\widehat{F}_1(Z)| \le
 \frac{C\varepsilon_0
}{\delta(X)}\left( \int_{\frac{3}{2}B(X)}|K(Z,Y)|^2dY\right)^{1/2} \left( \int_{B(X)}|\nabla u_1(Y)|^2 dY\right)^{1/2}.
\end{equation}
Since $K(Z, -)\ge 0$ Harnack's inequality yields,
\begin{equation}\label{tt7.15half}
\left( {\dashint} _{\frac{3}{2}B(X)}K(Z,Y)^2dY \right)^{1/2} \le
 C\left( {\dashint} _{\frac{3}{2}B(X)}K(Z,Y)dY \right)\le
 C {\dashint} _{\frac{3}{2}B(X)}|Z-Y|^{2-n}dY
\end{equation}
since $G_0(Z,Y) \lesssim \frac{1}{|Z-Y|^{n-2}}.$ Thus since for $Y\in B(X)$, $\delta(X)\sim\delta(Y)$ combining (\ref{tt7.15}) and (\ref{tt7.15half}) we have
\begin{eqnarray}\label{tt7.16}
  \bigg({\fint}_{2B(X)}|\widehat{F}_1(Z)|^2dZ\bigg)^{1/2} &\le
& \frac{C\varepsilon_0
}{\delta(X)}\bigg( \int_{2B(X)}{\bigg({\fint}_{\frac{3}{2}B(X)}\frac{dY}{|Z-Y|^{n-2}}\bigg)}^2dZ\bigg)^{1/2}\bigg(\int_{B(X)}|\nabla u_1(Y)|^2 dY\bigg)^{1/2} \nonumber\\
   &\le
& C\varepsilon_0
\delta(X)^{1-n/2}\bigg(\int_{B(X)}|\nabla u_1(Y)|^2 dY\bigg)^{1/2} \nonumber\\
   &\le
& C\varepsilon_0
\bigg(\int_{B(X)}|\nabla u_1(Y)|^2\delta(Y)^{2-n} dY\bigg)^{1/2} \nonumber\\
   &\le
& C \varepsilon_0
 S_2(u_1)(Q_0).
\end{eqnarray}
Combining (\ref{tt-hat}), (\ref{tt7.13}) and (\ref{tt7.16}) we obtain
\begin{equation}\label{tt7.17}
  \bigg({\fint}_{B(X,\frac{\delta(X)}{2})}|F_1(Z)|^2dZ\bigg)^{1/2}\le
 C \varepsilon_0
 S_2(u_1)(Q_0).
\end{equation}

Next we give a pointwise estimate for $F_2(Z)$ when $Z\in B(X,\delta(X)/8)$. Note that in this case $Z$ is away from the pole of the Green's function that appears as an integrand in the definition of $F_2$. To estimate $F_2(Z)$ for $Z\in B(X,\delta(X)/8)$ we consider two cases: $\delta(X)\le
 4R_0$ and $\delta(X)>4R_0$. In the second case we use Lemma \ref{claim3-p7}.

Assume that $\delta(X)\le
 4R_0$ and let $Q_X\in\partial\Omega$ be such that $|X-Q_X|=\delta(X)$. Let $\Omega_0=\Omega\cup B(Q_X,\frac{\delta(X)}{2})$ and $\Delta_0=\partial\Omega\cap B(Q_X,\delta(X)/2)$. For $j\ge
 1$ define $\Omega_j=\Omega\cap B(Q_X, 2^{j-1}\delta(X))$ with $j=1,...,N$ and $2^{14}R_0\le
 2^{N-1}\delta(X)<2^{15}R_0$. Let $\widetilde{R}_j=\Omega_{2j}\setminus \Omega_{2j-2}$, $\Delta_j=\partial\Omega\cap B(Q_X,2^{j-1}\delta(X))$ and $A_j=A(Q_X,2^{j-1}\delta(X))\in\Omega_j$. We now follow the argument that appears in \cite{fkp} using the dyadic surface cubes constructed by M. Christ and described above (see (\ref{tt7.19})) and their interior projections (see (\ref{tt7.26})). Note that $\Omega_0\subset \bigcup_{Q_\alpha^k\subset3\Delta_0}I_\alpha^k$. In fact if $Y\in\Omega_0$ then $\delta(Y)\le
 |Y-Q_X|<\frac{\delta(X)}{2}<2R_0$. As in the proof of (\ref{tt7.26A}) there exists $k\ge
 2$ so that $\frac{\lambda}{8}<8^k\delta(Y)<8\lambda$ and $Q_Y\in\partial\Omega$ with $|Y-Q_Y|=\delta(Y)$. For $\rho_0=\min\{\frac{\delta(Y)-\lambda8^{-k-1}}{2},\frac{\lambda8^{-k+1}-\delta(Y)}{2}\}$ there exists $Q_\alpha^k$ so that $P_\alpha\in Q_\alpha^k\cap\Delta(Q_Y,\rho_0)$ and $Y\in I_\alpha^k$. For any $P\in Q_\alpha^k$,
\begin{eqnarray}\label{tt7.201}
  |P-Q_X| &\le
& |P-P_\alpha|+|P_\alpha-Q_Y|+|Q_Y-Y|+|Y-Q_X| \nonumber\\
   &\le
& {\rm{diam}}\,
Q_\alpha^k+\rho_0+\delta(Y)+\frac{\delta(X)}{2} \nonumber\\
   &<& C_08^{-k}+\rho_0+\delta(Y)+\frac{\delta(X)}{2}  \nonumber\\
   &<&  \frac{8C_0}{\lambda}\delta(Y)+\delta(Y)+\frac{\delta(X)}{2}+\rho_0 \nonumber\\
   &<&  \frac{9}{8}\delta(Y)+\frac{\delta(X)}{2}+\frac{\delta(Y)}{2}\nonumber\\
   &<&  2\delta(Y)+\frac{\delta(X)}{2}< \delta(X)+\frac{\delta(X)}{2}<\frac{3\delta(X)}{2}
\end{eqnarray}
which implies that $Q_\alpha^k\subset3\Delta_0$. We now estimate $F_2(Z)$ for $Z\in B(X,\delta(X)/8)$ as follows
\begin{equation}\label{tt7.202}
    |F_2(Z)|\le
 \bigg|\int_{\Omega_0}\nabla_YG_0(Z,Y)\varepsilon(Y)\nabla u_1(Y)dY\bigg|+\sum_{j=1}^N\bigg|\int_{\widetilde{R}_j\cap(\Omega\setminus B(X))}\nabla_YG_0(Z,Y)\varepsilon(Y)\nabla u_1(Y)dY\bigg|
\end{equation}
$$+\int_{(\Omega\setminus B(X))\cap(\partial\Omega,4R_0)\setminus B(Q_X,2^{15}R_0) }\bigg|\nabla_YG_0(Z,Y)\varepsilon(Y)\nabla u_1(Y)\bigg|dY.$$
We estimate each term separately. To estimate the first term we note that
\begin{eqnarray}\label{tt7.303}
   |F_2^0(Z)|&=& \bigg|\int_{\Omega_0}\nabla_YG_0(Z,Y)\varepsilon(Y)\nabla u_1(Y)dY\bigg| \nonumber\\
   &\le
& \int_{\Omega_0}|\nabla_YG_0(Z,Y)||\varepsilon(Y)||\nabla u_1(Y)|dY \nonumber\\
   &\le
&  \lim_{\varepsilon\rightarrow0^+} \int_{\Omega_0\setminus (\partial\Omega,\varepsilon)}|\nabla_YG_0(Z,Y)||\varepsilon(Y)||\nabla u_1(Y)|dY.
\end{eqnarray}
The goal is to estimate $$F_2^\varepsilon(Z)=\int_{\Omega_0\setminus (\partial\Omega,\varepsilon)}|\nabla_YG_0(Z,Y)||\varepsilon(Y)||\nabla u_1(Y)|dY$$
independently of $\varepsilon>0$. In particular
\begin{equation}\label{tt7.304}
    F^\varepsilon_0
(Z)\le
 \sum_{\substack{Q_\alpha^k\subset3\Delta_0 \\ \varepsilon<\lambda8^{-k-1}}}\sup_{I_\alpha^k}|\varepsilon(Y)|\bigg(\int_{I_\alpha^k}|\nabla_Y G_0(Z,Y)|^2dY\bigg)^{1/2}\bigg(\int_{I_\alpha^k}|\nabla u_1(Y)|^2dY\bigg)^{1/2}.
\end{equation}
By Cacciopoli's inequality
\begin{equation}\label{tt7.204}
    \bigg(\int_{I_\alpha^k}|\nabla_Y G_0(Z,Y)|^2dY\bigg)^{1/2}\le
\frac{C}{{\rm{diam}}\,
Q_\alpha^k}\bigg(\int_{\widehat{I}_\alpha^k}| G_0(Z,Y)|^2dY\bigg)^{1/2}
\end{equation}
where $\widehat{I}_\alpha^k=\{Y\in\Omega:\exists Z\in I^k_\alpha,\ |Z-Y|<\frac{\delta(Z)}{8^4}\}$. By the comparison principle for NTA domains, the Harnack principle and the doubling properties of $\omega^Z$ and $\omega_0$ we have for $Y\in\widehat{I}^k_\alpha$
\begin{equation}\label{tt7.205}
    \frac{G_0(Z,Y)}{G_0(Y)}\sim\frac{G_0(Z,A_0)}{G_0(A_0)}\sim\frac{\omega^Z(\Delta_0)}{\omega_0(\Delta_0)}.
\end{equation}
Thus for $Y\in\widehat{I}^k_\alpha$ with $Q_\alpha^k\subset3\Delta_0$
\begin{equation}\label{tt7.206}
    \frac{G_0(Z,Y)}{G_0(Y)}\le
 \frac{C}{\omega_0(\Delta_0)}.
\end{equation}
Combining (\ref{tt7.304}), (\ref{tt7.204}), (\ref{tt7.205}) and (\ref{tt7.206}) we have that
\begin{equation}\label{tt7.207}
    |F_2^\varepsilon(Z)|\lesssim \sum_{\substack{Q_\alpha^k\subset3\Delta_0\\ k\le
 k_\varepsilon}}\frac{1}{\omega_0(\Delta_0)}\bigg(\int_{\widehat{I}^k_\alpha}\frac{G_0^2(Y)a^2(Y)}{\delta(Y)^2}dY\bigg)^{1/2}\bigg(\int_{I_\alpha^k}|\nabla u_1(Y)|^2dY\bigg)^{1/2}.
\end{equation}
Note that for $Y\in \widehat{I}^k_\alpha$, there exists $Z\in I_\alpha^k$ and $P_\alpha\in Q_\alpha^k$ so that $\frac{\lambda}{8}<|Z-P_\alpha|<\lambda 8$, $\lambda8^{-k-1}-\lambda8^{-k-3}<|Y-P_\alpha|<\lambda8^{-k+1}+\lambda8^{-k-3}$ and $\delta(Z)(1-8^{-4})<\delta(Y)<\delta(Z)(1+8^{-4})$. That is
$|P_\alpha-Q_Y|\le
 |P_\alpha-Z|+|Z-Y|\le
 \lambda8^{-k+1}+\delta(Z)8^{-4}\le
 \delta(Z)(64+8^{-4})\le
 65\delta(Y)$. Now using the doubling property of $\omega_0$ we have
\begin{equation}\label{tt7.208}
    G_0(Y)\sim\frac{\omega_0(\Delta(Q_Y,\delta(Y)))}{\delta(Y)^{n-2}}\sim\frac{\omega_0(\Delta(P_\alpha,\delta(Y)))}{\delta(Y)^{n-2}}.
\end{equation}
Recall that there exists $Z_\alpha^k\in\partial\Omega$ such that $\Delta(Z_\alpha^k,8^{-k-1})\subset Q_\alpha^k\subset \Delta(Z^k_\alpha, 2C_08^{-k})$ (see the construction of the $Q_\alpha^k$) and $|P_\alpha-Z_\alpha^k|\le
 {\rm{diam}}\,
\,
\,
Q_\alpha^k\le
 C_08^{-k}\sim\delta(Y)$. Again by the doubling property of $\omega_0$ we have that (\ref{tt7.208}) yields for $Y\in I_\alpha^k$
\begin{equation}\label{tt7.209}
    G_0(Y)\sim\frac{\omega_0(\Delta(Z_\alpha^k,8^{-k-1}))}{(8^{-k})^{n-2}}\sim\frac{\omega_0(Q_\alpha^k)}{({\rm{diam}}\,
Q_\alpha^k)^{n-2}}
\end{equation}
and combining (\ref{tt7.207}) with (\ref{tt7.209}) we have
\begin{equation}\label{tt7.210}
    |F_2^\varepsilon(Z)|\le
 \sum_{\substack{Q_\alpha^k\subset3\Delta_0 \\ k+1\le
\frac{{\rm{log}}\lambda-{\rm{log}}\varepsilon}{8}}}\frac{1}{\omega_0(\Delta_0)}\bigg(\int_{\widehat{I}^k_\alpha}\frac{G_0(Y)a^2(Y)}{\delta(Y)^2}dY\bigg)^{1/2}
    \bigg(\frac{\omega_0(Q_\alpha^k)}{({\rm{diam}}\,
Q_\alpha^k)^{n-2}}\int_{I_\alpha^k}|\nabla u_1(Y)|^2dY\bigg)^{1/2}.
\end{equation}

To finish the estimate of $F_2^\varepsilon(Z)$ for $Z\in B(X,\delta(X)/8)$ we use a ``stopping time" argument. For $j\in \mathbb{Z}$ let $M\ge
64(1+C_0)$ and
$$O_j=\{Q\in3\Delta_0: T_\varepsilon u_1(Q)=\bigg(\int_{(\Gamma_{M}(Q)\setminus B_{2\varepsilon}(Q))\cap(\partial\Omega,4R_0)}|\nabla u_1(Y)|^2\delta(Y)^{2-n}dY\bigg)^{1/2}>2^j\}.$$
We say that a surface cube $Q_\alpha^k$ in the dyadic grid belongs to $J_j$ if
\begin{equation}\label{tt7.211}
\omega_0(Q_\alpha^k\cap O_j)\ge
\frac{1}{2}\omega_0(Q_\alpha^k)\ \ {\rm{and}}\  \ \omega_0(Q_\alpha^k\cap O_{j+1}) < \frac{1}{2}\omega_0(Q_\alpha^k)
\end{equation}
and it belongs to $J_\infty$ if
\begin{equation}\label{tt7.211A}
    \omega_0(Q_\alpha^k\cap \{T_\varepsilon u_1(Q)=0\})\ge
\frac{1}{2}\omega_0(Q_\alpha^k).
\end{equation}
Note that there exists $0<c_0<1$ depending on the doubling constant of $\o_0$ so that for $\widetilde{O}_j=\{M_{\omega_0}(\chi_{O_j})>c_0\}$, if $Q_\alpha^k\in J_j$ then $Q_\alpha^k\subset \widetilde{O}_j$ and
\begin{equation}\label{tt-7.69A}
\omega_0(Q_\alpha^k\cap \widetilde{O}_j\setminus O_{j+1})\ge
\frac{1}{2}\omega_0(Q_\alpha^k).
\end{equation}
In fact, for $Q_\alpha^k$ there exists $Z^k_\alpha\in Q_\alpha^k$ so that
\begin{equation}\label{tt7.212}
    \Delta(Z_\alpha^k,8^{-k-1})\subset Q_\alpha^k \subset \Delta(Z_\alpha^k,2C_08^{-k}).
\end{equation}
Moreover if $Q_\alpha^k\in J_j$ for $P\in Q_\alpha^k$, $|Z_\alpha^k-P|\le
 {\rm{diam}}\,
Q_\alpha^k\le
 C_08^{-k}$ thus
\begin{equation}\label{tt7.213}
    \Delta(Z_\alpha^k,2C_08^{-k})\subset \Delta(P,3C_08^{-k})\subset \Delta(Z_\alpha^k,4C_08^{-k})
\end{equation}
and by (\ref{tt7.212}) and the doubling property of $\omega_0$ we have
\begin{eqnarray}\label{tt7.214}
  M_{\omega_0}(\chi_{O_j})(P) &\ge
& \frac{\omega_0(\Delta(P,3C_08^{-k})\cap O_j)}{\omega_0(\Delta(P,3C_08^{-k})} \nonumber\\
   &\ge
& \frac{\omega_0(\Delta(Z_\alpha^k,2C_08^{-k})\cap O_j)}{\omega_0(\Delta(Z_\alpha^k,4C_08^{-k}))} \nonumber\\
   &\gtrsim& \frac{\omega_0(Q_\alpha^k\cap O_j)}{\omega_0(\Delta(Z_\alpha^k,8^{-k-1}))} \nonumber\\
   &\gtrsim& \frac{\omega_0(Q_\alpha^k\cap O_j)}{\omega_0(Q_\alpha^k)}\ge
 c_0.
\end{eqnarray}
We conclude that if $Q_\alpha^k\in J_j$ then $Q_\alpha^k \subset \widetilde{O}_j$. Since  $O_{j+1}\subset O_j\subset \widetilde{O}_j$
\begin{eqnarray}\label{tt-7.72A}
\omega_0(Q_\alpha^k\cap\widetilde{O}_j\setminus O_{j+1}) &=&\omega_0(Q_\alpha^k\cap O_{j+1}^c)\\
&=&\omega_0(Q_\alpha^k)-\omega_0(Q_\alpha^k\cap O_{j+1})>\frac{1}{2}\omega_0(Q_\alpha^k),\nonumber
\end{eqnarray}
which ensures that $Q_\alpha^k\subset\{Q\in \partial\Omega: M_{\omega_0}(\chi_{\widetilde{O}_j\setminus O_{j+1}})(Q)>c_0\}=U_j$. 
A weak type inequality for $M_{\omega_0}$ applied to 
$(\chi_{\widetilde{O}_j\setminus O_{j+1}}$ and $\chi_{O_j}$
yields
\begin{equation}\label{tt-7.72B}
\omega_0(U_j)\le
 C\omega_0(\widetilde{O}_j\setminus O_{j+1})\le
 C\omega_0(O_j).
 \end{equation} 
Note that for each $\varepsilon>0$ $T_\varepsilon u_1(Q)$ is bounded. Thus for $Q_\alpha^k\subset 3\Delta_0$ either
$T_\varepsilon u_1\equiv 0$ or
there exists $j_0$ so that $$2^{j_0-1}\le
 {\fint}_{Q_\alpha^k}T_\varepsilon u_1(Q)d\omega_0(Q)<2^{j_0}.$$
 In the first case $Q_\alpha^k\in J_\infty$, in the second $\omega_0(Q_\alpha^k\cap O_{j})<\frac{1}{2}\omega_0(Q_\alpha^k)$ for $j\ge
 j_0$.
 Furthermore either there exists $j<j_0$ so that (\ref{tt7.211}) is satisfied or for all $l\in \mathbb{Z}$, $\omega_0(Q_\alpha^k\cap O_l)<\frac{1}{2}\omega_0(Q_\alpha^k)$ which implies that $\omega_0(Q_\alpha^k\cap \{T_\varepsilon u_1(Q)=0\})\ge
 \frac{1}{2}\omega(Q_\alpha^k)$. In this case $Q_\alpha^k\in J_\infty$ and $$Q_\alpha^k\subset \{Q\in \partial\Omega: M_{\omega_0}(\chi_{\{T_\varepsilon u_1=0\}})(Q)>c_0\}=U_\infty.$$
As above a weak type inequality on the maximal function yields that $\omega_0(U_\infty)\le
 C\omega_0(O_\infty)$ where $O_\infty=\{Q\in \partial\Omega: T_\varepsilon u_1(Q)=0\}$. Note that if $Q_\alpha^k\in J_\infty$, $\omega_0(Q_\alpha^k\cap O_\infty)\ge
 \frac{1}{2}\omega_0(Q_\alpha^k)$.

We now go back to our estimate of $F_2^\varepsilon$ for $Z\in B(X,\delta(X/8)$. Combining (\ref{tt7.210}), Cauchy-Schwarz inequality, and letting $k_\varepsilon=\frac{{\rm{log}}\lambda-{\rm{log}}\varepsilon}{8}-1$ we have
$$|F_2^\varepsilon(Z)|\le
 \frac{1}{\omega_0(\Delta_0)}\sum_j\sum_{k\le
 k_\varepsilon}\sum_{Q_\alpha^k\in J_j}\bigg(\int_{{I}^k_\alpha}\frac{G_0(Y)a^2(Y)}{\delta(Y)^2}dY\bigg)^{1/2}
    \bigg(\frac{\omega_0(Q_\alpha^k)}{({\rm{diam}}\,
Q_\alpha^k)^{n-2}}\int_{I_\alpha^k}|\nabla u_1(Y)|^2dY\bigg)^{1/2}$$
$$+\frac{1}{\omega_0(\Delta_0)}\sum_{k\le
 k_\varepsilon}\sum_{Q_\alpha^k\in J_\infty}\bigg(\int_{{I}^k_\alpha}\frac{G_0(Y)a^2(Y)}{\delta(Y)^2}dY\bigg)^{1/2}
    \bigg(\frac{\omega_0(Q_\alpha^k)}{({\rm{diam}}\,
Q_\alpha^k)^{n-2}}\int_{I_\alpha^k}|\nabla u_1(Y)|^2dY\bigg)^{1/2}$$
$$\lesssim \frac{1}{\omega_0(\Delta_0)}\sum_j\bigg(\sum_{k\le
 k_\varepsilon}\sum_{Q_\alpha^k\in J_j}\int_{{I}^k_\alpha}\frac{G_0(Y)a^2(Y)}{\delta(Y)^2}dY\bigg)^{1/2}\bigg(\sum_{k\le
 k_\varepsilon}\sum_{Q_\alpha^k\in J_j}
    \frac{\omega_0(Q_\alpha^k)}{({\rm{diam}}\,
Q_\alpha^k)^{n-2}}\int_{I_\alpha^k}|\nabla u_1(Y)|^2dY\bigg)^{1/2}$$
$$+\frac{1}{\omega_0(\Delta_0)}\bigg(\sum_{k\le
 k_\varepsilon}\sum_{Q_\alpha^k\in J_\infty}\int_{{I}^k_\alpha}\frac{G_0(Y)a^2(Y)}{\delta(Y)^2}dY\bigg)^{1/2}\bigg(\sum_{k\le
 k_\varepsilon}\sum_{Q_\alpha^k\in J_\infty}
    \frac{\omega_0(Q_\alpha^k)}{({\rm{diam}}\,
Q_\alpha^k)^{n-2}}\int_{I_\alpha^k}|\nabla u_1(Y)|^2dY\bigg)^{1/2}.$$
\begin{equation}\label{tt7.215}
\end{equation}
Note that if $Q_\alpha^k$, $Q_\beta^l\in J_j$ and $Q_\alpha^k\cap Q_\beta^l\neq \emptyset$ then either $Q_\alpha^k\subset Q_\beta^l$ or $Q_\beta^l \subset Q_\alpha^k$. Since $Q_\alpha^k \subset \Delta(Z_\alpha^k, C_08^{-k})$ by construction, then for $Y\in I^l_\beta$ there exists $P\in Q_\beta^l$ so that $\lambda/8<8^k|P-Y|<8\lambda$ and $|Y-Z_\alpha^k|\le
 C_08^{-k}+\lambda8^{-k+1}$ thus $Y\in T(\Delta(Z_\alpha^k, (C_0+8\lambda)8^{-k}))$. Furthermore since $\Delta(Z_\alpha^k, 8^{-k-1})\subset Q_\alpha^k$, $\omega_0$ is doubling, the $I_\alpha^k$'s have finite overlap and (\ref{tt-7.72A}) we conclude that \begin{eqnarray}\label{tt7.216}
  \sum_{k\le
 k_\varepsilon}\sum_{Q_\alpha^k\in J_j}\int_{{I}^k_\alpha}\frac{G_0(Y)a^2(Y)}{\delta(Y)^2}dY &\lesssim& \sum_{\substack{Q_\alpha^k\in J_j \\ Q_\alpha^k \ {\rm{disjoint}}}}\int_{T(\Delta(Z_\alpha^k, (C_0+8\lambda)8^{-k}))}\frac{G_0(Y)a^2(Y)}{\delta(Y)^2}dY \nonumber\\
   &\lesssim& \varepsilon^2_p \sum_{\substack{Q_\alpha^k\in J_j \\ Q_\alpha^k \ {\rm{disjoint}}}}  \omega_0(Q_\alpha^k) \lesssim \varepsilon^2_p \omega_0(O_j).
\end{eqnarray}
Similarly we obtain
\begin{equation}\label{tt7.217}
  \sum_{k\le
 k_\varepsilon}\sum_{Q_\alpha^k\in J_\infty}\int_{{I}^k_\alpha}\frac{G_0(Y)a^2(Y)}{\delta(Y)^2}dY \lesssim \varepsilon^2_p \sum_{\substack{Q_\alpha^k\in J_\infty \\ Q_\alpha^k \ {\rm{disjoint}}}}  \omega_0(Q_\alpha^k) \lesssim \varepsilon^2_p \omega_0(O_\infty).
\end{equation}
To estimate the other term note that $I_\alpha^k \subset \Gamma_M(P)$ for all $P\in Q_\alpha^k$ and $M>64+8C_0$. In fact if $Y\in I_\alpha^k$, $\delta(Y)>\lambda 8^{-k-1}$ and there is $P'\in Q_\alpha^k$ so that $|Y-P'|<\lambda8^{-k+1}$, thus for $P\in Q_\alpha^k$ $|Y-P|<\lambda8^{-k+1}+{\rm{diam}}\,
Q_\alpha^k\le
 \lambda8^{-k+1}+C_08^{-k}<\delta(Y)(1+M)$ and $Y\in\Gamma_M(P)$. So if $Q_\beta^l \subset Q_\alpha^k$, $I_\alpha^k\subset \Gamma_M(P)$ for every $P\in Q_\beta^l$ and since the $I^k_\alpha$'s have finite overlap then denoting by $S_M^{\varepsilon,l}=(\Gamma_M(Q)\setminus B_\varepsilon(Q))\cap (\partial\Omega,4R_0)\cap\{\frac{\lambda}{8}<8^l\delta(Y)<8\lambda\}$ we have using (\ref{tt-7.72A})
$$\sum_{k\le
 k_\varepsilon}\sum_{Q_\alpha^k\in J_j}
    \frac{\omega_0(Q_\alpha^k)}{({\rm{diam}}\,
Q_\alpha^k)^{n-2}}\int_{I_\alpha^k}|\nabla u_1(Y)|^2dY\lesssim \sum_{k\le
 k_\varepsilon}\sum_{Q_\alpha^k\in J_j}
    \omega_0(Q_\alpha^k)\int_{I_\alpha^k}|\nabla u_1(Y)|^2\delta(Y)^{2-n}dY$$
    \begin{eqnarray}\label{tt7.218}
       &\lesssim&  \sum_{\substack{Q_\alpha^k\in J_j \\ Q_\alpha^k \ {\rm{disjoint}} \\ k\le
 k_\varepsilon}}\sum_{\substack{Q_\beta^l\subset Q_\alpha^k \\ Q_\beta^l\in J_j}}\omega_0(Q_\beta^l)\int_{I_\beta^l}|\nabla u_1(Y)|^2\delta(Y)^{2-n}dY\nonumber\\
       &\lesssim&  \sum_{\substack{Q_\alpha^k\in J_j \\ Q_\alpha^k \ {\rm{disjoint}} \\ k\le
 k_\varepsilon}}\sum_{\substack{Q_\beta^l\subset Q_\alpha^k \\ Q_\beta^l\in J_j}}\omega_0(\widetilde{O}_j\setminus O_{j+1}\cap Q_\beta^l)\int_{I_\beta^l}|\nabla u_1(Y)|^2\delta(Y)^{2-n}dY\nonumber\\
       &\lesssim&  \sum_{\substack{Q_\alpha^k\in J_j \\ Q_\alpha^k \ {\rm{disjoint}} \\ k\le
 k_\varepsilon}}\sum_{\substack{Q_\beta^l\subset Q_\alpha^k \\ Q_\beta^l\in J_j}}\int_{\widetilde{O}_j\setminus O_{j+1}\cap Q_\beta^l}\int_{S_M^{\varepsilon,l}}|\nabla u_1(Y)|^2\delta(Y)^{2-n}dYd\omega_0(Q)\nonumber\\
       &\lesssim&  \sum_{\substack{Q_\alpha^k\in J_j \\ Q_\alpha^k \ {\rm{disjoint}} \\ k\le
 k_\varepsilon}}\sum_{l}\int_{\widetilde{O}_j\setminus O_{j+1}\cap Q_\alpha^k}\int_{S_M^{\varepsilon,l}}|\nabla u_1(Y)|^2\delta(Y)^{2-n}dYd\omega_0(Q)\nonumber\\
       &\lesssim&  \sum_{\substack{Q_\alpha^k\in J_j \\ Q_\alpha^k \ {\rm{disjoint}} \\ k\le
 k_\varepsilon}}\int_{\widetilde{O}_j\setminus O_{j+1}\cap Q_\alpha^k}T_\varepsilon u_1(Q)^2d\omega_0(Q)\nonumber\\
       &\lesssim&  \int_{\widetilde{O}_j\setminus O_{j+1}}T_\varepsilon u_1(Q)^2d\omega_0(Q).
    \end{eqnarray}
Note that if $Q_\alpha^k\in J_\infty$ for $k\le
 k_\varepsilon$  since $I_\alpha^k\subset \Gamma_M(P)\setminus B_\varepsilon(P)$ then
$$\int_{I_\alpha^k}|\nabla u_1|^2\delta(Y)^{2-n}dY\le
 T_\varepsilon u_1(Q)^2=0.$$
Combining this remark with (\ref{tt-7.72B}), (\ref{tt7.215}), (\ref{tt7.216}) and (\ref{tt7.218}) we conclude that for $Z\in B(X,\delta(X)/8)$
\begin{eqnarray}\label{tt7.219}
  |F_2^\varepsilon(Z)| &\lesssim& \frac{\varepsilon_0
}{\omega_0(\Delta_0)}\sum_j\omega_0(O_j)^{1/2}\bigg(\int_{\widetilde{O}_j\setminus O_{j+1}}T_\varepsilon u_1(Q)^2d\omega_0(Q)\bigg)^{1/2} \nonumber\\
   &\lesssim& \frac{\varepsilon_0
}{\omega_0(\Delta_0)}\sum_j2^j\omega_0(O_j)^{1/2}\omega_0(\widetilde{O}_j\setminus O_{j+1})^{1/2} \nonumber\\
   &\lesssim&  \frac{\varepsilon_0
}{\omega_0(\Delta_0)}\sum_j2^j\omega_0(O_j)^{1/2}\omega_0(U_j)^{1/2}\nonumber\\
   &\lesssim&  \frac{\varepsilon_0
}{\omega_0(\Delta_0)}\sum_j2^j\omega_0(O_j)\nonumber\\
   &\lesssim&  \frac{\varepsilon_0
}{\omega_0(\Delta_0)}\int_{\Delta_0}T_\varepsilon u_1(Q)d\omega_0(Q).
\end{eqnarray}
The last inequality comes from the fact that $\sum2^j\omega_0(O_j)=\sum2^j\omega_0(O_j\backslash O_{j+1}) + \sum_j2^j\omega_0(O_j)$ which ensures that $\sum 2^j\omega_0(O_j)=\frac{1}{2}\sum2^j\omega_0(O_j\backslash O_{j+1}) $.
It is important to note that at each step the constants involved are independent of $\varepsilon>0$. Combining (\ref{tt7.303}), (\ref{tt7.219}) as well as the doubling property of $\omega_0$ we have that
\begin{eqnarray}\label{tt7.220}
  |F_2^0(Z)| &\le
& C\lim_{\varepsilon\rightarrow 0} \frac{\varepsilon_0
}{\omega_0(\Delta_0)}\int_{\Delta_0}T_\varepsilon u_1(Q)d\omega_0(Q)\nonumber \\
   &\le
& C\frac{\varepsilon_0
}{\omega_0(\Delta_0)}\int_{\Delta_0}S_M(u_1)(Q)d\omega_0(Q)\nonumber \\
   &\le
& C \varepsilon_0
M_{\omega_0}(S_M(u_1))(Q).
\end{eqnarray}

To estimate the second part of (\ref{tt7.202}) recall that for $j\ge
 1$, $\widetilde{R}_j=\Omega_{2j}\setminus\Omega_{2j-2}$, $\Delta_j=\Delta(Q_X, 2^{j-1}\delta(X))$, $\Omega_j=B(Q_X, 2^{j-1}\delta(X))\cap\Omega$ and $A_j=A(Q_X, 2^{j-1}\delta(X))\in \Omega_j$. Denote $R_j=\widetilde{R}_j\setminus B(X)$. To estimate
\begin{equation}\label{tt7.221}
    F_2^j(Z)=\int_{R_j}\nabla_Y G_0(Z,Y)\varepsilon(Y)\nabla u_1(Y)dY
\end{equation}
for $Z\in B(X,\delta(X)/8)$ divide $R_j$ as follows
\begin{equation}\label{tt7.222}
    R_j=R_j\cap(\partial\Omega,2^{2j-6}\delta(X))\cup R_j\setminus (\partial\Omega,2^{2j-6}\delta(X)).
\end{equation}
Let $V_j=R_j\cap(\partial\Omega,2^{2j-6}\delta(X))$ and $W_j=R_j\setminus (\partial\Omega,2^{2j-6}\delta(X))$. Note that $V_j\subset \bigcup_{Q_\alpha^k\subset 3\Delta_{2j}\setminus\frac{1}{3}\Delta_{2j-2}}I_\alpha^k$. In fact if $Y\in V_j$ then $2^{2j-3}\delta(X)\le
|Y-Q_X|<2^{2j-1}\delta(X)$, $\delta(Y)<4R_0$ and there exists $k$ such that $8^{-k-1}\lambda<\delta(Y)<8^{-k+1}\lambda$ and $Y\in I_\alpha^k$ for some $\alpha$. For $\rho_0=\frac{1}{2}\min\{\delta(Y)-\lambda8^{-k-1}, \lambda8^{-k+1}-\delta(Y)\}$ there exists $P\in Q_\alpha^k\cap\Delta(Q_Y,\rho_0)$ such that
$$|P-Q_X|\ge
 |Q_X-Q_Y|-|P-Q_Y|\ge
 |Y-Q_X|-|Y-Q_Y|-|P-Q_Y|$$ $$\ge
 2^{2j-3}\delta(X)-\rho_0-\delta(Y)\ge
 2^{2j-3}\delta(X)-\frac{3}{2}\delta(Y)\ge
 (2^{2j-3}-2^{2j-5})\delta(X)> \frac{2^{2j-3}}{3}\delta(X).$$
Following the same pattern of the proof above we have for $Z\in B(X,\delta(X)/8)$
\begin{equation}\label{tt7.223}
    \int_{V_j}|\nabla_Y G_0(Z,Y)||\varepsilon(Y)||\nabla u_1(Y)|dY\le
 \lim_{\varepsilon\rightarrow 0}\int_{V_j\setminus(\partial\Omega,\varepsilon)}|\nabla_Y G_0(Z,Y)||\varepsilon(Y)||\nabla u_1(Y)|dY
\end{equation}
and
\begin{equation}\label{tt7.224}
    \int_{V_j\setminus(\partial\Omega,\varepsilon)}|\nabla_Y G_0(Z,Y)||\varepsilon(Y)||\nabla u_1|dY\lesssim \sum_{\substack{Q_\alpha^k\subset 3\Delta_{2j}\setminus\frac{1}{3}\Delta_{2j-2} \\ k\le
 k_\varepsilon}}
    \bigg(\int_{{I}^k_\alpha}\frac{G_0(Z,Y)^2a^2(Y)}{\delta(Y)^2}dY\bigg)^{1/2}\bigg(\int_{I_\alpha^k}|\nabla u_1|^2dY\bigg)^{1/2}.
\end{equation}
For $Y\in I_\alpha^k\cap V_j$ and $Z\in B(X,\delta(X)/8)$, (\ref{tt7.206}) and the vanishing properties of the Green's function at the boundary of an NTA domain yield
\begin{equation}\label{tt7.225}
    G_0(Z,Y)\lesssim \bigg(\frac{\delta(Z)}{\delta(X)2^{2j-1}}\bigg)^\beta G_0(A_{2j},Y)\lesssim 2^{-2\beta j}G_0(Y)\frac{1}{\omega_0(\Delta_{2j})}.
\end{equation}
Moreover for $Y\in I_\alpha^k$, $G_0(Y)\sim\frac{\omega_0(Q_\alpha^k)}{({\rm{diam}}\,
Q_\alpha^k)^{n-2}}$ as in (\ref{tt7.209}). Thus combining (\ref{tt7.224}), (\ref{tt7.225}) and the remark above we have
$$\int_{V_j\setminus(\partial\Omega,\varepsilon)}|\nabla_Y G_0(Z,Y)||\varepsilon(Y)||\nabla u_1|dY$$
\begin{equation}\label{tt7.226}
    \lesssim \sum_{\substack{Q_\alpha^k\subset 3\Delta_{2j}\setminus\frac{1}{3}\Delta_{2j-2} \\ k\le
 k_\varepsilon}}2^{-2\beta j}\frac{1}{\omega_0(\Delta_{2j})}
    \bigg(\int_{{I}^k_\alpha}\frac{G_0(Y)a^2(Y)}{\delta(Y)^2}dY\bigg)^{1/2}\bigg(\frac{\omega_0(Q_\alpha^k)}{({\rm{diam}}\,
Q_\alpha^k)^{n-2}}\int_{I_\alpha^k}|\nabla u_1|^2dY\bigg)^{1/2}.
\end{equation}
A ``stopping time" argument yields, as in (\ref{tt7.220}) that
\begin{equation}\label{tt7.227}
\int_{V_j}|\nabla_Y G_0(Z,Y)||\varepsilon(Y)||\nabla u_1(Y)|dY \le
 C\varepsilon_0
 2^{-2\beta j}M_{\omega_0}(S_M(u_1))(Q).
\end{equation}
To estimate the corresponding integral over $W_j$, cover $W_j$ with balls $B(X_{jl}, 2^{2j-8}\delta(X))$ such that $X_{jl}\in W_j$ and the
$B(X_{jl}, 2^{2j-10}\delta(X))$'s are disjoint. Since $X_{jl}\in W_j$ $2^{2j-6}\delta(X)\le
 \delta(X_{jl})\le
 2^{2j-1}\delta(X)$, the $B_{jl}=B(X_{jl}, 2^{2j-8}\delta(X))$'s are non-tangential balls and
\begin{eqnarray}\label{tt7.228}
 \int_{W_j}|\nabla_Y G_0(Z,Y)||\varepsilon(Y)||\nabla u_1|dY
      &\le
& \sum_{l}\sup_{B_{jl}}|\varepsilon(Y)|   \bigg(\int_{B_{jl}}\frac{G_0(Z,Y)^2}{\delta(Y)^2}dY\bigg)^{1/2}\bigg(\int_{B_{jl}}|\nabla u_1|^2dY\bigg)^{1/2} \nonumber\\
   &\le
&  \sum_{l}\bigg(\int_{B_{jl}}\frac{a^2(Y)G_0(Z,Y)^2}{\delta(Y)^2}dY\bigg)^{1/2}\bigg(\int_{B_{jl}}|\nabla u_1|^2dY\bigg)^{1/2}
\end{eqnarray}
For $Y\in B_{jl}$, $2^{2j-7}\delta(X)\le \delta(Y)\le 2^{2j} \delta(X)$ and $Z\in B(X,\delta(X)/8)$
\begin{equation}\label{tt7.229}
    G_0(Z,Y)\le
 C2^{-2j \beta}G_0(A_{2j},Y)\le
 C2^{-2j \beta}\frac{G_0(Y)}{\omega_0(\Delta_{2j})}
\end{equation}
and for $Y\in B_{jl}$, $G_0(Y)\le
 G_0(A_{2j})$. Thus combining this with (\ref{tt7.228}) and (\ref{tt7.229}) we obtain
$$\int_{W_j}|\nabla_Y G_0(Z,Y)||\varepsilon(Y)||\nabla u_1|dY$$
$$\le
 C\frac{2^{-2\beta j}}{\omega_0(\Delta_j)}  \sum_{l}\bigg(\int_{B_{jl}}\frac{a^2(Y)G_0(Y)}{\delta(Y)^2}dY\bigg)^{1/2}\bigg(\frac{G_0(A_{2j})}{(2^{2j}\delta(X))^{2-n}}\bigg)^{1/2}\bigg(\int_{B_{jl}}|\nabla u_1|^2\delta(Y)^{2-n}dY\bigg)^{1/2}$$
$$\le
 C\frac{2^{-2\beta j}}{\omega_0(\Delta_j)}\bigg({G_0(A_{2j})}{(2^{2j}\delta(X))^{n-2}}\bigg)^{1/2}  \bigg(\sum_{l}\int_{B_{jl}}\frac{a^2(Y)G_0(Y)}{\delta(Y)^2}dY\bigg)^{1/2}\bigg(\sum_l\int_{B_{jl}}|\nabla u_1|^2\delta(Y)^{2-n}dY\bigg)^{1/2}$$
\begin{equation}\label{tt7.231}
    \le
 C2^{-2\beta j}\bigg(\frac{1}{\omega_0(\Delta_{2j+1})}\int_{\Omega_{2j+1}}\frac{a^2(Y)G_0(Y)}{\delta(Y)^2}dY\bigg)^{1/2}\bigg(\int_{(\Omega_{2j+1}\setminus \Omega_{2j-2})\setminus (\partial\Omega,2^{2j-4}\delta(X))}|\nabla u_1|^2\delta(Y)^{2-n}dY\bigg)^{1/2}
\end{equation}
For $Y\in (\Omega_{2j+1}\setminus \Omega_{2j-2})\setminus (\partial\Omega,2^{2j-4}\delta(X))$ we have
$|Y-Q_0|\le
 |Y-Q_X|+|Q_X-Q_0|\le
 2^{2j}\delta(X)+|Q_X-X|+|X-Q_0|\le
 2^{2j}\delta(X)+\delta(X)+2\delta(X)\le
 2^4\delta(Y)+2^{-2j+4}\delta(Y)+2^{-2j+5}\delta(Y)$ thus $|Y-Q_0|\le
 64\delta(Y)$ and $Y\in\Gamma_{64}(Q_0)$, therefore for $M\ge 64$, (\ref{tt7.231}) yields
\begin{equation}\label{tt7.232}
    \int_{W_j}|\nabla_Y G_0(Z,Y)||\varepsilon(Y)||\nabla u_1|dY \le
 C2^{-2\beta j}\varepsilon_0
 S_M(u_1)(Q_0).
\end{equation}
Combining (\ref{tt7.220}), (\ref{tt7.227}) and (\ref{tt7.232}) we have for $\delta(X)\le
 4R_0$ and $Z\in B(X,\delta(X/8))$
$$\bigg|\int_{(\Omega\setminus B(X))\cap B(Q_X,2^{15}R_0)}\nabla_Y G_0(Z,Y)\varepsilon(Y)\nabla u_1(Y)dY\bigg|$$
\begin{eqnarray}\label{tt7.233}
   &\le
& \int_{\Omega_0}|\nabla_Y G_0(Z,Y)||\varepsilon(Y)||\nabla u_1|dY+\sum_{j=1}^{N}\int_{R_j}|\nabla_Y G_0(Z,Y)||\varepsilon(Y)||\nabla u_1|dY \nonumber\\
   &\le
& C\varepsilon_0
M_{\omega_0}(S_M(u_1))(Q_0)+ C\varepsilon_0
 \sum_{j=1}^{N}2^{-2\beta j}M_{\omega_0}(S_M(u_1))(Q_0)\nonumber\\
   &\le
&  C\varepsilon_0
M_{\omega_0}(S_M(u_1))(Q_0)
\end{eqnarray}
To complete the estimate for $F_2(Z)$ with $Z\in B(X,\delta(X)/8)$ it only remains to consider the integral
\begin{equation}\label{tt7.234}
    \int_{(\Omega\setminus B(X))\cap (\partial\Omega,4R_0)\setminus B(Q_X,2^{15}R_0)}\nabla_Y G_0(Z,Y)\varepsilon(Y)\nabla u_1(Y)dY.
\end{equation}
Note that $$(\Omega\setminus B(X))\cap (\partial\Omega,4R_0)\setminus B(Q_X,2^{15}R_0)\subset \bigcup_{Q_\alpha^k\subset \partial\Omega\setminus \Delta(Q_X,2^{14}R_0)}I_\alpha^k.$$
If $Y\in (\Omega\setminus B(X))\cap (\partial\Omega,4R_0)\setminus B(Q_X,2^{15}R_0)$ then $|Y-Q_X|\ge
 2^{15}R_0$ and there are $I_\alpha^k$ and $P_\alpha^k\in Q_\alpha^k$ so that $Y\in I_\alpha^k$, $8/\lambda<|P_\alpha^k-Y|8^k<8\lambda$. Given $Y'\in I_\alpha^k$ note that $|P_\alpha^k-Y'|\le
 {\rm{diam}}\,
Q_\alpha^k+\lambda8^{-k+1}\le
 C_08^{-k}+\lambda8^{-k+1}\le
 8C_0\delta(Y)/\lambda+64\delta(Y)\le
 65\delta(Y)\le
 2^{10}R_0$. Thus
$I_\alpha^k \subset B(P_\alpha^k,2^{10}R_0)\cap\Omega $,
and for $Y'\in B(P_\alpha^k,2^{10}R_0)\cap\Omega$ we have $|Y'-Q_X|\ge
 |Y-Q_X|-|Y'-Y|\ge
 |Y-Q_X|-|Y-P_\alpha^k|-|Y'-P_\alpha^k|\ge
 2^{15}R_0-\lambda8^{-k+1}-2^{10}R_0\ge
 2^{15}R_0-2^{10}R_0-2^6\delta(Y)\ge
 2^{14}R_0$. Moreover for $P\in Q_\alpha^k$, $|P-Q_X|\ge
 |P_\alpha^k-Q_X|-|P-P_\alpha^k|\ge
 |Y-Q_X|-|Y-P_\alpha^k|-|P-P_\alpha^k|\ge
 2^{15}R_0-\lambda8^{-k+1}-{\rm{diam}}\,
Q_\alpha^k\ge
 2^{14}R_0$. If $Z\in B(X,\delta(X)/8)$ and $Y\in (\Omega\setminus B(X))\cap (\partial\Omega,4R_0)\setminus B(Q_X,2^{15}R_0)$ then since $\delta(X)\le
 4R_0$ we have $|Z-Y|\ge
 |Y-Q_X|-|Q_X-X|-|X-Z|\ge
 2^{15}R_0-\delta(X)-\delta(X)/8\ge
 2^{14}R_0$. Similarly if $Y\in B(P_\alpha^k,2^{10}R_0)\cap\Omega$ we have $|Z-Y|\ge
 |Y-Q_X|-|Q_X-X|-|X-Z|\ge
 2^{13}R_0$. We mimic the stopping time argument used when the integration over the region $\Omega_0$ was considered. The key point is that for $Z\in B(X,\delta(X)/8)$ the pole of the Green's function is far away from the $I_\alpha^k$'s considered in the integration. Since $\partial\Omega\setminus \Delta(Q_X,2^{14}R_0)\subset \partial\Omega\setminus\Delta(Q_0,2^{13}R_0)$ and $\omega_0(\partial\Omega)\le
 C_{R_0}\omega_0(\Delta(P,2^9R_0))$ for any $P\in\partial\Omega$ by the doubling properties of $\omega_0$, estimate (\ref{tt7.206}) becomes $G_0(Z,Y)\le
 C_{R_0}G_0(Y)$ and (\ref{tt7.220}) becomes
\begin{equation*}
 \int_{(\Omega \backslash B(X)) \cap (\partial \Omega , 4R_0) \backslash B(Q_X, 2^{15}R_0)} |\nabla_Y G_0(Z,Y)||\varepsilon(Y)||\nabla u_1(Y)|
\le
 C \varepsilon_0
 \int_{\partial \Omega \backslash \Delta(Q_0,2^{13}R_0)} S_M (u_1)(Q) d\omega_0
\end{equation*}
\begin{equation}\label{tt7.235}
\le
 C \varepsilon_0
 M_{\omega_0} S_M (u_1)(Q_0).
\end{equation}
Combining (\ref{tt7.233}) and (\ref{tt7.235})
and noting that when $\delta(X) > 8 R_0$ the integration over $\Omega \cap (\partial \Omega , 4R_0)$ is treated as that over $\Omega_0$ or $\Omega \backslash B(X) \cap (\partial \Omega , 4R_0) \backslash B(Q_X, 2^{15}R_0)$ as the pole $Z \in B(X, \frac{\delta(X)}{8})$ is very far from the $I_{\alpha}^k$'s we obtain that
\begin{equation}\label{tt7.236}
|F_2(Z)| \le
 C \varepsilon_0
 M_{\omega_0} S_M (u_1)(Q_0), \ \forall Z \in B(X, \frac{\delta(X)}{8}).
\end{equation}
Hence (\ref{tt7.107}), (\ref{tt7.17}) and (\ref{tt7.236}) ) yield
for $M$ large and fixed
\begin{eqnarray*}
 \widetilde{N}F(Q_0)  &=& \sup_{X \in \Gamma(Q_0)} \fint_{B(X, \frac{\delta(X)}{8})} F^2(Z)dZ \\
   & \lesssim &  \sup_{X \in \Gamma(Q_0)} \fint_{B(X, \frac{\delta(X)}{8})} F_1^2(Z)dZ + \sup_{X \in \Gamma(Q_0)} \int_{B(X, \frac{\delta(X)}{8})} F_2^2(Z)dZ
\end{eqnarray*}
\begin{equation}\label{tt7.237}
\lesssim C \varepsilon_0
 M_{\omega_0} (S_M (u_1))(Q_0).
\end{equation}

We now estimate the second term in Lemma \ref{lem2.9}. Fix $Q_0 \in \partial \Omega, \ X \in \Gamma(Q_0),$ let $B(X)=B(X, \frac{\delta(X)}{16})$. Note that $B(X, \frac{\delta(X)}{8}) \subset \Gamma_2(Q_0),$ 
then
\begin{equation*}
\fint_{B(X)}(\delta |\nabla F|)^2(Z)dZ \le
 C \delta^2(X)\frac{1}{\delta(X)^n}\int_{B(X)}|\nabla F|^2(Z)dZ
\end{equation*}
\begin{equation}\label{tt7.238}
 \le
 C \frac{1}{\delta(X)^{n-1}}\int_{\frac{\delta(X)}{16}}^{\frac{\delta(X)}{8}}\int_{B(X,\rho)}|\nabla F|^2(Z)dZd\rho.
\end{equation}
The same argument as in \cite{fkp} which only uses interior estimates yields
\begin{equation}\label{tt7.239}
\widetilde{N}^2_{1/2}(\delta|\nabla F|)(Q_0) \le
 C \widetilde{N}F(Q_0)\widetilde{N} (\delta|\nabla F|)(Q_0) +
                                                  \varepsilon_0
 \widetilde{N}(\delta|\nabla F|)S_2(u_1)(Q_0)+ \varepsilon_0
 \widetilde{N}(F)(Q_0)S_2(u_1)(Q_0).
\end{equation}
Combining (\ref{tt7.239}) with (\ref{ttlem7.6A}) and using the fact $ab \le
 \frac{a^2}{2}+\frac{b^2}{2}$ we obtain (\ref{ttlem7.6B}).
 Integrating (\ref{ttlem7.6B}) and applying Remark \ref{N-hat-tilde} we obtain
\begin{eqnarray}\label{tt7.240}
\int \widetilde{N}(\delta|\nabla F|)^2 d\omega_0 &\le&
C \int \widetilde{N}_{1/2}(\delta|\nabla F|)^2 d\omega_0 \le
 C \varepsilon_0
 \int (M \omega_0 S_M (u_1) )^2d\omega_0  + C \varepsilon_0
 \int \widetilde{N}(\delta|\nabla F|)^2 d\omega_0 \\
 &\le&
 C \varepsilon_0
 \int (S_M (u_1) )^2 d\omega_0 + C \varepsilon_0
 \int \widetilde{N}(\delta|\nabla F|)^2 d\omega_0,\nonumber
 \end{eqnarray}
which yields 
\begin{equation}\label{tt7.241}
\int \widetilde{N}(\delta|\nabla F|)^2 d\omega_0 \le
 C \varepsilon_0
 \int S^2_M (u_1)(Q)  d\omega_0.
\end{equation}
Combining (\ref{tt7.241}), the integration of (\ref{ttlem7.6A}) and the maximal function theorem we obtain (\ref{ttlem7.6C}) which concludes the
proof of Lemma \ref{lem2.9}.
\end{proof}

\begin{lemma}\label{lem2.10}
Let $\Omega$ be a CAD and assume that (\ref{condThm2.11}) holds. Then there exists $C>1$ so that
\begin{equation}\label{ttlem7.8}
||SF||^2_{L^2(\omega_0)} \le
 C\left(||\widetilde{N}(\delta|\nabla F|)||^2_{L^2(\omega_0)}+||NF||^2_{L^2(\omega_0)}+||\widetilde{N}F||^2_{L^2(\omega_0)}+||f||^2_{L^2(\omega_0)}\right)
\end{equation}
\end{lemma}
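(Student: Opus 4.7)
The plan is to follow the outline of Lemma 2.10 in \cite{fkp}, adapting each step to the CAD setting with the tent-space and nontangential machinery developed in Sections \ref{nontan}--\ref{theory_of_weights}. First I would convert the square-function norm into a weighted solid integral: by Fubini (the $\omega_0$ analogs of Lemmas \ref{four} and \ref{six}, which hold since $\omega_0$ is doubling) together with Lemma \ref{lem2.3} in the form $G_0(Y)\sim \omega_0(\Delta(Q_Y,\delta(Y)))/\delta(Y)^{n-2}$,
$$\|SF\|^2_{L^2(\omega_0)}\;\lesssim\;\int_\Omega|\nabla F(Y)|^2\,G_0(Y)\,dY.$$

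Next, using that $L_0F=-\mathrm{div}(\varepsilon\nabla u_1)$ (with $\varepsilon=A_1-A_0$), $L_0G_0=-\delta_0$, and $F=G_0=0$ on $\partial\Omega$, a Green's identity (justified by truncating around the pole $0$ and around $\partial\Omega$, then removing the cutoffs) gives
$$\int_\Omega A_0\nabla F\cdot\nabla F\,G_0\,dY \;=\; -\int_\Omega\varepsilon\nabla u_1\cdot\nabla F\,G_0\,dY\;-\;\int_\Omega\varepsilon\nabla u_1\,F\cdot\nabla G_0\,dY\;-\;\tfrac{1}{2}F(0)^2.$$
Ellipticity converts the left-hand side to $\int|\nabla F|^2G_0$, and it remains to estimate the three terms on the right.

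The value $F(0)=u_1(0)-u_0(0)=\int f\,d(\omega_1-\omega_0)$ is controlled by $\|f\|_{L^2(\omega_0)}$ (using Jensen and the mutual absolute continuity provided by $\omega_1\in A_\infty(\omega_0)$, which in turn is known once $\omega_1\in B_p(\omega_0)$; at this stage one may instead view $F(0)^2$ as an allowed $\|f\|^2$-error). For the first integral I apply Cauchy--Schwarz and absorb a small multiple of $\int|\nabla F|^2G_0$ back into the left-hand side; what is left is $\int a^2|\nabla u_1|^2G_0\,dY$. Writing this as $\int(\delta|\nabla u_1|)^2\,d\mu$ with $d\mu=a^2G_0\delta^{-2}\,dY$, the hypothesis (\ref{condThm2.11}) is precisely the statement that $\mu$ is a Carleson measure of norm $\lesssim\varepsilon_0^2$ with respect to $\omega_0$. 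The $\omega_0$-analog of Proposition \ref{thm1} (which goes through verbatim for any doubling measure on $\partial\Omega$), combined with Remarks \ref{N-hat-tilde}--\ref{Nequiv} and interior (Caccioppoli) estimates for the $L_1$-solution $u_1$, then gives
$$\int a^2|\nabla u_1|^2 G_0\,dY\;\lesssim\;\varepsilon_0^2\|\widetilde N(\delta|\nabla u_1|)\|_{L^2(\omega_0)}^2\;\lesssim\;\varepsilon_0^2\|Nu_1\|_{L^2(\omega_0)}^2\;\lesssim\;\varepsilon_0^2\bigl(\|f\|_{L^2(\omega_0)}^2+\|NF\|_{L^2(\omega_0)}^2\bigr),$$
since $u_1=u_0+F$ and $\|Nu_0\|_{L^2(\omega_0)}\lesssim\|M_{\omega_0}f\|_{L^2(\omega_0)}\lesssim\|f\|_{L^2(\omega_0)}$.

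The delicate term is the second one, $\int\varepsilon\nabla u_1\,F\cdot\nabla G_0$. Using Caccioppoli for $G_0$ (away from the pole) to replace $|\nabla G_0|$ by $G_0/\delta$, and the Cauchy--Schwarz splitting $a|\nabla u_1|\sqrt{G_0}\cdot|F|\sqrt{G_0}/\delta$, it is enough to control $\int F^2G_0/\delta^2\,dY$. Here I would decompose $\Omega\cap(\partial\Omega,4R_0)$ by the Whitney-type pieces $\{I_\alpha^k\}$ introduced before Lemma \ref{lem2.9} together with standard Whitney cubes in the interior; on each $I_\alpha^k$, $G_0\sim\omega_0(Q_\alpha^k)/(\mathrm{diam}\,Q_\alpha^k)^{n-2}$ and $\fint_{I_\alpha^k}F^2\lesssim\widetilde N_{1/2}F(Q)^2$ for every $Q\in Q_\alpha^k$, while a Hardy/Caccioppoli estimate on a slightly enlarged Whitney region replaces $F^2/\delta^2$ by a combination of $\widetilde NF$ and $\widetilde N(\delta|\nabla F|)$. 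Summing and passing back through the cone-to-region lemma (the $\omega_0$ version of Lemma \ref{six}) produces
$$\int_\Omega F^2\frac{G_0}{\delta^2}\,dY\;\lesssim\;\|\widetilde NF\|_{L^2(\omega_0)}^2+\|\widetilde N(\delta|\nabla F|)\|_{L^2(\omega_0)}^2.$$
Combining the three bounds and rearranging yields \eqref{ttlem7.8}. The main obstacle is exactly this last Hardy-type control of $\int F^2G_0/\delta^2$: in the Lipschitz setting one can cut off against smooth Lipschitz graphs and use the classical Hardy inequality on strips, whereas on a CAD one is forced to exploit the doubling of $\omega_0$ and the Christ-type dyadic/Whitney structure of Section \ref{theory_of_weights}. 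A secondary but recurring issue is making the Green's identity rigorous by approximation near $0$ and $\partial\Omega$; the resulting cutoff errors are absorbed using the interior regularity of $F$ and the boundary decay of $G_0$, producing exactly the $\widetilde NF$ and $\widetilde N(\delta|\nabla F|)$ terms on the right-hand side of \eqref{ttlem7.8}.
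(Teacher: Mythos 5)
Your Green's identity and the overall scheme (reduce $\|SF\|^2_{L^2(\omega_0)}$ to $\int_\Omega|\nabla F|^2 G_0$, then use $L_0 F=-\div(\varepsilon\nabla u_1)$ and the pole structure of $G_0$) match the paper, and your treatment of the first term $\int G_0\,\nabla F\cdot\varepsilon\nabla u_1$ --- Cauchy--Schwarz, absorb a multiple of $\int|\nabla F|^2G_0$, then bound $\int a^2|\nabla u_1|^2 G_0$ via the Carleson condition and Caccioppoli --- is fine in spirit (modulo that the Carleson duality must be stated with the averaged $\widetilde N$ rather than the pointwise $N$, since $|\nabla u_1|$ is only $L^2_{\rm loc}$, which is a standard fix).

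The genuine gap is in the second term $\int F\,\varepsilon\nabla u_1\cdot\nabla G_0$. You decouple by Cauchy--Schwarz and then claim
$$\int_\Omega F^2\,\frac{G_0}{\delta^2}\,dY\;\lesssim\;\|\widetilde N F\|^2_{L^2(\omega_0)}+\|\widetilde N(\delta|\nabla F|)\|^2_{L^2(\omega_0)}.$$
This is false. The left side is, by Fubini and $G_0(Y)\sim \omega_0(\Delta(Q_Y,\delta(Y)))\delta(Y)^{2-n}$, comparable to $\int_{\partial\Omega}\Bigl(\int_{\Gamma(Q)}\frac{F^2}{\delta^n}\Bigr)d\omega_0$, a genuinely multiscale quantity: on each Whitney piece $I_\alpha^k$ one has $\int_{I_\alpha^k}F^2\frac{G_0}{\delta^2}\sim\omega_0(Q_\alpha^k)\,\fint_{I_\alpha^k}F^2$, and the naive bound $\fint_{I_\alpha^k}F^2\lesssim(\widetilde N F)^2$ then sums over the infinitely many dyadic generations to give a logarithmic divergence. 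The only way to make the scale-sum converge is to exploit that $F$ vanishes at $\partial\Omega$ via a Hardy-type inequality, but that produces $\int F^2G_0/\delta^2\lesssim\int|\nabla F|^2 G_0\sim\|SF\|^2_{L^2(\omega_0)}$ --- i.e.\ it brings the square function back on the right, not $\widetilde N F$ or $\widetilde N(\delta|\nabla F|)$. Your estimate as written therefore does not close. (It could be repaired if you \emph{both} prove a $G_0$-weighted Hardy inequality on a CAD --- a nontrivial ingredient not in the paper --- \emph{and} add a second absorption step, using the $\varepsilon_0$ factor from the other Cauchy--Schwarz factor to absorb the resulting $\varepsilon_0\|Nu_1\|\,\|SF\|$; neither is done in your proposal.)

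The paper sidesteps this entirely by \emph{not} decoupling: in (\ref{tt7.310})--(\ref{tt7.311}) it applies Cauchy--Schwarz cube-by-cube, keeping $F^2$ multiplied against $|\nabla u_1|^2$ inside the cone integral, runs the stopping-time argument on $\bigl(\int_{\Gamma_M(Q)}|\nabla u_1|^2\delta^{2-n}F^2\bigr)^{1/2}\le N_M F(Q)\,S_M(u_1)(Q)$, and arrives directly at $\varepsilon_0\int_{\partial\Omega}S_M(u_1)\,N_M F\,d\omega_0$, where the smallness $\varepsilon_0$ already sits next to the square function and absorbs without any Hardy inequality. This product-preserving step, together with the averaged spherical cutoff over $s\in[1,2]$ that turns the boundary terms on $\partial\Omega_s$ into solid annulus integrals controllable by $N_M F\cdot\widetilde N^M(\delta|\nabla F|)$ and $N_M F\cdot\widetilde N^M F$, is exactly the structure your version is missing.
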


\begin{proof}
For $s \in [1,2],$ let $\Omega_s=B(0,s\widetilde{R}_0)$ where $\widetilde{R}_0=\frac{\delta(0)}{2^{30}},$ then
\begin{eqnarray*}
\int_{\partial \Omega}S^2F(Q)d\omega_0 &=& \int_{\partial \Omega}\int_{\Gamma(Q)\cap \Omega_s}\delta(Z)^{2-n}|\nabla F(Z)|^2dZd\omega_0+ \int_{\partial \Omega}\int_{\Gamma(Q)\backslash \Omega_s}\delta(Z)^{2-n}|\nabla F(Z)|^2dZd\omega_0
\end{eqnarray*}
   $$ = \int_{\partial \Omega}\int_{\Gamma(Q)\cap \Omega_s}\delta(Z)^{-n}(\delta(Z)|\nabla F(Z)|)^2dZd\omega_0+ \int_{\Omega \backslash \Omega_s}\int_{\partial \Omega }\delta(Z)^{2-n}|\nabla F(Z)|^2\chi_{\{Z \in \Gamma(Q)\}}(Q) d\omega_0dZ $$
\begin{equation}\label{tt7.301}
   \le
 \int_{\partial \Omega}\int_{\Gamma(Q)\cap \Omega_s}\delta(Z)^{-n}(\delta(Z)|\nabla F(Z)|)^2dZd\omega_0(Q)+ \int_{\Omega \backslash \Omega_s}|\nabla F(Z)|^2 \delta(Z)^{2-n} \omega_0(\Delta(Q_Z,3\delta(Z)))dZ.
\end{equation}

Note that if $0 \in \Gamma_2(Q)$ then $B(0,s\widetilde{R}_0)\subset \Gamma_3(Q)$ and if $0 \notin \Gamma_2(Q)$ then $B(0,s\widetilde{R}_0) \cap \Gamma(Q)=\emptyset.$ Thus
\begin{eqnarray}\label{tt7.302}
\int_{\partial \Omega}\int_{\Gamma(Q)\cap \Omega_s}\delta(Z)^{-n}(\delta(Z)|\nabla F|)^2dZd\omega_0
&\le
& \int_{\partial \Omega}\int_{\Gamma(Q)\cap \Omega_s}\delta(Z)^{-n}(\delta(Z)|\nabla F|)^2\chi_{\{0 \in \Gamma_2(Q)\}}(Z)dZd\nonumber\omega_0\\
&\lesssim& \int_{\partial \Omega}\fint_{B(0,s\widetilde{R}_0)}(\delta(Z)|F(Z)|)^2\chi_{\{0 \in \Gamma_2(Q)\}}(Z)dZd\omega_0\nonumber\\
&\lesssim& \int_{\partial \Omega}\widetilde{N}
(\delta|\nabla F(Z)|)^2 (Q)d\omega_0(Q).
\end{eqnarray}
We now estimate the second term in (\ref{tt7.301}). Since $\omega_0(\Delta(Q_Z, 3\delta(Z)))\delta(Z)^{2-n} \thicksim G_0(Z)$ using the ellipticity
of $L_0$ we have
\begin{eqnarray}\label{tt.7.304}
\int_{\Omega\setminus\Omega_s}\delta(Z)^{2-n}|\nabla F(Z)|^2dZd\omega_0(Q)
&\lesssim & \int_{\Omega \backslash \Omega_s}|\nabla F(Z)|^2 G_0(Z)dZ\nonumber\\
&\lesssim & \int_{\Omega \backslash \Omega_s}\langle A_0\nabla F(Z), \nabla F(Z)\rangle G_0(Z) dZ\nonumber\\
&\lesssim& \int_{\Omega \backslash \Omega_s} {\rm{div}} (A_0\nabla F)F)G_0\, dZ-\int_{\Omega \backslash \Omega_s}{\rm{div}} ((A_0\nabla F)F G_0\, dZ\nonumber\\
&\lesssim&  \frac{1}{2}\int_{\Omega \backslash \Omega_s}L_0(F^2)G_0\, dZ - \int_{\Omega \backslash \Omega_s}(L_0 F)F G_0\, dZ.
\end{eqnarray}
Integration by parts on the second term in (\ref{tt.7.304}) yields
\begin{eqnarray}\label{tt7.305}
  \int_{\Omega \backslash \Omega_s}(L_0 F)(Z)F(Z) G_0(Z)dZ &=& - \int_{\Omega \backslash \Omega_s}{\rm{div}}(\varepsilon \nabla u_1)(F G_0)(Z)dZ\nonumber\\
                                                           &=& \int_{\Omega \backslash \Omega_s}\nabla (F G_0)\varepsilon\nabla u_1dZ\nonumber\\
                                                           &=& \int_{\Omega \backslash \Omega_s}G_0 \nabla F\varepsilon \nabla u_1 dZ + \int_{\Omega \backslash \Omega_s}\nabla G_0 F\varepsilon \nabla u_1 dZ.
\end{eqnarray}
since $G_0=0$ on $\partial \Omega$ and $\varepsilon=0$ on $\partial \Omega_s$ (recall $\varepsilon(Y)=0$ when $\delta(Y) \ge
 4R_0$).
We use the dyadic decomposition of $\partial \Omega$ to estimate each term. Recall that for $Y \in I^k_{\alpha}$,
$G_0(Y) \sim \frac{\omega_0(Q^k_{\alpha})}{({\rm{diam}}\,
 Q^k_{\alpha})^{n-2}}$ then given $\varepsilon>0$
\begin{eqnarray}\label{tt7.306A}
  \int_{\Omega \backslash \Omega_s \backslash (\partial \Omega,\varepsilon)}|\nabla F|G_0 |\varepsilon(Z)| |\nabla u_1|dZ
   &\le
 &  \sum_{\substack{Q^k_{\alpha}\subset \partial \Omega\\ k \le
 k_{\varepsilon}}} \sup_{I^k_{\alpha}} |\varepsilon(Z)| \int_{I^k_{\alpha}}G_0|\nabla F||\nabla u_1|dZ\\
   &\lesssim & \sum_{\substack{Q^k_{\alpha}\subset \partial \Omega\\ k \le
 k_{\varepsilon}}}  \sup_{I^k_{\alpha}} |\varepsilon(Z)| \frac{\omega_0(Q^k_{\alpha})}{({\rm{diam}}\,
 Q^k_{\alpha})^{n-2}}\bigg(\int_{I^k_{\alpha}}|\nabla F||\nabla u_1|\, dZ\bigg)\nonumber
 \end{eqnarray}
 To estimate $\int_{I^k_{\alpha}}|\nabla F||\nabla u_1|\, dZ$ cover $I^k_{\alpha}$ by balls $\{B(X_i,\lambda 8^{-k-3} )\}_{1 \le
 i\le
 N}$ with $X_i \in I^k_{\alpha}$ such that $|X_i-X_l| \ge
 \lambda 8^{-k-3}/2$. Here $N$ is independent of $k$ and the balls $B(X_i,\lambda 8^{-k-3} )$ have finite overlap (also independent of $k$).
 \begin{eqnarray}\label{tt7.306B}
 \int_{I^k_{\alpha}}|\nabla F||\nabla u_1|\, dZ&\le &\sum_{i=1}^N\int_{B(X_i,\lambda 8^{-k-3}) }|\nabla F| |\nabla u_1|\, dZ\\
 &\le &\sum_{i=1}^N\left(\int_{B(X_i,\lambda 8^{-k-3})}|\nabla F|^2\, dZ\right)^{1/2}\left(\int_{B(X_i,\lambda 8^{-k-3} )}|\nabla u_1|^2\, dZ\right)^{1/2}\nonumber\\
 &\le &\sum_{i=1}^N\left( \int_{B(X_i,\lambda 8^{-k-3}) }|\nabla u_1|^2\left(\int_{B(Z,\lambda 8^{-k-2}) }|\nabla F|^2\, dY\right)
 \, dZ\right)^{1/2}\nonumber\\
 &\lesssim &({\rm{diam}}\, Q^k_{\alpha})^{\frac{n-2}{2}}\sum_{i=1}^N\left( \int_{B(X_i,\lambda 8^{-k-3}) }|\nabla u_1|^2\left(\fint_{B(Z,\frac{\delta(Z)}{8} }(\delta(Y)|\nabla F|)^2\, dY\right)
 \, dZ\right)^{1/2}\nonumber\\
 &\lesssim & ({\rm{diam}}\, Q^k_{\alpha})^{\frac{n-2}{2}}\left(\sum_{i=1}^N  \int_{B(X_i,\lambda 8^{-k-3}) }|\nabla u_1|^2\left(\fint_{B(Z,\frac{\delta(Z)}{8} }(\delta(Y)|\nabla F|)^2\, dY\right)
 \, dZ \right)^{1/2}\nonumber\\
 &\lesssim&\left(({\rm{diam}}\, Q^k_{\alpha})^{n-2}\int_{I^k_{\alpha}}|\nabla u_1|^2\left(\fint_{B(Z,\frac{\delta(Z)}{8} }(\delta(Y)|\nabla F|)^2\, dY\right)\,dZ.\right)^{1/2}\nonumber
 \end{eqnarray}

 Combining (\ref{tt7.306A}) and (\ref{tt7.306B}) we have
 \begin{eqnarray}\label{tt7.306}
&& \int_{\Omega \backslash \Omega_s \backslash (\partial \Omega,\varepsilon)}|\nabla F|G_0 |\varepsilon(Z)| |\nabla u_1|dZ\\
   &\lesssim
 &  \sum_{\substack{Q^k_{\alpha}\subset \partial \Omega\\ k \le
 k_{\varepsilon}}} \sup_{I^k_{\alpha}} |\varepsilon(Z)|\omega_0(Q^k_{\alpha})
  \left(\int_{I^k_{\alpha}}|\nabla u_1|^2\delta(Z)^{2-n}\left(\fint_{B(Z,\frac{\delta(Z)}{8} }(\delta(Y)|\nabla F|)^2\, dY\right)\, dZ\right)^{1/2}\nonumber\\
   &\lesssim& \sum_{\substack{Q^k_{\alpha}\subset \partial \Omega\\ k \le
 k_{\varepsilon}}}   \left(\int_{I^k_{\alpha}}\frac{a^2(Y)G_0(Y)}{\delta(Y)^2}dY\right)^{1/2}\omega_0(Q^k_{\alpha})^{1/2}
  \left(\int_{I^k_{\alpha}}|\nabla u_1|^2\delta(Z)^{2-n}\left(\fint_{B(Z,\frac{\delta(Z)}{8} }(\delta(Y)|\nabla F|)^2\, dY\right)\, dZ\right)^{1/2}.\nonumber
\end{eqnarray}

Applying a stopping time argument similar to the one used in the proof of Lemma \ref{lem2.9} to estimate $F^\varepsilon_0
$, to the function
$$\widetilde{T}_{\varepsilon}(Q)=\left(\int_{\Gamma_M(Q)\backslash B_{2 \varepsilon}(Q)} |\nabla u_1|^2\delta(Z)^{2-n}\left(\fint_{B(Z, \delta(Z)/8)}\delta ^2|\nabla F|^2\right)dZ\right)^{1/2}$$
and letting $\varepsilon$ tend to 0 we obtain
\begin{eqnarray}\label{tt7.309}
 \int_{\Omega \backslash \Omega_s}|\nabla F|G_0|\varepsilon||\nabla u_1|  &\lesssim& \varepsilon_0
 \int_{\partial \Omega} \left(\int_{\Gamma_M(Q)}|\nabla u_1|^2 \delta(Z)^{2-n} \fint_{B(Z, \delta(Z)/8)}\delta ^2|\nabla F|^2 dZ \right)^{1/2} d\omega_0(Q) \nonumber\\
   &\lesssim&   \varepsilon_0
 \int_{\partial \Omega} \widetilde{N}
^M (\delta |\nabla F|)(Q) S_M (u_1)(Q)d\omega_0(Q).
\end{eqnarray}

Now we turn our attention to the second term in (\ref{tt7.305}). Applying Cacciopoli's (see (\ref{tt7.204})) we have
\begin{eqnarray}\label{tt7.310}
  \int_{\Omega \backslash \Omega_s}|\nabla G_0||F||\varepsilon||\nabla u_1|dZ &\le
& \sum_{Q^k_{\alpha}\subset \partial \Omega} \sup_{I^k_{\alpha}}|\varepsilon|
\left(\int_{I^k_{\alpha}}|\nabla G_0|^2 dZ\right)^{1/2}
\left(\int_{I^k_{\alpha}}|\nabla u_1(Z)|^2 F^2(Z)dZ\right)^{1/2} \nonumber\\
   &\lesssim&  \sum_{Q^k_{\alpha}\subset \partial \Omega} \sup_{I^k_{\alpha}}|\varepsilon|
\left(\int_{\widehat{I}^k_{\alpha}}\frac{G_0(Y)^2}{\delta(Y)^2} dY\right)^{1/2}
\left(\int_{I^k_{\alpha}}|\nabla u_1(Z)|^2 F^2(Z)dZ\right)^{1/2}.\nonumber\\
\end{eqnarray}
Once again a similar argument to the one that appears in the proof of Lemma \ref{lem2.9} with a stopping time argument applied to a truncation of
$\left(\int_{\Gamma_M(Q)}|\nabla u_1|^2 \delta(Y)^{2-n} F(Y)dY\right)^{1/2}$ yields the following estimate
\begin{eqnarray}\label{tt7.311}
\int_{\Omega \backslash \Omega_s}|\nabla G_0||F||\varepsilon||\nabla u_1|dZ
&\lesssim & \varepsilon_0
 \int_{\partial \Omega} \left(\int_{\Gamma_M(Q)}|\nabla u_1|^2 \delta(Z)^{2-n} F(Y)dY \right)^{1/2} d\omega_0(Q)\nonumber\\
&\lesssim&\varepsilon_0
 \int_{\partial \Omega} S_M (u_1)(Q)N_M F(Q)d\omega_0(Q).
\end{eqnarray}
Putting together (\ref{tt7.305}), (\ref{tt7.309}) and (\ref{tt7.311}) we obtain
\begin{equation}\label{tt7.311A}
\bigg|\int_{\Omega \backslash \Omega_s}L_0 F\cdot FG_0dZ\bigg|
\lesssim \varepsilon_0
 \int_{\partial \Omega} \widetilde{N}
_M (\delta |\nabla F|)(Q) S_M (u_1)(Q)d\omega_0+
         \varepsilon_0
 \int_{\partial \Omega} N_MF(Q) S_M (u_1)(Q)  d\omega_0.
\end{equation}
To estimate the first term in (\ref{tt.7.304}) observe that
\begin{eqnarray}\label{tt7.312}
  \frac{1}{2}\int_{\Omega \backslash \Omega_s} G_0 L_0(F^2)dZ
&=&\frac{1}{2}\int_{\Omega \backslash \Omega_s}{\rm{div}}(G_0 A_0 \nabla F^2)dZ -\frac{1}{2}\int_{\Omega \backslash \Omega_s} A_0 \nabla G_0 \nabla F^2(Z)dZ \nonumber\\
&=& \int_{\partial \Omega_s}G_0 A_0 F \nabla F \cdot \overrightarrow{\nu}d\sigma -\frac{1}{2}\int_{\Omega \backslash \Omega_s} {\rm{div}}(F^2 A_0 \nabla G_0)dZ \nonumber\\
&=&\int_{\partial \Omega_s}G_0 A_0 F \nabla F \cdot \overrightarrow{\nu}d\sigma -\frac{1}{2}\int_{\partial \Omega_s} F^2 A_0 \nabla G_0 \cdot \overrightarrow{\nu} d\sigma.
\end{eqnarray}
Integrating over $s \in [1,2]$ we obtain
$$\frac{1}{2}\int_{1}^{2}\bigg|\int_{\Omega \backslash B(0,s\widetilde{R}_0)} G_0 L_0F^2dZ \bigg|ds =
\int_{1}^{2}\int_{\partial B(0,s\widetilde{R}_0)} |G_0|| A_0|| F||\nabla F| \,d\sigma ds$$
$$+\frac{1}{2}\int_{1}^{2}\int_{\partial B(0,s\widetilde{R}_0)} F^2 |A_0| |\nabla G_0|\, d\sigma ds $$
\begin{equation}\label{tt7.313}
 = \int_{B(0,2\widetilde{R}_0)\backslash B(0,\widetilde{R}_0)} G_0| A_0|| F||\nabla F|dZ -
\frac{1}{2}\int_{B(0,2\widetilde{R}_0)\backslash B(0,\widetilde{R}_0)} F^2 |A_0| |\nabla G_0| dZ.
\end{equation}
Looking at each term in (\ref{tt7.313}) separately we have that
\begin{equation}\label{tt7.314}
 \int_{B(0,2\widetilde{R}_0)\backslash B(0,\widetilde{R}_0)} G_0| A_0|| F||\nabla F|dZ
 \lesssim \sum_{\substack{Q^k_{\alpha}\subset \partial \Omega\\ I^k_{\alpha}\cap \Omega_2 \backslash \Omega_1 \neq \emptyset}} \int_{I^k_{\alpha}\cap (\Omega_2 \backslash \Omega_1)} G_0| F||\nabla F|dZ.
\end{equation}
Note that if $I^k_{\alpha}\cap \Omega_2 \backslash \Omega_1 \neq \emptyset$ there is $Y \in \Omega$ so that $\lambda/8 <8^k\delta(Y)<8\lambda$
and $|\delta(Y)-\delta(0)|\lneqq 2\widetilde{R}_0=\delta(0)2^{-29}.$ Thus
$(1+2^{-29})^{-1}\lambda/8 <8^k\delta(0)<(1-2^{-29})^{-1}8\lambda.$ Since ${\rm{diam}}\,
 Q^k_{\alpha}\thicksim 8^{-k} \thicksim \delta(0)$ then
$\omega(Q^k_{\alpha}) \ge
 C_1$ an absolute constant only depending on the NTA constants of $\Omega.$ Thus for $Y \in I^k_{\alpha}\cap (\Omega_2 \backslash \Omega_1) $ $G_0(Y) \lesssim \frac{1}{\delta(0)^{n-2}}\lesssim \frac{\omega_0(Q^k_{\alpha})}{({\rm{diam}}\,
 Q^k_{\alpha})^{n-2}}.$ 
 These combined with a computation like the one that appears in (\ref{tt7.306A}), (\ref{tt7.306B}) and (\ref{tt7.306}) yields
\begin{eqnarray}\label{tt7.315}
 \int_{\Omega_2 \backslash \Omega_1 } G_0| A_0|| F||\nabla F|dZ
   &\lesssim & \sum_{\substack{Q^k_{\alpha}\subset \partial \Omega\\ I^k_{\alpha}\cap \Omega_2 \backslash \Omega_1 \neq \emptyset}}
               \left(\int_{I^k_{\alpha}}| F||\nabla F|dZ \right)\frac{\omega_0(Q^k_{\alpha})}{({\rm{diam}}\,
 Q^k_{\alpha})^{n-2}}\nonumber \\
   &\lesssim&  \sum_{\substack{Q^k_{\alpha}\subset \partial \Omega\\ I^k_{\alpha}\cap \Omega_2 \backslash \Omega_1 \neq \emptyset}} \omega_0(Q^k_{\alpha})^{1/2}
               \left(\omega_0(Q^k_{\alpha})\int_{I^k_{\alpha}}\delta^{2-n}| F|^2 dZ \fint_{B(Z,\frac{\delta(Z)}{8})} \delta^2|\nabla F|^2dZ \right)^{1/2}\nonumber\\
   &\lesssim& \int_{\partial \Omega}\left(\int_{\Gamma_M(Q)\cap \Omega_2 \backslash \Omega_1}\delta^{2-n}(Z)|F|^2 \left(\fint_{B(Z,\frac{\delta(Z)}{8})}\delta^2|\nabla F|^2dY\right)dZ\right)^{1/2}d\omega_0\nonumber \\
   &\lesssim&  \delta(0)^{(2-n)/2}\int_{\partial \Omega}N_M F(Q) \widetilde{N}^M(\delta |\nabla F|)(Q)\delta(0)^{n/2}d\omega_0\nonumber\\
&\lesssim& \delta(0) \int_{\partial \Omega}N_M F(Q) \widetilde{N}
^M(\delta |\nabla F|)(Q)d\omega_0(Q).
\end{eqnarray}
We control the second term in (\ref{tt7.313}) by recalling that if $Y \in I^k_{\alpha}\cap \Omega_2 \backslash \Omega_1$ then $\delta(0)\thicksim {\rm{diam}}\,
 Q^k_{\alpha}$ and $|\nabla G_0(Y)|\lesssim \frac{G_0(Y)}{\delta(Y)}\lesssim \frac{\omega_0(Q^k_{\alpha})}{\delta(0)^{n-1}}.$ Thus as in (\ref{tt7.315}) and using the doubling properties of $\o_0$ we have
\begin{eqnarray}\label{tt7.316}
\int_{\Omega_2 \backslash \Omega_1 } F^2| A_0||\nabla  G_0|dZ
&\lesssim& \sum_{\substack{Q^k_{\alpha}\subset \partial \Omega\\ I^k_{\alpha}\cap \Omega_2 \backslash \Omega_1 \neq \emptyset}} \int_{I^k_{\alpha}\cap \Omega_2 \backslash \Omega_1} F^2|\nabla  G_0|dZ\nonumber\\
&\lesssim&  \sum_{\substack{Q^k_{\alpha}\subset \partial \Omega\\ I^k_{\alpha}\cap \Omega_2 \backslash \Omega_1 \neq \emptyset}} \omega_0(Q^k_{\alpha})^{1/2}
            \bigg(\omega_0(Q^k_{\alpha})\int_{I^k_{\alpha}} \delta^{2-n}|F|^2 \fint_{B(Z,\frac{\delta(Z)}{8})}|F|^2\bigg)^{1/2}\nonumber\\
      &\lesssim&  \delta(0) \int_{\partial \Omega}N_M F(Q) \widetilde{N}
^M F(Q)d\omega_0(Q).      
\end{eqnarray}
Combining (\ref{tt7.313}), (\ref{tt7.315}) and (\ref{tt7.316}) we obtain
$$ \int_{1}^2 \bigg| \int_{\Omega\backslash B(0,s\widetilde{R}_0)}G_0(Z)L_0F^2(Z)dZ\bigg|ds $$
\begin{equation}\label{tt7.317}
\lesssim \delta(0) \int_{\partial \Omega}N_M F(Q) \widetilde{N}^M(\delta |\nabla F|)(Q)d\omega_0(Q)+ \\
         \delta(0) \int_{\partial \Omega}N_M F(Q) \widetilde{N}
^M F(Q)d\omega_0(Q).
\end{equation}
Combining (\ref{tt7.301}), (\ref{tt7.302}), (\ref{tt.7.304}),  (\ref{tt7.311}) and (\ref{tt7.317}) plus recalling the fact that $S_M (u_1) \le
 S_M (F)+S_M (u_0)$ and $||S_M F||_{L^2(\omega_0)} \thicksim ||SF||_{L^2(\omega_0)}$ and Remark \ref{N-hat-tilde}
we obtain

$$\int_{\partial \Omega}S^2F(Q)d\omega_0(Q)  = \int_{1}^2  \int_{\partial \Omega} S^2F(Q)d\omega_0(Q) ds $$
 \begin{eqnarray}\label{tt7.318}
 &=& \int_{1}^2  \int_{\partial \Omega} \int_{\Gamma(Q)\cap \Omega_s}\delta^{-n}(Z)(\delta(Z)|\nabla F|)^2 dZ d\omega_0 ds+ \int_{1}^2  \int_{\Omega\backslash \Omega_s}|\nabla F|^2\delta^{2-n}(Z)\omega_0(\Delta(Q_Z, 3\delta(Z))) dZ ds\nonumber\\
 &\lesssim& \int_{\partial \Omega} \int_{\Gamma(Q)\cap \Omega_s}\delta^{-n}(Z)(\delta(Z)|\nabla F|)^2 dZ d\omega_0
  + \int_{1}^2 \bigg| \int_{\Omega\backslash \Omega_s}L_0 F^2 G_0\bigg|ds + \int_{1}^2 \bigg| \int_{\Omega\backslash \Omega_s}(L_0 F)F G_0\bigg|ds\nonumber\\
 &\lesssim&  \int_{\partial \Omega} \widetilde{N}
 (\delta |\nabla F|)^2(Q)d\omega_0(Q)+ \varepsilon_0
 \int_{\partial \Omega}\widetilde{N}
^M(\delta |\nabla F|)(Q)S_M (u_1)(Q) d\omega_0(Q)\nonumber\\
&+&\varepsilon_0
 \int_{\partial \Omega}S_M (u_1)(Q) N_M F(Q) d\omega_0+\int_{\partial \Omega}N_M F(Q)\widetilde{N}
^M (\delta |\nabla F|) d\omega_0+ \int_{\partial \Omega}N_M F(Q) \widetilde{N}
^M (F)(Q) d\omega_0\nonumber\\
&\lesssim& ||\widetilde{N}(\delta |\nabla F|)||_{L^2(\omega_0)}^2 + || S u_0||_{L^2(\omega_0)}^2 + || NF||_{L^2(\omega_0)}^2 +||\widetilde{N}
(F)||_{L^2(\omega_0)}^2 + \varepsilon_0
 || S F||_{L^2(\omega_0)}^2.
\end{eqnarray}

Since by Lemma \ref{1510} $||S u_0||_{L^2(\omega_0)}^2 \lesssim || N u_0||_{L^2(\omega_0)}^2 \lesssim || f||_{L^2(\omega_0)}^2$ we obtain from (\ref{tt7.318})
\begin{equation}\label{tt7.319}
|| S F||_{L^2(\omega_0)}^2 \lesssim ||\widetilde{N}(\delta |\nabla F|)||_{L^2(\omega_0)}^2 + || NF||_{L^2(\omega_0)}^2 +||\widetilde{N}
(F)||_{L^2(\omega_0)}^2 + ||f||_{L^2(\omega_0)}^2
\end{equation}
which yields Lemma \ref{lem2.10}.

\emph{Proof of Theorem \ref{mainthm1}:} Since $S(u_1) \le
 S(F) + S(u_0),$ (\ref{ttlem7.6C}), (\ref{ttlem7.8}) and the argument above, (\ref{tt7.319})
yields
\begin{eqnarray*}
  \int_{\partial \Omega}\widetilde{N}F(Q)^2 d\omega_0 + \int_{\partial \Omega}\widetilde{N}(\delta |\nabla F|)^2(Q)d\omega_0 &\le
& C \varepsilon_0
^2 \int_{\partial \Omega} S^2 u_1 d\omega_0\\
  &\le
& C \varepsilon_0
^2 \int_{\partial \Omega} (S F)^2(Q)d\omega_0 + C \varepsilon_0
^2 \int_{\partial \Omega} (Su_o)^2 d\omega_0
\end{eqnarray*}
\begin{equation}\label{tt7.320}
\le
 C \varepsilon_0
^2 \left(\int_{\partial \Omega} \widetilde{N}F(Q)^2d\omega_0 + \int_{\partial \Omega}\widetilde{N}(\delta |\nabla F|)^2(Q)d\omega_0\right)
     + C \varepsilon_0
^2 \int_{\partial \Omega} N F(Q)^2d\omega_0 + C \varepsilon_0
^2 \int_{\partial \Omega} f^2 d\omega_0.
\end{equation}
Thus
\begin{equation}\label{tt7.321}
\int_{\partial \Omega} \widetilde{N}F(Q)^2d\omega_0 + \int_{\partial \Omega}\widetilde{N}(\delta |\nabla F|)^2(Q)d\omega_0 \le
C \varepsilon_0
^2 \int_{\partial \Omega} N F(Q)^2 d\omega_0 + C \varepsilon_0
^2 \int_{\partial \Omega} f^2 d\omega_0.
\end{equation}
Note that since $||\widetilde{N}u_i||_{L^2(\omega_0)} \thicksim ||N u_i||_{L^2(\omega_0)}^2,\ i=0,1$ then by (\ref{tt7.321})
\begin{eqnarray}\label{tt7.322}
  \int_{\partial \Omega} N F(Q)^2d\omega_0  &\le
 & \int_{\partial \Omega} N u_1^2(Q)d\omega_0 + \int_{\partial \Omega} N u_0^2(Q) d\omega_0\nonumber\\
  &\le
& C\int_{\partial \Omega} \widetilde{N}u_1^2(Q)d\omega_0 + C\int_{\partial \Omega} f^2 d\omega_0\nonumber\\
  &\le
& C\int_{\partial \Omega} \widetilde{N}F(Q)^2d\omega_0 + C\int_{\partial \Omega} f^2 d\omega_0\nonumber\\
  &\le
& C \varepsilon_0
^2 \int_{\partial \Omega} N F(Q)^2d\omega_0 + C \int_{\partial \Omega} f^2 d\omega_0
\end{eqnarray}
which ensures that
\begin{equation}\label{tt7.323}
\int_{\partial \Omega} N F(Q)^2d\omega_0 \le
 C\int_{\partial \Omega} f^2 d\omega_0
\end{equation}
which yields

\begin{eqnarray}\label{tt7.324}
  \int_{\partial \Omega} Nu_1^2(Q)d\omega_0 &\le
 & C\int_{\partial \Omega} NF(Q)^2d\omega_0 + C \int_{\partial \Omega} N u_0^2(Q)d\omega_0 \nonumber\\
   &\le
&  C\int_{\partial \Omega} f^2 d\omega_0. 
\end{eqnarray}
This concludes the proof of Theorem \ref{mainthm1}.

\end{proof}

\section{Regularity for the elliptic kernel on CADs}
\begin{thm}\label{thm8.1}
Let $\O$ be a CAD---assume there exists a constant $C>0$ such that
\begin{equation}\label{eqn8.1}
\sup_{\D\subset\po}\left(\frac{1}{\s(\D)} \int_{T(\D)}
\frac{a^2(X)}{\d(X)^n}dX\right)^{\frac{1}{2}}<C
\end{equation}
Then $\o_1\in A_\infty(\s)$ if $\o_0\in A_\infty(\s)$. 
\end{thm}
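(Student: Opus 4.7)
The plan is to reduce the theorem to repeated applications of the small–perturbation result Theorem \ref{mainthm1} by linearly interpolating between $L_0$ and $L_1$. Set $A_t=(1-t)A_0+tA_1$ for $t\in[0,1]$; each $L_t=\mathrm{div}(A_t\nabla)$ is symmetric and elliptic with the same ellipticity constants as $L_0$ and $L_1$, and $L_t=L_0$ on $\partial\Omega$. Since $A_1-A_0\equiv 0$ on $\partial\Omega$, the deviation function $a_{s,t}$ of $L_t$ relative to $L_s$ satisfies the pointwise bound $a_{s,t}(X)\le |t-s|\,a(X)$, so the Carleson quantity in the hypothesis scales quadratically:
\[
\sup_{\Delta}\frac{1}{\sigma(\Delta)}\int_{T(\Delta)}\frac{a_{s,t}^2(X)}{\delta(X)}\,dX \le (t-s)^2 C^2.
\]

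Since $\omega_0\in A_\infty(\sigma)$ there is $p_0>1$ with $\omega_0\in B_{p_0}(\sigma)$. Partition $[0,1]$ by $0=t_0<t_1<\cdots<t_N=1$ with $t_i-t_{i-1}=1/N$. At step $i$ we view $L_{t_i}$ as a perturbation of $L_{t_{i-1}}$ and play $\omega_{t_{i-1}}$ in the role of $\omega_0$ in Proposition \ref{p-fkp}: provided $\omega_{t_{i-1}}\in B_p(\sigma)$ for some $p>1$, the proposition produces a threshold $\delta_i>0$ such that the $\sigma$-Carleson smallness $C^2/N^2<\delta_i$ upgrades to the $\omega_{t_{i-1}}$-$G_{t_{i-1}}$-Carleson smallness $\varepsilon_0$ required in (\ref{condThm2.11}). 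Theorem \ref{mainthm1} then yields $\omega_{t_i}\in B_2(\omega_{t_{i-1}})$.

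The main obstacle is uniformity of $N$: the threshold $\delta_i$ depends on the $B_p(\sigma)$-constants of the base measure $\omega_{t_{i-1}}$, which could in principle degrade with $i$. This is overcome by observing that each step furnishes $\omega_{t_i}\in B_2(\omega_{t_{i-1}})$ with a quantitative, $\varepsilon_0$-controlled constant (independent of $i$); quantitative transitivity of $A_\infty$ (in particular the standard fact that $B_2$ composed with $B_{p_0}(\sigma)$ lands in some $B_{q}(\sigma)$ with constants controlled by the input) ensures that the $B_p(\sigma)$-constants of $\omega_{t_i}$ admit bounds depending only on the initial data $(p_0,\|\omega_0\|_{B_{p_0}(\sigma)})$ and on the number $N$ of composition steps. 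Fixing $N$ large enough at the outset (so that $C^2/N^2$ beats all of the $N$ resulting thresholds) lets the induction close.

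After $N$ steps the chain of $B_2$ relations gives $\omega_1\in A_\infty(\omega_0)$, and combined with $\omega_0\in A_\infty(\sigma)$, transitivity of $A_\infty$ yields $\omega_1\in A_\infty(\sigma)$, as claimed.
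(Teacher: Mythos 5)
Your iteration strategy cannot be closed as written, and this is precisely the obstacle the paper's two-step argument (Theorem \ref{thm8.2} plus a sawtooth reduction) is designed to overcome. The gap is in the paragraph where you "fix $N$ large enough at the outset so that $C^2/N^2$ beats all of the $N$ resulting thresholds." The thresholds $\delta_i$ produced by Proposition \ref{p-fkp} genuinely depend on the $B_p(\sigma)$-exponent and constant of the base measure $\omega_{t_{i-1}}$: the proof of Proposition \ref{p-fkp} applies the reverse H\"older inequality for $k=d\omega_0/d\sigma$ and then bounds a truncated square function in $L^{2q}(\sigma)$ via the maximal function theorem, where $q$ is the exponent dual to $p$. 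Each step of your induction only yields $\omega_{t_i}\in B_2(\omega_{t_{i-1}})$ with a fixed $\varepsilon_0$-controlled constant, so the $B_p(\sigma)$-data of $\omega_{t_i}$ degrades as $i$ grows -- the exponent $p_i$ tends toward $1$ and/or the constant blows up with the number of compositions. Consequently the thresholds $\delta_i$ shrink with $N$ at an unknown rate, while the smallness you gain from subdividing only improves like $C^2/N^2$; there is no a priori comparison between the two, and the induction does not close. Choosing $N$ "at the outset" is circular, since the $\delta_i$ depend on $N$ through the degradation.

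The paper's route is different in a way that sidesteps exactly this. It first proves Theorem \ref{thm8.2} (the analogue of Fefferman's pointwise criterion): if $\|A(a)\|_{L^\infty(\sigma)}$ is finite and $\omega_0\in A_\infty(\sigma)$, then $\omega_1\in A_\infty(\sigma)$. There the iteration over $L_t=(1-t)A_0+tA_1$ does close, because the passage from the $L^\infty$ bound on $A(a)$ to the hypothesis of Theorem \ref{mainthm1} (estimate (\ref{eqn8.3})) uses only the comparison principle, doubling of $\omega_0$ and Ahlfors regularity -- not the $B_p(\sigma)$ constant of the base measure -- so the threshold is uniform over the partition. Theorem \ref{thm8.1} is then obtained from Theorem \ref{thm8.2} by a stopping-time/sawtooth argument: the Carleson condition (\ref{eqn8.1}) on $a^2/\delta$ gives, on each surface ball $\Delta$, a closed subset $S\subset\Delta$ of proportional $\sigma$-measure on which a truncated square function $A_{\gamma,r}$ is pointwise bounded; replacing $A_1$ by $A_0$ outside the sawtooth domain $\Omega_S$ over $S$ yields a modified operator $\widetilde L_1$ with $\|\widetilde A_\beta\|_{L^\infty}<\infty$, so Theorem \ref{thm8.2} gives $\widetilde\omega_1\in A_\infty(\sigma)$; finally the comparison principles of Dahlberg--Jerison--Kenig and Jerison--Kenig relate $\widetilde\omega_1$ and $\omega_1$ on $\Omega_S$ and give the required density estimate for $\omega_1$. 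Your proposal is missing both the role of the $L^\infty$-square-function intermediary and the sawtooth localization, and without them the naive iteration over $t$ has a uniformity problem that cannot be handwaved away.
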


The argument used to
prove the result above is similar to the one used in \cite{fkp} to prove Theorem
2.3. In our case it relies on a generalization of Fefferman's result to the CAD
setting, as follows.

\begin{thm}\label{thm8.2}
Let $\O$ be a CAD and let
\begin{equation}\label{eqn8.2}
A(a)(Q) = \left(\int_{\G(Q)} \frac{a^2(X)}{\d(X)^n} dX\right)^{\frac{1}{2}}.
\end{equation}

If $\|A(a)\|_{L^\infty(\s)}\le C_0<\infty$ and $\o_0\in A_\infty(\s)$ then
$\o_1\in A_\infty(\s)$.
\end{thm}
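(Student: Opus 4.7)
The plan is to mimic the argument of \cite{fkp}, who derived the analogous Fefferman-type result in the Lipschitz setting from their main perturbation theorem (the Lipschitz analog of our Theorem \ref{mainthm1}) by means of a sawtooth/stopping-time reduction. All the ingredients required to carry this program out in the CAD setting---Christ's dyadic cubes $\{Q_\alpha^k\}$ on $\partial\Omega$, their interior projections $I_\alpha^k$ introduced in (\ref{tt7.26}), the tent-space theory of Sections \ref{nontan}--\ref{Square}, the NTA comparison lemmas, Proposition \ref{p-fkp}, and Theorem \ref{mainthm1} itself---are already in place. Since $A_\infty$ is transitive and $\omega_0\in A_\infty(\sigma)$, it is enough to prove $\omega_1\in A_\infty(\omega_0)$, i.e.\ that for every $\eta>0$ there exists $\beta=\beta(\eta)>0$ with the property that $\omega_0(E)<\beta\,\omega_0(\Delta_0)$ implies $\omega_1(E)<\eta\,\omega_1(\Delta_0)$ for every surface ball $\Delta_0$ and every Borel $E\subset\Delta_0$.

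First I would promote the hypothesis $\|A(a)\|_{L^\infty(\sigma)}\le C_0$ to the global (but not small) Carleson bound
$$\sup_{\Delta\subset\partial\Omega}\frac{1}{\sigma(\Delta)}\int_{T(\Delta)}\frac{a^2(X)}{\delta(X)}\,dX\le C\,C_0^2,$$
via the Fubini argument in (\ref{mm38}). Fix $\epsilon>0$ to be chosen later. The key step is to produce, inside any given surface ball $\Delta_0$, a closed subset $F\subset\Delta_0$ and an associated sawtooth region $\Omega_F\subset\Omega$, assembled out of the projections $I_\alpha^k$ of ``non-selected'' dyadic sub-cubes, such that (i) $\sigma(\Delta_0\setminus F)$ is small, and (ii) the Carleson norm of $a^2/\delta$ localized to $\Omega_F$ is $\le\epsilon^2$ at every scale. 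This is achieved by a stopping-time decomposition on the dyadic sub-cubes of $\Delta_0$ in the spirit of the proof of Theorem \ref{theorem2}, combined with the finite-Carleson control above; the packing estimates of the $\{I_\alpha^k\}$ play the role that Lipschitz graph parametrizations play in \cite{fkp}.

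Next I would introduce the matrix $\widetilde A_1$ coinciding with $A_1$ on $\Omega_F$ and with $A_0$ off $\Omega_F$, and write $\widetilde L_1=\mathrm{div}\,\widetilde A_1\nabla$, with elliptic measure $\widetilde\omega_1$. By construction the deviation $\widetilde a$ satisfies the global smallness
$$\sup_\Delta \frac{1}{\sigma(\Delta)}\int_{T(\Delta)}\frac{\widetilde a^2(X)}{\delta(X)}\,dX\le C\epsilon^2,$$
and---using that $\omega_0\in A_\infty(\sigma)$ forces $\omega_0\in B_p(\sigma)$ for some $p>1$---Proposition \ref{p-fkp} converts this into the smallness assumption (\ref{condThm2.11}) for $\widetilde L_1$. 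Theorem \ref{mainthm1} then gives $\widetilde\omega_1\in B_2(\omega_0)$; in particular, for $E\subset F$,
$$\frac{\widetilde\omega_1(E)}{\widetilde\omega_1(\Delta_0)}\le C\left(\frac{\omega_0(E)}{\omega_0(\Delta_0)}\right)^{1/2}.$$
Finally, since $L_1\equiv\widetilde L_1$ inside $\Omega_F$ and both elliptic measures vanish continuously on $\partial\Omega$, the NTA comparison principle (Lemma \ref{lem2.5}) applied inside the CAD-subregion $\Omega_F$ yields $\omega_1(E)\lesssim\widetilde\omega_1(E)$ for $E\subset F$, up to an error controlled by $\omega_1(\Delta_0\setminus F)$. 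Iterating this dichotomy over the complementary stopping cubes, and using the $A_\infty$ relationship between $\omega_0$ and $\sigma$, produces the required $\beta=\beta(\eta)>0$.

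The main obstacle will be the sawtooth step. In the Lipschitz setting of \cite{fkp} the sawtooth above $F$ is itself a Lipschitz domain, and the relevant comparisons of Green functions and elliptic measures are classical. In our CAD setting $\Omega_F$ is assembled in a much rougher way from the interior projections $I_\alpha^k$ of non-stopping cubes, and one must verify that $\Omega_F$ inherits enough of the NTA character of $\Omega$---quantitatively, with constants uniform in the stopping-time parameters---for the comparison principle and Proposition \ref{p-fkp} to apply. This verification is the geometric heart of the argument and is precisely where the CAD hypothesis enters in a substantial way beyond its use in Section \ref{theory_of_weights}.
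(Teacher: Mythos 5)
Your proposal follows a sawtooth/stopping-time reduction, but this is not the paper's proof of Theorem \ref{thm8.2}, and it contains a gap at its central step. The paper's proof is much more direct: it first observes, via the Fubini-type computation in (\ref{eqn8.3}) together with $G_0(X)\delta(X)^{n-2}\sim \omega_0(\Delta(Q_X,\delta(X)))$ and doubling, that $\|A(a)\|_{L^\infty(\sigma)}\le\delta$ implies the Carleson-with-$G_0$ condition (\ref{condThm2.11}) with bound $C\delta^{1/2}$, so for $\delta$ small enough Theorem \ref{mainthm1} applies directly to give $\omega_1\in B_2(\omega_0)\subset A_\infty(\sigma)$. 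To treat a general $C_0$, the paper then interpolates the coefficient matrix along $A_t=(1-t)A_0+tA_1$, subdivides $[0,1]$ into intervals of length $<\delta/C_0$ so that each incremental deviation $a_i=(t_{i+1}-t_i)a$ satisfies $\|A(a_i)\|_{L^\infty(\sigma)}<\delta$, and iterates the smallness result: at each step $\omega_{t_i}\in A_\infty(\sigma)$ holds inductively, and the previous paragraph gives $\omega_{t_{i+1}}\in B_2(\omega_{t_i})$, hence $\omega_{t_{i+1}}\in A_\infty(\sigma)$. The iteration over small operator perturbations is the key idea you are missing; it works because $A(\cdot)$ is linear in the deviation.

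The gap in your argument is step (ii): you assert that a stopping-time argument produces a large closed $F\subset\Delta_0$ and a sawtooth $\Omega_F$ on which the Carleson norm of $a^2/\delta$ is $\le\epsilon^2$. This cannot be achieved. The hypothesis $\|A(a)\|_{L^\infty(\sigma)}\le C_0$ yields a \emph{bounded}, not a small, Carleson norm, and this bound does not improve under localization: for any surface ball $\Delta\subset F$ with small radius, the tent $T(\Delta)$ still lies inside (or near) the sawtooth $\Omega_F$ at the bottom, and $\int_{T(\Delta)}\widetilde a^2/\delta\sim A(a)(Q)^2\,\sigma(\Delta)$ for $Q\in F$, which is of size $C_0^2\sigma(\Delta)$, not $\epsilon^2\sigma(\Delta)$. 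What the sawtooth construction \emph{can} deliver, and what the paper in fact uses it for in the proof of Theorem \ref{thm8.1}, is the passage from a finite Carleson condition to a \emph{pointwise} $L^\infty$ bound on the square function of the modified deviation (via Chebyshev on truncated cones), after which Theorem \ref{thm8.2} is invoked as a black box. In other words, you have the logical dependence between Theorems \ref{thm8.1} and \ref{thm8.2} reversed: the sawtooth proves \ref{thm8.1} \emph{from} \ref{thm8.2}, whereas \ref{thm8.2} itself is proved by the small-perturbation iteration described above.
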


\noindent{\bf Proof of Theorem \ref{thm8.2}}

This is a corollary of Theorem 2.9. In fact note that for $\D=B(Q_0,r)\cap\po$
with $Q_0\in\po$ the fact that $\frac{\o_0(\D(Q_X,\d(X))}{\d(X)^{n-1}}\sim
\frac{G_0(X)}{\d(X)}$ combined with Fubini's theorem and the doubling properties
of $\o_0$ yields
\begin{eqnarray}\label{eqn8.3}
\frac{1}{\o_0(\D)} \int_{T(\D)}\frac{a^2(X)G_G(X)}{\d(X)^2}
dX & \mathop{<}\limits_{\sim} & \frac{1}{\o_0(\D)}\int_{T(\D)}\frac{a^2(X)}{\d(X)}
\frac{\o_0(\D(Q_X\d(X))}{\d(X)^{n-1}}dX\nonumber \\
& = & \frac{1}{\o_0(\D)} \int_{T(\D)} \int_{\po} \frac{a^2(X)}{\d(X)^n} \chi_{\D(Q_X,\d(X))} (Q)d\o dX \\
& \le & \frac{1}{\o_0(\D)} \int_{3\D}\left(\int_{T(\D)} \frac{a^2(X)}{d(X)^n} \chi_{\G(Q)}(X)dX\right) d\o_0 \nonumber \\
&  \le & \frac{1}{\o_0(\D)} \int_{3\D} A^2(a)(Q)d\o_0(Q)\nonumber \\
& \le & \frac{1}{\o_0(\D)} \int_{3\D} A^2(a)(Q)d\o(Q) \nonumber
\end{eqnarray}
Hence there exists $\d>0$ depending on $n$ and the NTA constants of $\O$ such that if $\|A(a)\|_{L^\infty(\s)}\le \d$, and $\o_0\in A_\infty(\s)$ then $\o_1\in B_2(\o_0)$. Since $\o_0\in A_\infty(\s)$ the fact that $\|A(a)\|_{L^\infty(\s)}\le\d$ implies that $\|A(a)\|_{L^\infty(\o_0)}\le\d$. Estimate (\ref{eqn8.3}) guarantees that there exists $C>0$ depending on $n$ and the NTA constants of $\O$ such that
\begin{equation}\label{eqn8.4}
\sup_{\D C\po} \left(\frac{1}{\o_0(\D)} \int_{T(\D)}a^2(X)\frac{G_0(X)}{\d^2(X)}dX\right)^{\frac{1}{2}} \le C\d^{\frac{1}{2}} 
\end{equation}
choosing $\d>0$ small enough so that $C\d^{\frac{1}{2}}<\epsilon_0$ in Theorem 2.9 we conclude that $\o_1\in B_2(\o_0)$. To finish the proof of Theorem \ref{thm8.2} consider the family of operators $L_t=(1-t)A_0+tA_1$ for $0\le t\le 0$. Consider a partition of $[0,1]$ $\{t_i\}^m_{i=0}$ such that $0<t_{i+1}-t_i<\frac{\d}{C_0}$ where $C_0$ is as in the statement of Theorem \ref{thm8.2}. Let $a_i$ be the deviation function corresponding to $L_{t_{i+1}}=L_{i+1}$ and $L_{t_i}=L_i$, here $\epsilon_i(X) = A_{t_{i+1}}(X)=(t_{i+1}-t_i)\epsilon(X)$, and $a_i(X)=(t_{i+1}-t_i)a(X)$. Hence $\|A(a_k)\|_{L^\infty(\s)}=(t_{i-1}-t_i)\|A(a)\|_{L^\infty(\s)}<\d$. An iteration of the argument above ensures that for $i\in \{0,\ldots, m\}$ $\o_i\in A_\infty(\s)$ and $\o_{i+1}\in B_2(\o)$. Hence $\o_1\in A_\infty(\s)$.\hfill\qed
\medskip

\noindent{\bf Proof of Theorem \ref{thm8.1}}: We are assuming the Carleson condition (\ref{eqn8.1}) on $\frac{a^2(X)}{d(X)}dX$ and that $\o_0\in A\infty(\s)$. We will show that $\o_1\in A_\infty(\s)$ by showing that there exists $0<\a<1$ and $0<\b<1$ such that if $\D=B(Q_0,r)\cap\po$ and $E\subset\D$ then $\frac{\s(E)}{\s(\D)}>\a$ implies that $\frac{\o_1(E)}{\o_1(\D)}>\b$.

For $r>0$ and $\gamma>0$ we denote by $\G_{\gamma,r}(Q)$ the truncated cone of radius $r$ and aperture $\gamma$, i.e. $\G_{\gamma,r}(Q)=\{X\in\O:|X-Q|<(1+\gamma)\d(X), 0<\partial(X)<r <\d(X)<r\}$. We define the truncated square function with aperture determined by $\gamma$ for the deviation function $a(X)$ by
\[
A_{\gamma,r}(Q)=\left(\int_{\G_{\gamma,r}(Q)} \frac{a^2(X)}{\d(X)^n}dX\right).
\]
The appropriate constant $\gamma$ will be chosen later. 

Applying Lemma 3.13 to $A(X)=\frac{a^2(X)}{\d(X)^j} \chi B(Q_0; (2+\gamma)r)(X)$ we conclude that 
\[
\frac{1}{\s(\D)}\int_\D A^2_{\gamma,r}(Q)d\s(Q)  \le  \frac{1}{\s(\D)}\int_{T((2+\gamma)\D)} \frac{a^2(X)}{\d(X)}dX 
 \le  C_{\gamma}
\]
because $\s$ is doubling and hypothesis (\ref{eqn8.1}). Thus there is a closed set $S\subset\D$ so that $\frac{\s(S)}{\s(\D)}\ge \frac{1}{2}$ and $A_{\gamma,r}(Q)\le C'_\gamma$ for $Q\in S$

Recall that there exist constants $0<\b<\gamma$ and $C_1<C_2<0$ and a sawtooth domain $\O_S$ such that
\begin{itemize}
\item[(i)]$\bigcup_{Q\in S}\G_{\b,C_1 r}(Q)\subset\O_S\subset\bigcup_{Q\in S}\G_{\gamma,C_2 r}(Q)$
\item[(ii)]$\po_S\cap\po=S$
\item[(iii)]The NTA character of $\O_S$ is independent of $S$.
\end{itemize}
(See \cite{k1}).

Without loss of generality we may assume that $\frac{3}{2}\beta+\frac{1}{2}<\gamma$. Let $\O'=\bigcup_{Q\in S}\G_{\b,C_1r}(Q)$ and $\widetilde{\O}=\bigcup_{Q\in S} \G_{\gamma,C_2r}(Q)$. For $X\in\O$ with $\d(X)<C_1r$ if $B\left(X,\frac{\d(X)}{2}\right)\cap\O'\ne\emptyset$ then there exists $\widetilde Q\in S$ so that $B\left(X, \frac{\d(X)}{2}\right)\subset\G_{\gamma, C_2r}(\widetilde Q)$. In fact if $Y\in B\left(X, \frac{\d(X)}{2}\right)\cap\O'$ there is $\widetilde Q\in S$ so sthat $|\widetilde Q-Y|<(1+\b)\d(Y)$ and
$|\widetilde Q-X|\le |\widetilde Q-Y|+|Y-X|<(1+\b)\d(Y)\in \frac{\d(X)}{2}\le (1+\b)\frac{3\d(X)}{2} + \frac{\d(X)}{2}=\d(X)\left(1+\left(\frac{3}{2}\b+\frac{1}{2}\right)\right)$. Thus $X\in\G_{\gamma,r}(\widetilde Q)$. Define the operator $\widetilde L_{1}=\div\widetilde A_1\nabla$ by setting
\[
\widetilde{A}_1(X) = \left\{\begin{array}{l}
A_1(X)\mbox{ if }X\in\O'\\
A_0(X)\mbox{ if }X\in\O\backslash\O'\end{array}\right.
\]
Let $\widetilde a(X)=\sup_{Y\in B\left(X,\frac{\d(X)}{2}\right)}|\widetilde A_1(Y) - A_0(Y)|$ be the deviation function for $\widetilde L_1$ and $L_0$. Observe that $\widetilde a(X)\le a(X)$. For $\gamma$ and $\b$ as above consider 
\[
\widetilde A_\b(Q) = \left(\int_{\G_\b(Q)}\frac{\widetilde a(X)^2}{\d(X)^n}dX\right)^{\frac{1}{2}}.
\]
Note that by the definition of $\widetilde A_1$ and $\widetilde a$, 
\[
\widetilde A_\b(Q)=\left(\int_{\G_{\b,C_1r}(Q)}\frac{\widetilde a(X)^2}{\d(X)^n}dX\right)^{\frac{1}{2}}.
\]
If $B\left(X,\frac{\d(X)}{2}\right)\cap\O'=\emptyset$ then $\widetilde a(X)=0$. On the other hand if $X\in \G_{\b,C_1r}(Q)=0$ and $B\left(X,\frac{\d(X)}{2}\right)\cap\O'\ne\emptyset$ then there exists $\widetilde Q\in S$ so that $B\left(X,\frac{\d(X)}{2}\right)\subset\G_{\gamma,C_2r}(\widetilde Q)$.

Thus $\widetilde A_\b(Q)\le A_{\gamma,r}(\widetilde Q)\le C_{\gamma'}$. By Theorem \ref{thm8.2} $\widetilde \o_1=\o_{\widetilde L_1}\in A_\infty(\s)$. Choose $0<\a<1$ close to $1$ so that $\frac{\s(E\cap S)}{\s(\D)}\ge \frac{1}{4}$ whenever $\frac{\s(E)}{\s(\D)}>\a$. Let $F=S\cap E$ since $\widetilde \o_1\in A_\infty(\s)$ there exists constants $C>0$ and $\eta>0$ so that
\begin{equation}\label{eqn8.5}
\frac{\widetilde\o_1(F)}{\widetilde\o_1(\D)}>C\left(\frac{\s(F)}{\s(\D)}\right)^\eta\ge C'.
\end{equation}

By \cite{DJK} and \cite{JK} there exist constants $C>0$ and $Q>0$ depending on the ellipticity constants, the NTA constants of $\O$ and $n$ so that for $F\subset S$
\begin{equation}\label{eqn8.6}
\frac{1}{C}\left(\widetilde \o_1^{\O_S}(F)\right)^{\frac{1}{\theta}} \le \frac{\widetilde\o_1(F)}{\widetilde\o_1(\D)}\le C\left(\widetilde \o_1^{\O_S}(F)\right)^\theta
\end{equation}
and
\begin{equation}\label{eqn8.7}
\frac{1}{C} \left(\o_1^{\O_S}(F)\right)^{\frac{1}{\theta}} \le \frac{\o_1(F)}{\o_1(\D)} \le C\left(\o_1^{\O_S}(F)\right)^\theta
\end{equation}
(see Lemma 1.4.14 in \cite{k1}). Since $\O_S\subset\O'$, $L_1=L_1$ on $\O_S$ and $\widetilde\o_1^{\O_S}=\widetilde\o_1$. Combining (\ref{eqn8.5}), (\ref{eqn8.6}) and (\ref{eqn8.7}) we obtain since $F\subset E$
\begin{eqnarray}\label{eqn8.8}
\frac{\o_1(E)}{\o_1(\D)} \ge \frac{\o_1(F)}{\o_1(\D)} & \ge & \frac{1}{C} \left(\o_1^{\O_S}(F)\right)^{\frac{1}{\theta}} = \frac{1}{C}\left(\widetilde\o_1^{\O_S}(F)\right)^{\frac{1}{\theta}} \\
& \ge & C'\left(\frac{\widetilde\o_1(F)}{\widetilde\o_1(\D)}\right)^{\frac{1}{\theta^2}} \ge C' \nonumber
\end{eqnarray}
\vskip-.4in\hfill\qed
\medskip

\noindent{\bf Remark 1}:
Recall that by the work of David \& Jerison \cite{DJ} and Semmes \cite{S}, we have that if $\O$ is a CAD and $\o$ denotes the harmonic measure then $\o\in A_\infty(\s)$. Theorem \ref{thm8.1} shows that the elliptic measure of operators which are perturbations of the Laplacian in the sense of (1) is also in $A_\infty(\s)$.

\section{Acknowledgments} T. Toro was partially supported by NSF DMS grants 0600915 and 0856687. E. Milakis was supported by Marie Curie International Reintegration Grant No 256481 within the 7th European
Community Framework Programme and NSF DMS grant 0856687. J. Pipher was partially supported by NSF DMS grant 0901139.

\hspace{3em}
\\

\begin{tabular}{l}
Emmanouil Milakis\\ University of Cyprus \\ Department of Mathematics \& Statistics \\ P.O. Box 20537\\
Nicosia, CY- 1678 CYPRUS
\\ {\small \tt emilakis@ucy.ac.cy}
\end{tabular}
\begin{tabular}{l}
Jill Pipher\\ Brown University \\ Mathematics Department\\ Box 1917 \\
Providence, RI 02912  USA
\\ {\small \tt jpipher@math.brown.edu}
\hfill
\end{tabular}
\begin{tabular}{l}
Tatiana Toro \\ University of Washington \\ Department of Mathematics \\ Box 354350 \\
Seattle, WA 98195-4350 USA
\\ {\small \tt toro@math.washington.edu}\\
\end{tabular}

\end{document}